\documentclass{amsart}
\usepackage{amssymb}

\newcommand{\map}[1]{\xrightarrow{#1}}
\newcommand{\iso}{\cong}

\newcommand{\Hom}{\mathrm{Hom}}
\newcommand{\Aut}{\mathrm{Aut}}
\newcommand{\End}{\mathrm{End}}
\newcommand{\Spec}{\mathrm{Spec}}
\newcommand{\Q}{\mathbb Q}
\newcommand{\Z}{\mathbb Z}
\newcommand{\R}{\mathbb R}
\newcommand{\C}{\mathbb C}
\newcommand{\F}{\mathbb F}
\newcommand{\A}{\mathbb A}
\newcommand{\co}{\mathcal O}
\newcommand{\alg}{\mathrm{alg}}
\newcommand{\ord}{\mathrm{ord}}
\newcommand{\length}{\mathrm{length}}
\newcommand{\Lie}{\mathrm{Lie}}
\newcommand{\Frob}{\mathrm{Fr}}

\newcommand{\action}{\bullet}

\newcommand{\CM}{\mathrm{CM}}
\newcommand{\can}{\mathrm{can}}
\newcommand{\green}{\mathtt{Gr}}

\input xy
\xyoption{all}

\author{Benjamin Howard}
\thanks{This research was supported in part by NSF grant DMS-0901753.}
\title{Complex multiplication cycles and  Kudla-Rapoport divisors}

\begin{document}

\begin{abstract}
We study the intersections of special cycles on  a unitary Shimura variety of signature $(n-1,1)$,
and show that the intersection multiplicities of these cycles agree with Fourier coefficients of Eisenstein series.
The results are new cases of conjectures of Kudla, and suggest a Gross-Zagier  theorem for
unitary Shimura varieties. 
\end{abstract}

\maketitle

\theoremstyle{plain}
\newtheorem{Thm}{Theorem}[subsection]
\newtheorem{Prop}[Thm]{Proposition}
\newtheorem{Lem}[Thm]{Lemma}
\newtheorem{Cor}[Thm]{Corollary}
\newtheorem{Conj}[Thm]{Conjecture}
\newtheorem{BigThm}{Theorem}
\newtheorem{BigCor}[BigThm]{Corollary}

\theoremstyle{definition}
\newtheorem{Def}[Thm]{Definition}
\newtheorem{BigHyp}[BigThm]{Hypothesis}

\theoremstyle{remark}
\newtheorem{Rem}[Thm]{Remark}
\newtheorem{Ques}[Thm]{Question}
\newtheorem{Hyp}[Thm]{Hypothesis}

\numberwithin{equation}{subsection}
\renewcommand{\theBigThm}{\Alph{BigThm}}
\renewcommand{\theBigCor}{\Alph{BigCor}}
\renewcommand{\theBigHyp}{\Alph{BigHyp}}



\section{Introduction}



\subsection{Overview}


In \cite{KRunitaryII}, Kudla and Rapoport define a family of divisors $Z(m)$
on a  unitary Shimura variety $M$ of dimension $n-1$, all defined
over a quadratic imaginary field $K_0$.  The variety and the divisors have integral models $\mathcal{M}$
and $\mathcal{Z}(m)$ over $\co_{K_0}$.  The program begun in  \cite{KRunitaryI, KRunitaryII, terstiege}
seeks to compute the $n$-fold intersection multiplicity of a tuple $\mathcal{Z}(m_1),\ldots,\mathcal{Z}(m_n)$, and
to relate the intersection multiplicity to Fourier coefficients of Eisenstein series.   
In this article we intersect the Kudla-Rapoport divisors with a different cycle on $\mathcal{M}$,
formed by points with complex multiplication.     By fixing  a CM field $K$
 of degree $n$ over $K_0$ and  a CM type $\Phi$
 satisfying a suitable signature condition, we obtain a $0$-cycle $X_{\Phi}$ on $M$ defined
over the reflex field of $\Phi$, representing points with complex multiplication by $\co_K$
and  CM type $\Phi$.   Passing to integral models yields a cycle $\mathcal{X}_\Phi$ on $\mathcal{M}$
of absolute dimension $1$, and our main results relate the intersection  multiplicity of $\mathcal{Z}(m)$ 
and $\mathcal{X}_\Phi$ with Fourier coefficients of an Eisenstein series.

 The  intersection $\mathcal{Z}(m) \cap \mathcal{X}_\Phi$ naturally  decomposes as a disjoint union of 
 $0$-dimensional stacks   $ \mathcal{Z}_{\Phi}(\alpha)$, where the index $\alpha$ ranges over those totally 
 positive elements of the maximal totally real subfield 
 $F\subset K$ which satisfy $\mathrm{Tr}_{F/\Q}(\alpha)=m$.  In the body of the paper we allow $K$ to be a product of CM fields, 
 in which case some $\mathcal{Z}_{\Phi}(\alpha)$ may have dimension one, \emph{i.e.}~the cycles 
 $\mathcal{X}_\Phi$ and $\mathcal{Z}(m)$ may intersect improperly.  This does not happen when $K$ is 
 a field.  The \emph{Arakelov degree} of $\mathcal{Z}_{\Phi}(\alpha)$ is (essentially) defined
 to be the sum of the lengths of the local rings of all geometric points, 
 and our main result shows that, as $\alpha$ varies, 
 these degrees are the Fourier coefficients of the derivative of a weight one Hilbert modular Eisenstein 
 series  $\mathcal{E}_{\Phi}(\tau,s)$ at the  center $s=0$ of its functional equation.
 
 Returning to the original problem,
 the intersection multiplicity of $\mathcal{X}_\Phi$ with $\mathcal{Z}(m)$ is obtained by adding 
 together the  degrees of those $\mathcal{Z}_{\Phi}(\alpha)$ with $\mathrm{Tr}_{F/\Q}(\alpha)=m$.  This 
 intersection multiplicity is equal to the $m^\mathrm{th}$ Fourier coefficient of the central derivative
 of   the pullback of $\mathcal{E}_{\Phi}(\tau,s)$ via the 
 diagonal embedding of the upper half plane into a product of upper half planes.


\subsection{Statement of the results}


Fix  a quadratic imaginary field $K_0\subset \C$,  denote by $\iota$ the inclusion $K_0 \to \C$, 
and let $\overline{\iota}$ be the conjugate embedding.   For  nonnegative integers $r,s$ 
let $\mathcal{M}_{(r,s)}$ be the algebraic  stack over $\Spec(\co_{K_0})$ whose functor of points assigns to 
every $\co_{K_0}$-scheme $S$  the groupoid of triples $(A,\kappa,\lambda)$ in which 
 \begin{itemize}
 \item
 $A\to S$ is an abelian scheme of relative dimension $r+s$,
\item
$\kappa:\co_{K_0} \to \End(A)$ is an action of $\co_{K_0}$ on $A$, 
\item
$\lambda:A\to A^\vee$ is a principal polarization
\end{itemize}
(throughout this paper \emph{scheme}  means \emph{locally Noetherian scheme}, and \emph{algebraic stack}
means \emph{Deligne-Mumford stack}). We require  that  the polarization $\lambda$ be  $\co_{K_0}$-linear, 
in the sense that
\[
\lambda\circ \kappa(\overline{x}) = \kappa(x)^\vee \circ \lambda
\]
for all $x\in \co_{K_0}$. We further require  that the action of $\co_{K_0}$ satisfy the 
$(r,s)$-\emph{signature condition}:  for any $x \in \co_{K_0}$, locally on $S$ the determinant of 
$T-x $ acting on $\mathrm{Lie}(A)$ is equal to the image of 
\[
(T -  \iota(x) )^r  (T - \overline{\iota}(x)   )^s \in \co_{K_0}[T]
\]
in $\co_S[T]$.  Our stack $\mathcal{M}_{(r,s)}$ is the stack denoted $\mathcal{M}(r,s)^\mathrm{naive}$
in \cite{KRunitaryII}; it is smooth of relative dimension $rs$ over $\co_{K_0}[\mathrm{disc}(K_0)^{-1}]$.
The generic fiber of $\mathcal{M}_{(r,s)}$ is a union of Shimura varieties associated to the unitary similitude
groups of finitely many Hermitian spaces over $K_0$, but for us the interpretation as a moduli space
is paramount. 

Note that $\mathcal{M}_{(1,0)}$ is simply the moduli stack of elliptic curves 
$A_0\to S$ over $\co_{K_0}$-schemes with
complex multiplication by $\co_{K_0}$, normalized so that the action of $\co_{K_0}$ on $\Lie(A_0)$ 
is through the structure morphism $\co_{K_0}\to \co_S$.

For the remainder of the introduction we fix a positive integer $n$ and focus on
the case of signature $(n-1,1)$. We will construct two types of cycles on the stack
\[
\mathcal{M} = \mathcal{M}_{(1,0)} \times_{\co_{K_0} }  \mathcal{M}_{(n-1,1)}.
\]
For an $\co_{K_0}$-scheme $S$, an $S$-valued point of $ \mathcal{M}$
 is a sextuple  $(A_0,\kappa_0,\lambda_0,A,\kappa,\lambda)$ with
\[
(A_0,\kappa_0,\lambda_0)\in \mathcal{M}_{(1,0)}(S) \qquad 
(A,\kappa,\lambda)\in \mathcal{M}_{(n-1,1)}(S)
\] 
but we will usually  abbreviate this sextuple to $(A_0,A)$.

The first  family of cycles on $\mathcal{M}$   are the  \emph{Kudla-Rapoport divisors} of   \cite{KRunitaryII}.
If $S$ is connected and $(A_0,A)\in \mathcal{M}(S)$, the projective $\co_{K_0}$-module of finite rank
\[
L(A_0,A)=\Hom_{\co_{K_0}}(A_{0} , A)
\]
comes equipped with a positive definite $\co_{K_0}$-valued Hermitian form
\begin{equation}\label{hermitian def}
\langle f_1, f_2\rangle = \lambda_0^{-1} \circ f_2^\vee \circ \lambda \circ f_1 
\end{equation}
(the right hand side lies in $\co_{K_0}=\End_{\co_{K_0}} (A_0)$).
For an integer  $m\not=0$ let  $\mathcal{Z}(m)$  be the moduli stack  over $\co_{K_0}$ whose 
$S$-valued points are   triples $(A_0,A,f)$ with  $(A_0,A)\in \mathcal{M}(S)$ and  $f\in L(A_0,A)$ satisfying 
$\langle f,f\rangle=m$.    There is an obvious forgetful morphism 
 \[
 \mathcal{Z}(m) \to  \mathcal{M}.
 \]
 In terms of Shimura varieties, these divisors correspond roughly to inclusions of algebraic groups 
 of the form $\mathrm{H} \to  \mathrm{GU}(V)$, where  $V$ is a  Hermitian space
 over $K_0$ of signature $(n-1,1)$, and $\mathrm{H}$ is the stabilizer of a vector of positive length.  
 But, once again, to us it is the moduli 
 interpretation that matters most. 

The second type of cycle is constructed from abelian varieties with complex multiplication.
Let $F$ be a totally real \'etale $\Q$-algebra (in other words, a product of totally real number fields)
with $[F:\Q]=n$, and    fix a CM type $\Phi$ of 
\[
K=F\otimes_\Q K_0
\]
 of signature $(n-1,1)$.  This means that
there are  $n-1$ elements of $\Phi$ whose restriction to $K_0$ is  $\iota$, and a unique element
  whose restriction to  $K_0$ is $\overline{\iota}$.  Let 
  \[
  K_\Phi\subset \C
  \] 
  be a number field containing both  $K_0$ and the reflex field of $\Phi$, and set $\co_\Phi=\co_{K_\Phi}$.
 Let $\mathcal{CM}_{\Phi}$  be the algebraic stack  over $\co_\Phi$ classifying principally 
 polarized abelian schemes  with complex multiplication by 
 $\co_K$ and CM type $\Phi$.  See Section \ref{ss:moduli} for the precise definition.
For an $\co_\Phi$-scheme $S$, an $S$-valued point of
\[
\mathcal{X}_{\Phi}
 = \mathcal{M}_{(1,0)/\co_\Phi} \times_{ \co_{\Phi} } \mathcal{CM}_{\Phi}.
\] 
 is a pair $(A_0,A)\in \mathcal{M}(S)$ 
together with an extension of the  $\co_{K_0}$-action on $A$  to complex multiplication by $\co_K$,
and as such there is an evident forgetful morphism
\[
\mathcal{X}_{\Phi}  \to  \mathcal{M}_{/\co_\Phi}.
\]
The stack  $\mathcal{X}_\Phi$ is  \'etale and proper over $\co_\Phi$, 
and  in particular is  regular  of dimension $1$.  
In terms of Shimura varieties, the map $\mathcal{X}_\Phi \to \mathcal{M}_{/\co_\Phi}$ 
corresponds roughly to $T\to \mathrm{GU}(V)$, where
$V$ is a Hermitian space over $K_0$ of signature $(n-1,1)$, and $T$ is the torus with
$\Q$-points 
\[
T(\Q) = \{ x\in K^\times :  \mathrm{Nm}_{K/F}(x) \in \Q^\times \}.
\]
 
Now we come to the central problem of this paper: to compute the intersection multiplicity on 
$\mathcal{M}_{/  \co_\Phi}$ of the  Kudla-Rapoport divisor $\mathcal{Z}(m)_{/\co_\Phi}$ 
with the complex multiplication cycle $\mathcal{X}_{\Phi}.$
Consider the cartesian diagram (this is the definition of the upper left corner)
\[
\xymatrix{
 {  \mathcal{X}_{\Phi}  \cap \mathcal{Z}(m) } \ar[d]  \ar[r]  & {\mathcal{Z}(m)_{/\co_\Phi} } \ar[d] \\
{ \mathcal{X}_{\Phi} } \ar[r] &   { \mathcal{M}_{/\co_\Phi}}.
}
\]
Let $S$ be a connected $\co_\Phi$-scheme. Given a point
\[
(A_0,A)\in  \mathcal{X}_{\Phi}  (S),
\]
we may consider the $\co_{K_0}$-module $L(A_0,A)$ attached to the image $(A_0,A)\in \mathcal{M}(S)$.
The fact that the pair $(A_0,A)$ comes from $ \mathcal{X}_{\Phi} (S)$ endows $L(A_0,A)$
with obvious extra structure:  the action of $\co_K$ on $A$ makes $L(A_0,A)$ into an $\co_K$-module.
Slightly less obviously,  there is a unique $K$-valued totally positive definite
$\co_K$-Hermitian form $\langle f_1,f_2\rangle_\CM$ on $L(A_0,A)$, which refines $\langle f_1,f_2\rangle$, in the 
sense that 
\[
\langle f_1,f_2\rangle = \mathrm{Tr}_{K/K_0} \langle f_1,f_2\rangle_\CM.
\]

By contemplation of the the moduli problems, there is  a  decomposition 
\begin{equation}\label{moduli decomp}
 \mathcal{X}_{\Phi}  \cap \mathcal{Z}(m)
=\bigsqcup_{ \substack{ \alpha\in F \\ \mathrm{Tr}_{F/\Q} (\alpha) = m} } \mathcal{Z}_{\Phi}(\alpha),
\end{equation}
where  $\mathcal{Z}_{\Phi}(\alpha)$ is the moduli space of triples  $(A_0,A,f)$ over $\co_\Phi$-schemes $S$,
in which 
\[
(A_0,A)\in   \mathcal{X}_{\Phi}(S)
\]
and    $f\in L(A_0,A)$ satisfies $\langle f,f\rangle_\CM =\alpha$.   
  If $\alpha\in F^\times$ the stack $\mathcal{Z}_{\Phi}(\alpha)$ has dimension $0$, and is nonempty only
 if $\alpha$ is totally positive ($\alpha\gg 0$).   If $\alpha\not\in F^\times$ then  $\mathcal{Z}_{\Phi}(\alpha)$ may 
have irreducible components of  dimension $1$, in which case the intersection (\ref{moduli decomp}) is improper.

For a prime $\mathfrak{p}$ of $K_\Phi$, let  $k_{\Phi,\mathfrak{p}}$ denote the residue field of $\mathfrak{p}$.
When $\mathcal{Z}_{\Phi}(\alpha)$ has dimension $0$, its  \emph{Arakelov degree}
\[
\widehat{\deg}\, \mathcal{Z}_{\Phi}(\alpha) = \sum_{ \mathfrak{p}\subset\co_\Phi }\frac{ \log( \mathrm{N}(\mathfrak{p}))  }{[K_\Phi : \Q]}
\sum_{z\in \mathcal{Z}_{\Phi}( \alpha ) (k^\alg_{\Phi,\mathfrak{p}})  }
\frac{\mathrm{length}(\co^\mathrm{sh}_{\mathcal{Z}_{\Phi}(\alpha),z})}  {\#\Aut(z)}
\]
is finite, and is independent of the choice of $K_\Phi$ (here $\co^\mathrm{sh}_{\mathcal{Z}_{\Phi}(\alpha),z}$
is the strictly Henselian local ring of $\mathcal{Z}_{\Phi}(\alpha)$ at $z$, \emph{i.e}.~the local ring 
for the \'etale topology).
Our first main result is a formula for the Arakelov degree.  To state it we need some notation.
Let $\varphi^\mathrm{sp}\in \Phi$ be the \emph{special element}, determined by 
$\varphi^\mathrm{sp}|_{K_0} = \overline{\iota}$. Recalling that $K$ is a product of CM fields, there is a unique factor 
$K^\mathrm{sp} \subset K$ such that  $\varphi^\mathrm{sp}:K\to \C$ factors through the projection 
$K\to K^\mathrm{sp}$.  Denote by $F^\mathrm{sp}$ the  maximal totally real subfield of 
$K^\mathrm{sp}$, so that $F^\mathrm{sp}$ is a  direct summand of $F$.
   If $\mathfrak{p}$ is a prime of $F^\mathrm{sp}$ then we denote again by $\mathfrak{p}$ the
prime of $F$ determined by pullback through the projection $F\to F^\mathrm{sp}$.
If $\mathfrak{b}$ is a fractional $\co_F$-ideal define
\begin{equation}\label{rho}
\rho(\mathfrak{b}) = \# \{ \mathfrak{B}\subset\co_K : \mathfrak{B}\overline{\mathfrak{B}} = \mathfrak{b} \co_K\}.
\end{equation}
In particular $\rho(\mathfrak{b}) =0 $ if $\mathfrak{b} \not\subset\co_F$.
For any prime $p$ set
\begin{equation}\label{epsilon}
\epsilon_p=\begin{cases}
1 & \hbox{if $K_0/\Q$ is unramified at $p$}\\
0&  \hbox{if $K_0/\Q$ is ramified at $p$.}
\end{cases}
\end{equation}
The following theorem appears in the text as Theorem \ref{Thm:zero cycle degree}.

\begin{BigThm}\label{Main Theorem I}
Assume  the discriminants of $K_0/\Q$ and $F/\Q$ are odd and relatively prime. 
 If $\alpha\in F^{\gg 0}$  then  $\mathcal{Z}_{\Phi}(\alpha)$ has dimension zero, and 
\[
\widehat{\deg}\, \mathcal{Z}_{\Phi}(\alpha)  =   \frac{ h(K_0)}{   w(K_0)  }
\sum_{\mathfrak{p}}   \frac{ \log(\mathrm{N}(\mathfrak{p})) }{  [K^\mathrm{sp} :\Q ] } 
 \cdot \ord_{\mathfrak{p}} (\alpha \mathfrak{p}\mathfrak{d}_F  )
\cdot   \rho ( \alpha\mathfrak{p}^{-\epsilon_p}\mathfrak{d}_F )  
\]
where the sum is over all  primes $\mathfrak{p}$ of $F^\mathrm{sp}$ nonsplit in $K^\mathrm{sp}$, $p$
is the rational prime below $\mathfrak{p}$,  $\mathfrak{d}_F$ is the different of $F/\Q$,  $h(K_0)$ is the 
class number of $K_0$,  $w(K_0)$ is number of roots of unity in $K_0$, and $\mathrm{N}(\mathfrak{p})$
is the cardinality of the residue field of $\mathfrak{p}$.
\end{BigThm}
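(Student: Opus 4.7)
The plan is to first verify that $\mathcal{Z}_\Phi(\alpha)$ has dimension zero (so the Arakelov degree is a finite sum of local lengths), then decompose the sum locally: at each finite prime $\mathfrak{p}$ of $\co_\Phi$ I would enumerate the geometric points in $k^\alg_{\Phi,\mathfrak{p}}$ and compute the length of $\co^\mathrm{sh}_{\mathcal{Z}_\Phi(\alpha),z}$ at each. Dimension zero follows by rigidity: once $\alpha\in F^\times$ is fixed, the condition $\langle f,f\rangle_\CM=\alpha$ leaves no infinitesimal deformations of $f$ relative to $(A_0,A)$, so each component has dimension at most that of its image in $\mathcal{X}_\Phi$, which is discrete.

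For each prime $\mathfrak{p}$ of residue characteristic $p$, I would use the class-group parametrization of CM abelian varieties in characteristic $p$. The CM elliptic curve $A_0$ contributes $h(K_0)$ isomorphism classes each with $w(K_0)$ automorphisms, yielding the $h(K_0)/w(K_0)$ prefactor after combining with the $\#\Aut(z)$ denominator. For each fixed $A_0$, Shimura's theory in positive characteristic classifies the remaining data $(A,f)$ with $\langle f,f\rangle_\CM=\alpha$ as invertible $\co_K$-ideals $\mathfrak{B}$ satisfying the norm relation $\mathfrak{B}\overline{\mathfrak{B}}=\alpha\mathfrak{p}^{-\epsilon_p}\mathfrak{d}_F\cdot\co_K$. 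The $\mathfrak{d}_F$-twist reflects the polarization-induced normalization of the Hermitian form, and the exponent $\epsilon_p$ accounts for the local structure of $A_0$ at primes ramified in $K_0/\Q$. The count is $\rho(\alpha\mathfrak{p}^{-\epsilon_p}\mathfrak{d}_F)$ by definition, and the norm equation forces $\mathfrak{p}$ to be nonsplit in $K^\mathrm{sp}/F^\mathrm{sp}$, restricting the outer sum.

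To compute $\length(\co^\mathrm{sh}_{\mathcal{Z}_\Phi(\alpha),z})$ I would invoke Serre-Tate and Grothendieck-Messing, translating the problem to a linear-algebra question on the crystalline Dieudonn\'e modules of $A_0[p^\infty]$ and $A[p^\infty]$ equipped with their $\co_{K_0}\otimes\co_K$-action. The deformation space of $(A_0,A)$ inside $\mathcal{M}$ is formally smooth of the expected dimension, and the condition that the homomorphism $f$ lift to a given Artinian thickening is a closed condition whose defining ideal has colength equal to the $\mathfrak{p}$-adic valuation of a Hermitian pairing built from the lifted Hodge filtration. A Gross-Keating-style analysis at the nonsplit prime $\mathfrak{p}$ identifies this colength as $\ord_\mathfrak{p}(\alpha\mathfrak{p}\mathfrak{d}_F)=1+\ord_\mathfrak{p}(\alpha\mathfrak{d}_F)$. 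Multiplying the count by this length and summing produces the stated formula.

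The main obstacle is this local length computation. Tracking the Hermitian form through the Grothendieck-Messing equivalence and matching the answer to the symmetric normalization $\ord_\mathfrak{p}(\alpha\mathfrak{p}\mathfrak{d}_F)$ requires a careful treatment of the Hodge filtration on the Dieudonn\'e module at each nonsplit $\mathfrak{p}$. The hypotheses that $\mathrm{disc}(K_0/\Q)$ and $\mathrm{disc}(F/\Q)$ be odd and coprime are essential here: they preclude wild ramification and permit the diagonalization of the relevant Hermitian lattices at ramified primes, mirroring the role of such assumptions in Terstiege's work on $n$-fold intersections of Kudla-Rapoport divisors.
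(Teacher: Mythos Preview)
Your overall strategy---decompose the Arakelov degree as (point count) $\times$ (local length) and compute each factor separately---is exactly the paper's approach, and your identification of Grothendieck--Messing theory as the tool for the length calculation is correct. However, two key technical ingredients are missing or misdescribed.

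First, the point count. You assert that ``Shimura's theory in positive characteristic'' classifies pairs $(A,f)$ by ideals $\mathfrak{B}$ with $\mathfrak{B}\overline{\mathfrak{B}}=\alpha\mathfrak{p}^{-\epsilon_p}\mathfrak{d}_F\co_K$, but this is circular: any such classification presupposes knowing the isomorphism class of the Hermitian lattice $(L(A_0,A),\langle\cdot,\cdot\rangle_\CM)$ for each supersingular pair $(A_0,A)$, and that is precisely the hard part. The paper determines this structure by lifting $(A_0,A)$ canonically to characteristic $0$ (where the Betti Hermitian space $L_B(A_0',A')$ is computed by elementary linear algebra), and then comparing $L(A_0,A)_\mathfrak{q}$ with $L_B(A_0',A')_\mathfrak{q}$ prime by prime. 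At every $\mathfrak{q}\neq\mathfrak{p}_F$ the two agree; at $\mathfrak{p}_F$ they differ by the local invariant, and a direct Dieudonn\'e-module calculation gives the extra $\mathfrak{p}_F^{\epsilon_p}$ factor. Only after this is established does the paper convert the point count into an orbital integral over the norm-one torus, summed over the twisting group $C_K$, and identify the result with $\rho(\alpha\mathfrak{p}_F^{-\epsilon_p}\mathfrak{d}_F)$. Your sketch skips this canonical-lift comparison entirely.

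Second, the length. At a prime $\mathfrak{p}$ of $K_\Phi\cong K^{\mathrm{sp}}$ the length is not $\ord_{\mathfrak{p}}(\alpha\mathfrak{p}\mathfrak{d}_F)$ but rather $\tfrac{1}{2}e_\mathfrak{p}\cdot\ord_{\mathfrak{p}_F}(\alpha\mathfrak{p}_F\mathfrak{d}_F)$, where $e_\mathfrak{p}$ is the ramification index of $K^{\mathrm{sp}}/F^{\mathrm{sp}}$ at $\mathfrak{p}$. This factor only disappears when you convert the sum from primes of $K^{\mathrm{sp}}$ to primes of $F^{\mathrm{sp}}$, using $e_\mathfrak{p}f_\mathfrak{p}=2$ at nonsplit primes. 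The paper proves the length formula by an induction: a base case (Proposition \ref{Prop:lift I}) treating a generator of $L(A_0,A)$, and an inductive step (Proposition \ref{Prop:lift II}) showing that multiplication by a uniformizer of $\co_K$ shifts the lifting level by exactly one. This is a generalization of Gross's canonical-lift argument for supersingular elliptic curves, not a Gross--Keating computation.
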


Fix a prime $\mathfrak{p}\subset \co_\Phi$, and let $W_{\Phi,\mathfrak{p}}$ be the 
completion of the ring of integers in the maximal unramified extension of $\co_{\Phi,\mathfrak{p}}$.
The most difficult part of the proof of Theorem \ref{Main Theorem I} is the calculation of the length of the local 
ring at a geometric point $z\in \mathcal{Z}_\Phi(\alpha)(k^\alg_{\Phi,\mathfrak{p}}  )$, 
corresponding to a triple $(A_0,A,f)$. This calculation proceeds in 
two steps.  First we show that the formal deformation space of the pair $(A_0,A)$ 
is  isomorphic to the formal spectrum of $W_{\Phi,\mathfrak{p}}$.  In more concrete 
terms, this means that $(A_0,A)$ lifts uniquely to any complete local Noetherian 
$W_{\Phi,\mathfrak{p}}$-algebra with residue field $k^\alg_{\Phi,\mathfrak{p}}$, and 
in particular has a unique lift to $W_{\Phi,\mathfrak{p}}$ called the \emph{canonical lift}.
Let $(A_0^{(k)},A^{(k)})$ be the reduction of  the canonical lift to the
quotient $W_{\Phi,\mathfrak{p}} /\mathfrak{m}^k$,
where $\mathfrak{m}$ is the maximal ideal of $W_{\Phi,\mathfrak{p}}$.  The length of the 
local ring of $\mathcal{Z}_\Phi(\alpha)$ at $z$ is then equal to the largest $k$ such that 
$f$ lifts to a map $ A_0^{(k)} \to A^{(k)}$.  In other words, the $\co_K$-module $L(A_0,A)$ has
a filtration 
\[
\dots\subset L^{(3)}\subset L^{(2)} \subset L^{(1)} =L(A_0,A)
\]
in which 
\[
L^{(k)} = \Hom_{\co_{K_0}} (A_0^{(k)}, A^{(k)}),
\]
and the problem is to compute the largest $k$ such that $f\in L^{(k)}$.  We show that this $k$
is 
\[
k=\frac{1}{2} \cdot e_\mathfrak{p} \cdot \ord_{\mathfrak{p}_F} (\alpha\mathfrak{p}_F\mathfrak{d}_F),
\]
where $\mathfrak{p}_F$ is the pullback of $\mathfrak{p}$ under the map $\varphi^\mathrm{sp}:F\map{}K_\Phi$,
and $e_\mathfrak{p}$ is the ramification degree of $\mathfrak{p}_F$ in $K$.
All of this is done in Section \ref{ss:LHI}, using the Grothendieck-Messing deformation theory of $p$-divisible groups,
with the final application to lengths of local rings appearing as Theorem \ref{Thm:local length}.
In the case $n=1$, so that $A_0$ and $A$ are supersingular elliptic curves, all of these calculations reduce to calculations of
Gross, as explained at the end of Section \ref{ss:LHI}.

  In Section \ref{s:eisenstein} we construct a Hilbert modular Eisenstein series 
$\mathcal{E}_{\Phi}(\tau,s)$ of parallel weight one.  
The Eisenstein series $\mathcal{E}_{\Phi}(\tau,s)$ satisfies a function equation 
in $s\mapsto -s$ which forces $\mathcal{E}_{\Phi}(\tau,0)=0$, and the central derivative
has a Fourier expansion 
\[
 \mathcal{E}_{\Phi}'(\tau,0)   =\sum_{\alpha\in F} b_{\Phi}(\alpha,y)\cdot  q^\alpha,
\]
where $\tau=x+iy\in \mathcal{H}^n$ lies in the product of $n$ upper half planes.
The Fourier coefficients $b_{\Phi}(\alpha,y)$ can easily be computed using explicit formulas of Yang
\cite{yang05}, and the result is stated as Corollary  \ref{Cor:final fourier}.   Comparison with 
Theorem \ref{Main Theorem I} shows that, for $\alpha\in F^{\gg 0}$
\begin{equation}\label{naive degree formula}
\widehat{\mathrm{deg}}\, \mathcal{Z}_{\Phi}(\alpha)
 =       -   \frac{ h(K_0) } {w(K_0) } \cdot   \frac{  \sqrt{\mathrm{N}( d_{K/F}) }  } { 2^{r -1 }    [K^\mathrm{sp}:\Q]}
 \cdot  b_{\Phi}(\alpha,y)
\end{equation}
where $d_{K/F}$ is the relative discriminant of $K/F$, and $r$ is the number of places of $F$ ramified in 
$K$, including the archimedean places.  In particular the right hand side is independent of $y$.

Of course the right hand side of (\ref{naive degree formula}) makes sense for all $\alpha\in F$, while at the moment the 
left hand side is only defined for $\alpha\gg 0$.    To remedy this asymmetry  we introduce in 
Section \ref{ss:divisors} the Gillet-Soul\'e \emph{arithmetic Chow group} $\widehat{\mathrm{CH}}^1(\mathcal{X}_\Phi)$
of the $1$-dimensional stack $\mathcal{X}_\Phi$.  Elements of the arithmetic Chow group are rational
equivalence classes of pairs $(\mathtt{Z},\green)$ where $\mathtt{Z}$ is a $0$-cycle on $\mathcal{X}_\Phi$
with rational coefficients, and $\green$ is a Green function for $\mathtt{Z}$. As $\mathtt{Z}$ has no points
in characteristic $0$, $\green$ is just a function on the finite set of complex points of $\mathcal{X}_\Phi$.  
In Section \ref{ss:divisors} we construct a divisor class
\[
\widehat{\mathtt{Z}}_\Phi(\alpha,y) \in \widehat{\mathrm{CH}}^1(\mathcal{X}_\Phi)
\]
for every $\alpha\in F^\times$ and every $y\in F_\R^{\gg 0}$.  If $\alpha\gg 0$ then 
this class is $(\mathtt{Z}_\Phi(\alpha),0)$, where the $0$-cycle $\mathtt{Z}_\Phi(\alpha)$ is
the image of the map $\mathcal{Z}_\Phi(\alpha) \to  \mathcal{X}_\Phi$, with points counted with appropriate
multiplicities.  
 If $\alpha\not\gg 0$ then our divisor class has the form $(0,\green_\Phi(\alpha,y,\cdot))$
for a particular function $\green_\Phi(\alpha,y,\cdot)$ on the complex points of $\mathcal{X}_\Phi$.
There is a canonical \emph{arithmetic degree}
\begin{equation}\label{intro deg}
\widehat{\deg}  : \widehat{\mathrm{CH}}^1(\mathcal{X}_\Phi) \to \R,
\end{equation}
and (\ref{naive degree formula}) has the following generalization, which appears in the text as 
Theorem \ref{Thm:degree-fourier}.

\begin{BigThm}\label{Main Theorem II} 
Assume  the discriminants of $K_0/\Q$ and $F/\Q$ are odd and relatively prime. 
If $\alpha\in F^\times$ and $y\in F_\R^{\gg 0}$ then
\[
\widehat{\mathrm{deg}}\, \widehat{\mathtt{Z}}_{\Phi}(\alpha,y)
 =       -   \frac{ h(K_0) } {w(K_0) } \cdot   \frac{  \sqrt{\mathrm{N}( d_{K/F}) }  } { 2^{r -1 }    [K^\mathrm{sp}:\Q]}
 \cdot  b_{\Phi}(\alpha,y) .
\]
\end{BigThm}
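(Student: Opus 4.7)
The plan is to split the statement on the signature of $\alpha$: for $\alpha \gg 0$ the divisor class $\widehat{\mathtt{Z}}_\Phi(\alpha, y)$ is purely geometric, while for $\alpha \not\gg 0$ it is purely archimedean, so the two cases reduce to very different computations.

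In the totally positive case $\alpha \in F^{\gg 0}$, by the construction in Section \ref{ss:divisors} the class is $(\mathtt{Z}_\Phi(\alpha), 0)$ with vanishing Green function, and the arithmetic degree (\ref{intro deg}) therefore coincides (up to the fixed normalization built into the map) with the Arakelov degree $\widehat{\deg}\, \mathcal{Z}_\Phi(\alpha)$ of Theorem \ref{Main Theorem I}. The identity (\ref{naive degree formula}) already records that substituting Theorem \ref{Main Theorem I} and comparing with Yang's formula (Corollary \ref{Cor:final fourier}) for $b_\Phi(\alpha, y)$ yields exactly the claimed equality. That the right-hand side is $y$-independent for $\alpha \gg 0$ matches the fact that Yang's formula is purely holomorphic in this range.

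In the remaining case $\alpha \in F^\times$ with $\alpha \not\gg 0$, positivity of the CM Hermitian form $\langle f, f \rangle_\CM = \alpha$ forces $\mathcal{Z}_\Phi(\alpha)(\C) = \emptyset$, so $\widehat{\mathtt{Z}}_\Phi(\alpha, y) = (0, \green_\Phi(\alpha, y, \cdot))$ is supported entirely at the archimedean fiber. Its arithmetic degree is therefore a weighted sum of values $\green_\Phi(\alpha, y, z)$ over the finite set of complex points $z \in \mathcal{X}_\Phi(\C)$, each divided by $\#\Aut(z)$. By construction, each $\green_\Phi(\alpha, y, z)$ is built from exponential integrals of incomplete $\Gamma$-function type over those archimedean places of $F$ at which $\alpha$ is negative, mirroring the archimedean Whittaker factor appearing in Yang's expansion of $b_\Phi(\alpha, y)$. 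The CM points of $\mathcal{X}_\Phi$ are classified by the Galois action on CM types, and their count, weighted by automorphisms, reproduces exactly the non-archimedean local densities extracted at each finite place in Yang's formula.

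The main obstacle is the archimedean matching in this second case: the non-archimedean local factors on either side agree tautologically by the same count over CM points that underlies Theorem \ref{Main Theorem I}, but verifying that the sum of $\green_\Phi(\alpha, y, \cdot)$ over the CM points of $\mathcal{X}_\Phi(\C)$ reproduces the archimedean Whittaker factor of $b_\Phi(\alpha, y)$, with the correct constants of proportionality, is a non-trivial special-function computation. Once the Green function has been chosen compatibly with a Kudla-Millson prescription, this reduces to an explicit identity among incomplete $\Gamma$-functions, verifiable place by place.
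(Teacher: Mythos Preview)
Your overall split into $\alpha\gg 0$ and $\alpha\not\gg 0$ is exactly what the paper does, and your treatment of the totally positive case is correct: the arithmetic degree is the Arakelov degree of Theorem~\ref{Main Theorem I}, and direct comparison with Corollary~\ref{Cor:final fourier} gives the result. This is precisely the paper's argument (Theorem~\ref{Thm:degree-fourier} just says ``compare Theorem~\ref{Thm:degree formulas} with Corollary~\ref{Cor:final fourier}'').

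In the case $\alpha\not\gg 0$, however, you have misidentified where the content lies. You describe the archimedean matching as ``a non-trivial special-function computation'' and the point count as ``tautological by the same count\ldots that underlies Theorem~\ref{Main Theorem I}.'' It is the other way around. By definition (see (\ref{pre green}) and (\ref{pre green II})), $\green_\Phi(\alpha,y,z)$ is a sum, over $f\in L_B(A_0,A)$ with $\langle f,f\rangle_\CM=\alpha$, of the \emph{single} value $\beta_1(4\pi|y\alpha|_{\infty^{\mathrm{sp},\sigma}})$, which involves only the one special archimedean place, not all places where $\alpha$ is negative. Since Yang's archimedean Whittaker factor in Corollary~\ref{Cor:final fourier} is literally the same $\beta_1$, no special-function identity is needed: the archimedean factor matches by design.

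The actual work is the point count: one must show that, summed over $(A_0,A)\in\mathcal{X}_\Phi^\sigma(\C)$ and weighted by $1/\#\Aut$, the number of $f\in L_B(A_0,A)$ representing $\alpha$ equals $\tfrac{h(K_0)}{w(K_0)}\rho(\alpha\mathfrak{d}_F)$. This is \emph{not} the count underlying Theorem~\ref{Main Theorem I}, which is a characteristic-$p$ count on the supersingular Hermitian lattice $L(A_0,A)$ governed by Theorem~\ref{Thm:global hermitian}. Here one is in characteristic zero, and the relevant Hermitian lattice is the Betti lattice $L_B(A_0,A)$, whose structure is given instead by Proposition~\ref{Prop:betti hermitian}; in particular its ideal is $\mathfrak{as}$ rather than $\mathfrak{asp}_F^{\epsilon_p}$, and its signature forces the representation count to vanish unless $\alpha$ is negative at exactly the one place $\infty^{\mathrm{sp},\sigma}$. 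The paper carries this out in the proof of part~(2) of Theorem~\ref{Thm:degree formulas} by rerunning the orbital-integral argument of Theorem~\ref{Thm:point count} with Proposition~\ref{Prop:betti hermitian} in place of Theorem~\ref{Thm:global hermitian}. Your sketch does not indicate this step, and without it the $\alpha\not\gg 0$ case is incomplete.
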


We now return to our original motivating problem: the calculation of the intersection multiplicity of 
$\mathcal{X}_\Phi$ and $\mathcal{Z}(m)$ on $\mathcal{M}$.  Assume that $m\not=0$, and that $F$ is 
a field.  This guarantees that $\mathcal{X}_\Phi \cap \mathcal{Z}(m)$ is $0$ dimensional.  The 
intersection multiplicity $I( \mathcal{X}_\Phi : \mathcal{Z}(m))$ is defined as the 
Arakelov degree of the $0$-dimensional stack $\mathcal{X}_\Phi \cap \mathcal{Z}(m)$.  It is a more or less
formal consequence of (\ref{moduli decomp}), see Theorem \ref{Thm:fundamental decomp I}, that
\begin{equation}\label{intro finite}
I( \mathcal{X}_\Phi : \mathcal{Z}(m)) = \sum_{  \substack{  \alpha\in F^\times, \alpha\gg 0 \\ \mathrm{Tr}_{F/\Q}(\alpha)=m  }  }
\widehat{\mathrm{deg}}\, \widehat{\mathtt{Z}}_{\Phi}(\alpha,y)
\end{equation}
for all $y\in \R^{>0}$.   In Section \ref{ss:divisors II} we define a Green function $\green(m,y,\cdot)$
for the Kudla-Rapoport divisor $\mathcal{Z}(m)$.  
It is a smooth function on $\mathcal{M}(\C)$, except for  
a logarithmic singularity along the divisor $\mathcal{Z}(m)(\C)$, and depends on a 
parameter $y\in \R^{>0}$.  If $m<0$ then $\mathcal{Z}(m) =\emptyset$, and $\green (m,y,\cdot)$
is simply a smooth function on $\mathcal{M}(\C)$.
This function may be evaluated at the finite set of complex points of $\mathcal{X}_\Phi$, and the result,
Theorem \ref{Thm:fundamental decomp II}, is 
\begin{equation}\label{intro arch}
\green(m,y, \mathcal{X}_\Phi) =
 \sum_{  \substack{  \alpha\in F^\times, \alpha\not \gg 0 \\ \mathrm{Tr}_{F/\Q}(\alpha)=m  }  }
\widehat{\mathrm{deg}}\, \widehat{\mathtt{Z}}_{\Phi}(\alpha,y).
\end{equation}

Let  $i_F:\mathcal{H}\to \mathcal{H}^n$ be the diagonal embedding of the  upper half plane.
The restriction $\mathcal{E}_{\Phi}(i_F(\tau),s)$  of $\mathcal{E}_{\Phi} (\tau,s)$
to $\mathcal{H}$ vanishes at $s=0$, and the derivative has a Fourier expansion
\[
\mathcal{E}'_{\Phi}(i_F(\tau),0) = \sum_{m\in \Z} c_{\Phi}(m,y)  \cdot  q^m
\]
where now $\tau=x+iy\in \mathcal{H}$, and 
\[
 c_{\Phi}(m,y)  =  \sum_{  \substack{  \alpha\in F^\times \\ \mathrm{Tr}_{F/\Q}(\alpha)=m  }  } b_{\Phi}(m,y).
\]
Combining this with Theorem \ref{Main Theorem II} and the decompositions
(\ref{intro finite}) and (\ref{intro arch})  gives an arithmetic interpretation of the Fourier coefficients $c_{\Phi}(m,y)$.

\begin{BigThm}\label{Main Theorem III}
Assume  the discriminants of $K_0/\Q$ and $F/\Q$ are odd and relatively prime. 
If $F$ is a field and $m$ is  nonzero  then
\[
I(\mathcal{X}_\Phi : \mathcal{Z}(m) )  +  \green(m,y, \mathcal{X}_\Phi ) =
 -   \frac{ h(K_0) }{w(K_0) } \cdot 
 \frac{  \sqrt{\mathrm{N}( d_{K/F}) }  } { 2^{r -1 } [K:\Q]} \cdot  c_{\Phi}(m,y) 
\]
for all $y\in \R^{>0}$.
\end{BigThm}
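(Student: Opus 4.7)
The plan is that Theorem C is essentially an assembly of previously established ingredients: Theorem B gives the value of $\widehat{\deg}\,\widehat{\mathtt{Z}}_\Phi(\alpha,y)$ for every $\alpha\in F^\times$, and the decompositions (\ref{intro finite}) and (\ref{intro arch}) split $I(\mathcal{X}_\Phi:\mathcal{Z}(m))$ and $\green(m,y,\mathcal{X}_\Phi)$ into sums of such Arakelov degrees. First I would record two preparatory reductions. Because $F$ is assumed to be a field, $K=F\otimes_\Q K_0$ is itself a CM field; hence its unique factor $K^{\mathrm{sp}}$ equals $K$, so $[K^{\mathrm{sp}}:\Q]=[K:\Q]$, matching the denominator in the stated formula. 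Moreover the hypothesis $m\neq 0$ forces $\alpha\neq 0$ whenever $\mathrm{Tr}_{F/\Q}(\alpha)=m$, so the trace-$m$ indexing set is automatically contained in $F^\times$.

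Next I would add (\ref{intro finite}) and (\ref{intro arch}). The right hand side of (\ref{intro finite}) sums over totally positive $\alpha$ and that of (\ref{intro arch}) sums over $\alpha\not\gg 0$; together, since $\alpha\neq 0$, they exhaust the trace-$m$ elements of $F^\times$. This yields
\[
I(\mathcal{X}_\Phi:\mathcal{Z}(m))+\green(m,y,\mathcal{X}_\Phi)=\sum_{\substack{\alpha\in F^\times\\ \mathrm{Tr}_{F/\Q}(\alpha)=m}}\widehat{\deg}\,\widehat{\mathtt{Z}}_\Phi(\alpha,y).
\]
Now I would apply Theorem B to each summand. The prefactor is independent of $\alpha$, so it factors out, leaving a sum of $b_\Phi(\alpha,y)$ over the same index set. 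By the definition of the Fourier coefficients $c_\Phi(m,y)$ of $\mathcal{E}'_\Phi(i_F(\tau),0)$ recorded immediately before the theorem, this inner sum is precisely $c_\Phi(m,y)$, and substituting $[K^{\mathrm{sp}}:\Q]=[K:\Q]$ completes the identification with the claimed right hand side.

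In short, no genuinely new work is needed at this stage: all the analytic input (Yang's formulas, the Fourier expansion, the functional equation that forces $\mathcal{E}_\Phi(\tau,0)=0$) was absorbed into Theorem B, and all the geometric input (local length computations, deformation theory of CM $p$-divisible groups, Green function construction) was absorbed into the decompositions (\ref{intro finite}) and (\ref{intro arch}). The only point that merits a careful check is that the union of the two index sets in (\ref{intro finite}) and (\ref{intro arch}) covers every $\alpha\in F^\times$ with $\mathrm{Tr}_{F/\Q}(\alpha)=m$ exactly once, which is immediate from the $F$-is-a-field hypothesis and the moduli decomposition (\ref{moduli decomp}). If anything could be called the main obstacle here it is purely a bookkeeping one, namely confirming that the archimedean piece is indexed precisely by the complementary set $\{\alpha\not\gg 0\}$ so that the two pieces assemble without overlap; once this is in hand the proof is an algebraic manipulation of already-proved identities.
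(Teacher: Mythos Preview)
Your proposal is correct and matches the paper's own argument essentially line for line: the paper also adds the two decompositions (Theorems \ref{Thm:fundamental decomp I} and \ref{Thm:fundamental decomp II}, which are the body-text versions of (\ref{intro finite}) and (\ref{intro arch})) and then applies Theorem \ref{Thm:degree-fourier} (the body-text version of Theorem B), using $K^{\mathrm{sp}}=K$ since $F$ is a field. Your observation that $m\neq 0$ forces $\alpha\in F^\times$ and that the two index sets partition the trace-$m$ elements is exactly the bookkeeping the paper invokes.
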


When $n=1$ or $2$ our results have precedents in the literature, albeit in very different language.
The case $n=1$ is essentially treated by Kudla-Rapoport-Yang in \cite{kudla99b}.
In this case   $F=\Q$,  $\mathcal{Z}(m)$ is a divisor on the $1$-dimensional stack 
\[
\mathcal{M} = \mathcal{M}_{(1,0)} \times_{\co_{K_0}}  \mathcal{M}_{(0,1)},
\]
 and $\mathcal{X}_\Phi = \mathcal{M}$.  Is this degenerate case the intersection $I(\mathcal{X}_\Phi : \mathcal{Z}(m))$
 is simply the Arakelov degree of the $0$-cycle $\mathcal{Z}(m)$, which is not quite what 
 is computed in \cite{kudla99b}.  For every triple $(A_0,\kappa_0,\lambda_0)$ in $\mathcal{M}_{(1,0)}$
 there is a conjugate triple $(A_0,\overline{\kappa}_0, \lambda_0)$, where 
 $\overline{\kappa}_0(x) = \kappa_0(\overline{x})$.
 The functor taking a triple to its conjugate defines an isomorphism $\mathcal{M}_{(1,0)} \to \mathcal{M}_{(0,1)}$,
 which allows us to define the  substack $\mathcal{M}^\Delta\to \mathcal{M}$ as the image of 
 $\mathcal{M}_{(1,0)}$ under the diagonal embedding.  The intersection 
 $\mathcal{Z}^\Delta(m)= \mathcal{Z}(m) \cap \mathcal{M}^\Delta$ is then the moduli space of triples $(E,\kappa, f)$
 where $E$ is an elliptic curve, $\kappa : \co_{K_0} \to \End(E)$
 is an action of $\co_{K_0}$ (suitable normalized), and $f\in\End(E)$ is a degree $m$ isogeny satisfying
 $\kappa(x)\circ f= f\circ \kappa(\overline{x})$ for all $x\in \co_{K_0}$.  It is the Arakelov degree
 of $\mathcal{Z}^\Delta(m)$ which is computed in  \cite{kudla99b}, and is shown to agree
 with the Fourier coefficients of the central derivative of a weight one Eisenstein series.

When $n=2$, so $F$ is either $\Q\times\Q$ or a real quadratic field,  our results are closely related to the work of 
Gross-Zagier on  prime factorizations of singular moduli \cite{gross-zagier85}, and heights of 
Heegner points \cite{gross-zagier}.   In this case the moduli space $\mathcal{M}$
is a union of Shimura varieties attached to groups of type $\mathrm{GU}(1,1)$.  Such Shimura varieties
are, roughly, unions of Shimura curves parametrizing abelian surfaces with quaternionic multiplication, including
 the classical modular curves.  This is worked out in detail in \cite[Section 14]{KRunitaryII}.
 The author has not worked out carefully the translation of the results of this paper into the language
 of moduli of elliptic curves, but the picture should look roughly like this.
 Both cycles $\mathcal{X}_\Phi$ and $\mathcal{Z}(m)$ are divisors
 on  $\mathcal{M}$ representing points with complex multiplication, \emph{i.e.~}Heegner points.
  In the case where $F$ is a real quadratic field, the 
 compositum $K=K_0\cdot F$ contains another quadratic imaginary field $K_1$, and 
 $\mathcal{X}_\Phi$ is the divisor formed by elliptic curves with complex multiplication by $\co_{K_1}$.
 The divisor $\mathcal{Z}(1)$ is formed by elliptic curves with complex multiplication by $\co_{K_0}$,
 and $\mathcal{Z}(m)$ is the translate of $\mathcal{Z}(1)$ by the   $m^\mathrm{th}$ Hecke
 correspondence.  The calculation of the intersection multiplicity $I(\mathcal{X}_\Phi,\mathcal{Z}(m))$, which amounts
 to computing congruences between values of the $j$-function at CM points, and the observation that these
 intersection multiplicities appear as the 
 Fourier coefficients of the diagonal restriction of a Hilbert modular Eisenstein series, is the content of
  \cite{gross-zagier85}.  See also  \cite{howard-yangA}.  
 
If $n=2$ and $F=\Q\times\Q$ then our calculations should be closely related to the more famous result
of Gross-Zagier \cite{gross-zagier}.  In this case the calculation of $I(\mathcal{X}_\Phi,\mathcal{Z}(m))$
amounts to the calculation of the intersection multiplicity, on the modular curve, of the divisor 
of elliptic curves with complex multiplication by $\co_{K_0}$ with the same divisor translated by the $m^\mathrm{th}$
Hecke correspondence.  This is the key calculation performed in \cite{gross-zagier}, although
those authors deal with instances of improper intersection (and other serious complications), 
which we have avoided.  Our results 
assert that these intersections agree with the Fourier coefficients of the central derivative 
of the diagonal pullback of an Eisenstein series on $\mathrm{GL}_2\times\mathrm{GL}_2$; that is to say, the 
derivative of a \emph{product} of two weight one Eisenstein series on $\mathcal{H}$, say $E_1(\tau,s)E_2(\tau,s)$.  
On the other hand, the results of Gross-Zagier assert that these same  intersection multiplicities are the Fourier coefficients
of the product of the central derivative of an Eisenstein series with a weight one theta series.  
One of our Eisenstein series, say $E_1(\tau,s)$, vanishes at $s=0$, while the other does not,
and so the central derivative of the product is $E'_1(\tau,0)\cdot E_2(\tau,0)$.  But the 
Siegel-Weil formula then asserts that the central value $E_2(\tau,0)$ is actually a weight one
theta series, so our results are compatible with those of \cite{gross-zagier}.


\subsection{Speculation}
\label{ss:speculation}


We would like to interpret Theorem \ref{Main Theorem III} in terms of the arithmetic intersection 
theory of Gillet-Soul\'e \cite{gillet09,gillet-soule90,soule92}.    Let $\widehat{\mathrm{CH}}^1(\mathcal{M})$
be the codimension one arithmetic Chow group, so that $\mathcal{Z}(m)$ (now viewed as 
a divisor on $\mathcal{M}$), with its Green function $\green(m,y,\cdot)$, defines a class
\begin{equation}\label{arith cycle}
\widehat{\mathtt{Z}}(m,y) \in \widehat{\mathrm{CH}}^1(\mathcal{M})
\end{equation}
for every $m\not=0$ and $y\in \R^+$.
The composition of pull-back by $\mathcal{X}_\Phi \to  \mathcal{M}_{/\co_\Phi}$ with 
(\ref{intro deg}) defines a linear functional
\[
 \widehat{\mathrm{CH}}^1(\mathcal{M}_{/\co_\Phi}) \to   \widehat{\mathrm{CH}}^1(\mathcal{X}_\Phi)
  \to \R.
\]
Composing with  base change from $\co_{K_0}$ to $\co_\Phi$, we obtain a linear functional
\[
\widehat{\deg}_{\mathcal{X}_\Phi} : \widehat{\mathrm{CH}}^1(\mathcal{M}) \to \R,
\]
called the \emph{arithmetic degree along $\mathcal{X}_\Phi$}.  
What our Theorem \ref{Main Theorem III} essentially shows is that
(ignoring the uninteresting constants appearing in the theorem)
\begin{equation}\label{arithmetic pullback}
\widehat{\deg}_{\mathcal{X}_\Phi}  \widehat{\mathtt{Z}}(m,y)  =  c_{\Phi}(m,y).
\end{equation}

There are several  gaps in the above interpretation of Theorem \ref{Main Theorem III}: 
 to have a good theory of arithmetic Chow groups one needs to work on a stack that is flat, regular, and proper.
 The stack $\mathcal{M}$ has none of these properties.   The stack $\mathcal{M}$
 is only flat and regular after inverting $\mathrm{disc}(K_0)$, but Pappas \cite{pappas00} and Kr\"amer 
 \cite{kramer} have
  modified the moduli problem defining $\mathcal{M}$ in order to obtain a flat and regular moduli stack
  which agrees with $\mathcal{M}$ over $\co_{K_0}[\mathrm{disc}(K_0)^{-1}]$.  As for properness,
 the theory of toroidal compactifications of the complex fiber of $\mathcal{M}$ is well 
understood \cite{AMRT,cogdell85,miller09}, and Lan's thesis \cite{kw-lan} gives a complete theory of the arithmetic
toroidal compactifications of $\mathcal{M}$  over $\co_{K_0}[\mathrm{disc}(K_0)^{-1}]$.  
See also \cite{larsen} for the signature $(2,1)$ case.  It seems  likely that the results of Lan's thesis 
can be extended to give a compactification of the integral model of Pappas and Kr\"amer, but the 
details have not been written down.  In any case, let us suppose that we have replaced $\mathcal{M}$
by a stack that is  flat, regular, and proper.  

In order to define the class $\widehat{\mathtt{Z}}(m,y)$ for $m\not=0$,
 one needs to understand the 
behavior of the Green function $\green(m,y,\cdot)$ near the boundary components of the 
newly compactified $\mathcal{M}$.
Some preliminary calculations suggest that if one adds a particular linear combination of boundary 
components to the divisor $\mathcal{Z}(m)$, the function $\green(m,y,\cdot)$ 
becomes a Green function for the modified
divisor, but with $\log$-$\log$ singularities along the boundary.  Thus one expects to obtain a class 
 in the generalized arithmetic Chow group of Burgos-Gil--Kramer--K\"uhn \cite{bruinier-burgos-kuhn,BKK}.
Assume this is the case, so that (\ref{arith cycle}) is defined for all $m\not=0$.

The next step is to define the class $\widehat{\mathtt{Z}}(0,y)$.  
The definition of the stack $\mathcal{Z}(m)$ makes sense when $m=0$, 
but the map $\mathcal{Z}(0)\map{}\mathcal{M}$ is surjective, and so this is clearly 
not the way to proceed.
To find the correct definition of $\widehat{\mathtt{Z}}(0,y)$ one should interpret 
$\widehat{\mathrm{CH}}^1(\mathcal{M})$
as the group  of isomorphism classes of metrized line bundles on $\mathcal{M}$.  Based on 
work of Kudla-Rapoport-Yang \cite{kudla04a,KRY} and conjectures of Kudla \cite{kudla04b}, the class
$\widehat{\mathtt{Z}}(0,y)$ should  be defined as the Hodge bundle on $\mathcal{M}$, endowed with a 
particular choice of metric (which will depend on the parameter $y\in \R^{ > 0}$).

One should also seek a natural definition of 
\[
\widehat{\mathtt{Z}}_\Phi(\alpha,y) \in \widehat{\mathrm{CH}}^1(\mathcal{X}_\Phi)
\]
 for all $\alpha\in F$, not just for $\alpha\not\in F^\times$,  for which  Theorem \ref{Main Theorem II} continues to hold.
There are two cases, depending on whether or not $\varphi^\mathrm{sp}(\alpha)=0$.  If $\varphi^\mathrm{sp}(\alpha)\not=0$
then Theorem \ref{Thm:local length} shows that the stack $\mathcal{Z}_\Phi(\alpha)$ has dimension zero;
if $\varphi^\mathrm{sp}(\alpha)=0$
then the proof of that same theorem  shows that every irreducible component 
of $\mathcal{Z}_\Phi(\alpha)$ has dimension $1$.   The upshot is that if $\varphi^\mathrm{sp}(\alpha)\not=0$,
the definition of $\widehat{\mathtt{Z}}_\Phi(\alpha,y)$ should be close to the definition we have given 
for $\alpha\in F^\times$.  If $\varphi^\mathrm{sp}(\alpha)=0$ then the correct
definition should be in terms of the metrized Hodge bundle in $\widehat{\mathrm{CH}}^1(\mathcal{X}_\Phi)$.
These classes should satisfy two properties.  First, the pull-back map
 \[
 \widehat{\mathrm{CH}}^1(\mathcal{M}_{/\co_\Phi} ) \to   \widehat{\mathrm{CH}}^1(\mathcal{X}_\Phi)
 \]
should satisfy
 \[
 \widehat{\mathtt{Z}}(m,y) \mapsto \sum_{ \substack{  \alpha\in F \\ \mathrm{Tr}_{F/\Q}(\alpha)=m }   }
 \widehat{\mathtt{Z}}_\Phi(\alpha,y)
 \]
 for all $m$ and all $y\in \R^{>0}$. Second,  the relation 
 \[
 \widehat{\deg}\, \widehat{\mathtt{Z}}_\Phi(\alpha,y) = b_\Phi(\alpha,y)
 \]
should hold for all $\alpha\in F$ and $y\in F_\R^{\gg 0}$.    Given these two properties one can deduce 
 (\ref{arithmetic pullback})  for all $m\in \Z$ and all $y\in \R^{>0}$.

The final step in the program laid out by Kudla \cite{kudla04b} is 
to form the  vector-valued generating series 
\[
\widehat{\theta}(\tau) = \sum_{m\in \Z} \widehat{\mathtt{Z}}(m,y) \cdot q^m
\in \widehat{\mathrm{CH}}^1(\mathcal{M})[[q]]
\]
for $\tau=x+iy \in \mathcal{H}$.  The equality (\ref{arithmetic pullback}) amounts to
\[
\widehat{\deg}_{\mathcal{X}_\Phi}  \widehat{\theta}(\tau)  =   \mathcal{E}'_{\Phi}( i_F(\tau),0).
\]
Given the results of Kudla-Rapoport-Yang \cite{KRY} on CM cycles on Shimura curves, and the results of  
Bruinier--Burgos-Gil--K\"uhn \cite{bruinier-burgos-kuhn} on Hirzebruch-Zagier divisors on Hilbert modular surfaces, 
it is reasonable to expect that the above generating series is a vector-valued
nonholomorphic modular form of weight $n$.  If $f$ is a weight $n$ cuspform on $\mathcal{H}$, we may therefore
form the Petersson inner product of $f(\tau)$ with $\widehat{\theta}(\tau)$, and so define the \emph{arithmetic theta lift}
\[
\widehat{\theta}_f  = \langle f, \widehat{\theta} \rangle^\mathrm{Pet} \in \widehat{\mathrm{CH}}^1(\mathcal{M}).
\]
Moving the linear functional $\widehat{\deg} _{\mathcal{X}_\Phi}$ inside the integral defining the 
Petersson inner product, one finds the  Gross-Zagier style formula
\begin{align}
\widehat{\deg}_{\mathcal{X}_\Phi}\widehat{\theta}_f  
&=  \langle f,  \widehat{\deg}_{\mathcal{X}_\Phi}  \widehat{\theta} \rangle^\mathrm{Pet}  \nonumber\\
&=   \langle f (\tau),  \mathcal{E}'_{\Phi}( i_F(\tau),0) \rangle^\mathrm{Pet}  \nonumber\\
&= \mathcal{L}'_{\Phi}(f,0)  \label{looks GZ}
\end{align}
where
\[
\mathcal{L}_{\Phi}(f,s) = \langle  f (\tau),  \mathcal{E}_{\Phi}( i_F(\tau),s) \rangle^\mathrm{Pet}.
\]

 In the case where $F$ is a real quadratic field (so $n=2$),
a function very much like $\mathcal{L}_{\Phi}(f,s)$ appears in the work of Gross-Kohnen-Zagier 
\cite{GKZ}, and is shown to be closely 
related to the usual $L$-function of $f$.  When $F$ is a field of degree $>2$ there seems to be no literature at all
on the function $\mathcal{L}_{\Phi}(f,s)$, and the author is at a loss as to its properties and significance.

However, there are interesting cases where $n>2$ 
and one has some hope of  better understanding $\mathcal{L}_\Phi(f,s)$.
For example, consider the totally degenerate case of $F=  \Q \times  \cdots\times  \Q$ and
$K=K_0\times\cdots\times K_0$.   Modulo details, one should expect the following.
Our Hilbert modular Eisenstein series on $\mathcal{H}^n$  is just a product
of classical weight one Eisenstein series 
\[
\mathcal{E}_\Phi(\tau,s) = \mathcal{F}_1(\tau_1,s) \cdots \mathcal{F}_n(\tau_n,s).
\]
Each factor will satisfy a functional equation in $s\to -s$, and the sign of the functional equation
will be $1$ for all factors but one.  Say the last factor has sign $-1$.
The first $n-1$ factors are  \emph{coherent}, the last one is the \emph{incoherent} factor.
The Siegel-Weil formula implies that the value at $s=0$ of each coherent factor is a theta function
$\Theta$ attached to the extension $K_0/\Q$, and so 
\[
\mathcal{E}'_{\Phi}( i_F(\tau),0)  = \Theta^{n-1}(\tau)  \mathcal{F}_n'(\tau,0).
\]
But the Petersson inner product
\[
L( f \times \Theta^{n-1} ,s)=\langle f(\tau) , \Theta^{n-1}(\tau) \mathcal{F}_n(\tau,s) \rangle^\mathrm{Pet}
\]
is, up to rescaling and shifting in the variable $s$, just the Rankin-Selberg convolution $L$-function
of $f$ with $\Theta^{n-1}$, and hence the mysterious function $\mathcal{L}_\Phi(f,s)$  has the 
less mysterious central derivative
\[
 \mathcal{L}'_\Phi(f,0)  = L'( f \times \Theta^{n-1} ,0).
\]
When $n=2$, so $F=\Q\times\Q$, 
the Rankin-Selberg $L$-function on the right is the one appearing in the work of Gross-Zagier
\cite{gross-zagier}, as we have noted earlier.

Finally, it may be helpful to put the above results and conjectures into the context of 
seesaw dual pairs and the Siegel-Weil formula, which are among 
the guiding principles of Kudla's conjectures \cite{kudla04b}. 
 Suppose we start with free $K$-module $W$ of rank $1$, equipped with 
a totally positive definite Hermitian form $\langle\cdot,\cdot\rangle_\CM$. 
Let $V$ denote the underlying $K_0$-vector space with the $K_0$-Hermitian form 
$\langle v_1,v_2\rangle = \mathrm{Tr}_{K/K_0} \langle w_1,w_2\rangle_\CM$.  
Define a torus $T=\mathrm{Res}_{F/\Q} U(W)$ so that  $T \subset U(V)$.
The  dual reductive pairs $( \mathrm{SL}_2, U(V) )$ and 
$(\mathrm{Res}_{F/\Q}\mathrm{SL}_2,T  ) $ can be arranged into the 
 \emph{seesaw diagram}
\[
\xymatrix{
{ \mathrm{Res}_{F/\Q}\mathrm{SL}_2 } \ar@{-}[d]     &   {U(V)} \ar@{-}[d]  \\
{\mathrm{SL}_2 } \ar@{-}[ur]  & { T . } \ar@{-}[ul]
}
\]
Starting with a cusp form $f$ on $\mathrm{SL}_2$, one can theta lift to an automorphic form $\theta_f$ on $U(V)$,
then restrict to $T$ and integrate against the constant function $1$.  By tipping the 
seesaw, this is the same as theta lifting the constant function $1$ on  $T$
to a Hilbert modular theta series on $\mathrm{Res}_{F/\Q}\mathrm{SL}_2$, restricting that theta series
to the diagonally embedded $\mathrm{SL_2}$, and integrating against $f$.  The Siegel-Weil formula
implies that the Hilbert modular theta series appearing in the this process is in fact the central value 
of a Hilbert modular Eisenstein series, say $E(g,s)$, at $s=0$.  Thus 
\begin{equation}\label{looks SW}
\int_{T(\A)} \theta_f(t) \, dt= \int_{\mathrm{SL}_2(\A)} f(g) E(g,0) \, dg.
\end{equation}

The conjectural picture described (and largely proved) above, is formally similar.
Automorphic forms on $U(V)$ are replaced by elements of $\widehat{\mathrm{CH}}^1(\mathcal{M})$,
the theta lift $f\mapsto \theta_f$ is replaced by the arithmetic theta lift $f \mapsto \widehat{\theta}_f$, 
and the linear functional ``integrate over the torus $T$" 
is replaced by  the linear functional ``arithmetic degree along $\mathcal{X}_\Phi$."
On the other side of the seesaw, the Hilbert modular theta series (which is the central \emph{value}
of an Eisenstein series) is replaced by the central \emph{derivative} of 
an Eisenstein series, and the integral over $\mathrm{SL}_2$ is replaced by the 
Petersson inner product.  In this way,  (\ref{looks GZ}) can be seen as 
an arithmetic version of (\ref{looks SW}).

\subsection{Acknowledgements}

The author thanks both Steve Kudla and Tonghai Yang for helpful conversations,
and the anonymous referee for helpful comments on an earlier draft of this paper.


\section{Barsotti-Tate groups with complex multiplication}
\label{s:deformations}


This section contains the technical deformation theory calculations that will eventually be used in the 
proof of Theorem \ref{Thm:local length} to compute the lengths of the local rings of the 
$0$-dimensional stack $\mathcal{Z}_{\Phi}(\alpha)$.
The reader might prefer to begin with the global theory of Section \ref{S:global moduli}, and refer back to this
section as needed.

Fix a prime $p$ and let $\F$ be an algebraic closure of the field of $p$ elements.  Let 
$W$ be the ring of Witt vectors of $\F$, let $\mathrm{Frac}(W)$ be the fraction field of $W$, and let
$\C_p$ be any algebraically closed field containing   $\mathrm{Frac}(W)$.    
If $L$ is a product of finite extensions of $\Q_p$, denote by $L^u$ the maximal unramified extension of $\Q_p$ in $L$,
and by $\co^u_L$ the ring of integers of $L^u$. A $p$-divisible group over $\F$ is \emph{supersingular} if 
all slopes of its Dieudonn\'e module are equal to $1/2$.  Here and throughout, \emph{Dieudonn\'e module} means 
\emph{covariant Dieudonn\'e module}.


\subsection{Deformations of Barsotti-Tate groups with complex multiplication}
\label{ss:canonical lifts}


Let $F$ be a field extension of $\Q_p$ of degree $n$, and let
$K$ be a quadratic \'etale $F$-algebra (so $K$ is a either a quadratic field extension of $F$, or $K\iso F\times F$).  
Denote by $x\mapsto \overline{x}$ the nontrivial  automorphism  of $K/F$, and for any $\Q_p$-algebra map 
$\varphi:K\to \C_p$ define the conjugate map $\overline{\varphi}(x)=\varphi(\overline{x})$.  
A \emph{$p$-adic CM type}  of   $K$ is a set $\Phi$ of $\Q_p$-algebra maps $K\to \C_p$ 
such that $\Hom(K, \C_p)$ is the disjoint   union of $\Phi$ and $\overline{\Phi}$.   Fix such a $\Phi$, and  let  
 $K_\Phi\subset \C_p$ be any finite extension   of $\Q_p$ large enough that  
 \begin{equation}\label{reflex condition}
 \sigma\in \Aut(\C_p/K_\Phi) \implies \Phi^\sigma=\Phi.
 \end{equation}
  Denote by \[W_\Phi\subset\C_p\] the ring of integers in the completion of the maximal unramified 
  extension of $K_\Phi$, so that $\mathrm{Frac}(W_\Phi)$ is the compositum of 
   $K_\Phi$ and  $\mathrm{Frac}(W)$.  

Let $\mathbf{ART}$ be the category of local Artinian    $W_\Phi$-algebras with residue field $\F$.
If $R$ is an object of $\mathbf{ART}$ and $A$ is a $p$-divisible group over $R$,  
 an action $\kappa: \co_K\to \End(A)$ satisfies the \emph{$\Phi$-determinant condition} if for
 every $x_1,\ldots,x_r\in \co_K$ the determinant of $T_1 x_1+\cdots+ T_r x_r$ acting on
  $\Lie(A)$  is equal to the image of
  \begin{equation}\label{det poly}
  \prod_{\varphi\in \Phi} ( T_1 \varphi(x_1)+\cdots+ T_r \varphi(x_r) ) \in W_\Phi[T_1,\ldots,T_r]
  \end{equation}
  in $R[T_1,\ldots, T_r]$. In particular,  this implies 
  \[
   \mathrm{dim}(A)  =  [ F : \Q_p ].
  \]
  For the remainder of this subsection, fix a  triple $(A,\kappa,\lambda)$ in which 
\begin{itemize}
\item
$A$ is a $p$-divisible group over $\F$,
\item
$\kappa:\co_K\to \End(A)$ satisfies the $\Phi$-determinant condition, 
\item
$\lambda:A\to A^\vee$ is an $\co_K$-linear polarization of $A$ 
(which is not  assumed to be principal).  The condition of $\co_K$-linearity means that
\[
\lambda\circ \kappa(\overline{x}) = \kappa(x)^\vee\circ \lambda
\]
  for every $x\in \co_K$.
\end{itemize} 
 Let  $\mathrm{Def}_\Phi(A,\kappa,\lambda)$
be the functor that assigns to every object $R$ of $\mathbf{ART}$ the set of isomorphism classes of
deformations of $(A,\kappa,\lambda)$ to $R$, where the deformations are again required to satisfy the 
$\Phi$-determinant condition.  The goal of this subsection is to prove that $\mathrm{Def}_\Phi(A,\kappa,\lambda)$
is pro-represented by $W_\Phi$.

 \begin{Prop}\label{Prop:BT basics}
 Let $(A,\kappa,\lambda)$ be the triple fixed above.
\begin{enumerate}
\item
The Dieudonn\'e module $D(A)$ is free of rank one over $\co_K\otimes_{\Z_p} W$.
\item
The image of $\co_K$ in $\End(A)$ is equal to its own centralizer.
\item
If $K$ is a field then $A$ is supersingular.
\end{enumerate}
\end{Prop}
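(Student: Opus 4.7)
The plan is to work throughout with the covariant Dieudonn\'e module $D(A)$, a free $W$-module equipped with a $\sigma$-semilinear Frobenius $F$, Verschiebung $V$, and a commuting action of $\co_K$. The first step is to observe that the ring $R := \co_K \otimes_{\Z_p} W$ is semilocal and decomposes canonically as a finite product $R = \prod_i R_i$ of complete discrete valuation rings, so that $D(A) = \bigoplus_i D_i$ with each $D_i$ an $R_i$-module. Since $D(A)$ is $W$-free, each $D_i$ is $R_i$-torsion-free, and hence free of some rank $r_i \geq 0$.

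\medskip

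\noindent For part (1), I would first use the polarization $\lambda$ to conclude that $A$ has height $2n$, so that the $W$-rank of $D(A)$ matches that of $R$, giving $\sum_i r_i [R_i : W] = \sum_i [R_i : W]$. To upgrade this to $r_i = 1$ for every $i$, my plan is to invoke the $\Phi$-determinant condition on $\Lie(A) = D(A)/V D(A)$: the condition pins down the $\co_K$-module structure of $\Lie(A)$, and decomposing by the $i$-indices shows that $\dim_{\F} \Lie(A)_i$ equals the number of elements of $\Phi$ whose reduction modulo $p$ selects the factor $R_i$. Matching this against the Hodge filtration---which relates each $\Lie(A)_i$ to $D_i$ via the image of $V$, shuffled by the $\sigma$-action on indices---forces every $r_i$ to equal $1$. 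The combinatorial bookkeeping behind this matching, especially in the ramified case or when $K$ is a product of several fields, is the main technical obstacle.

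\medskip

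\noindent Part (2) will follow formally from (1). Via the Dieudonn\'e equivalence, the centralizer of $\kappa(\co_K)$ inside $\End(A)$ identifies with the subring of $\End_R(D(A))$ commuting with $F$; commutation with $V$ is then automatic, since $FV = p$ on $D(A)$. By (1), $\End_R(D(A))$ is simply $R$ acting by multiplication, and an element $a \in R$ commutes with the $\sigma$-semilinear $F$ precisely when $\sigma(a) = a$, where $\sigma$ acts on $R = \co_K \otimes W$ through its Frobenius action on $W$. The fixed subring is $\co_K$, as claimed.

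\medskip

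\noindent For part (3), suppose $K$ is a field. Then $K^u$ is a field and $\Hom_{\Q_p}(K^u, \mathrm{Frac}(W))$ is a single Frobenius orbit, so the primitive idempotents of $K \otimes_{\Q_p} \mathrm{Frac}(W)$---which label the rank-one summands of the isocrystal $D(A)[1/p]$ produced by (1)---are cyclically permuted by $\sigma$. Since $F$ is $\sigma$-semilinear and commutes with the $K$-action, it permutes these summands accordingly. Any slope subisocrystal of $D(A)[1/p]$ is both $F$-stable and $K$-stable, so it corresponds to a $\sigma$-stable subset of the summands; cyclicity forces this subset to be empty or everything, so $D(A)[1/p]$ is isoclinic. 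Finally, the slope symmetry $s \leftrightarrow 1-s$ forced by the polarization pins the unique slope down to $1/2$, proving that $A$ is supersingular.
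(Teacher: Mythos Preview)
Your treatment of (2) matches the paper's.

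For (3) you take a genuinely different route. The paper argues through the endomorphism algebra: writing the isocrystal up to isogeny as $D_1^{m_1}\times\cdots\times D_r^{m_r}$ with the $D_i$ simple and pairwise non-isogenous gives $\End^0(A)\cong\prod_i M_{m_i}(H_i)$ for division algebras $H_i$, and such a product contains a field equal to its own centralizer (namely $K$, by (2)) only when $r=1$---hence the isocrystal is isoclinic. You instead observe directly that the slope filtration, being functorial, is $K$-stable and therefore a sum of idempotent pieces of $K\otimes\mathrm{Frac}(W)$, which Frobenius permutes transitively. Your argument is more elementary (it avoids the structure theory of simple isocrystals) and makes the dependence on (1) explicit; the paper's is shorter once (2) is available. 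Both finish via the slope symmetry $s\leftrightarrow 1-s$ coming from the polarization.

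For (1) the paper simply cites the argument of Rapoport. Your plan to lean on the $\Phi$-determinant condition is a detour: that condition pins down $\dim_\F(D_i/VD_{\sigma(i)})$, which records how divisible $V$ is on each factor, not the rank $r_i$ itself. The direct route---which you essentially rediscover in your argument for (3)---is that injectivity of $F$ and $V$ forces $r_i=r_{\sigma(i)}$, so the ranks are constant on each $\sigma$-orbit of idempotents. When $K$ is a field there is a single orbit and you are done. When $K\cong F\times F$ there are two orbits with $r_1+r_2=2$, and the determinant condition does \emph{not} in general exclude $(r_1,r_2)=(2,0)$: if $\Phi$ happens to lie entirely over one factor of $K$, the Lie-algebra constraint is compatible with the other factor acting trivially. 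What actually excludes this is the $\co_K$-conjugate-linearity of $\lambda$, since conjugation swaps the two copies of $F$; you currently invoke $\lambda$ only to compute the height. So the obstacle you flag in the product case is real, and resolving it requires using the polarization more substantially than your outline indicates.
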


\begin{proof}
The first claim follows from the argument of  \cite[Lemma 1.3]{rapoport78}, and the second claim follows easily from the first.
The category of Dieudonn\'e modules over $\F$ up to isogeny is semisimple.  If 
\[
D(A) \sim D_1^{m_1}\times \cdots \times D_r^{m_r}
\] 
with $D_1,\ldots, D_r$ simple and pairwise non-isogenous, then
\[
\End(D(A))\otimes_{\Z_p}\Q_p \iso M_{m_1}(H_1) \times\cdots\times M_{m_r}(H_r)
\] 
with each $H_i$ a  division algebra over $\Q_p$.  The only way  this product can contain a field  
equal to its own centralizer is if $r=1$.  Therefore $D(A)$ is isoclinic: it is isogenous to a power of a simple 
Dieudonn\'e module, and hence its slope sequence is constant.   By hypothesis $D(A)$ admits a polarization, so its
slope sequence is symmetric in $s\mapsto 1-s$.  Therefore $1/2$ is the unique slope of $D(A)$.
\end{proof}

 Given a $\Q_p$-algebra map $\varphi:K\to \C_p$, let $\C_p(\varphi)$ denote $\C_p$,
with $K$ acting through $\varphi$.    There is a unique $W_\Phi$-algebra map
 \[
 \eta_\Phi:\co_K\otimes_{\Z_p} W_\Phi \to  \prod_{\varphi\in \Phi} \C_p(\varphi)
 \]
 sending  $x\otimes 1$ to the $\Phi$-tuple $(\varphi(x))_{\varphi\in \Phi}$.  
The kernel and image of $\eta_\Phi$ are denoted  $J_\Phi$ and $\Lie_\Phi$, respectively, 
so that there is an exact sequence of 
 $\co_K\otimes_{\Z_p} W_\Phi$-modules
\begin{equation}\label{generic hodge}
0\to J_\Phi \to  \co_K\otimes_{\Z_p} W_\Phi \to  \Lie_\Phi\to 0.
\end{equation}
We will make repeated use of the isomorphism
\[
\co_K^u \otimes_{\Z_p} W\iso \prod_{ \psi : \co_K^u \to W } W
\]
sending  $x\otimes 1\mapsto (\psi(x))_\psi$.  For each factor on the right hand side there is a corresponding 
idempotent $e_\psi \in \co_K^u \otimes_{\Z_p} W$ characterized by 
\[
(x\otimes 1)  e_\psi = (1\otimes \psi(x)) e_\psi
\]  
for all $x\in \co_K^u$.

\begin{Lem}\label{Lem:key}
The ideal $J_\Phi\subset \co_K\otimes_{\Z_p} W_\Phi$ enjoys the following properties.
\begin{enumerate}
\item[(a)]
As $W_\Phi$-modules, $J_\Phi$ and $\mathrm{Lie}_\Phi$ are each free of rank $n$.   Furthermore, for any 
tuple $x_1,\ldots, x_r\in \co_K$ the determinant of $T_1x_1+\cdots+ T_r x_r$ acting on $\Lie_\Phi$ 
is equal to (\ref{det poly}).
\item[(b)]
The ideal $J_\Phi$ is generated by  the set of all elements of the form
\[
j_\Phi(x,\psi) = e_\psi  \prod_{ \substack{  \varphi\in \Phi  \\ \varphi|_{\co_K^u} =\psi  }} 
 ( x\otimes 1- 1\otimes \varphi(x)) \in \co_K\otimes_{\Z_p}W_\Phi.
\]
 with $x\in \co_K$ and $\psi:\co_K^u \to  W.$
\item[(c)]
Suppose $R$ is an object of $\mathbf{ART}$, $M$ is a free $\co_K\otimes_{\Z_p} R$-module of rank one, 
and $M_1\subset M$ is an $\co_K$-stable $R$-direct summand such that for any  $x_1,\ldots, x_r \in \co_K$ 
the determinant of $T_1x_1+\cdots+T_r x_r$ acting on  $M/M_1$ is (\ref{det poly}).  Then $M_1=J_\Phi M$.
\end{enumerate}
\end{Lem}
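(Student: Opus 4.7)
The plan is to establish (a) and (b) by direct computation in the commutative algebra of $\co_K\otimes_{\Z_p}W_\Phi$, and then deduce (c) by a Cayley--Hamilton argument combined with a rank count.

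For (a), I would first observe that $\Lie_\Phi$ is a finitely generated $W_\Phi$-submodule of the torsion-free module $\prod_{\varphi\in\Phi}\C_p(\varphi)$ and hence is $W_\Phi$-free. To pin down its rank and verify the determinant identity, pass to the algebraic closure $\overline{W_\Phi}$ of $W_\Phi$ in $\C_p$: then $\co_K\otimes_{\Z_p}\overline{W_\Phi}$ splits as $\prod_{\varphi:K\to\C_p}\overline{W_\Phi}$, the map $\eta_\Phi$ becomes projection onto the $\Phi$-indexed factors, and $\Lie_\Phi\otimes\overline{W_\Phi}$ becomes free of rank $n$ with $x\in\co_K$ acting diagonally as $(\varphi(x))_{\varphi\in\Phi}$. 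Flatness of $\overline{W_\Phi}/W_\Phi$ then forces $\Lie_\Phi$ to be free of rank $n$ over $W_\Phi$ with the correct determinant, and the exact sequence (\ref{generic hodge}) pins down $J_\Phi$ as free of rank $n$ as well.

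For (b), I would use the idempotents $e_\psi$ to decompose $\co_K\otimes_{\Z_p}W_\Phi=\bigoplus_\psi R_\psi$, where $R_\psi=e_\psi(\co_K\otimes W_\Phi)\cong\co_K\otimes_{\co_K^u,\psi}W_\Phi$, and correspondingly $J_\Phi=\bigoplus_\psi J_{\Phi,\psi}$. Reducing to the case where $K$ is a field, identify $R_\psi\cong W_\Phi[T]/(f^\psi(T))$ with $f^\psi$ the image under $\psi$ of the Eisenstein polynomial of a uniformizer $\pi$ of $K$ over $K^u$. Here lies the main obstacle: the reflex condition, combined with the fact that $\mathrm{Frac}(W_\Phi)$ is the completion of the maximal unramified extension of $K_\Phi$, ensures that $\Gal(\overline{\Q_p}/\mathrm{Frac}(W_\Phi))$ preserves the set $\{\varphi(\pi):\varphi\in\Phi,\;\varphi|_{\co_K^u}=\psi\}$, so that
\[
g^\psi(T)=\prod_{\varphi\in\Phi,\;\varphi|_{\co_K^u}=\psi}(T-\varphi(\pi))
\]
actually has coefficients in $W_\Phi$ and $f^\psi=g^\psi h^\psi$ is a factorization in $W_\Phi[T]$ into coprime factors. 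This factorization simultaneously gives meaning to the expression $j_\Phi(\pi,\psi)\in\co_K\otimes_{\Z_p}W_\Phi$ and identifies the map $\eta_\Phi|_{R_\psi}$ with the composition $R_\psi\twoheadrightarrow W_\Phi[T]/(g^\psi)\hookrightarrow\prod_{\varphi|\psi,\,\varphi\in\Phi}\C_p$, whose kernel is the principal ideal $(g^\psi(\pi))=(j_\Phi(\pi,\psi))$.

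For (c), the $\Phi$-determinant condition forces the characteristic polynomial of any $x\in\co_K$ on $M/M_1$ to be $\prod_{\varphi}(T-\varphi(x))\in W_\Phi[T]\subset R[T]$, so $M/M_1$ is free of rank $n$ and $M_1$ is a direct summand of rank $n$ over $R$. Cayley--Hamilton then yields $\prod_{\varphi}(x-\varphi(x))\cdot M\subset M_1$ for every $x\in\co_K$; running $x$ through suitable lifts of the uniformizer $\pi$ modified by units of $\co_K^u$ and invoking (b), the ideal of $\co_K\otimes_{\Z_p}W_\Phi$ generated by these elements is shown to exhaust $J_\Phi$, whence $J_\Phi M\subset M_1$. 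Since $J_\Phi M$ is itself free of rank $n$ over $R$ by (a), the induced surjection $M/J_\Phi M\twoheadrightarrow M/M_1$ between free $R$-modules of equal rank must be an isomorphism over the Artinian local ring $R$, yielding $M_1=J_\Phi M$.
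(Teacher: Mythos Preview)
Your proof is correct, and for parts (a) and (b) it follows essentially the same line as the paper's argument (indeed your discussion of why $g^\psi$ descends to $W_\Phi[T]$ is more explicit than the paper's). The interesting difference is in part (c).

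The paper proves (c) by exploiting the \emph{multi-variable} determinant hypothesis: specializing the $T_i$ to elements of $W\subset R$ allows one to compute the characteristic polynomial of $e_\psi(x\otimes 1)\in \co_K\otimes_{\Z_p}W$ acting on $M/M_1$, which is
\[
T^r\prod_{\substack{\varphi\in\Phi\\ \varphi|_{\co_K^u}=\psi}}(T-\varphi(x)),
\]
and Cayley--Hamilton then gives directly that $j_\Phi(x,\psi)$ annihilates $M/M_1$. Your route instead uses only the single-variable characteristic polynomial of $x\in\co_K$, obtaining that the full product $P(x)=\prod_{\varphi\in\Phi}(x\otimes 1-1\otimes\varphi(x))$ annihilates $M/M_1$, and then argues that these $P(x)$ already generate $J_\Phi$. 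This last step, which you state somewhat tersely, can be justified as follows: choose $x=x_0+\varpi$ with $x_0\in\co_K^u$ a lift of a generator of the residue field of $\co_K^u$. Then for each $\psi$ and each $\varphi\in\Phi$ with $\varphi|_{\co_K^u}=\psi'\neq\psi$, the factor $e_\psi(x\otimes 1-1\otimes\varphi(x))$ reduces in the residue field of the local ring $e_\psi(\co_K\otimes W_\Phi)$ to $\psi(x_0)-\psi'(x_0)\neq 0$, hence is a unit; the remaining factors give exactly $j_\Phi(\varpi,\psi)$. Thus $e_\psi P(x)$ is a unit multiple of $j_\Phi(\varpi,\psi)$, and by (b) the single element $P(x)$ already generates $J_\Phi$.

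What your approach buys is worth noting: the paper's own Remark following this lemma observes that the multi-variable determinant condition was \emph{needed} in its proof of (c), and asks whether the weaker single-variable condition would suffice. Your argument shows that, at least for the purposes of establishing (c), the single-variable characteristic polynomial condition on $M/M_1$ is in fact enough.
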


 \begin{proof}
The first claim is elementary linear algebra, and  the proof is left to the reader.
For the second claim, $j_\Phi(x,\psi)\in J_\Phi$ is obvious from the definitions.  To prove the
other inclusion, fix a $\varpi\in \co_K$ such that $\co_K=\co_K^u[\varpi]$, and  
let $\mu(z)\in \co_K^u[z]$ be the minimal polynomial of $\varpi$.
Using $\varpi\mapsto z$ to identify $\co_K\iso \co_K^u[z]/(\mu)$, we obtain an isomorphism
\[
\co_K\otimes_{\Z_p} W_\Phi \iso (\co_K^u\otimes_{\Z_p} W_\Phi)[z]/(\mu)\iso 
\prod_{\psi: \co_K^u\to W }W_\Phi[z]/(\mu_\psi)
\]
where $\mu_\psi$ is the image of $\mu$ under $\psi:\co_K^u[z]\to W[z]$.  Under these isomorphisms,
the element $j_\Phi(\varpi,\psi)$ on the left is identified with the tuple on the right whose $\psi$-coordinate
is the polynomial 
\begin{equation}\label{roots}
\prod_{ \substack{\varphi \in \Phi \\ \varphi|_{\co_K^u=\psi } }} (z-\varphi(\varpi))
\end{equation}
and all other coordinates are $0$.  Now suppose $j\in J_\Phi$.  Under the above isomorphism 
$j$ corresponds to a tuple of polynomials $j_\psi(z)\in W_\Phi[z]/(\mu_\psi)$, and the assumption 
that $j\in J_\Phi$ means precisely that the polynomial $j_\psi(z)$ vanishes at $z=\varphi(\varpi)$
for each $\varphi\in \Phi$ whose restriction to $\co_K^u$ is $\psi$.  Such a $j_\psi(z)$ is obviously divisible,
in $W_\Phi[z]$, by (\ref{roots}).  It follows that $e_\psi j$ is a multiple of 
$j_\Phi(\varpi,\psi)$, and hence that $J_\Phi$
is contained in the ideal generated by the elements $j_\Phi(\varpi,\psi)$ as $\psi$ varies. 

For the final claim, extend each $\varphi\in \Phi$ to a $W$-linear map $\varphi:\co_K\otimes_{\Z_p}W\to \C_p$.
The determinant condition imposed on $M/M_1$ implies that for every $x\in \co_K$, 
\[
e_\psi (x\otimes 1) \in \co_K\otimes_{\Z_p} W
\] 
acts on $M/M_1$ with characteristic polynomial
\[
 \prod_{  \varphi\in \Phi } 
 (T-  \varphi(e_\psi)\varphi(x) ) 
 = T^r
 \prod_{ \substack{  \varphi\in \Phi  \\ \varphi|_{\co_K^u} =\psi  }} 
 (T-  \varphi(x)) \in W_\Phi[T]
\]
where $r=\#\{ \varphi\in \Phi : \varphi|_{\co_K^u} \not=\psi\}$, and hence acts on $e_\psi(M/M_1)$
with  characteristic polynomial 
\[
 \prod_{ \substack{  \varphi\in \Phi  \\ \varphi|_{\co_K^u} =\psi  }} 
 (T-  \varphi(x)) \in W_\Phi[T].
\]
Therefore $x\otimes 1$ acts on $e_\psi(M/M_1)$ with this same characteristic polynomial, and
the Cayley-Hamilton theorem implies that $e_\psi(M/M_1)$  is annihilated by
\[
 \prod_{ \substack{  \varphi\in \Phi  \\ \varphi|_{\co_K^u} =\psi  }} 
 ( x\otimes 1- 1\otimes \varphi(x)) \in \co_K\otimes_{\Z_p}W_\Phi.
\]
Hence $M/M_1$ is annihilated by $j_\Phi(x,\psi)$.  By the second claim of the lemma, $J_\Phi M\subset M_1$,
and as  $J_\Phi M$ and $M_1$ are  $R$-module direct summands of $M$ of the same rank, they must be equal.
\end{proof}

We now make use of the theory of Grothendieck-Messing crystals. Standard references include 
\cite{BBM,grothendieck74,messing72}; for Zink's reconstruction of the theory by different means, 
see \cite{messing07,zink02}.    Associated to $A$ is a short exact sequence
\begin{equation}\label{hodge base}
0\to \mathrm{Fil}^1 \mathcal{D}_A(\F)\to \mathcal{D}_A(\F)\to \mathrm{Lie}(A)\to 0
\end{equation}
of  $\F$-modules, in which $\mathcal{D}_A(\F)$ is the covariant  Grothendieck-Messing crystal
of $A$ evaluated at $\F$, and the submodule $\mathrm{Fil}^1\mathcal{D}_A(\F)$ is its Hodge filtration.  
Proposition \ref{Prop:BT basics} and the isomorphisms 
\[
\mathcal{D}_A(\F)\iso \mathcal{D}_A(W)\otimes_W\F \iso D(A)\otimes_W\F,
\]
the second by \cite[Theorem 4.2.14]{BBM}, imply that 
\[
\mathcal{D}_A(\F) \iso \co_K\otimes_{\Z_p}\F,
\] 
and the final claim of Lemma \ref{Lem:key} implies that 
\[
\mathrm{Fil}^1\mathcal{D}_A(\F) = J_\Phi \mathcal{D}_A(\F).
\] 
 In particular, (\ref{hodge base}) is obtained from 
(\ref{generic hodge}) by applying $\otimes_{W_\Phi}\F$, which explains our choice of notation $\mathrm{Lie}_\Phi$.
Similarly, if  $R$ is an object of $\mathbf{ART}$ and 
\[
(A',\kappa',\lambda')\in \mathrm{Def}_\Phi(A,\kappa,\lambda)(R),
\] 
 there is an associated short exact sequence of free $R$-modules
 \begin{equation}\label{hodge lift}
0\to  \mathrm{Fil}^1 \mathcal{D}_{A'}(R)\to \mathcal{D}_{A'}(R)\to \mathrm{Lie}(A')\to 0.
\end{equation}
Applying $\otimes_R\F$ to (\ref{hodge lift}) recovers (\ref{hodge base}), and  an easy Nakayama's lemma argument 
then shows that 
\[
\mathcal{D}_{A'}(R)\iso \co_K\otimes_{\Z_p} R.
\]  
Another application of 
Lemma \ref{Lem:key} shows that  \[ \mathrm{Fil}^1\mathcal{D}_{A'}(R) = J_\Phi \mathcal{D}_{A'}(R), \] and so
  (\ref{hodge lift}) is obtained from (\ref{generic hodge}) by applying $\otimes_{W_\Phi} R$.
 In this sense, (\ref{generic hodge}) is the universal  Hodge short exact sequence of  deformations 
 of $(A,\kappa,\lambda)$.
 As the following theorem demonstrates, this  information is enough to deduce the existence and 
 uniqueness of deformations of $(A,\kappa,\lambda)$.

\begin{Thm}\label{Thm:BT canonical}
The functor $\mathrm{Def}_\Phi(A,\kappa,\lambda)$ is pro-represented by $W_\Phi$.  Equivalently,
the triple $(A,\kappa,\lambda)$ admits a unique deformation  to every object  of $\mathbf{ART}$.
\end{Thm}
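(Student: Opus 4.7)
The plan is to combine Grothendieck-Messing deformation theory with the algebraic input of Lemma \ref{Lem:key}(c). By a standard d\'evissage I would reduce to the case of a single ``small'' surjection $R\twoheadrightarrow R_0$ in $\mathbf{ART}$ whose kernel $I$ satisfies $I\cdot\mathfrak{m}_R=0$, so that $I^2=0$ and $I$ carries its trivial divided power structure. The inductive step is then to show that every element of $\mathrm{Def}_\Phi(A,\kappa,\lambda)(R_0)$ admits a unique lift to $R$.

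For such a thickening, Grothendieck-Messing theory identifies deformations of the underlying $p$-divisible group $A_{R_0}$ with $R$-direct-summand lifts of the Hodge filtration $\mathrm{Fil}^1\mathcal{D}_{A_{R_0}}(R_0)\subset\mathcal{D}_{A_{R_0}}(R_0)$ to submodules of $\mathcal{D}_{A_{R_0}}(R)$. By the crystal property, together with the identification $\mathcal{D}_{A_{R_0}}(R_0)\iso\co_K\otimes_{\Z_p} R_0$ explained in the paragraphs preceding the theorem, $\mathcal{D}_{A_{R_0}}(R)$ is free of rank one over $\co_K\otimes_{\Z_p} R$. The submodule $J_\Phi\mathcal{D}_{A_{R_0}}(R)$ is then an $R$-direct summand (since $J_\Phi$ and $\mathrm{Lie}_\Phi$ are each free over $W_\Phi$ by Lemma \ref{Lem:key}(a)) lifting $J_\Phi\mathcal{D}_{A_{R_0}}(R_0)$, and its quotient $\mathrm{Lie}_\Phi\otimes_{W_\Phi}R$ satisfies the $\Phi$-determinant condition by Lemma \ref{Lem:key}(a). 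This furnishes existence of a deformation of $(A_{R_0},\kappa_{R_0})$. For uniqueness, any deformation in $\mathrm{Def}_\Phi$ produces an $\co_K$-stable $R$-direct-summand lift of the Hodge filtration whose quotient carries the $\Phi$-determinant, and Lemma \ref{Lem:key}(c) forces such a lift to be exactly $J_\Phi\mathcal{D}_{A_{R_0}}(R)$.

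It remains to lift the polarization. The dual $p$-divisible group $A^\vee$, endowed with the $\co_K$-action $\kappa^\vee(x)=\kappa(x)^\vee$, satisfies the $\overline{\Phi}$-determinant condition in view of the sesquilinearity relation $\lambda\circ\kappa(\bar x)=\kappa(x)^\vee\circ\lambda$. Applying the previous step with $\Phi$ replaced by $\overline{\Phi}$ produces a unique deformation $A^\vee_R$ whose Hodge filtration is $J_{\overline{\Phi}}\mathcal{D}_{A^\vee_{R_0}}(R)$. Grothendieck-Messing applied to the isogeny $\lambda:A\to A^\vee$ says that $\lambda_{R_0}$ lifts to $\lambda_R:A_R\to A^\vee_R$ if and only if the induced crystal map $\mathcal{D}_{A_{R_0}}(R)\to\mathcal{D}_{A^\vee_{R_0}}(R)$ carries Hodge filtration into Hodge filtration. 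This crystal map is $\co_K$-semilinear, hence sends $J_\Phi\mathcal{D}_{A_{R_0}}(R)$ into $\overline{J_\Phi}\mathcal{D}_{A^\vee_{R_0}}(R)=J_{\overline{\Phi}}\mathcal{D}_{A^\vee_{R_0}}(R)$, as required. The resulting $\lambda_R$ remains an $\co_K$-linear polarization because these are closed conditions verifiable on the special fiber, and uniqueness is automatic once both $A_R$ and $A^\vee_R$ are pinned down.

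The hard part of the argument is genuinely encapsulated in Lemma \ref{Lem:key}(c); once that pins down the Hodge filtration of every deformation in $\mathrm{Def}_\Phi$, the remaining work is bookkeeping. The subtlest point in the polarization step is matching $\overline{J_\Phi}$ with the Hodge filtration of the unique lift of $(A^\vee,\kappa^\vee)$, which is where the $\co_K$-semilinearity of $\lambda$ and the fact that $\overline{\Phi}$ is the CM type of $(A^\vee,\kappa^\vee)$ must be combined carefully.
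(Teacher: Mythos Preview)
Your proof is correct and follows the paper's approach: reduce to square-zero thickenings, apply Grothendieck--Messing, and invoke Lemma~\ref{Lem:key}(c) to identify the unique admissible Hodge lift as $J_\Phi\mathcal{D}_{A'}(S)$. The only difference is in the polarization step. The paper observes directly that $J_\Phi\mathcal{D}_{A'}(S)$ is isotropic for the alternating form induced by $\lambda'$, using the one-line computation $x\bar x\in\ker\eta_\Phi\cap\ker\eta_{\bar\Phi}=0$ for $x\in J_\Phi$; you instead treat $(A^\vee,\kappa^\vee)$ as its own CM object with type $\bar\Phi$, deform it uniquely, and check that the crystal map of $\lambda$ sends $J_\Phi$ into $\overline{J_\Phi}=J_{\bar\Phi}$. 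These are equivalent formulations of the same semilinearity observation, but the paper's is more economical: your route requires separately justifying the $\bar\Phi$-determinant condition on $A^\vee$ (your appeal to the sesquilinearity relation is a bit quick---the clean reason is $\Lie(A^\vee)\cong(\mathrm{Fil}^1\mathcal{D}_A)^*$ with transpose action, and determinants are invariant under transpose) and the identification $(A_R)^\vee=(A^\vee)_R$ needed to conclude that $\lambda_R$ is actually a polarization of $A_R$.
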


\begin{proof}
Let $S\to R$ be a surjective morphism in $\mathbf{ART}$ whose kernel $\mathcal{I}$
satisfies $\mathcal{I}^2=0$.  In particular $\mathcal{I}$ comes equipped with its trivial divided power 
structure.  Suppose we have already lifted the triple $(A,\kappa,\lambda)$ over $\F$ to a triple
$(A',\kappa',\lambda')$ over $R$.  Let  $\mathcal{D}_{A'}$ be the 
Grothendieck-Messing crystal of $A'$, so that 
$\mathcal{D}_{A'}(R) \iso \co_K\otimes_{\Z_p} R$, and 
\[
\mathrm{Fil}^1\mathcal{D}_{A'}(R) = J_\Phi \mathcal{D}_{A'}(R).
\]
Now evaluate the crystal $\mathcal{D}_{A'}$ on $S$.  As $\mathcal{D}_{A'}(S)\otimes_{S}\F\iso \mathcal{D}_A(\F)$,
a Nakayama's lemma argument shows that $\mathcal{D}_{A'}(S)$ is free of rank one over 
$\co_K\otimes_{\Z_p} S$.   By the main result of Grothendieck-Messing theory,
deformations of the pair $(A',\kappa')$  to $S$ (satisfying the $\Phi$-determinant condition, as always) are in bijection 
with $\co_K$-stable $S$-direct summands $M_1\subset \mathcal{D}_{A'}(S)$ for which the action of $\co_K$
on  $\mathcal{D}_{A'}(S)/M_1$ satisfies   the  $\Phi$-determinant condition.
The final claim of Lemma \ref{Lem:key} shows that $M_1=J_\Phi\mathcal{D}_{A'}(S)$  
is the unique such summand, and so  $(A',\kappa')$ admits a unique deformation $(A'',\kappa'')$  to $S$.

By the results of \cite[Chapter 5.3]{BBM}, the polarization $\lambda'$ of $A'$ 
induces an alternating $S$-bilinear form
\[
\lambda':\mathcal{D}_{A'}(S)\times\mathcal{D}_{A'}(S) \to S
\]
satisfying $\lambda( x v,w) = \lambda(v, \overline{x} w)$ for every $x\in \co_K\otimes_{\Z_p}W_\Phi$.  
But every  $x\in J_\Phi$ satisfies 
\[
x\overline{x}\in \mathrm{ker}(\eta_\Phi) \cap \mathrm{ker}(\eta_{\overline{\Phi}}) = 0,
\]
and hence $J_\Phi\mathcal{D}_{A'}(S)$ is isotropic for the pairing $\lambda'$.  
This  implies that the polarization $\lambda'$ lifts (uniquely) to a polarization $\lambda''$
of $(A'',\kappa'')$, and so  $(A',\kappa',\lambda')$ admits a unique deformation to $S$.  
Induction  on the length   now shows that  $(A,\kappa,\lambda)$ lifts uniquely to every object 
of $\mathbf{ART}$.
\end{proof}

\begin{Rem}\label{Rem:strong lift}
The proof of Theorem \ref{Thm:BT canonical} actually shows that something slightly 
stronger is true: the pair $(A,\kappa)$ deforms uniquely to every object of $\mathbf{ART}$, 
and $\lambda$ automatically  lifts to that deformation.
\end{Rem}

\begin{Rem}
Instead of the $\Phi$-determinant condition imposed on the action $\co_K\to \End(A)$
at the beginning of this subsection, we might have imposed the (seemingly) weaker condition that 
every $x\in \co_K$ acts on $\Lie(A)$ with characteristic polynomial equal to the image of
\[
\prod_{\varphi\in \Phi}(T-\varphi(x)) \in W_\Phi[T]
\]
in $R[T]$.   The advantage of the stronger $\Phi$-determinant condition  
is that it determines not only the characteristic 
polynomial of every element of $\co_K$,  but of every element
of $\co_K\otimes_{\Z_p} R$.  This was needed in the proof of part (c) of Lemma \ref{Lem:key}.
It would be interesting to know whether the two conditions are equivalent.
\end{Rem}


\subsection{Deformations of CM abelian varieties}


Theorem \ref{Thm:BT canonical}, when combined with the Serre-Tate theorem,  gives 
information about the formal deformation spaces of CM abelian varieties over $\F$, and 
in much greater generality than is needed in this paper.  Because the  result,
Theorem \ref{Thm:cm etale}, is of independent interest, we state it in full generality.

Let $K=\prod K_i$ be any product of CM fields, and let $F=\prod F_i$ be its maximal totally real subalgebra.
Let  $\Phi$ be any CM type of $K$, and  let $K_\Phi\subset \C$ be a number field 
containing the reflex field of $\Phi$.  Set $\co_\Phi=\co_{K_\Phi}$.
Suppose $\mathfrak{o}\subset K$ is an order and $S$ is a locally Noetherian $\co_\Phi$-scheme.  A 
\emph{polarized $(\mathfrak{o},\Phi)$-CM abelian scheme over $S$} is a triple  $(A,\kappa,\lambda)$ 
 in which 
\begin{itemize}
\item
$A\to S$ is  an abelian scheme  over $S$ of relative dimension $n$,
\item
$\kappa:\mathfrak{o}\to \End(A)$ is an action of $\mathfrak{o}$ on $A$ such that, locally on $S$, for any
tuple $x_1,\ldots,x_r \in \mathfrak{o}$ the determinant of $T_1x_1+\cdots+T_r x_r$ on $\mathrm{Lie}(A)$ 
is equal to the image of 
\[
\prod_{\varphi\in \Phi}( T_1\varphi(x_1) + \cdots+ T_r \varphi(x_r) ) \in \co_\Phi[T_1,\ldots, T_r]
\]
in $\co_S [T_1,\ldots,T_r]$,
\item 
$\lambda:A\to A^\vee$ is a polarization of $A$ satisfying 
$\lambda\circ \kappa(\overline{x}) = \kappa(x)^\vee\circ \lambda$ for  every $x\in \mathfrak{o}$.
\end{itemize}

Given a prime $\mathfrak{p}$ of $K_\Phi$,  let $W_{\Phi,\mathfrak{p}}$ be the completion of the ring
of integers in the maximal unramified extension of $K_{\Phi,\mathfrak{p}}$, 
and let $k_{\Phi,\mathfrak{p}}^\alg$ be its residue field.  Let $p$ be the rational prime below $\mathfrak{p}$.

\begin{Thm}\label{Thm:cm etale}
Let $R$ be a complete local Noetherian $W_{\Phi,\mathfrak{p}}$-algebra  with residue field  
$k_{\Phi,\mathfrak{p}}^\alg$.  If $\mathfrak{o}$ is maximal at $p$ then every polarized 
$(\mathfrak{o},\Phi)$-CM abelian scheme  $(A,\kappa,\lambda)$ over $k_{\Phi,\mathfrak{p}}^\alg$ admits a unique 
deformation to a polarized $(\mathfrak{o},\Phi)$-CM abelian scheme over $R$.
\end{Thm}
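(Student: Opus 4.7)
The plan is to combine the Serre-Tate theorem with the local result Theorem~\ref{Thm:BT canonical}.  Since the hypothesis "$\mathfrak{o}$ is maximal at $p$" gives
\[
\mathfrak{o}\otimes_{\Z}\Z_p \iso \prod_{v\mid p}\co_{K_v},
\]
with $v$ running over the places of $F$ above $p$ and each $K_v=K\otimes_F F_v$ quadratic \'etale over $F_v$, the $p$-divisible group $A[p^\infty]$ decomposes as $\prod_v A[v^\infty]$.  Because conjugation on $K$ fixes $F$ pointwise, the $\co_K$-linearity of $\lambda$ forces it to respect this decomposition, yielding compatible pieces $(A[v^\infty],\kappa_v,\lambda_v)$.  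After fixing the embedding $K_\Phi\hookrightarrow \C_p$ determined by $\mathfrak{p}$, the CM type $\Phi$ restricts on each $K_v$ to a $p$-adic CM type $\Phi_v$ in the sense of Section~\ref{ss:canonical lifts}, and the global $\Phi$-determinant condition on $\Lie(A)$ breaks up into the local $\Phi_v$-determinant conditions on $\Lie(A[v^\infty])$.

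With this setup in place, Theorem~\ref{Thm:BT canonical} produces, for each $v$, a unique deformation of $(A[v^\infty],\kappa_v,\lambda_v)$ to any Artinian $W_{\Phi,\mathfrak{p}}$-algebra with residue field $k_{\Phi,\mathfrak{p}}^{\mathrm{alg}}$.  Reassembling the factors and invoking the Serre-Tate theorem then yields the unique deformation of $(A,\kappa,\lambda)$ to every Artinian quotient $R/\mathfrak{m}^n$.  To upgrade this to a deformation over $R$ itself, I will appeal to Grothendieck's algebraization theorem: the compatible family of Artinian deformations assembles into a formal polarized abelian scheme over $\mathrm{Spf}(R)$, and the lifted polarization, being ample on each truncation, makes it algebraizable to an honest polarized abelian scheme over $\mathrm{Spec}(R)$.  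The $\mathfrak{o}$-action then extends uniquely by rigidity of morphisms under algebraization, and uniqueness of the overall lift is forced by uniqueness on every Artinian quotient.

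The main subtlety lies in correctly interpreting the polarization after decomposition at $p$.  If $v$ is nonsplit in $K$ then $K_v$ is a field and $\lambda_v$ is an honest polarization on $A[v^\infty]$, matching the input of Theorem~\ref{Thm:BT canonical} directly.  If instead $v=w\overline{w}$ splits in $K$ then $K_v\iso K_w\times K_{\overline{w}}$, $A[v^\infty]=A[w^\infty]\times A[\overline{w}^\infty]$, and $\lambda_v$ is not a polarization of a single factor but an identification of $A[w^\infty]^\vee$ with $A[\overline{w}^\infty]$.  This is precisely the split case ($K\iso F\times F$) of Theorem~\ref{Thm:BT canonical}; verifying that both the determinant and polarization data line up in this case is the one piece of bookkeeping that requires care, but no new ideas.
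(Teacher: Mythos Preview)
Your proposal is correct and follows essentially the same approach as the paper: decompose $A[p^\infty]$ along primes of $F$ above $p$, apply Theorem~\ref{Thm:BT canonical} to each factor, invoke Serre--Tate for the Artinian case, and finish with Grothendieck's formal existence theorem. Your discussion of the split case of the polarization is more explicit than the paper's, which simply asserts that each $\lambda[\mathfrak{P}^\infty]$ is an $\co_{K,\mathfrak{P}}$-linear polarization without separating cases.
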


\begin{proof}
  Let $\C_\mathfrak{p}$ be an algebraically closed field
containing $W_{\Phi,\mathfrak{p}}$, and fix an isomorphism between the algebraic closures of $K_\Phi$ in 
$\C$ and $\C_\mathfrak{p}$.  This allows us to view elements of $\Phi$ as maps $\varphi:K\to \C_\mathfrak{p}$.
For any prime $\mathfrak{P}$ of $F$ above $p$ let  $\Phi(\mathfrak{P})\subset \Phi$ be the subset consisting of those
$\varphi$ whose restriction to $F$ induces the prime $\mathfrak{P}$.
There is a decomposition of  $p$-divisible groups 
\[
A[p^\infty]\iso \prod_{\mathfrak{P}} A[\mathfrak{P}^\infty]
\]
where the product is over the primes of $F$ lying above $p$.  A similar decomposition holds for any 
deformation of the triple $(A,\kappa,\lambda)$.  Each factor  $A[\mathfrak{P}^\infty]$ has an action
\[
\kappa[\mathfrak{P}^\infty] : \co_{K,\mathfrak{P}} \to \End( A[\mathfrak{P}^\infty] )
\]
satisfying the $\Phi(\mathfrak{P})$-determinant condition, and an $\co_{K,\mathfrak{P}}$-linear
polarization $\lambda[\mathfrak{P}^\infty]$. If $R$ is Artinian, we may apply
Theorem \ref{Thm:BT canonical} to see that each triple 
$(A[\mathfrak{P}^\infty],\kappa[\mathfrak{P}^\infty],\lambda[\mathfrak{P}^\infty])$ admits
 a unique deformation to $R$.  By the Serre-Tate theorem \cite{messing72}, 
 the same is true of the triple $(A,\kappa,\lambda)$.
 This proves the claim if $R$ is Artinian, and the general case follows from Grothendieck's formal existence theorem 
 as in   \cite[Section 3]{conrad04}.
 \end{proof}

Theorem \ref{Thm:cm etale} is false  if one  omits the hypothesis that $\mathfrak{o}$ is maximal at $p$, 
even for elliptic curves.   This is clear from the theory of \emph{quasi-canonical lifts} of elliptic curves,
due to Serre-Tate in the ordinary case, and Gross in the supersingular case 
\cite{gross86, ARGOS-9, ARGOS-8}.


\subsection{Lifting homomorphisms: the signature $(n-1,1)$ case}
\label{ss:LHI}


In this subsection  $K_0$ is a quadratic field extension of $\Q_p$,  $F/\Q_p$ is a field extension of 
degree  $n$, and 
\[
K=K_0\otimes_{\Q_p} F.
\]  
We assume that $K_0$ does not embed into $F$, so that $K$
is a field.  Let $\mathfrak{D}_0$ and $\mathfrak{D}$ be the differents of $K_0/\Q_p$ and $K/\Q_p$, respectively,
and let $\mathfrak{p}_F$ be the maximal ideal of $\co_F$.

 Fix an embedding $\iota:K_0\to \C_p$, so that  
$\Phi_0=\{\iota\}$ is a $p$-adic CM type of $K_0$.  A $p$-adic CM type $\Phi$ of $K$ is said to be
of \emph{signature} $(n-1,1)$ if there is a unique  $\varphi^{\mathrm{sp}}\in \Phi$ 
 satisfying $\varphi^\mathrm{sp}|_{K_0}=\overline{\iota}$.   The distinguished element $\varphi^\mathrm{sp}$
 is the  \emph{special element} of $\Phi$, and this element determines $\Phi$ uniquely.  Fix a $\Phi$ of signature
 $(n-1,1)$ and  define  
 \[
 K_\Phi = \varphi^\mathrm{sp}(K).
 \]  
 For any $\sigma\in \Aut(\C_p/K_\Phi)$ the $p$-adic 
CM type $\Phi^\sigma$ is again of signature $(n-1,1)$, and still contains $\varphi^\mathrm{sp}$.
Thus  $\Phi=\Phi^\sigma$, and  condition (\ref{reflex condition}) is satisfied.  Let $W_\Phi$ be 
as in Section \ref{ss:canonical lifts}.

 Fix a triple $(A,\kappa,\lambda)$ in which 
 \begin{itemize}
 \item
 $A$ is a $p$-divisible group over  $\F$ of dimension $n$,
 \item
 $\kappa:\co_K\to \End(A)$ satisfies the $\Phi$-determinant condition, 
 \item  
$\lambda:A\to A^\vee$ is an $\co_K$-linear polarization with kernel $A[\mathfrak{a}]$ for 
some ideal $\mathfrak{a}\subset\co_F$.
\end{itemize}  
Fix a second triple 
 $(A_0,\kappa_0,\lambda_0)$ in which 
 \begin{itemize}
 \item
  $A_0$ is a  $p$-divisible group over  $\F$ of dimension $1$,
  \item
  $\kappa_0:\co_{K_0}\to \End(A_0)$ satisfies the $\Phi_0$-determinant condition,
  \item 
$\lambda_0:A_0\to A_0^\vee$ is an $\co_{K_0}$-linear principal polarization.  
\end{itemize}
By  Proposition \ref{Prop:BT basics} both   $A_0$ and $A$ are supersingular.

The $\co_K$-module 
\[
L(A_0,A)=\Hom_{\co_{K_0}}(A_0,A)
\]
 has a natural   $\co_{K_0}$-Hermitian form $\langle f_1,f_2\rangle$ defined by
\[
\langle f_1, f_2\rangle = \lambda_0^{-1} \circ f_2^\vee \circ \lambda \circ f_1.
\]
This Hermitian form is compatible with the action of $\co_K$ on $L(A_0,A)$, in the sense that 
\[
\langle x\cdot f_1,f_2\rangle = \langle f_1, \overline{x}\cdot f_2\rangle
\]
for every $x\in \co_K$, and it follows that there is a unique $K$-valued
$\co_K$-Hermitian form $\langle f_1,f_2\rangle_\CM$ on $L(A_0,A)$ satisfying
\[
\langle f_1,f_2\rangle = \mathrm{Tr}_{K/K_0} \langle f_1,f_2\rangle_\CM.
\]
It is not easy to give a description of $\langle\cdot ,\cdot\rangle_\CM$, other than ``the Hermitian 
form whose trace is $\langle\cdot ,\cdot\rangle$."  Nevertheless, 
the structure of the Hermitian space $\big( L(A_0,A), \langle\cdot,\cdot\rangle_\CM \big)$
will be described quite explicitly  in Proposition \ref{Prop:hermitian local I} below.

Set 
\[
\mathcal{S}= \co_K \otimes_{\Z_p} W.
\]  
If   $\Frob\in \Aut(W)$ is the Frobenius automorphism, there is an induced automorphism of $\mathcal{S}$ 
defined by 
\[
(x\otimes w)^\Frob = x\otimes w^\Frob.
\]
As in Section \ref{ss:canonical lifts}, for each $\psi:\co^u_K\to W$ there is an idempotent $e_\psi\in \mathcal{S}$ satisfying  
\[
(x\otimes 1) e_\psi = (1\otimes \psi(x))e_\psi
\] 
for every $x\in \co_K^u$.  These idempotents satisfy 
$(e_\psi)^\Frob = e_{\Frob\circ \psi}$, and 
\[
\mathcal{S}=
\prod_{ \psi:\co^u_K \to W} e_\psi \mathcal{S}
\]
where each factor on the right is isomorphic to the ring of integers in the completion of the 
maximal unramified extension of $K$. In particular each factor is a discrete valuation ring, 
whose valuation determines a surjection   
\[
\ord_\psi:\mathcal{S} \to \Z^{\ge 0}\cup\{\infty\}.
\]
Denote by 
\[
m(\psi,\Phi) = \# \{\varphi\in \Phi : \varphi|_{\co_K^u} = \psi \}
\]
 the \emph{multiplicity of $\psi$} in $\Phi$. 
Similarly, if we set 
\[
\mathcal{S}_0=\co_{K_0}\otimes_{\Z_p} W
\] 
 there is a decomposition of $W$-algebras
\[
\mathcal{S}_0 = \prod_{\psi_0:\co^u_{K_0} \to W} e_{\psi_0} \mathcal{S}_0
\]
in which each factor is isomorphic to the integers in the completion of the maximal unramified extension 
of $K_0$.  For each $\psi_0$ there is an associated valuation 
\[
\ord_{\psi_0}:\mathcal{S}_0\to \Z^{\ge 0} \cup \{\infty\},
\]
and the \emph{multiplicity of $\psi_0$ in $\Phi_0$} is 
\[
m(\psi_0,\Phi_0) = \# \{\varphi\in \Phi_0 : \varphi|_{\co_{K_0}^u} = \psi_0 \}.
\]

Let $D(A_0)$ and $D(A)$ be the  Dieudonn\'e modules of $A_0$ and $A$, respectively.
The following lemma makes the structure of these   Dieudonn\'e modules more explicit.

\begin{Lem}\label{Lem:dieu coords}
There is an isomorphism of $\mathcal{S}$-modules $D(A) \iso \mathcal{S}$.  Under any such isomorphism 
the operators $F$ and $V$ on $D(A)$ take the form $F= a \circ \Frob$ and $V = b \circ \Frob^{-1}$
for some $a, b\in \mathcal{S}$ satisfying $a b^\Frob =p$, and satisfying \[\ord_\psi (b) = m(\psi,\Phi)\]
for every $\psi:\co_K^u\to W$.

Similarly, there is an isomorphism of $\mathcal{S}_0$-modules $D(A_0) \iso \mathcal{S}_0$.  Under any such isomorphism 
the operators $F$ and $V$ on $D(A_0)$ take the form $F= a_0 \circ \Frob$ and $V = b_0 \circ \Frob^{-1}$
for some $a_0, b_0\in \mathcal{S}_0$ satisfying $a_0 b_0^\Frob =p$, and 
satisfying \[ \ord_{\psi_0} (b_0) = m(\psi_0,\Phi_0) \] for every $\psi_0:\co^u_{K_0} \to W$.
\end{Lem}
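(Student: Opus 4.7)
The plan is to handle both statements in parallel; I write out the argument for $(A,\kappa,\lambda)$, and the argument for $(A_0,\kappa_0,\lambda_0)$ is obtained by replacing $K, \Phi$ by $K_0, \Phi_0$ throughout.

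First, by Proposition \ref{Prop:BT basics}(1), I fix an isomorphism of $\mathcal{S}$-modules $D(A) \iso \mathcal{S}$. Since the Frobenius $F$ on $D(A)$ is $\Frob$-semilinear over $W$ and commutes with the $\co_K$-action induced by $\kappa$, it is $\Frob$-semilinear as a map of $\mathcal{S}$-modules, and so is determined by $a := F(1) \in \mathcal{S}$: explicitly, $F(m) = a \cdot m^\Frob$. Likewise $V(m) = b \cdot m^{\Frob^{-1}}$ with $b := V(1)$, and the standard identity $FV = VF = p$ translates immediately to $a \cdot b^\Frob = p$.

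To identify $\ord_\psi(b)$, I would compare two descriptions of the Hodge filtration of $\mathcal{D}_A(\F) \iso \mathcal{S}/p\mathcal{S} = \co_K \otimes_{\Z_p}\F$. Intrinsically, $\mathrm{Fil}^1 \mathcal{D}_A(\F) = VD(A)/pD(A)$, which in our coordinates is the image of the principal ideal $b\mathcal{S}$ modulo $p$. On the other hand, the $\Phi$-determinant condition and part (c) of Lemma \ref{Lem:key}, applied to $M = \mathcal{D}_A(\F)$ and $M_1 = \mathrm{Fil}^1\mathcal{D}_A(\F)$ viewed as $\co_K \otimes_{\Z_p} W_\Phi$-modules via $W_\Phi \to \F$, identify this same submodule with $J_\Phi \mathcal{D}_A(\F)$.

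Decomposing along the idempotents $e_\psi$, I would use the isomorphism $e_\psi(\co_K \otimes_{\Z_p} W) \iso W[z]/\mu_\psi(z)$ from the proof of Lemma \ref{Lem:key}; this reduces modulo $p$ to $\F[z]/z^{e_K}$ because $\mu$ is Eisenstein of degree $e_K$. Using the generators $j_\Phi(\varpi,\psi)$ from Lemma \ref{Lem:key}(b) and the positivity of each $\varphi(\varpi)$ in $W_\Phi$, the image of $J_\Phi$ in this component reduces to $(z^{m(\psi,\Phi)})$, while the image of $b\mathcal{S}$ is $(z^{\ord_\psi(b)})$ (read as $0$ once $\ord_\psi(b) \ge e_K$). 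Matching these two principal ideals, together with the automatic bound $\ord_\psi(b) \le e_K$ forced by $a \in \mathcal{S}$ and $a b^\Frob = p$, yields $\ord_\psi(b) = m(\psi,\Phi)$.

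The main obstacle is the careful $\psi$-wise bookkeeping in the ramified case, particularly the verification that the boundary case $m(\psi,\Phi) = e_K$ (where the $\psi$-component of the Hodge filtration degenerates to zero) is consistent with the valuation interpretation on both sides. Once the $e_\psi$-decomposition is in place, the lemma reduces to matching two generators of the same principal ideal component by component, and the statement for $(A_0,\kappa_0,\lambda_0)$ then follows by the identical argument with $K, \Phi$ replaced by $K_0, \Phi_0$.
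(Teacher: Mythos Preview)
Your argument is correct, but it takes a longer route than the paper's. Both proofs agree on the easy parts (freeness from Proposition~\ref{Prop:BT basics}, the shape of $F$ and $V$, the relation $ab^{\Frob}=p$), and both ultimately rest on the identification $\Lie(A)\iso \mathcal{S}/b\mathcal{S}$. The difference is in how the formula $\ord_\psi(b)=m(\psi,\Phi)$ is extracted.

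You go through the Hodge filtration: invoking Lemma~\ref{Lem:key}(c) to identify $\mathrm{Fil}^1\mathcal{D}_A(\F)=J_\Phi\mathcal{D}_A(\F)$, then reducing the generators $j_\Phi(\varpi,\psi)$ from Lemma~\ref{Lem:key}(b) modulo the maximal ideal to read off $(z^{m(\psi,\Phi)})$ in each $e_\psi$-component, and finally matching principal ideals in $\F[z]/(z^{e_K})$ while handling the boundary case via the bound $\ord_\psi(b)\le e_K$ coming from $ab^{\Frob}=p$. This is sound.

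The paper bypasses $J_\Phi$ entirely. It simply observes that $e_\psi(\mathcal{S}/b\mathcal{S})$ has $\F$-dimension $\ord_\psi(b)$, so for $x\in\co_K^u$ the characteristic polynomial of $x$ on $\Lie(A)$ is $\prod_\psi(T-\psi(x))^{\ord_\psi(b)}$; the $\Phi$-determinant condition gives the same polynomial as $\prod_\psi(T-\psi(x))^{m(\psi,\Phi)}$, and comparing exponents (choosing $x$ so the $\psi(x)$ are distinct in $\F$) finishes immediately. This is shorter because it only uses the action of the unramified part $\co_K^u$, where the idempotent decomposition already diagonalizes everything, and avoids any boundary-case bookkeeping. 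Your route has the virtue of reusing machinery already developed for the deformation theory, but for this lemma alone the characteristic-polynomial comparison is more economical.
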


\begin{proof}
We give the proof only for $D(A)$, as the proof for $D(A_0)$ is identical. 
 The only assertion that isn't obvious from Proposition \ref{Prop:BT basics}
is the formula for $\ord_\psi(b)$.  The Lie algebra of $A$ is canonically identified with
\[
D(A)/V D(A) \iso \mathcal{S}/b\mathcal{S}, 
\]
and the $e_\psi$ component of $\mathcal{S}/b\mathcal{S}$ is an $\F$-vector space
of dimension $\ord_\psi(b)$. It follows that the characteristic polynomial of any $x\in \co^u_K$ 
acting on $\Lie(A)$ is equal to
\[
\prod_{ \psi:\co^u_K \to W} (T- \psi(x))^{\ord_\psi (b)} \in \F[T].
\]
On the other hand the $\Phi$-determinant condition imposed on $(A,\kappa,\lambda)$ implies that this 
characteristic polynomial is equal to 
\[
\prod_{\varphi\in \Phi} (T- \varphi(x)) =  \prod_{ \psi:\co^u_K \to W}    (T- \psi(x))^{m(\psi,\Phi)}.
\]
 It follows that  $\ord_\psi (b) = m(\psi,\Phi)$ for every  $\psi$. 
\end{proof}

Fix  isomorphisms $D(A_0)\iso \mathcal{S}_0$ and $D(A) \iso \mathcal{S}$  as in the lemma, 
so that  $L(A_0,A)$ is identified with  an $\co_K$-submodule of 
$\Hom_{\mathcal{S}_0}(\mathcal{S}_0,\mathcal{S}) \iso \mathcal{S}$. 
Of course an element of $\Hom_{\mathcal{S}_0}(\mathcal{S}_0,\mathcal{S})$ 
lies in the submodule $L(A_0,A)$ if and only if it respects the $V$ (equivalently, $F$) operators on $D(A_0)$
and $D(A)$.  In the notation of Lemma \ref{Lem:dieu coords} this amounts to
\begin{equation}\label{hermitian model}
L(A_0,A) \iso \{ s\in \mathcal{S} : (b_0  s)^\Frob=b^\Frob s  \}.
\end{equation}
Let  $s\mapsto \overline{s}$ is the automorphism of $\mathcal{S}$ induced by the 
nontrivial automorphism of $K/F$

\begin{Lem}\label{Lem:dieu coords II}
Under the isomorphism (\ref{hermitian model}) the Hermitian form $\langle\cdot,\cdot\rangle_\CM$
on $L(A_0,A)$ is identified with the Hermitian form 
\[ \langle s_1,s_2\rangle_\CM= \xi s_1\overline{s}_2 \]
on the right hand side of (\ref{hermitian model}), for some $\xi\in \mathcal{S}\otimes_\Z\Q$ satisfying 
\begin{enumerate}
\item
$\overline{\xi}=\xi$,
\item
$\xi\mathcal{S} = \mathfrak{a}\mathfrak{D}_0\mathfrak{D}^{-1} \mathcal{S}$, and
\item
$(b_0\overline{b}_0)^\Frob \xi= \xi^\Frob (b\overline{b})^\Frob$
\end{enumerate}
(the last condition guarantees that $\xi s_1\overline{s}_2$ lies in 
$K=(\mathcal{S}\otimes_\Z\Q)^{\Frob=1}$, as it must).
\end{Lem}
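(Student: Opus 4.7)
The plan is to compute $\langle f_1, f_2\rangle$ directly on Dieudonn\'e modules, read off the $K$-valued refinement $\langle\cdot,\cdot\rangle_\CM$ by transitivity of trace, and then verify the three claimed properties of $\xi$ by tracking how it is built from the pairings induced by $\lambda_0$ and $\lambda$.

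First, fix isomorphisms $D(A_0)\iso\mathcal{S}_0$ and $D(A)\iso\mathcal{S}$ as in Lemma \ref{Lem:dieu coords}. The polarizations $\lambda_0$ and $\lambda$ translate to $W$-valued alternating, $\co_{K_0}$-Hermitian (respectively $\co_K$-Hermitian) pairings on $D(A_0)$ and $D(A)$. Because $D(A)$ is free of rank one over $\mathcal{S}$, the Hermitian structure forces $\lambda$ to take the form $\lambda(s_1,s_2)=\mathrm{tr}_{\mathcal{S}/W}(\eta s_1\overline{s}_2)$ for a unique $\eta\in\mathcal{S}\otimes\Q$. The alternating condition gives $\overline{\eta}=-\eta$; the hypothesis $\ker(\lambda)=A[\mathfrak{a}]$ translates, via the trace-duality identification $\Hom_W(\mathcal{S},W)\iso\mathfrak{D}^{-1}\mathcal{S}$, into the ideal relation $\eta\mathcal{S}=\mathfrak{a}\mathfrak{D}^{-1}\mathcal{S}$; and compatibility of $\lambda$ with the Frobenius structure $F=a\circ\Frob$, $V=b\circ\Frob^{-1}$, applied via the standard identity $\lambda(Fv,w)=\lambda(v,Vw)^\Frob$ for a polarization pairing on a Dieudonn\'e module together with the relation $ab^\Frob=p$ of Lemma \ref{Lem:dieu coords}, forces $\eta p=\eta^\Frob(b\overline{b})^\Frob$. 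The analogous analysis applied to $\lambda_0$ produces $\eta_0\in\mathcal{S}_0\otimes\Q$ satisfying $\overline{\eta}_0=-\eta_0$, $\eta_0\mathcal{S}_0=\mathfrak{D}_0^{-1}\mathcal{S}_0$ (here the principality of $\lambda_0$ is used), and $\eta_0 p=\eta_0^\Frob(b_0\overline{b}_0)^\Frob$.

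Next, translate the defining formula $\langle f_1,f_2\rangle=\lambda_0^{-1}\circ f_2^\vee\circ\lambda\circ f_1$ into these coordinates. Each $f_i$ corresponds to the multiplication map $s_i\cdot:\mathcal{S}_0\to\mathcal{S}$, and the composition is characterized as the unique element $h\in\mathcal{S}_0$ with $\lambda_0(hv,v')=\lambda(s_1v,s_2v')$ for all $v,v'\in\mathcal{S}_0$. Substituting the trace formulas and applying transitivity $\mathrm{tr}_{\mathcal{S}/W}=\mathrm{tr}_{\mathcal{S}_0/W}\circ\mathrm{tr}_{\mathcal{S}/\mathcal{S}_0}$, together with nondegeneracy of the trace pairing on $\mathcal{S}_0$, yields
\[
h \;=\; \mathrm{tr}_{\mathcal{S}/\mathcal{S}_0}\bigl(\xi s_1\overline{s}_2\bigr), \qquad \xi := \eta_0^{-1}\eta \in \mathcal{S}\otimes\Q.
\]
Since $\mathrm{tr}_{\mathcal{S}/\mathcal{S}_0}$ restricts on $\Frob$-fixed elements to $\mathrm{Tr}_{K/K_0}$, the uniqueness of the $K$-valued refinement of $\langle\cdot,\cdot\rangle$ forces $\langle s_1,s_2\rangle_\CM=\xi s_1\overline{s}_2$. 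The three claimed properties of $\xi$ now follow directly: $\overline{\xi}=(-\eta_0)^{-1}(-\eta)=\xi$; the ideal identity $\xi\mathcal{S}=\mathfrak{D}_0\cdot\mathfrak{a}\mathfrak{D}^{-1}\mathcal{S}=\mathfrak{a}\mathfrak{D}_0\mathfrak{D}^{-1}\mathcal{S}$ is immediate from the corresponding statements about $\eta_0$ and $\eta$; and dividing the Frobenius relation for $\eta$ by the one for $\eta_0$ and rearranging yields $(b_0\overline{b}_0)^\Frob\xi=\xi^\Frob(b\overline{b})^\Frob$. As a consistency check, combining property (3) with the description (\ref{hermitian model}) of $L(A_0,A)$ shows $(\xi s_1\overline{s}_2)^\Frob=\xi s_1\overline{s}_2$ whenever $s_1,s_2\in L(A_0,A)$, confirming that the form lands in $K$ as required.

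The main obstacle is the Frobenius-compatibility calculation for $\eta$: keeping track of the interaction between $\Frob$, the bar involution, and the alternating condition requires care in choosing conventions for the polarization pairing on the Dieudonn\'e module, and one must carefully unwind the identity $\lambda(Fv,w)=\lambda(v,Vw)^\Frob$ together with $ab^\Frob=p$. A secondary subtlety is the ideal-theoretic description of $\eta\mathcal{S}$, which rests on identifying $\Hom_W(\mathcal{S},W)$ with $\mathfrak{D}^{-1}\mathcal{S}$ compatibly with the bar involution.
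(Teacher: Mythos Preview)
Your proof is correct and follows essentially the same route as the paper: express each polarization as a trace form $\mathrm{Tr}(\zeta\,\cdot\,\overline{\cdot})$ on $\mathcal{S}$ (resp.\ $\mathcal{S}_0$), read off the skew-symmetry, ideal, and Frobenius properties of $\zeta$ and $\zeta_0$, and then set $\xi=\zeta_0^{-1}\zeta$ and check the three claims by dividing. Your treatment is if anything slightly more explicit than the paper's (you spell out the adjoint characterization of $\langle f_1,f_2\rangle$ and the transitivity-of-trace step, and you correctly get $\eta_0\mathcal{S}_0=\mathfrak{D}_0^{-1}\mathcal{S}_0$ from principality, whereas the paper's printed line ``$\zeta_0\mathcal{S}_0=\mathcal{S}_0$'' appears to be a typo).
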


\begin{proof}
This is an exercise in linear algebra.
The polarization $\lambda$  induces a $W$-symplectic form 
\[
\lambda:  D(A) \times D(A) \to  W
\]
satisfying $\lambda(sx,y) = \lambda(x,\overline{s} y)$  for all $s\in \mathcal{S}$, and 
$\lambda(Fx,y) = \lambda(x,Vy)^\Frob$.  The first property implies that the induced pairing 
\[
\lambda: \mathcal{S}\times \mathcal{S} \to W
\] 
has the form
\[
\lambda(s_1,s_2) = \mathrm{Tr}_{K/\Q_p}( \zeta s_1\overline{s}_2)
\]
for some $\zeta\in \mathcal{S}\otimes_\Z\Q$ satisfying $\overline{\zeta}=-\zeta$.  The 
second property implies that  $p\zeta=(\zeta b\overline{b})^\Frob$.
The assumption that $\lambda:A\to A^\vee$ has kernel $A[\mathfrak{a}]$ implies that
$\zeta\mathcal{S} =   \mathfrak{a}\mathfrak{D}^{-1}\mathcal{S}$.
Similarly, the principal polarization $\lambda_0$  induces a perfect pairing 
\[
\lambda_0: \mathcal{S}_0\times \mathcal{S}_0 \to W
\]
of the form 
\[
\lambda_0(s_1,s_2) = \mathrm{Tr}_{K_0/\Q_p}( \zeta_0 s_1\overline{s}_2)
\]
for some $\zeta_0\in \mathcal{S}_0\otimes_\Z\Q$ satisfying $\overline{\zeta}_0=-\zeta_0$,
$p\zeta_0=(\zeta_0 b_0\overline{b}_0)^\Frob$, and $\zeta_0\mathcal{S}_0=\mathcal{S}_0$.

The $\co_{K_0}$-Hermitian form 
$\langle f_1, f_2\rangle$ on (\ref{hermitian model}) is then given by the explicit  formula 
\[
\langle s_1, s_2\rangle=\mathrm{Tr}_{K/K_0}(\zeta_0^{-1}\zeta s_1\overline{s}_2).
\]
It follows that  
$
\langle s_1, s_2\rangle_\CM=\xi s_1\overline{s}_2
$
where  $\xi= \zeta_0^{-1}\zeta$.
\end{proof}

Armed with the above explicit coordinates, we  may describe  the Hermitian space  
$L(A_0,A)$ attached to our fixed triples $(A_0,\kappa_0,\lambda_0)$
and $(A,\kappa,\lambda)$.

\begin{Prop}\label{Prop:hermitian local I}
For some $\beta\in F^\times$ satisfying 
\[
\beta \co_K=\begin{cases}
\mathfrak{a}\mathfrak{p}_F\mathfrak{D}_0\mathfrak{D}^{-1}\co_K  & \hbox{if $K_0/\Q$ is unramified}\\
\mathfrak{a}\mathfrak{D}_0\mathfrak{D}^{-1}\co_K &  \hbox{if $K_0/\Q$ is ramified}
\end{cases}
\] 
there is an isomorphism of $\co_K$-modules  $L(A_0,A) \iso \co_K$  identifying $\langle \cdot,\cdot\rangle_\CM$ with  
the Hermitian form $\langle x,y\rangle_\CM=\beta x\overline{y}$ on $\co_K$.
\end{Prop}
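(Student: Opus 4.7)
The plan is to extract the structure of $L(A_0,A)$ from the explicit parametrization (\ref{hermitian model}) and the Hermitian form $\xi s_1\overline{s_2}$ of Lemma \ref{Lem:dieu coords II}, working componentwise in the product decomposition $\mathcal{S}=\prod_\psi \co_{K,\psi}$ indexed by $\psi:\co_K^u\to W$. I would first show $L(A_0,A)$ is free of rank one over $\co_K$ by taking $\ord_\psi$ of both sides of the Frobenius equation $(b_0s)^{\Frob}=b^{\Frob}s$. Using the formulas $\ord_\psi(b)=m(\psi,\Phi)$ and $\ord_{\psi_0}(b_0)=m(\psi_0,\Phi_0)$ of Lemma \ref{Lem:dieu coords}, together with the observation that an element of $\mathcal{S}_0$, viewed in $\mathcal{S}$, has $\psi$-valuation equal to $e(K/K_0)$ times its $\psi|_{K_0}$-valuation in $\mathcal{S}_0$, one obtains a telescoping relation for $v_\psi(s)$ whose total increment around the full $\Frob$-orbit vanishes by the identity $e(K/K_0)\cdot f(K/K_0) = n = \#\Phi$. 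Hence the valuation pattern of any solution is rigid up to addition of a uniform constant, and so $L(A_0,A) = s_0\co_K$ for a generator $s_0$ realizing the minimal admissible valuations, and $\beta = \xi s_0\overline{s_0}\in F^\times$.

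Determining $\beta\co_K$ then reduces to computing $\ord_F(s_0\overline{s_0})$, which splits on whether the nontrivial involution of $K/F$ permutes the idempotents $e_\psi$ nontrivially. When $K_0/\Q$ is unramified, $K/F$ is also unramified and the involution swaps $e_\psi\leftrightarrow e_{\overline\psi}$; the signature-$(n-1,1)$ condition forces the telescoping sum to dip by $-1$ at the position of $\psi_*:=\overline{\varphi^{\mathrm{sp}}}|_{K^u}$ and to return to $0$ at $\psi^{\mathrm{sp}}=\overline{\psi_*}$, two embeddings lying $f(F/\Q_p)$ positions apart in the $\Frob$-orbit of length $2f(F/\Q_p)$. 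The resulting complementary pattern gives $v_\psi(s_0)+v_{\overline\psi}(s_0)=1$ for every $\psi$, hence $\ord_F(s_0\overline{s_0})=1$; this accounts for the extra factor of $\mathfrak{p}_F$. When $K_0/\Q$ is ramified, $K^u=F^u$ and the involution acts trivially on $K^u$, so $\overline{\varphi^{\mathrm{sp}}}$ and $\varphi^{\mathrm{sp}}$ restrict identically to $K^u$; the would-be dip and rise cancel, $s_0$ has unit components, and $\ord_F(s_0\overline{s_0})=0$. Combining with $\xi\mathcal{S}=\mathfrak{a}\mathfrak{D}_0\mathfrak{D}^{-1}\mathcal{S}$ yields both ideal formulas of the proposition.

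The main technical obstacle is bookkeeping: the factor $e(K/K_0)$ must be inserted correctly when passing between $\mathcal{S}_0\hookrightarrow\mathcal{S}$, and one must identify in each of the two regimes precisely which embeddings carry the signature-$(n-1,1)$ asymmetry. In the supersingular elliptic-curve case $n=1$ the entire computation collapses to a special case of formulas of Gross, providing a useful sanity check on signs and ramification conventions.
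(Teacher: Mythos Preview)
Your proposal is essentially the paper's own argument: both compute $\beta=\xi s_0\overline{s_0}$ by running the recursion
\[
\ord_{\psi^{i+1}}(s)=\ord_{\psi^i}(s)-m(\psi^i,\Phi)+e(K/K_0)\cdot m(\psi^i_0,\Phi_0)
\]
derived from $(b_0s)^{\Frob}=b^{\Frob}s$ and Lemma~\ref{Lem:dieu coords}, and both split on whether $K_0/\Q_p$ is ramified. Two small points of divergence are worth noting.

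First, for the freeness of $L(A_0,A)$ over $\co_K$, the paper argues via Noether--Skolem: any two embeddings $K_0\hookrightarrow M_n(H)$ (with $H$ the quaternion algebra $\End(A_0)\otimes\Q_p$) are conjugate, so one can choose an $\co_{K_0}$-linear isogeny $A\to A_0^n$ and read off $\mathrm{rank}_{\co_{K_0}}L(A_0,A)=n$. Your rigidity argument (the valuation tuple is determined up to a uniform shift, hence solutions form a single $\co_K$-orbit) is correct in spirit, but as written it shows only that \emph{if} nonzero solutions exist they form a rank-one module; you should say a word about existence, either by invoking supersingularity and Noether--Skolem as the paper does, or by noting that the semilinear equation $s^{\Frob}=(b/b_0)^{\Frob}s$ over $W$ always has solutions once the total increment vanishes (your identity $e(K/K_0)f(K/K_0)=n$).

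Second, in the unramified case you have the dip and rise interchanged: with $\psi^0=\varphi^{\mathrm{sp}}|_{\co_K^u}$ one finds $m(\psi^0,\Phi)=1$ and $m(\psi^0_0,\Phi_0)=0$, so the dip occurs at $\psi^0$ (the restriction of $\varphi^{\mathrm{sp}}$, not of $\overline{\varphi^{\mathrm{sp}}}$), and the rise at $\psi^{2f+1}=\overline{\varphi^{\mathrm{sp}}}|_{\co_K^u}$. This is harmless for the conclusion $\ord_{\psi}(s_0\overline{s_0})=1$, which is symmetric in $\psi\leftrightarrow\overline\psi$.
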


\begin{proof}
First we show that $L(A_0,A)$ is free of rank one over $\co_K$.  
Let 
\[
H=\End(A_0)\otimes_{\Z_p}\Q_p,
\] 
so that $H$ is a quaternion division algebra over $\Q_p$.
As $A_0$ and $A$ are supersingular  there is an isogeny
$A\to  A_0\times\cdots \times A_0$ ($n$ factors).  The Noether-Skolem theorem implies that any 
two maps $K_0\to M_n(H)$ are conjugate, and it follows that the above isogeny may be chosen to 
be $\co_{K_0}$-linear.  A choice of such isogeny allows us to identify 
\[
L(A_0,A)\otimes_{\Z_p}\Q_p \iso \Hom_{\co_{K_0}}(A_0, A_0\times\cdots \times A_0)\otimes_{\Z_p}\Q_p
\iso K_0\times \cdots K_0
\]
as $K_0$-vector spaces. Thus $L(A_0,A)$ is free of rank $n$ over $\co_{K_0}$, and hence
$L(A_0,A)$ is free of rank one over $\co_K$.

Fix an $\co_K$-module  generator $s$ of (\ref{hermitian model}),
so that  $x\cdot s \mapsto x$ defines an isomorphism $L(A_0,A)\to  \co_K$ identifying 
$\langle \cdot,\cdot\rangle_\CM$ with $\beta x\overline{y}$, where, in the notation of 
Lemma \ref{Lem:dieu coords II},
\[
\beta=\xi s\overline{s}.
\]
We know that $\xi\mathcal{S} = \mathfrak{a} \mathfrak{D}_0\mathfrak{D}^{-1}\mathcal{S}$,
and so it only remains to determine the ideal $s\overline{s}\mathcal{S}$.  

Let $e(K/K_0)$ be the ramification degree of $K/K_0$.   
Set $d=[K^u:\Q_p]$, and enumerate the maps $\co^u_K\to W$ as  
$\{ \psi^i  : i\in \Z/d\Z\}$ in such a way that $\psi^{i+1}=\Frob\circ \psi^i$. Let $\psi^i_0$ be the restriction of 
$\psi^i$ to  $\co^u_{K_0}$.  The relation $(b_0s)^\Frob = b^\Frob s$ implies 
\begin{align*}
\ord_{\psi^{i+1}}(s) &= \ord_{\psi^i}(s) - \ord_{\psi^i}(b) + \ord_{\psi^i}(b_0) \\
 &= \ord_{\psi^i}(s) - \ord_{\psi^i}(b) + e(K/K_0) \cdot \ord_{\psi^i_0}(b_0) \\
&= \ord_{\psi^i}(s) - m(\psi^i,\Phi)  + e(K/K_0) \cdot  m(\psi^i_0,\Phi_0),
\end{align*}
where the final equality is by Lemma \ref{Lem:dieu coords}.
Note that there is at least one $\psi:\co^u_K\to W$ for which  $\ord_\psi(s)=0$; otherwise
$s$ would be divisible in $L(A_0,A)$ by a uniformizing parameter of $\co_K$.
This observation and the above relation between $\ord_{\psi^{i+1}}(s)$ and $\ord_{\psi^i}(s)$
will allow us to compute $\ord_\psi(s)$ for all $\psi:\co_K^u \to W$.

If  $K_0/\Q_p$ is ramified then $m(\psi^i_0,\Phi_0)=1$. Each $\psi^i:\co^u_K\to W$ admits 
\[
[K:K^u]= 2\cdot e(K/K_0)
\] 
extensions to a map $K\to \C_p$.  Exactly half of these extensions lie in $\Phi$, and so
$m(\psi^i,\Phi)=e(K/K_0)$.  It follows that 
$\ord_{\psi^{i+1}}(s) =\ord_{\psi^i}(s)$ for every $i\in \Z/d\Z$, hence    $s\in \mathcal{S}^\times$ and 
\[
\beta \mathcal{S}=\xi s\overline{s}\mathcal{S}
=\mathfrak{a}\mathfrak{D}_0\mathfrak{D}^{-1}\mathcal{S}
\] 
as desired.

Now suppose that $K_0/\Q_p$ is unramified.    As $K_0$ does not embed into $F$,
this implies $[F^u:\Q_p]=2f+1$ for some $f\in \Z^{\ge 0}$, and so $d=4f+2$.  Assume the $\psi^i$ have 
been enumerated in such a way that $\psi^0$ is the restriction of $\varphi^\mathrm{sp}$ to $\co^u_K$.  
This implies   $\psi^0_0=\overline{\iota}$, and so
\[
m(\psi^i_0 , \Phi_0)  =
\begin{cases}
1 & \hbox{if $i$ is odd}\\
0 & \hbox{if $i$ is even.}
\end{cases}
\]
If $\varphi$ is any extension of $\psi^0$ to a map $K\to \C_p$ then the restriction of $\varphi$ to $\co_{K_0}$ is 
$\overline{\iota}$.  Therefore $\varphi\in \Phi$ if and only if $\varphi=\varphi^\mathrm{sp}$, and so
$m(\psi^0,\Phi)=1$.  This shows  
\[
\ord_{\psi^1}(s)=\ord_{\psi^0}(s)-1.
\]   
The automorphism $x\mapsto \overline{x}$
of $K$ restricts to $\Frob^{2f+1}$ on $K^u$ and so the restriction of $\overline{\varphi^\mathrm{sp}}$ to 
$\co^u_K$ is $\psi^{2f+1}$.  The map $\psi^{2f+1}:\co^u_K\to W$ then admits $[K:K^u]=e(K/K_0)$
distinct extensions to a map $K\to \C_p$, every one of which except $\overline{\varphi^\mathrm{sp}}$
is contained in $\Phi$.  Therefore $m(\psi^{2f+1},\Phi) = e(K/K_0)-1$, from which we deduce
\[
\ord_{\psi^{2f+2}}(s)=\ord_{\psi^{2f+1}}(s) +1.
\]  
Now suppose $i\in \Z/d\Z$ is not equal to $0$ or $2f+1$,
so that $\psi^i$ is not the restriction to $\co^u_K$ of either $\varphi^\mathrm{sp}$ or $\overline{\varphi^\mathrm{sp}}$.
Similar reasoning to the above shows that if $i$ is even then $m(\psi^i,\Phi)$ and $m(\psi^i_0,\Phi_0)$ are both $0$,
while if $i$ is odd then $m(\psi^i,\Phi)=e(K/K_0)$ and $m(\psi^i_0,\Phi_0)=1$.  In either case
$\ord_{\psi^{i+1}}(s)=\ord_{\psi^i}(s)$.  Recalling that $\ord_\psi(s)=0$ for at least one $\psi:\co^u_K\to W$,
we deduce first
\[
\ord_{\psi^i}(s) = 
\begin{cases}
0 & \hbox{if } 1\le i\le  2f+1 \\
1 & \hbox{if } 2f+2\le i \le d
\end{cases}
\]
and then
$
\ord_{\psi^i}(s\overline{s}) = \ord_{\psi^i}(s) + \ord_{\psi^{i+2f+1}}(s) = 1.
$
Thus $s\overline{s}\mathcal{S}=\mathfrak{p}_F\mathcal{S}$ and  
\[
\beta \mathcal{S}=\mathfrak{a} \mathfrak{p}_F\mathfrak{D}_0\mathfrak{D}^{-1}\mathcal{S}.
\]
\end{proof}

\begin{Rem}
Proposition \ref{Prop:hermitian local I} specifies  $\beta$ up to multiplication by $\co_F^\times$,
but to determine  the isomorphism class of  $(\co_K,\beta x\overline{y})$ one needs to know 
 $\beta$ up to multiplication by   $\mathrm{Nm}_{K/F}(\co_K^\times)$.  If $K/F$ is unramified 
there is no difference, and so Proposition \ref{Prop:hermitian local I} 
completely determines the isomorphism class of the pair $\big( L(A_0,A) , \langle\cdot,\cdot\rangle_\CM \big)$.
If $K/F$ is ramified there is some remaining ambiguity, as Proposition \ref{Prop:hermitian local I} 
only narrows down the isomorphism class of the pair $\big( L(A_0,A) , \langle\cdot,\cdot\rangle_\CM \big)$
to two possibilities.
\end{Rem}

Let $\mathfrak{m}$ be the maximal ideal of $W_\Phi$, and for every $k\in \Z^{>0}$ set
\[
R^{(k)}=W_\Phi/\mathfrak{m}^k.
\]  
By Theorem \ref{Thm:BT canonical} there is a unique
deformation $(A^{(k)}, \kappa^{(k)}, \lambda^{(k)})$ of $(A,\kappa,\lambda)$ to $R^{(k)}$, and a unique 
deformation $(A_0^{(k)}, \kappa_0^{(k)}, \lambda_0^{(k)})$ of $(A_0,\kappa_0,\lambda_0)$ to $R^{(k)}$.  
The image of the reduction map
\[
\Hom_{\co_{K_0}} (A_0^{(k)}, A^{(k)}) \to  L(A_0,A)
\]
is an $\co_K$-submodule $L^{(k)}$, and 
\[
\dots\subset L^{(3)}\subset L^{(2)} \subset L^{(1)} =L(A_0,A)
\]
is a decreasing filtration of $L(A_0,A)$.

The following theorem, which shows that the filtration on $L(A_0,A)$
is completely determined by the Hermitian form $\langle\cdot,\cdot\rangle_\CM$,
 generalizes a result of Gross \cite{gross86}, as explained in the remarks at the 
end of this subsection.  Gross's original proof, which can be found in an expanded form in the ARGOS volume  \cite{ARGOS-8}, 
is based on Lubin-Tate groups and the theory of formal group laws.  Our proof will be based on 
crystalline deformation theory, and is closer in spirit to Zink's proof of Gross's result, found in 
\cite[Proposition 77]{zink02}.

\begin{Thm}\label{Thm:crystal deform}
Assume that at least one of the following hypotheses is satisfied:
\begin{enumerate}
\item
$K/\Q_p$ is unramified,
\item
$p\not=2$ and one of $K_0/\Q_p$ or $F/\Q_p$ is unramified.
\end{enumerate}
For any nonzero $f\in L(A_0,A)$, $f$ is in $L^{(k)}$ but not  $L^{(k+1)}$ where $\alpha=\langle f,f\rangle_\CM$ and
\[
 k= \frac{1}{2} \cdot \ord_{\co_K} (\alpha\mathfrak{p}_F\mathfrak{D}\mathfrak{D}_0^{-1}\mathfrak{a}^{-1}).
\]
\end{Thm}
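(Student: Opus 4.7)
My plan is to apply Grothendieck-Messing crystalline deformation theory, extending the analysis used in the proof of Theorem \ref{Thm:BT canonical}. The approach proceeds inductively: assuming $f$ has been lifted to a homomorphism $A_0^{(k)}\to A^{(k)}$, one tests whether it further lifts via Grothendieck-Messing on the square-zero PD thickening $R^{(k+1)}\to R^{(k)}$. By the proof of Theorem \ref{Thm:BT canonical},
\[
\mathcal{D}_{A_0^{(k)}}(R^{(k)})\iso \co_{K_0}\otimes_{\Z_p} R^{(k)},\qquad \mathcal{D}_{A^{(k)}}(R^{(k)})\iso \co_K\otimes_{\Z_p} R^{(k)},
\]
with Hodge filtrations given by the images of $J_{\Phi_0}$ and $J_\Phi$, and under the coordinates from Lemmas \ref{Lem:dieu coords} and \ref{Lem:dieu coords II} the induced map on crystals is multiplication by the element $s\in\mathcal{S}$ corresponding to $f$. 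Hence $f$ lifts to level $k$ precisely when
\[
s\cdot J_{\Phi_0}\subset J_\Phi+\mathfrak{m}^k(\co_K\otimes_{\Z_p}W_\Phi),
\]
equivalently, the image of $s\cdot J_{\Phi_0}$ in $\Lie_\Phi$ is contained in $\mathfrak{m}^k\Lie_\Phi$.

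The crucial simplification comes from the signature $(n-1,1)$ condition. Taking a uniformizer $\pi_0$ of $\co_{K_0}$, the ideal $J_{\Phi_0}$ is principal, generated by $\pi_0\otimes 1-1\otimes\iota(\pi_0)$. Inside $\Lie_\Phi\subset\prod_{\varphi\in\Phi}W_\Phi$ its $\varphi$-coordinate is $\varphi(\pi_0)-\iota(\pi_0)$, which vanishes unless $\varphi|_{K_0}=\overline{\iota}$, so only the unique special coordinate $\varphi=\varphi^{\mathrm{sp}}$ contributes, where the value $\overline{\iota}(\pi_0)-\iota(\pi_0)$ is (under either of the two ramification hypotheses, which together exclude the anomalous $p=2$ ramified case) a uniformizer generator of $\iota(\mathfrak{D}_0)W_\Phi$. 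The lifting condition at level $k$ thereby collapses to the single divisibility
\[
\ord_{\pi_\Phi}\bigl(\varphi^{\mathrm{sp}}(s)\bigr)+\ord_{\pi_\Phi}\bigl(\varphi^{\mathrm{sp}}(\mathfrak{D}_0)\bigr)\geq k,
\]
so the largest $k$ for which $f$ lifts equals $\ord_{\psi^{\mathrm{sp}}}(s)+\ord_{\co_K}(\mathfrak{D}_0)$, where $\psi^{\mathrm{sp}}=\varphi^{\mathrm{sp}}|_{\co_K^u}$ and $\ord_{\psi^{\mathrm{sp}}}$ is the valuation on the local factor $e_{\psi^{\mathrm{sp}}}\mathcal{S}$ (which is an unramified extension of $\co_K$).

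The final step is to convert this to the stated formula. Writing $\alpha=\xi s\overline{s}$ as in Lemma \ref{Lem:dieu coords II} with $\xi\mathcal{S}=\mathfrak{a}\mathfrak{D}_0\mathfrak{D}^{-1}\mathcal{S}$, and invoking the idempotent-by-idempotent valuation count for $s$ carried out in the proof of Proposition \ref{Prop:hermitian local I} (which yields $\ord_{\psi^{\mathrm{sp}}}(s)+\ord_{\overline{\psi^{\mathrm{sp}}}}(s)=\ord_{\co_K}(s\overline{s})$, equal to $e_{K/F}$ in the unramified-$K_0$ case and to $0$ otherwise, matching exactly the $\mathfrak{p}_F^{\epsilon}$ discrepancy in the formula for $\beta$), a short bookkeeping calculation produces $2k=\ord_{\co_K}(\alpha\,\mathfrak{p}_F\,\mathfrak{D}\,\mathfrak{D}_0^{-1}\,\mathfrak{a}^{-1})$. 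I expect the main difficulty to lie in precisely this last step: the crystalline condition controls only the single component $\varphi^{\mathrm{sp}}(s)$, while $\alpha$ mixes $s$ and $\overline{s}$ symmetrically across both special and non-special idempotents, and the two ramification hypotheses are exactly what make the valuation bookkeeping available in Proposition \ref{Prop:hermitian local I} uniform enough to yield the $\frac{1}{2}$ in the final answer.
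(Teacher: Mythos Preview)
Your approach is genuinely different from the paper's, and it has a real gap.

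The paper proceeds by induction on the $\pi_K$-divisibility of $f$. Proposition~\ref{Prop:lift I} is the base case: it locates an $\co_K$-module generator $f_0$ of $L(A_0,A)$ in the filtration, treating the unramified-$K_0$ and ramified-$K_0$ cases by separate arguments. Proposition~\ref{Prop:lift II} is the inductive step: it shows that multiplication by $\pi_K$ takes $L^{(k)}$ into $L^{(k+1)}$ and (under the stated hypotheses) is injective on $L^{(k)}/L^{(k+1)} \to L^{(k+1)}/L^{(k+2)}$. The key device there is the operator identity $(x\otimes 1 - 1\otimes\varphi^{\mathrm{sp}}(x))\cdot J_{\Phi_0}\subset J_\Phi$, combined with a PD structure on the kernel of the \emph{two-step} surjection $R^{(k+2)}\to R^{(k)}$; this kernel has length at most~$2$, so such a PD structure always exists under the hypotheses.

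Your direct approach instead asserts a global criterion: $f$ lifts to level $k$ exactly when $s\cdot J_{\Phi_0}\subset J_\Phi + \mathfrak{m}^k(\co_K\otimes W_\Phi)$. The gap is in the phrase ``the induced map on crystals is multiplication by $s$.'' For this to be literally true at every level $R^{(k+1)}$, you need an identification $\mathcal{D}_{A_0^{(k)}}(R^{(k+1)})\cong D(A_0)\otimes_W R^{(k+1)}$ that is compatible with the map induced by $f$. Such an identification comes for free from \cite[Corollary~56]{zink02} \emph{provided} the full kernel of $W_\Phi\to\F$ carries a PD structure compatible with the canonical one on $p$; that requires $e(W_\Phi/W)\le p-1$. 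This holds under hypothesis~(1), and under hypothesis~(2) when $F/\Q_p$ is unramified (so $e\le 2\le p-1$). But hypothesis~(2) also allows $K_0/\Q_p$ unramified with $F/\Q_p$ ramified of index $\ge p$; then $e(W_\Phi/W)=e(F/\Q_p)>p-1$, no global PD structure exists on $\mathfrak{m}\subset W_\Phi$, and your identification is unjustified. The paper's step-by-step induction sidesteps this entirely, since it never needs PD ideals of length greater than~$2$.

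A smaller point: your claim that $J_{\Phi_0}$ is generated by $\pi_0\otimes 1 - 1\otimes\iota(\pi_0)$ for a uniformizer $\pi_0$ is wrong when $K_0/\Q_p$ is unramified; there $\co_{K_0}^u=\co_{K_0}$ and $J_{\Phi_0}$ is generated by the idempotent $e_{\overline{\iota}}$ (cf.\ the proof of Proposition~\ref{Prop:lift I}). Your valuation conclusion happens to survive this error, but the argument as written does not.
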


The proof, which occupies the remainder of this subsection, 
is by induction on the divisibility of $f$ by a uniformizing parameter of $\co_K$. 
Proposition \ref{Prop:lift I} serves as the base case,  and  Proposition \ref{Prop:lift II} forms  the inductive step.

Fix an injective ring homomorphism $\co_F\to M_n(\Z_p)$.
If $B_0$ is a $p$-divisible group  defined
over some base scheme $S$, denote by $B_0\otimes\co_F$ the $p$-divisible 
group $B_0^n$, and let $\co_F$ act through the embedding  $\co_F\to M_n(\Z_p)$ just chosen.  
This construction has a more intrinsic characterization: the functor of points of $B_0\otimes\co_F$ is 
\[
(B_0\otimes\co_F) (T) = B_0(T)\otimes_{\Z_p} \co_F
\]
for any $S$-scheme $T$.  If $B_0$ has an action of $\co_{K_0}$ then $B_0\otimes\co_F$ inherits
an action of the subring  $\co_{K_0}\otimes_{\Z_p} \co_F \subset\co_K$.  
If $B$ is a $p$-divisible group over $S$ with an action of $\co_K$, then every 
$\co_{K_0}$-linear homomorphism $f: B_0\to B$ induces an $\co_{K_0}\otimes_{\Z_p}\co_F$-linear homomorphism
$f:B_0\otimes\co_F\to B$.

\begin{Prop}\label{Prop:lift I}
Suppose $f$ is an $\co_K$-module generator of $L(A_0,A)$.  
\begin{enumerate}
\item
If $K_0/\Q_p$ is unramified then $f$ is in $L^{(1)}$ but not $L^{(2)}$.
\item
If $K_0/\Q_p$ is ramified  and $F/\Q_p$ is unramified 
then $f$ is in $L^{(d)}$ but not $L^{(d+1)}$, where $d=\ord_{\co_{K_0}}(\mathfrak{D}_0)$.
\end{enumerate}
\end{Prop}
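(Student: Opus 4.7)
The plan is to translate the lifting problem into crystalline linear algebra via Grothendieck--Messing theory, and then extract the answer from the valuation computations already carried out in the proof of Proposition \ref{Prop:hermitian local I}.

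By Grothendieck--Messing, $f\colon A_0\to A$ extends to a homomorphism $A_0^{(k)}\to A^{(k)}$ over $R^{(k)}=W_\Phi/\mathfrak{m}^k$ between the canonical lifts if and only if the induced map on Dieudonn\'e crystals evaluated on $R^{(k)}$ preserves the Hodge filtrations.  Under the isomorphisms of Lemma \ref{Lem:dieu coords} and (\ref{hermitian model}), this map on crystals is multiplication by the element $s\in\mathcal{S}$ corresponding to $f$, while (as in the proof of Theorem \ref{Thm:BT canonical}) the Hodge filtrations on $\mathcal{D}_{A^{(k)}}(R^{(k)})\iso\co_K\otimes_{\Z_p}R^{(k)}$ and $\mathcal{D}_{A_0^{(k)}}(R^{(k)})\iso\co_{K_0}\otimes_{\Z_p}R^{(k)}$ are cut out by $J_\Phi$ and $J_{\Phi_0}$, respectively.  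So the criterion at level $k$ becomes
\[
s\cdot J_{\Phi_0} \;\subset\; J_\Phi \,+\, \mathfrak{m}^k(\co_K\otimes_{\Z_p}W_\Phi),
\]
equivalently that the image of $s\cdot J_{\Phi_0}$ in $\mathrm{Lie}_\Phi=\prod_{\varphi\in\Phi}W_\Phi$ lies in $\mathfrak{m}^k\cdot\mathrm{Lie}_\Phi$.

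In case (1), $K_0/\Q_p$ is unramified, so $J_{\Phi_0}\subset\co_{K_0}\otimes W_\Phi$ is the ideal generated by the idempotent $e_{\bar\iota}$ singling out the embedding $\bar\iota$.  Since $\varphi^{\mathrm{sp}}$ is the unique element of $\Phi$ restricting to $\bar\iota$ on $K_0$, the image of $s\,e_{\bar\iota}$ in $\mathrm{Lie}_\Phi$ is supported on the $\varphi^{\mathrm{sp}}$-factor and equals $\varphi^{\mathrm{sp}}(s)\in W_\Phi$.  Since $f$ generates $L(A_0,A)$, the valuation bookkeeping in the proof of Proposition \ref{Prop:hermitian local I} gives $\ord_{\psi^0}(s)=1$ with $\psi^0=\varphi^{\mathrm{sp}}|_{\co_K^u}$; tracing through the map $e_{\psi^0}\mathcal{S}\to W_\Phi$, which sends a uniformizer $\pi_K$ of $\co_K$ to a uniformizer of $W_\Phi$, this translates to $v_{W_\Phi}(\varphi^{\mathrm{sp}}(s))=1$.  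Hence the criterion holds for $k=1$ but fails for $k=2$.

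In case (2), $K_0/\Q_p$ is ramified and $F/\Q_p$ is unramified, so $K/K_0$ is unramified of degree $n$.  Writing $\pi_0$ for a uniformizer of $\co_{K_0}$, the ideal $J_{\Phi_0}$ is generated by $\pi_0\otimes 1 - 1\otimes\iota(\pi_0)$; its image in $\mathrm{Lie}_\Phi$ under $\eta_\Phi$ vanishes on each $\varphi$ with $\varphi|_{K_0}=\iota$ and equals $\bar\iota(\pi_0)-\iota(\pi_0)=\iota(\pi_0-\overline{\pi}_0)$ on $\varphi^{\mathrm{sp}}$.  Since $\pi_0-\overline{\pi}_0$ generates $\mathfrak{D}_0$ and $e(K_\Phi/K_0)=1$, this element has valuation exactly $d=\ord_{\co_{K_0}}(\mathfrak{D}_0)$ in $W_\Phi$.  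The proof of Proposition \ref{Prop:hermitian local I} shows $s\in\mathcal{S}^\times$ in this case, so $\varphi^{\mathrm{sp}}(s)\in W_\Phi^\times$; combining, the image of $s\cdot J_{\Phi_0}$ in $\mathrm{Lie}_\Phi$ has valuation exactly $d$, so the criterion holds for $k=d$ but fails for $k=d+1$.  The heart of the argument is the first step: once the lifting problem is reduced to the displayed ideal-containment condition, the remaining analysis is a direct application of the explicit coordinates of Lemma \ref{Lem:dieu coords} and the valuation tallies from Proposition \ref{Prop:hermitian local I}.
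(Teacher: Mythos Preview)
Your treatment of case (1) is essentially the paper's own argument: reduce the lift from $R^{(1)}$ to $R^{(2)}$ to a Hodge-filtration check via Grothendieck--Messing on the square-zero thickening, and read off the obstruction from $\ord_{\psi^0}(s)=1$.

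For case (2) your route diverges from the paper's. You continue the crystalline analysis, computing that the image of $s\cdot J_{\Phi_0}$ in $\Lie_\Phi$ is supported on the $\varphi^{\mathrm{sp}}$-component with valuation exactly $d$. The paper instead exploits that $s\in\mathcal{S}^\times$ forces $f:A_0\otimes_{\Z_p}\co_F\to A$ to be an \emph{isomorphism} of $p$-divisible groups, and then argues in both directions by comparing the $\Phi$- and $\Phi^*$-determinant conditions on Lie algebras together with the uniqueness of deformations (Remark~\ref{Rem:strong lift}): if $f$ lifts to level $k$ the induced Lie algebra isomorphism forces the two polynomials~(\ref{cong polys}) to agree in $R^{(k)}[T]$, hence $k\le d$; conversely if $k\le d$ then $A_0^{(k)}\otimes\co_F$ already satisfies the $\Phi$-determinant condition and must coincide with $A^{(k)}$. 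Your approach is pleasantly uniform with case (1); the paper's is more ad hoc but avoids crystal bookkeeping.

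There is, however, a gap in your justification for $p=2$. You invoke a one-shot Grothendieck--Messing criterion for lifting directly from $\F$ to $R^{(k)}$, which requires $R^{(k)}\to\F$ to be a divided-power thickening. Since $e(W_\Phi/W)=e(K_0/\Q_p)=2$, the ideal $\mathfrak{m}\subset W_\Phi$ carries a PD structure when $p$ is odd (as $e\le p-1$), and in any event $d=1$ then so only $k\le 2$ is needed. But for $p=2$ one has $d\in\{2,3\}$, and no PD structure exists on $\mathfrak{m}/\mathfrak{m}^k$ for $k\ge 3$: already $\gamma_2(\pi)=\pi^2/2$ would be a unit. The proposition as stated covers all $p$, so as written your argument is incomplete. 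It can be repaired by iterating over square-zero steps $R^{(j+1)}\to R^{(j)}$: at each step the crystal map is multiplication by some lift of $\bar s\in(\co_K\otimes\F)^\times$, hence a unit, so the obstruction at level $j+1$ reduces to whether $\overline{\iota}(\pi_0)-\iota(\pi_0)\in\mathfrak{m}^{j+1}$, recovering $k\le d$. The paper's isomorphism argument sidesteps these PD subtleties entirely; this is exactly the kind of issue the paper is careful about in Proposition~\ref{Prop:lift II}.
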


\begin{proof}
Let $\mathcal{D}_0$ and $\mathcal{D}$ be the Grothendieck-Messing crystals of $A_0=A_0^{(1)}$ and 
$A=A^{(1)}$, respectively.  First assume $K_0/\Q_p$ is unramified.
The kernel $\mathcal{I}$ of $R^{(2)}\to R^{(1)}$ can be equipped with a 
divided power structure 
compatible with the canonical divided powers on $pR^{(2)}$ (take the trivial divided powers on $\mathcal{I}$ 
if $W_\Phi/W$ is  ramified, and the canonical divided powers on $\mathcal{I}=pR^{(2)}$ otherwise),
and once such divided powers are chosen we may identify, using \cite[Corollary 56]{zink02},
\[
\mathcal{D}_0(R^{(2)})  \iso  D(A_0) \otimes_W R^{(2)}  \iso  \mathcal{S}_0\otimes_W R^{(2)}  
\]
and
\[
\mathcal{D}(R^{(2)}) \iso D(A) \otimes_W R^{(2)} \iso  \mathcal{S}\otimes_W R^{(2)}.
\]
As in the proof of Theorem \ref{Thm:BT canonical} the lifts of the Hodge filtrations
of $\mathcal{D}_0(R^{(1)})$ and $\mathcal{D}(R^{(1)})$ 
corresponding to the  deformations $A_0^{(2)}$ and $A^{(2)}$ are 
$J_{\Phi_0} \mathcal{D}_0(R^{(2)})$ and $J_{\Phi}\mathcal{D}(R^{(2)})$,
and $f$ lifts to a map $A^{(2)}_0\to A^{(2)}$ if and only if 
\[
J_{\Phi_0}  \mathcal{D}_0(R^{(2)}) \map{f}  \mathcal{D}(R^{(2)})/J_{\Phi}  \mathcal{D}(R^{(2)})
\]
is trivial.  If $f$ corresponds to $s\in \mathcal{S}$ under the isomorphism (\ref{hermitian model}), 
we must therefore prove that the map
\[
J_{\Phi_0}(\mathcal{S}_0\otimes_W W_\Phi) \map{s\cdot} 
 (\mathcal{S}\otimes_W W_\Phi)/J_\Phi (\mathcal{S}\otimes_W W_\Phi)
\]
is nonzero modulo $\mathfrak{m}^2$.  But this is clear from the proof of 
Proposition \ref{Prop:hermitian local I}: if $\psi$ denotes the restriction of $\varphi^\mathrm{sp}$
to $\co^u_K\to W$ then we have already seen that $\ord_\psi(s)=1$, and so the
image of $s$ under the surjection $\varphi^\mathrm{sp}:\mathcal{S}\to W_\Phi$ is a uniformizing parameter.  
The assumption that $K_0/\Q_p$ is unramified implies that 
$\mathcal{S}_0\otimes_W W_\Phi \iso W_\Phi \times W_\Phi$, and that the composition
\[
W_\Phi\iso J_{\Phi_0}(\mathcal{S}_0\otimes_W W_\Phi) \map{s \cdot} 
 (\mathcal{S}\otimes_W W_\Phi)/J_\Phi (\mathcal{S}\otimes_W W_\Phi) \map{\varphi^{\mathrm{sp}}}W_\Phi
\]
is multiplication by $\varphi^\mathrm{sp}(s)$.

Now assume $K_0/\Q_p$ is ramified and $F/\Q_p$ is unramified, so that 
\[
\co_K=\co_{K_0}\otimes_{\Z_p}\co_F.
\]
Let $s\in \mathcal{S}$ correspond to $f$ under the isomorphism
(\ref{hermitian model}), and recall from the proof of Proposition \ref{Prop:hermitian local I} that 
$s\in \mathcal{S}^\times$.
This implies that the induced map 
\[
f:D(A_0)\otimes_{\Z_p}\co_F \to D(A)
\]
is an isomorphism of Dieudonn\'e
modules, and in particular $f$ induces an isomorphism of Lie algebras
\[
\Lie(A_0)\otimes_{\Z_p}\co_F\iso \Lie(A).
\]
If $f$ lifts to a map $f^{(k)}:A_0^{(k)}\to A^{(k)}$ then Nakayama's lemma implies that the induced map
\[
\Lie(A_0^{(k)})\otimes_{\Z_p}\co_F\iso \Lie(A^{(k)})
\]
is again an isomorphism, and comparing the $\co_K$-action on each side we find the equality in $R^{(k)}[T]$
\begin{equation}\label{cong polys}
\prod_{\varphi\in \Phi^*}(T-\varphi(x)) = \prod_{\varphi\in \Phi}(T-\varphi(x))
\end{equation}
for every $x\in \co_K$, where 
\[
\Phi^*= \{ \varphi \in \Hom(K,\C_p)  :  \varphi|_{K_0} = \iota\}
\]
is the $p$-adic CM type of $K$ obtained by replacing $\varphi^\mathrm{sp}$ by $\overline{\varphi}^\mathrm{sp}$.
Comparing the coefficients of $T^{n-1}$ shows that
$
\varphi^\mathrm{sp},\overline{\varphi}^\mathrm{sp} : \co_K\to W_\Phi
$
are congruent modulo $\mathfrak{m}^k$, which implies $k\le  \ord_{\co_{K_0}}(\mathfrak{D}_0)$.

Conversely, if $k\le \ord_{\co_{K_0}}(\mathfrak{D}_0)$ then the polynomials (\ref{cong polys}) in $R^{(k)}[T]$
are equal, and so the natural $\co_K$-action on the $p$-divisible group $A_0^{(k)}\otimes_{\Z_p}\co_F$ 
over $R^{(k)}$ satisfies the $\Phi$-determinant condition.  The map  $f:A_0\otimes_{\Z_p}\co_F\to A$
is an isomorphism of $p$-divisible groups (because it induces an isomorphism of Dieudonn\'e modules), 
and this allows us to view $A_0^{(k)}\otimes_{\Z_p}\co_F$ as a deformation of $A$ with its $\co_K$-action.
By the uniqueness of such deformations (see Remark \ref{Rem:strong lift})  there is an $\co_K$-linear 
isomorphism 
\[
A_0^{(k)}\otimes_{\Z_p}\co_F \to  A^{(k)}
\] 
 lifting  $f:A_0\otimes_{\Z_p}\co_F\to A$, and precomposing with the inclusion 
\[
A_0^{(k)}\to A^{(k)}_0\otimes_{\Z_p}\co_F
\] 
gives the  desired lift of $f:A_0\to A$.  This shows that $f$ lifts to $A_0^{(k)}\to A^{(k)}$ if and only if 
$k\le  \ord_{\co_{K_0}}(\mathfrak{D}_0)$.
\end{proof}

\begin{Prop}\label{Prop:lift II}
Let $\pi_K$ be a uniformizer of $\co_K$.  If $f\in L^{(k)}$ then $\pi_K f\in L^{(k+1)}$. 
Furthermore, if any one of the conditions
\begin{enumerate}
\item
$k> 1$
\item
$p\not=2$
\item
$K/\Q_p$ is unramified
\end{enumerate}
is satisfied then the map $\pi_K:L^{(k)}/L^{(k+1)} \to  L^{(k+1)}/L^{(k+2)}$ is injective.
\end{Prop}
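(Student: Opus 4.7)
The plan is to extract from the Grothendieck-Messing framework underlying Theorem~\ref{Thm:BT canonical} a single scalar obstruction to liftability that transforms predictably under multiplication by $\pi_K$. Identify $f$ via (\ref{hermitian model}) with an element $s\in \mathcal{S}$. The Hodge-filtration calculation at the heart of Theorem~\ref{Thm:BT canonical} shows that $f$ lifts to an $\co_{K_0}$-linear morphism $A_0^{(k)}\to A^{(k)}$ if and only if the composition
\[
\phi_f^{(k)} \colon J_{\Phi_0}(\mathcal{S}_0\otimes_W W_\Phi) \map{s\cdot} \mathcal{S}\otimes_W W_\Phi \to \Lie_\Phi
\]
vanishes modulo $\mathfrak{m}^k$.

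The first step is to verify that the image of $\phi_f^{(k)}$ lies in the $\overline{\iota}$-isotypic submodule of $\Lie_\Phi$. This uses the $\co_{K_0}$-linearity of $s$ together with the identity
\[
\pi_0\cdot(\pi_0-\iota(\pi_0))=\overline{\iota}(\pi_0)\cdot(\pi_0-\iota(\pi_0))
\]
in $\mathcal{S}_0\otimes_W W_\Phi$, which shows that $\co_{K_0}$ acts on the rank-one $W_\Phi$-module $J_{\Phi_0}$ through the character $\overline{\iota}$; this holds in both the unramified and ramified $K_0$ cases. By the signature $(n-1,1)$ determinant condition, the $\overline{\iota}$-isotypic part of $\Lie_\Phi$ is free of rank one over $W_\Phi$ with $\co_K$ acting through $\varphi^\mathrm{sp}$. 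Fixing a generator and passing to the inverse limit over $k$, the compatible system $\{\phi_f^{(k)}\}$ assembles to a well-defined scalar $\phi_f\in W_\Phi$ characterized by $f\in L^{(k)}\Longleftrightarrow \phi_f\in\mathfrak{m}^k$.

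Both conclusions of the proposition then follow from the transformation rule $\phi_{\pi_K f}=\varphi^\mathrm{sp}(\pi_K)\phi_f$, combined with the fact that $\varphi^\mathrm{sp}(\pi_K)$ is a uniformizer of $W_\Phi$. The latter holds because $\varphi^\mathrm{sp}$ identifies $K$ with $K_\Phi\subset W_\Phi$ and $W_\Phi$ is unramified over $\co_{K_\Phi}$, so $\ord_\mathfrak{m}(\varphi^\mathrm{sp}(\pi_K))=1$. The implication $\phi_f\in\mathfrak{m}^k\Rightarrow\phi_{\pi_K f}\in\mathfrak{m}^{k+1}$ is immediate, and the injectivity of $\pi_K\colon L^{(k)}/L^{(k+1)}\to L^{(k+1)}/L^{(k+2)}$ amounts to the observation that in the discrete valuation ring $W_\Phi$, the containment $\varphi^\mathrm{sp}(\pi_K)\phi_f\in\mathfrak{m}^{k+2}$ forces $\phi_f\in\mathfrak{m}^{k+1}$.

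The technical hypotheses $k>1$, $p\neq 2$, or $K/\Q_p$ unramified enter only in justifying the Grothendieck-Messing step across the successive thickenings $R^{(k)}\to R^{(k+1)}\to R^{(k+2)}$: the trivial divided-power structure on the square-zero kernels $\mathfrak{m}^j/\mathfrak{m}^{j+1}$ must be compatible with the canonical divided powers on $p$, and this compatibility becomes delicate only in the residual case $p=2$ with $K$ ramified and $k=1$. I anticipate that the main obstacle will be the careful treatment of the ramified $K_0$ case, in particular the verification that the $\overline{\iota}$-eigenspace of $\Lie_\Phi$ is a $W_\Phi$-direct summand so that the assembled obstruction $\phi_f$ faithfully records liftability, together with the associated divided-power compatibility checks; this parallels the dichotomy between the unramified and ramified $K_0$ arguments already visible in Proposition~\ref{Prop:lift I}.
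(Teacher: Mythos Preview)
Your core insight matches the paper's: acting by $\pi_K$ on the Grothendieck--Messing obstruction is the same as multiplying by the uniformizer $\varphi^{\mathrm{sp}}(\pi_K)\in\mathfrak{m}$. The paper expresses this as the identity $(x\otimes 1-1\otimes\varphi^{\mathrm{sp}}(x))\cdot j_0\in J_\Phi$ for $j_0\in J_{\Phi_0}$ and $x\in\co_K$, which forces $\pi_K f^{(k)}$ and $\varphi^{\mathrm{sp}}(\pi_K)f^{(k)}$ to coincide as maps $J_{\Phi_0}\mathcal{D}_0\to\mathcal{D}/J_\Phi\mathcal{D}$.

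The genuine gap is your assertion that there exists a single scalar $\phi_f\in W_\Phi$ with $f\in L^{(k)}\Longleftrightarrow\phi_f\in\mathfrak{m}^k$. Theorem~\ref{Thm:BT canonical} does not give this: it concerns deformations of $(A,\kappa,\lambda)$, not liftability of homomorphisms, and its proof is step-by-step through square-zero thickenings. Your $\phi_f$ is built from multiplication by $s$ on $\mathcal{S}_0\otimes_W W_\Phi$, but the Grothendieck--Messing obstruction at level $k$ lives on $\mathcal{D}_{A_0^{(k)}}(R^{(k+1)})$, the crystal of the \emph{already-lifted} $A_0^{(k)}$. Identifying this with $\mathcal{S}_0\otimes_W R^{(k+1)}$ compatibly across all $k$, and identifying the map induced by $f^{(k)}$ with multiplication by the fixed $s$, requires evaluating the crystal of $A_0$ over $\F$ on $R^{(k)}$ for all $k$ --- i.e., divided powers on $\mathfrak{m}/\mathfrak{m}^k$ compatible with those on $p$. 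This is available when $W_\Phi=W$ but not in general, so your route cannot prove the first claim unconditionally, as the proposition requires.

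The paper sidesteps this entirely by never assembling a global obstruction. For the first claim it uses only the square-zero thickening $R^{(k+1)}\to R^{(k)}$: the obstruction map already has image killed by $\mathfrak{m}$ (since it vanishes after reduction to $R^{(k)}$), and then $\varphi^{\mathrm{sp}}(\pi_K)\in\mathfrak{m}$ kills it. For injectivity the paper makes a single two-step jump $R^{(k+2)}\to R^{(k)}$, and the hypotheses (1)--(3) are invoked precisely to furnish divided powers on that one kernel $\mathfrak{m}^k/\mathfrak{m}^{k+2}$. Your diagnosis of where the hypotheses enter is slightly off, and the worry about the $\overline{\iota}$-summand in the ramified $K_0$ case is a red herring: the paper's argument works directly with the abstract crystals $\mathcal{D}_0,\mathcal{D}$ and never needs either the identification with $\mathcal{S}$ or an integral eigenspace decomposition.
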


\begin{proof}
The essential observation is that if 
\[
j_0\in J_{\Phi_0}=\mathrm{ker}\big( \co_{K_0}\otimes_{\Z_p}W_\Phi \to \C_p(\iota)\big)
\]
then 
\[
(x\otimes 1-1\otimes\varphi^\mathrm{sp}(x))\cdot  j_0 \in J_\Phi 
=\mathrm{ker}\big( \co_K\otimes_{\Z_p}W_\Phi  \to \prod_{\varphi\in \Phi}\C_p(\varphi)\big)
\]
for every $x\in \co_K$.  So, given  an $\co_{K_0}\otimes_{\Z_p}W_\Phi$-module $M_0$, 
an $\co_K\otimes_{\Z_p} W_\Phi$-module $M$, and an $\co_{K_0}$-linear map $f:M_0\to M$,
the induced map
\[
f :J_{\Phi_0}M_0 \to  M/J_\Phi M
\]
satisfies 
\[
(x\otimes 1 - 1\otimes \varphi^\mathrm{sp}(x)) \circ f = 0 
\] 
for all  $x\in \co_K$.

Now suppose $f:A_0\to A$ lifts to a map 
\[
f^{(k)}:A_0^{(k)}\to A^{(k)}.
\]
  Let $\mathcal{D}_0$ and
$\mathcal{D}$ be the Grothendieck-Messing crystals of $A_0^{(k)}$ and $A^{(k)}$, respectively.
By equipping the kernel of $R^{(k+1)}\to R^{(k)}$ with its trivial divided power structure, $f^{(k)}$
induces a commutative diagram
\[
\xymatrix@=16pt{
 {  J_{\Phi_0} \mathcal{D}_0(R^{(k+1)}) } \ar[r]\ar[d]  &  {\mathcal{D}_0(R^{(k+1)})} \ar[r]^{f^{(k)}}\ar[d]  & 
 {  \mathcal{D}(R^{(k+1)}) } \ar[r] \ar[d] & {  \mathcal{D}(R^{(k+1)})  / J_\Phi\mathcal{D}(R^{(k+1)}) } \ar[d]  \\
  {  J_{\Phi_0} \mathcal{D}_0(R^{(k)}) } \ar[r]  &  {\mathcal{D}_0(R^{(k)})} \ar[r]^{f^{(k)}} & 
 {  \mathcal{D}(R^{(k)}) } \ar[r] & {  \mathcal{D}(R^{(k)})  / J_\Phi\mathcal{D}(R^{(k)}) }.
 }
\]
As the middle arrow of the bottom row must preserve the Hodge filtrations of the crystals,  the  composition 
along the bottom row is trivial (see the proof of Theorem \ref{Thm:BT canonical}).
Therefore the composition along the top row
\[
J_{\Phi_0} \mathcal{D}_0(R^{(k+1)}) \to   \mathcal{D}(R^{(k+1)})  / J_\Phi\mathcal{D}(R^{(k+1)}) 
\]
becomes trivial after applying $\otimes_{R^{(k+1)}} R^{(k)}$, and so has image
annihilated  by $\mathfrak{m}$.  By the comments of the previous paragraph 
\[
\pi_Kf^{(k)} = \varphi^\mathrm{sp}(\pi_K) f^{(k)}
\] 
when viewed as maps 
\[
J_{\Phi_0} \mathcal{D}_0(R^{(k+1)}) \to   \mathcal{D}(R^{(k+1)})  / J_\Phi\mathcal{D}(R^{(k+1)}), 
\]
and as $\varphi^\mathrm{sp}(\pi_K) \in \mathfrak{m}$ we deduce that these maps are trivial.
Therefore 
\[
\pi_Kf^{(k)}: \mathcal{D}_0(R^{(k+1)}) \to \mathcal{D}(R^{(k+1)})
\]
takes the submodule $J_{\Phi_0} \mathcal{D}_0(R^{(k+1)})$ into $J_{\Phi} \mathcal{D}(R^{(k+1)})$.
By the proof of Theorem \ref{Thm:BT canonical} these submodules are the lifts of the Hodge filtrations
defining $A_0^{(k+1)}$ and $A^{(k+1)}$, and so $\pi_K f^{(k)}$ lifts to a map $A_0^{(k+1)}\to A^{(k+1)}$.

Now suppose $f$ is in  $L^{(k)}$ but not  $L^{(k+1)}$, and let $\mathcal{I}$ be the kernel of $R^{(k+2)} \to R^{(k)}$.
If $k>1$ then $\mathcal{I}^2=0$, and we may  equip $\mathcal{I}$ with its trivial divided powers.   If  $p\not=2$ 
then  $\mathcal{I}^3=0$ allows us  to  equip $\mathcal{I}$ with its trivial divided powers. If $K/\Q_p$
is unramified then $W_\Phi=W$, and we may equip $\mathcal{I}=p^kW_\Phi$ with its canonical divided powers.
In any case there is some divided power structure on $\mathcal{I}$, and so we may add a third row 
\[
\xymatrix@=16pt{
 {  J_{\Phi_0} \mathcal{D}_0(R^{(k+2)}) } \ar[r]\ar[d]  &  {\mathcal{D}_0(R^{(k+2)})} \ar[r]^{f^{(k)}}\ar[d]  & 
 {  \mathcal{D}(R^{(k+2)}) } \ar[r] \ar[d] & {  \mathcal{D}(R^{(k+2)})  / J_\Phi\mathcal{D}(R^{(k+2)}) } \ar[d]  \\
 {  J_{\Phi_0} \mathcal{D}_0(R^{(k+1)}) } \ar[r]\ar[d]  &  {\mathcal{D}_0(R^{(k+1)})} \ar[r]^{f^{(k)}}\ar[d]  & 
 {  \mathcal{D}(R^{(k+1)}) } \ar[r] \ar[d] & {  \mathcal{D}(R^{(k+1)})  / J_\Phi\mathcal{D}(R^{(k+1)}) } \ar[d]  \\
  {  J_{\Phi_0} \mathcal{D}_0(R^{(k)}) } \ar[r]  &  {\mathcal{D}_0(R^{(k)})} \ar[r]^{f^{(k)}} & 
 {  \mathcal{D}(R^{(k)}) } \ar[r] & {  \mathcal{D}(R^{(k)})  / J_\Phi\mathcal{D}(R^{(k)}) }.
 }
\]
to the diagram above.  If $\pi_K f^{(k)}$ lifts to a map $A_0^{(k+2)} \to A^{(k+2)}$ then 
\[
\pi_Kf^{(k)} : J_{\Phi_0} \mathcal{D}_0(R^{(k+2)}) \to  \mathcal{D}(R^{(k+2)})  / J_\Phi\mathcal{D}(R^{(k+2)})
\]
is trivial.  By the comments of the first paragraph this implies that
\[
\varphi^\mathrm{sp}(\pi_K)f^{(k)}:J_{\Phi_0} \mathcal{D}_0(R^{(k+2)}) \to  
\mathcal{D}(R^{(k+2)})  / J_\Phi\mathcal{D}(R^{(k+2)})
\]
is also trivial, and so 
\[
f^{(k)} : J_{\Phi_0} \mathcal{D}_0(R^{(k+2)}) \to  \mathcal{D}(R^{(k+2)})  / J_\Phi\mathcal{D}(R^{(k+2)})
\]
takes values in $\mathfrak{m}^{k+1} \cdot \mathcal{D}(R^{(k+2)})  / J_\Phi\mathcal{D}(R^{(k+2)})$.
But this implies that
\[
f^{(k)} : J_{\Phi_0} \mathcal{D}_0(R^{(k+1)}) \to  \mathcal{D}(R^{(k+1)})  / J_\Phi\mathcal{D}(R^{(k+1)})
\]
is trivial, contradicting our hypothesis that $f^{(k)}$ does not lift to a map $A_0^{(k+1)}\to A^{(k+1)}$.
Therefore $\pi_K f^{(k)}$ lifts to  $A_0^{(k+1)}\to A^{(k+1)}$ but not to  
$A_0^{(k+2)} \to A^{(k+2)}$.
\end{proof}

\begin{proof}[Proof of Theorem \ref{Thm:crystal deform}]
Let $\beta\in F^\times$ be as in Proposition \ref{Prop:hermitian local I}.  Fix a uniformizer $\pi_K\in \co_K$
and write $f=\pi_K^m f_0$ with $f_0$ an $\co_K$-module generator of $L(A_0,A)$, so that 
\[
\langle f_0, f_0\rangle_\CM \co_F =  \beta\co_F.
\]  
If $K_0/\Q_p$ is unramified then
\[
\alpha \co_K=\mathfrak{p}_F^{2m}\beta\co_K=\mathfrak{D}_0\mathfrak{D}^{-1}\mathfrak{ap}_F^{2m+1}.
\]  
Using induction on $m$, Propositions \ref{Prop:lift I} and \ref{Prop:lift II} imply that $f$ is in
$L^{(m+1)}$ but not $L^{(m+2)}$, and the claim follows.  If $K_0/\Q_p$ is ramified then
\[
\alpha \co_K=\mathfrak{p}_F^{m}\beta\co_K=\mathfrak{D}_0\mathfrak{D}^{-1}\mathfrak{ap}_F^{m}.
\]
Using induction on $m$, Proposition \ref{Prop:lift I}  (with $d=1$, as $p$ is odd) and 
Proposition \ref{Prop:lift II}  imply that $f$ is in $L^{(m+1)}$ but not $L^{(m+2)}$, and again the claim follows.
\end{proof}

Consider the special  case of $F=\Q_p$, so that $K=K_0$ and  $W_\Phi$ 
is the completion of the ring of integers in the maximal unramified extension of $K$.   We end this subsection
by explaining how, in this special case, Theorem \ref{Thm:crystal deform} reduces 
to a well-known formula of Gross \cite{gross86}, 
which  plays a crucial role in the proof of the famous Gross-Zagier formula \cite{gross-zagier}.
Assume for simplicity that $p\not=2$.

Keep $(A_0,\kappa_0,\lambda_0)$
as above, but now take $(A,\kappa,\lambda)=(A_0,\overline{\kappa}_0,\lambda_0)$, where
$\overline{\kappa}_0(x)=\kappa_0(\overline{x})$.  Suppressing $\kappa_0$ from the notation, the 
$\co_K$-module $L(A_0,A)$ now sits inside of $\End(A_0)$ as the set of  $j\in \End(A_0)$ satisfying
$j\circ x= \overline{x}\circ j$ for all $x\in \co_K$, and
\[
\End(A_0) = \co_K \oplus L(A_0,A).
\]
 Furthermore,
\[
\End(A_0^{(k)}) = \co_K\oplus L^{(k)},
\]
and so in this special case Theorem \ref{Thm:crystal deform} amounts to an explicit description of 
how the ring $\End(A_0^{(k)})$ shrinks as $k$ grows.    Fix an $\co_K$-module generator $f\in L(A_0,A)$. 
If $\pi_K$ is a uniformizing parameter of $\co_K$, then our results prove
\[
\End(A_0^{(k)}) = \co_K \oplus \co_K \pi_K^{k-1} f,
\]
which is exactly Gross's formula.


\subsection{Lifting homomorphisms: the signature $(n,0)$ case}
\label{ss:LHII}


As in the previous subsection   $K_0$ is a quadratic field extension of $\Q_p$,  $F/\Q_p$ is a field extension of 
degree  $n$, and 
\[
K=K_0\otimes_{\Q_p} F.
\]  
We now allow the possibility  $K\iso F\times F$.
 Fix an embedding $\iota:K_0\to \C_p$, so that   $\Phi_0=\{\iota\}$ is a $p$-adic CM type of $K_0$. 
A $p$-adic CM type $\Phi$ of $K$ is  of \emph{signature} $(n,0)$ if  $\varphi|_{K_0}=\iota$
for every $\varphi\in \Phi$.  Fix such a $\Phi$ (in fact, it's unique). 
 If we let  $K_\Phi \subset \C_p$ be any subfield containing $\iota(K_0)$,
then condition (\ref{reflex condition})  is satisfied.  
Let $W_\Phi$ and $\mathbf{ART}$ be   as in Section \ref{ss:canonical lifts}.

 Fix a triple $(A,\kappa,\lambda)$ in which 
 \begin{itemize}
 \item
 $A$ is a $p$-divisible group over  $\F$ of dimension $n$,
 \item
 $\kappa:\co_K\to \End(A)$ satisfies the $\Phi$-determinant condition,
 \item
$\lambda:A\to A^\vee$ is an $\co_K$-linear polarization.     
\end{itemize}
Fix a second triple 
 $(A_0,\kappa_0,\lambda_0)$ in which  
 \begin{itemize}
\item
 $A_0$ is a $p$-divisible group over  $\F$ of dimension $1$,
\item
 $\kappa_0:\co_{K_0}\to \End(A_0)$ satisfies the $\Phi_0$-determinant condition, 
 \item
$\lambda_0:A_0\to A_0^\vee$ is an $\co_{K_0}$-linear polarization.  
\end{itemize} 
By Theorem  \ref{Thm:BT canonical} each of   $(A_0,\kappa_0,\lambda_0)$ and $(A,\kappa,\lambda)$
admits a unique deformation to any object of $\mathbf{ART}$.
The  following is the signature $(n,0)$ version of  Theorem \ref{Thm:crystal deform}.  Now the 
situation is drastically simplified.

\begin{Prop}\label{Prop:simple deformation}
Let $R$ be an object of $\mathbf{ART}$, and $(A_0',\kappa_0',\lambda_0')$ and $(A',\kappa',\lambda')$
 the unique deformations of  $(A_0,\kappa_0,\lambda_0)$ and $(A,\kappa,\lambda)$ to $R$.
The reduction map
\[
\Hom_{\co_{K_0}}(A_0',A') \to  \Hom_{\co_{K_0}}(A_0,A)
\]
is a bijection.
\end{Prop}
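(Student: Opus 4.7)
The plan is to prove the bijectivity by induction on the length of $R$, using Grothendieck-Messing deformation theory to reduce the lifting problem for homomorphisms to a condition on Hodge filtrations, then observing that the signature $(n,0)$ hypothesis makes that condition automatic for $\co_{K_0}$-linear maps.

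By standard d\'evissage, it suffices to treat a small thickening $S \to R$ in $\mathbf{ART}$ whose kernel $\mathcal{I}$ satisfies $\mathcal{I}^2 = 0$, which we equip with its trivial divided power structure. Writing $(A_0^S, A^S)$ and $(A_0^R, A^R)$ for the unique deformations of $(A_0,A)$ to $S$ and $R$ respectively (from Theorem~\ref{Thm:BT canonical}), it is enough to show that the reduction map $\Hom_{\co_{K_0}}(A_0^S, A^S) \to \Hom_{\co_{K_0}}(A_0^R, A^R)$ is a bijection. Given $f \colon A_0^R \to A^R$, the crystal attached to $A_0^R$ functorially extends along the PD-thickening $S \to R$, so $f$ induces a canonical $\co_{K_0}$-linear $S$-module homomorphism $\tilde{f} \colon \mathcal{D}_{A_0^S}(S) \to \mathcal{D}_{A^S}(S)$. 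By Grothendieck-Messing, $f$ lifts to a (necessarily unique) homomorphism $A_0^S \to A^S$ if and only if $\tilde{f}$ sends the Hodge filtration $J_{\Phi_0} \mathcal{D}_{A_0^S}(S)$ into the Hodge filtration $J_\Phi \mathcal{D}_{A^S}(S)$, with both Hodge filtrations identified as in the proof of Theorem~\ref{Thm:BT canonical}.

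The heart of the argument is that the signature $(n,0)$ condition makes this filtration compatibility automatic. Since $\varphi|_{K_0} = \iota$ for every $\varphi \in \Phi$, each generator $x \otimes 1 - 1 \otimes \iota(x)$ of $J_{\Phi_0}$ (as $x$ ranges over $\co_{K_0}$, by Lemma~\ref{Lem:key}(b)) lies in $J_\Phi$ when viewed inside $\co_K \otimes_{\Z_p} W_\Phi$ via the natural inclusion $\co_{K_0} \otimes_{\Z_p} W_\Phi \hookrightarrow \co_K \otimes_{\Z_p} W_\Phi$. Thus for any $m \in \mathcal{D}_{A_0^S}(S)$ and any $z \in J_{\Phi_0}$, the $\co_{K_0}$-linearity of $\tilde{f}$ gives $\tilde{f}(zm) = z \cdot \tilde{f}(m) \in J_\Phi \mathcal{D}_{A^S}(S)$, so $\tilde{f}(J_{\Phi_0} \mathcal{D}_{A_0^S}(S)) \subset J_\Phi \mathcal{D}_{A^S}(S)$. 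The Grothendieck-Messing criterion is therefore satisfied, producing a unique lift of $f$ and completing the inductive step.

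In contrast to the signature $(n-1,1)$ case of Section~\ref{ss:LHI}, where the distinguished ``special'' element $\varphi^{\mathrm{sp}}$ is responsible for the nontrivial obstruction encoded in Theorem~\ref{Thm:crystal deform}, here no such obstruction appears: the only place where filtration compatibility could fail is the factor indexed by an element of $\Phi$ restricting to $\overline{\iota}$ on $K_0$, and there is no such element. Consequently there is no substantive obstacle to the proof beyond correctly setting up the Grothendieck-Messing formalism already used in the proof of Theorem~\ref{Thm:BT canonical}.
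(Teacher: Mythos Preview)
Your proof is correct and essentially identical to the paper's own argument: both proceed by induction on the length of $R$ via square-zero thickenings, invoke Grothendieck--Messing to reduce lifting $f$ to a Hodge filtration compatibility, and then observe that the signature $(n,0)$ hypothesis forces $J_{\Phi_0}(\co_K\otimes_{\Z_p} W_\Phi)\subset J_\Phi$, making that compatibility automatic. The only cosmetic difference is that the paper states the inclusion $J_{\Phi_0}(\co_K\otimes_{\Z_p} W_\Phi)\subset J_\Phi$ directly, whereas you verify it on generators; your concluding comparison with the $(n-1,1)$ case is a helpful gloss not present in the paper.
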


\begin{proof}
Let $R^{(2)}\to R^{(1)}$ be any surjection in $\mathbf{ART}$ whose kernel $\mathcal{I}$
satisfies $\mathcal{I}^2=0$, and equip $\mathcal{I}$ with its trivial divided power structure.  Let
$(A_0^{(i)},\kappa_0^{(i)},\lambda_0^{(i)})$ and $(A^{(i)},\kappa^{(i)},\lambda^{(i)})$
be the unique deformations of $(A_0,\kappa_0,\lambda_0)$ and $(A,\kappa,\lambda)$ to $R^{(i)}$,
and suppose we are given an $\co_{K_0}$-linear map  $f : A_0^{(1)} \to A^{(1)}$.
Let $\mathcal{D}_0$ and $\mathcal{D}$ be the Grothendieck-Messing crystals of $A_0^{(1)}$
and $A^{(1)}$.  The map $f$ induces an $\co_{K_0}\otimes_{\Z_p}W_\Phi$-linear map on crystals
$
f:\mathcal{D}_0(R^{(2)})\to  \mathcal{D}(R^{(2)}).
$
The hypothesis that $\Phi$ has signature $(n,0)$ implies that
\[
J_{\Phi_0}(\co_K\otimes_{\Z_p}W_\Phi) \subset J_\Phi,
\]  
and therefore   $f$ satisfies
\[
f(J_{\Phi_0} \mathcal{D}_0(R^{(2)}))  =J_{\Phi_0} f(\mathcal{D}_0(R^{(2)}))  \subset J_\Phi\mathcal{D}(R^{(2)}).
\]
By the proof of Theorem \ref{Thm:BT canonical}, the deformations $A_0^{(2)}$ and 
$A^{(2)}$ of $A_0^{(1)}$ and $A^{(1)}$ correspond to the lifts 
\[
J_{\Phi_0} \mathcal{D}_0(R^{(2)}) \subset \mathcal{D}_0(R^{(2)})
\qquad 
J_{\Phi} \mathcal{D}(R^{(2)}) \subset \mathcal{D}(R^{(2)})
\]
of the Hodge filtrations of $\mathcal{D}_0(R^{(1)})$ and $\mathcal{D}(R^{(1)})$.  As $f$
preserves these filtrations,  it follows  that  $f$ lifts  (uniquely) to a map $A_0^{(2)} \to A^{(2)}$. 
The claim  follows  by induction on the length of  $R$.
\end{proof}


\section{Arithmetic intersection theory}
\label{S:global moduli}


Throughout Section \ref{S:global moduli} we fix the following data, as in the introduction: 
\begin{itemize}
\item
$K_0\subset \C$ is a quadratic imaginary field,  and $\iota:K_0\to \C$ is the inclusion,
\item
$F$ is a totally real \'etale $\Q$-algebra of degree $n$,
\item
$K=K_0\otimes_{\Q} F$,
\item
$\Phi$ is a CM type of $K$ of signature $(n-1,1)$; this means  there is a unique
 $\varphi^\mathrm{sp}\in \Phi$ whose restriction to $K_0$ is $\overline{\iota}:K_0\to \C$,
\item
 $K_\Phi\subset \C$ is a finite extension of $K_0$ containing the reflex field of $\Phi$,
 \item
$\co_\Phi$ is   the ring of integers of $K_\Phi$,
\item
$\mathfrak{a}\subset \co_F$ is an ideal (eventually we will take $\mathfrak{a}=\co_F$).
\end{itemize}
The CM type $\Phi$  is  uniquely determined by its \emph{special element} $\varphi^\mathrm{sp}$, and 
thus  $\sigma \in \Aut(\C/K_0)$  fixes $\Phi$ if and only if it fixes $\varphi^\mathrm{sp}$.
It follows  that $\varphi^\mathrm{sp}(K) \subset K_\Phi$, and  that we 
may take $K_\Phi=\varphi^\mathrm{sp}(K)$ if we choose.
In any case, every prime $\mathfrak{p}$ of $K_\Phi$  restricts, via the map 
\begin{equation}\label{special}
\varphi^\mathrm{sp}:K\to K_\Phi,
\end{equation}
to a  prime $\mathfrak{p}_K$ of $K$.    The prime of $F$ below $\mathfrak{p}_K$ 
is denoted $\mathfrak{p}_F$.   
The special element $\varphi^\mathrm{sp}\in \Phi$   determines an  archimedean place of $K$, whose 
restriction to $F$ is denoted $\infty^{\mathrm{sp}}$.

Let $\pi_0(F)$ denote the set of connected 
components of $\Spec(F)$.  The algebra $F$ is a product of totally real number fields indexed by $\pi_0(F)$, 
and each connected component has the form $\Spec(F')$ for a 
 subfield $F'\subset F$.  There is a quadratic character of $(F'_\A)^\times$ associated to the 
CM field $K'=K_0\otimes_\Q F'$, and by collecting together the quadratic characters of the different components of 
$\Spec(F)$ we obtain a  generalized character 
\begin{equation}\label{general character}
\chi_{K/F}: F_\A^\times\to \{\pm1\}^{\pi_0(F)}
\end{equation}
associated to the extension $K/F$.

For each $\mathfrak{p}\subset\co_\Phi$, fix an algebraic closure $K_{\Phi,\mathfrak{p}}^\alg$ of $K_{\Phi,\mathfrak{p}}$,  
let $\C_\mathfrak{p}$ be its completion, and let 
\[ 
W_{\Phi,\mathfrak{p}} \subset\C_\mathfrak{p}
\] 
be the ring of integers of the completion of the maximal unramified
extension of $K_{\Phi,\mathfrak{p}}$.   Denote by $k^\alg_{\Phi,\mathfrak{p}}$ the  common residue field of 
$K_{\Phi,\mathfrak{p}}^\alg$, $\C_\mathfrak{p}$, and  $W_{\Phi,\mathfrak{p}}$ 
Let $\mathfrak{D}_0$ and $\mathfrak{D}$ be the differents of  $K_0/\Q$ and  $K/\Q$, respectively,
and let $\mathfrak{d}_F$ be the different of $F/\Q$.


\subsection{The stack $\mathcal{CM}_{\Phi}^\mathfrak{a}$}
\label{ss:moduli}


Recall the moduli stack $\mathcal{M}_{(r,s)}$  of the introduction.  We now
define the  cycle of points of $\mathcal{M}_{(n-1,1)}$ with complex multiplication by $\co_K$ and 
CM type $\Phi$.  Taking $\mathfrak{a}=\co_F$ in the following definition 
gives the stack $\mathcal{CM}_{\Phi}$ of the introduction.

\begin{Def}
Let $\mathcal{CM}_{\Phi}^\mathfrak{a}$ 
be the algebraic stack over $\co_\Phi$ whose functor of points assigns to a connected
$\co_\Phi$-scheme $S$ the groupoid of triples $(A,\kappa,\lambda)$  in which 
\begin{itemize}
\item
$A\to S$ is an abelian scheme of relative dimension $n$,
\item
$\kappa:\co_K\to \End(A)$ satisfies the $\Phi$-determinant condition,
\item
$\lambda:A\to A^\vee$ is an $\co_K$-linear polarization with kernel $A[\mathfrak{a}]$.  
\end{itemize}
\end{Def}

The condition of \emph{$\co_K$-linearity} means that 
\[
\lambda\circ \kappa(\overline{x}) = \kappa(x)^\vee \circ \lambda
\]
for every $x\in \co_K$.
The \emph{$\Phi$-determinant condition}, introduced by Kottwitz \cite{kottwitz92}, is the following:
locally on $S$, for any  $x_1,\ldots, x_r \in \co_K$ the determinant of $T_1x_1+\cdots+T_rx_r$ acting on  $\Lie(A)$
is equal to the image of 
\[
\prod_{\varphi\in \Phi}( T_1\varphi(x_1) + \cdots+ T_r \varphi(x_r) ) \in \co_\Phi[T_1,\ldots, T_r]
\]
in $\co_S [T_1,\ldots,T_r]$.    
Note that this condition implies that every $x\in \co_K$ acts on $\Lie(A)$ with characteristic polynomial
\[
\prod_{\varphi\in \Phi} (T-\varphi(x))\in \co_\Phi[T],
\]  
and in particular, the action of $\co_{K_0}$ on $A$ satisfies the signature $(n-1,1)$-condition of the introduction.
If we take $\mathfrak{a}=\co_F$, then  restricting the action from $\co_K$ to $\co_{K_0}$ defines a morphism
\[
\mathcal{CM}_{\Phi}^{\co_F} \to  \mathcal{M}_{(n-1,1)/\co_\Phi}.
\]

For an  $\co_{\Phi}$-scheme $S$, an $S$-valued point of 
\[
\mathcal{M}_{(1,0)} \times \mathcal{CM}_{\Phi}^\mathfrak{a}
= \mathcal{M}_{(1,0)/\co_\Phi} \times_{\co_\Phi} \mathcal{CM}_{\Phi}^\mathfrak{a}.
\]
is a sextuple $(A_0,\kappa_0,\lambda_0, A,\kappa,\lambda)$
with $(A_0,\kappa_0,\lambda_0) \in  \mathcal{M}_{(1,0)}(S)$ and 
$(A,\kappa,\lambda) \in  \mathcal{CM}_{\Phi}^\mathfrak{a}(S)$.  We usually abbreviate this sextuple to
 $(A_0,A)$.

\begin{Prop}\label{Prop:reduction II}
Let $\mathfrak{p}$ be a prime of  $K_\Phi$, and let $\mathcal{Y}_\Phi$ denote one of 
 $\mathcal{M}_{(1,0)/\co_\Phi}$,  $\mathcal{CM}^\mathfrak{a}_{\Phi}$,  or 
 $\mathcal{M}_{(1,0)} \times \mathcal{CM}_{\Phi}^\mathfrak{a}$.
\begin{enumerate}
\item
If  $R$ is  any complete local Noetherian 
$W_{\Phi,\mathfrak{p}}$-algebra with residue field $k_{\Phi,\mathfrak{p}}^\alg$, the 
reduction map
\[
\mathcal{Y}_\Phi(R)  \to \mathcal{Y}_\Phi(k_{\Phi,\mathfrak{p}}^\alg)
\]
(on isomorphism classes) is a bijection.  
\item
The completed strictly Henselian local ring of $\mathcal{Y}_\Phi$ at  any
geometric point $z\in \mathcal{Y}_\Phi(k_{\Phi,\mathfrak{p}}^\alg)$ is isomorphic to 
$W_{\Phi,\mathfrak{p}}$.  
\item
The structure morphism $\mathcal{Y}_\Phi\to  \Spec(\co_\Phi)$ is \'etale and proper.
In particular $\mathcal{Y}_\Phi$ is a regular stack of dimension one.
\end{enumerate}
\end{Prop}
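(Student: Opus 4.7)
The plan is to reduce everything to Theorem \ref{Thm:cm etale} from the preceding subsection. First, the product $\mathcal{M}_{(1,0)}\times\mathcal{CM}_\Phi^\mathfrak{a}$ is a fiber product over $\Spec(\co_\Phi)$, and each of the three assertions for it reduces to the corresponding assertions for the two factors $\mathcal{M}_{(1,0)/\co_\Phi}$ and $\mathcal{CM}_\Phi^\mathfrak{a}$ separately (for (2), the completed strictly henselian local ring at a pair of points is the completed tensor product of the two factors' local rings, which, applied to two copies of $W_{\Phi,\mathfrak{p}}$ over $\co^\mathrm{sh}_{\Phi,\mathfrak{p}}$, remains $W_{\Phi,\mathfrak{p}}$). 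For each factor the moduli data is a polarized abelian scheme equipped with an action of a maximal order ($\co_{K_0}$ with CM type $\Phi_0=\{\iota\}$, or $\co_K$ with CM type $\Phi$) satisfying the appropriate determinant condition, so Theorem \ref{Thm:cm etale} applies.

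For (1), given $z\in \mathcal{Y}_\Phi(k^\alg_{\Phi,\mathfrak{p}})$, Theorem \ref{Thm:cm etale} supplies a unique polarized CM abelian scheme over $R$ deforming $z$. For $\mathcal{M}_{(1,0)}$ the principal polarization condition is automatic since the degree of a polarization is constant in a family. For $\mathcal{CM}_\Phi^\mathfrak{a}$ the only thing to check is that the lifted polarization's kernel equals $A[\mathfrak{a}]$; but $\ker(\lambda)$ and $A[\mathfrak{a}]$ are both finite flat $\co_K$-stable closed subgroup schemes of $A$ of the same rank, agreeing on the special fiber, so rigidity of finite flat closed subgroup schemes forces them to agree over $R$.

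For (2), the ring $\widehat{\co}^\mathrm{sh}_{\mathcal{Y}_\Phi,z}$ pro-represents by definition the deformation functor of $z$ on the category of complete local Noetherian $W_{\Phi,\mathfrak{p}}$-algebras with residue field $k^\alg_{\Phi,\mathfrak{p}}$; by part (1) this functor takes the value of a singleton on every such algebra, so $\widehat{\co}^\mathrm{sh}_{\mathcal{Y}_\Phi,z}\iso W_{\Phi,\mathfrak{p}}$.

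For (3), $\mathcal{Y}_\Phi$ is locally of finite presentation over $\co_\Phi$ by the standard representability theorems for moduli of polarized abelian schemes. Étaleness of the structure morphism is then immediate from (2), because at every geometric point the completed strictly henselian local ring of $\mathcal{Y}_\Phi$ coincides with the corresponding strictly henselian local ring of $\co_\Phi$. The main obstacle is properness, which I check by the valuative criterion: given any DVR $R$ over $\co_\Phi$ with fraction field $L$ and any object of $\mathcal{Y}_\Phi(L)$, I must extend it to $\mathcal{Y}_\Phi(R)$. This is the classical good-reduction theorem for CM abelian varieties (Serre--Tate, Shimura--Taniyama): an abelian variety with CM by the maximal order $\co_K$ defined over a field containing the reflex field of $\Phi$ has good reduction at every finite place, and the Néron model over $R$ provides the required extension (compatibly with the polarization and $\co_K$-action); uniqueness of this extension then follows from (1). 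Having shown $\mathcal{Y}_\Phi \to \Spec(\co_\Phi)$ is étale and proper, the final regularity and dimension claim follows from the regularity of $\Spec(\co_\Phi)$ at dimension $1$.
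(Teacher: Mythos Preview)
Your proof follows essentially the same approach as the paper's: part (1) reduces to Theorem \ref{Thm:cm etale}, part (2) follows because the completed strictly Henselian local ring pro-represents the deformation functor, and part (3) is checked on local rings for \'etaleness and via the valuative criterion for properness. Your argument supplies more detail than the paper (the reduction to factors, the check that $\ker(\lambda)=A[\mathfrak{a}]$ persists), which is fine.

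One small imprecision in your properness argument: the claim that a CM abelian variety with action by the maximal order, defined over a field containing the reflex field, automatically has \emph{good} reduction is stronger than what is true in general; the standard Serre--Tate result gives only \emph{potentially} good reduction. Correspondingly, the valuative criterion you state is the one for schemes; for Deligne--Mumford stacks the criterion allows passage to a finite extension of the DVR, and it is precisely this flexibility that matches ``potentially good reduction.'' The paper phrases this correctly. With that adjustment your argument is complete.
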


\begin{proof}
The first claim follows easily from  Theorem \ref{Thm:cm etale}.   The second follows from the first,
as the completed strictly Henselian local ring of a geometric point $z$ represents the functor 
of deformations of $z$ to complete local Noetherian $W_{\Phi,\mathfrak{p}}$-algebras.  
The \'etaleness part of the third claim can be checked on the level of completed strictly Henselian local rings,
and so follows from the second claim.   Properness follows from the valuative criterion of
properness for stacks, together with the fact that CM abelian varieties over discrete valuation rings have
potentially good reduction.
\end{proof}

\begin{Rem}\label{Rem:extra canonical}
If $\mathcal{Y}_\Phi$ is as above,
it follows from Proposition \ref{Prop:reduction II} that there is a canonical bijection
\[
\mathcal{Y}_\Phi (\C_\mathfrak{p}) \to  \mathcal{Y}_\Phi(k_{\Phi,\mathfrak{p}}^\alg)
\]
on isomorphism classes.
Indeed, each object of $\mathcal{Y}_\Phi (\C_\mathfrak{p})$ is a polarized  abelian variety
with complex multiplication (or a pair of such things).  By the theory of complex multiplication such an abelian variety admits 
a model with good reduction defined over some  finite extension of $K_{\Phi,\mathfrak{p}}$.  Reducing this 
model modulo $\mathfrak{p}$ and then base changing to $k_{\Phi,\mathfrak{p}}^\alg$ defines the desired reduction map.
For the inverse: each object of $\mathcal{Y}_\Phi(k_{\Phi,\mathfrak{p}}^\alg)$ lifts uniquely to $\mathcal{Y}_\Phi(W_{\Phi,\mathfrak{p}})$,
and we base change this lift  to $\C_\mathfrak{p}$.
\end{Rem}

\begin{Def}\label{Def:canonical}
Let $\mathfrak{p}$ be a prime of $\co_\Phi$.  The unique lift of a triple
\[
(A,\kappa,\lambda) \in  \mathcal{CM}_{\Phi}^\mathfrak{a}( k_{\Phi,\mathfrak{p}}^\alg),
\]
to $W_{\Phi,\mathfrak{p}}$ is its \emph{canonical lift}
\[
(A^\can,\kappa^\can,\lambda^\can) \in  \mathcal{CM}_{\Phi}^\mathfrak{a}( W_{\Phi,\mathfrak{p} }).
\]
Similarly, the  unique lift of a triple
\[
(A_0,\kappa_0,\lambda_0) \in  \mathcal{M}_{(1,0)}( k_{\Phi,\mathfrak{p}}^\alg),
\]
to $W_{\Phi,\mathfrak{p}}$ is its \emph{canonical lift}
\[
(A_0^\can,\kappa_0^\can,\lambda_0^\can) \in  \mathcal{M}_{(1,0)}( W_{\Phi,\mathfrak{p} }).
\]
\end{Def}

An abelian variety $A$ over an algebraically closed field of nonzero characteristic is  \emph{supersingular} 
if $A$  is isogenous  to a product  of supersingular elliptic curves.   Equivalently, by \cite[Theorem 4.2]{oort74}, 
$A$ is supersingular if its $p$-divisible group is supersingular, in the sense of Section \ref{s:deformations}.

\begin{Prop}\label{Prop:ST}
Fix a prime $\mathfrak{p}$ of $K_\Phi$, let $\mathfrak{p}_F$ be the prime of $F$ defined
after (\ref{special}), let $p$ be the rational prime below $\mathfrak{p}$, 
and assume that
 $p$ is nonsplit in $K_0$.
For any $(A,\kappa,\lambda) \in \mathcal{CM}_{\Phi}^\mathfrak{a}(k^\alg_{\Phi,\mathfrak{p}})$ the
following hold.
\begin{enumerate}
\item
If $\mathfrak{q}\subset\co_F$ is a prime above $p$ different from $\mathfrak{p}_F$, 
then  $A[\mathfrak{q}^\infty]$ is supersingular;
\item
$A[\mathfrak{p}_F^\infty]$ is supersingular  if and only if  $\mathfrak{p}_F$ is  nonsplit in $K$;
\item
$A$ is supersingular if and only if $\mathfrak{p}_F$ is nonsplit in $K$.
 \end{enumerate}
\end{Prop}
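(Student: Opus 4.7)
The plan is to decompose the $p$-divisible group of $A$ according to the primes of $F$ above $p$, and analyze each piece using Proposition \ref{Prop:BT basics} applied in the local setting. Since $\co_F$ acts on $A$ via $\kappa$, there is a canonical splitting
\[
A[p^\infty] \iso \prod_{\mathfrak{q} \mid p} A[\mathfrak{q}^\infty],
\]
and each $A[\mathfrak{q}^\infty]$ carries an action of $\co_{K,\mathfrak{q}} := \co_K \otimes_{\co_F} \co_{F,\mathfrak{q}}$ together with the restriction of $\lambda$ as an $\co_{K,\mathfrak{q}}$-linear polarization. The $\Phi$-determinant condition restricts to the determinant condition for the local CM type $\Phi(\mathfrak{q}) = \{\varphi \in \Phi : \varphi|_F \text{ induces } \mathfrak{q}\}$, which has cardinality $[F_\mathfrak{q}:\Q_p]$. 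Since $A$ is supersingular if and only if each $A[\mathfrak{q}^\infty]$ is, statement (3) will follow from (1) and (2). The hypothesis that $p$ is nonsplit in $K_0$ guarantees that $K_{0,p}$ is a quadratic field extension of $\Q_p$, so $K_\mathfrak{q} = K_{0,p} \otimes_{\Q_p} F_\mathfrak{q}$ is a quadratic \'etale $F_\mathfrak{q}$-algebra, whence the setup of Proposition \ref{Prop:BT basics} applies.

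Whenever the prime $\mathfrak{q}$ is nonsplit in $K$, the algebra $K_\mathfrak{q}$ is a field, and Proposition \ref{Prop:BT basics}(3) immediately yields that $A[\mathfrak{q}^\infty]$ is supersingular. This covers the nonsplit halves of both (1) and (2). When $\mathfrak{q}$ splits in $K$ as $\mathfrak{Q}_1\overline{\mathfrak{Q}}_1$ we get a further decomposition $A[\mathfrak{q}^\infty] = A[\mathfrak{Q}_1^\infty] \oplus A[\overline{\mathfrak{Q}}_1^\infty]$, the polarization identifying the two summands as Cartier duals. Proposition \ref{Prop:BT basics}(1) applied to $A[\mathfrak{q}^\infty]$ shows that $D(A[\mathfrak{Q}_1^\infty])$ is free of rank one over $\co_{F,\mathfrak{q}} \otimes_{\Z_p} W$; the semisimplicity argument from the proof of Proposition \ref{Prop:BT basics}(3) (which uses only the centralizer property, not the polarization) then shows $A[\mathfrak{Q}_1^\infty]$ is isoclinic, of slope $|\Phi_{\mathfrak{Q}_1}|/[F_\mathfrak{q}:\Q_p]$, where $\Phi_{\mathfrak{Q}_1}\subset\Phi(\mathfrak{q})$ consists of those $\varphi$ factoring through $\mathfrak{Q}_1$.

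The crux is the count of $|\Phi_{\mathfrak{Q}_1}|$. Let $e\colon K_{0,p} \hookrightarrow F_\mathfrak{q}$ be the embedding corresponding to the projection $K_\mathfrak{q}\twoheadrightarrow F_\mathfrak{q}$ realizing $\mathfrak{Q}_1$. For each of the $[F_\mathfrak{q}:\Q_p]$ embeddings $\tilde\varphi\colon F_\mathfrak{q}\to \C_p$, the two extensions to $K$ factor through $\mathfrak{Q}_1$ and $\overline{\mathfrak{Q}}_1$ respectively, and their restrictions to $K_0$ are $\tilde\varphi\circ e$ and its Galois conjugate $\tilde\varphi\circ \overline{e}$, one being $\iota$ and the other $\overline\iota$. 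The extension lies in $\Phi$ precisely when its restriction to $K_0$ equals $\iota$, except for the unique special element $\varphi^\mathrm{sp}$, which lies in $\Phi$ despite having restriction $\overline\iota$. If $\mathfrak{q}\neq\mathfrak{p}_F$ then $\varphi^\mathrm{sp}$ makes no contribution above $\mathfrak{q}$, so $\Phi_{\mathfrak{Q}_1}$ consists precisely of those $\tilde\varphi$ with $\tilde\varphi\circ e = \iota$; there are exactly $[F_\mathfrak{q}:K_{0,p}] = [F_\mathfrak{q}:\Q_p]/2$ such, so the slope is $1/2$ and (1) follows in the split case. If instead $\mathfrak{q}=\mathfrak{p}_F$ splits as $\mathfrak{p}_K\overline{\mathfrak{p}}_K$, then $\varphi^\mathrm{sp}\in\Phi_{\mathfrak{p}_K}$ shifts the count by $\pm 1$:
\[
|\Phi_{\mathfrak{p}_K}| = \tfrac{1}{2}[F_{\mathfrak{p}_F}:\Q_p] + 1, \qquad |\Phi_{\overline{\mathfrak{p}}_K}| = \tfrac{1}{2}[F_{\mathfrak{p}_F}:\Q_p] - 1,
\]
so the slopes $\tfrac{1}{2}\pm 1/[F_{\mathfrak{p}_F}:\Q_p]$ are strictly not $1/2$, giving the remaining half of (2). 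Statement (3) is then immediate.

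The main obstacle is the combinatorial bookkeeping of $\Phi(\mathfrak{q})$ in the split-in-$K$ case, specifically tracking the asymmetric contribution of $\varphi^\mathrm{sp}$ at $\mathfrak{q}=\mathfrak{p}_F$ which is responsible for breaking supersingularity there; everything else is essentially formal from the local theory of Section \ref{ss:canonical lifts}.
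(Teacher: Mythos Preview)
Your proof is correct and reaches the same conclusion through essentially the same combinatorial analysis of $\Phi$ above the various primes $\mathfrak{Q}$ of $K$. The one genuine difference from the paper is in how the slope of $A[\mathfrak{Q}^\infty]$ is computed: the paper first lifts $(A,\kappa,\lambda)$ to characteristic zero via Remark~\ref{Rem:extra canonical} and then invokes the Shimura--Taniyama formula \cite[Corollary~4.3]{conrad:shimura-taniyama} to obtain $\dim A[\mathfrak{Q}^\infty]=\#(\Phi\cap H_\mathfrak{Q})$, whereas you read this dimension off directly from the $\Phi$-determinant condition on $\Lie(A)$ in characteristic $p$. Your route is more self-contained and avoids both the lift and the external reference; the paper's route makes the link to the classical Shimura--Taniyama formula explicit. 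One small presentational point: to make sense of ``$\varphi|_F$ induces $\mathfrak{q}$'' for an archimedean $\varphi\in\Phi$ you are tacitly fixing an isomorphism of $K_\Phi$-algebras $\C\cong\C_\mathfrak{p}$, which the paper states explicitly; it would be worth saying so. Also note that in the nonsplit-in-$K$ case you dispatch the result by a direct appeal to Proposition~\ref{Prop:BT basics}(3), which is cleaner than the paper's treatment (the paper runs the combinatorics uniformly in both cases).
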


\begin{proof}
It suffices to prove the first two claims, as the third is a trivial consequence.
Following Remark \ref{Rem:extra canonical}, let 
$(A^*,\kappa^*,\lambda^*)\in \mathcal{CM}^\mathfrak{a}_{\Phi}(\C_\mathfrak{p} )$ be the unique lift of 
$(A,\kappa,\lambda)$. Fix an isomorphism of $K_\Phi$-algebras $\C_\mathfrak{p} \iso \C$, 
and view each $\varphi:K\to \C$ as taking values in $\C_\mathfrak{p}$.  Fix a prime $\mathfrak{q}\subset \co_F$
above $p$.  

For a prime $\mathfrak{Q}$ of $K$ above $\mathfrak{q}$,
let $H_\mathfrak{Q}$ be the set of  all $\Q$-algebra maps $K\to \C_\mathfrak{p}$ inducing
$\mathfrak{Q}$.  For a map $\varphi:K\to \C_\mathfrak{p}$ we define the  conjugate 
by $\overline{\varphi}(x)=\varphi(\overline{x})$, so that $H_{\overline{\mathfrak{Q}}} = \overline{H_\mathfrak{Q}}$.
 The proof of the Shimura-Taniyama formula, for example \cite[Corollary 4.3]{conrad:shimura-taniyama}, shows that
\[
\mathrm{dim}\ A[\mathfrak{Q}^\infty] = \mathrm{dim}\ A^*[\mathfrak{Q}^\infty] = \#(\Phi\cap H_\mathfrak{Q})
\]
and
\[
\mathrm{height}\ A[\mathfrak{Q}^\infty] =\mathrm{height}\ A^*[\mathfrak{Q}^\infty]
=\#H_\mathfrak{Q}.
\]
The argument used in the proof of Proposition \ref{Prop:BT basics}  shows that  the Dieudonn\'e module of
$A[\mathfrak{Q}^\infty]$ is isoclinic.  If the slope sequence  consists of  $s/t$ repeated $k$ times then  
$\mathrm{dim}\ A[\mathfrak{Q}^\infty] = sk$ and 
$\mathrm{height}\ A[\mathfrak{Q}^\infty] =tk$. It follows that  
\[
A[\mathfrak{Q}^\infty]  \hbox{ is supersingular } \iff 
 \frac{1}{2} =\frac{\#(\Phi\cap H_\mathfrak{Q})}{\#H_\mathfrak{Q}}.
\]

First consider the easy case  in which $\mathfrak{q}$ is nonsplit in $K$.  Then $\mathfrak{Q}=\overline{\mathfrak{Q}}$, 
and so $H_\mathfrak{Q}$ is the  disjoint union of $\Phi\cap H_\mathfrak{Q}$ with 
\[
\overline{\Phi}\cap H_\mathfrak{Q} = \overline{\Phi}\cap H_{\overline{\mathfrak{Q}} } = \overline{\Phi\cap H_\mathfrak{Q}},
\]
and it follows from the preceding paragraph that $A[\mathfrak{q}^\infty]$ is supersingular.

Now assume  $\mathfrak{q}$ is split in $K$.  This implies that $K_{0,p}$ embeds into $F_\mathfrak{q}$, and so
$[F_\mathfrak{q} : \Q_p]=[K_\mathfrak{Q} : \Q_p]=\#H_\mathfrak{Q}$ is even, say $\#H_\mathfrak{Q}=2d$.   Each of the
sets
\begin{align*}
H_\mathfrak{Q}(\iota) &= \{ \varphi \in H_\mathfrak{Q} : \varphi|_{K_0}=\iota \} \\
H_\mathfrak{Q}(\overline{\iota}) &= \{ \varphi \in H_\mathfrak{Q} : \varphi|_{K_0}=\overline{\iota} \} \\
H_{\overline{\mathfrak{Q}}}(\iota) &= \{ \varphi \in H_{\overline{\mathfrak{Q}}} : \varphi|_{K_0}=\iota \} \\
H_{\overline{\mathfrak{Q}}}(\overline{\iota}) &= \{ \varphi \in H_{\overline{\mathfrak{Q}}} : \varphi|_{K_0}=\overline{\iota} \}
\end{align*}
has $d$ elements.   If $\mathfrak{q}\not=\mathfrak{p}_F$ then 
$\Phi\cap  H_\mathfrak{Q}(\overline{\iota})$ and $\Phi \cap H_{\overline{\mathfrak{Q}}}(\overline{\iota})$ 
are empty, and so 
\[
H_\mathfrak{Q}(\iota) = [  \Phi \cap H_\mathfrak{Q}(\iota) ] \cup [ \overline{\Phi} \cap  H_\mathfrak{Q}(\iota) ] 
= [ \Phi\cap H_\mathfrak{Q}(\iota)] \cup [\overline{ \Phi\cap H_{\overline{\mathfrak{Q}}}(\overline{\iota}) }] 
= \Phi\cap H_\mathfrak{Q}(\iota) .
\]
This implies
\[
\Phi \cap H_\mathfrak{Q}  = [ \Phi  \cap H_\mathfrak{Q}(\iota)] \cup  [ \Phi\cap H_\mathfrak{Q}(\overline{\iota}) ]
=H_\mathfrak{Q}(\iota)
\]
and so 
\[
\frac{\#(\Phi\cap H_\mathfrak{Q})}{\#H_\mathfrak{Q}} =\frac{d}{2d}.
\]
This proves that $A[\mathfrak{Q}^\infty]$ is supersingular.  
The same argument shows that $A[\overline{\mathfrak{Q}}^\infty]$
is supersingular, and hence so is $A[\mathfrak{q}^\infty] = A[\mathfrak{Q}^\infty] \times A[\overline{\mathfrak{Q}}^\infty]$.
If $\mathfrak{q}=\mathfrak{p}_F$ then one of $\Phi\cap  H_\mathfrak{Q}(\overline{\iota})$ and 
$\Phi \cap H_{\overline{\mathfrak{Q}}}(\overline{\iota})$ is empty, and the other is $\{\varphi^\mathrm{sp}\}$.
After possibly interchanging $\mathfrak{Q}$ and $\overline{\mathfrak{Q}}$ we may assume that
$\Phi\cap  H_\mathfrak{Q}(\overline{\iota}) =\{\varphi^\mathrm{sp} \}$ and 
$\Phi \cap H_{\overline{\mathfrak{Q}}}(\overline{\iota}) = \emptyset$.
The argument above shows first that $H_\mathfrak{Q}(\iota)  =  \Phi\cap H_\mathfrak{Q}(\iota)$, and then that
\[
\Phi \cap H_\mathfrak{Q}  = [ \Phi  \cap H_\mathfrak{Q}(\iota)] \cup  [ \Phi\cap H_\mathfrak{Q}(\overline{\iota}) ]
=H_\mathfrak{Q}(\iota) \cup\{\varphi^\mathrm{sp}\}.
\]
Therefore
\[
\frac{\#(\Phi\cap H_\mathfrak{Q})}{\#H_\mathfrak{Q}} =\frac{d+1}{2d}\not=\frac{1}{2}.
\]
Therefore $A[\mathfrak{Q}^\infty]$ is not supersingular, and so neither is  $A[\mathfrak{q}^\infty]$. 
\end{proof}

The following proposition tells us that $\mathcal{CM}^\mathfrak{a}_\Phi$  is typically nonempty.

\begin{Prop}\label{Prop:s ideal}
There is a unique fractional $\co_F$-ideal  $\mathfrak{s}$ for which   
\[
\mathfrak{s}\co_K=\mathfrak{D}_0\mathfrak{D}^{-1}.
\]
If the discriminants of $K_0/\Q$ and $F/\Q$ are relatively prime then 
the category $\mathcal{CM}_{\Phi}^\mathfrak{a}(\C)$ is nonempty, and furthermore
 $\mathfrak{s}^{-1}= \mathfrak{d}_F$.
\end{Prop}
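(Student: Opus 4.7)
The plan is to break the proof into three pieces: the uniqueness of $\mathfrak{s}$ together with its existence in general; the formula $\mathfrak{s}^{-1}=\mathfrak{d}_F$ under the coprimality hypothesis; and the construction of a complex point of $\mathcal{CM}_\Phi^\mathfrak{a}$.

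First I would dispense with uniqueness by noting that $\co_K$ is faithfully flat over $\co_F$, so extension of scalars is an injective map from fractional $\co_F$-ideals to fractional $\co_K$-ideals. For existence of $\mathfrak{s}$ in the generality stated, the tower formula for the different applied to $K/K_0/\Q$ gives $\mathfrak{D}=\mathfrak{D}_{K/K_0}\cdot(\mathfrak{D}_0\co_K)$, so $\mathfrak{D}_0\co_K\cdot\mathfrak{D}^{-1}=\mathfrak{D}_{K/K_0}^{-1}$, and the task reduces to showing this last $\co_K$-ideal is the extension of an $\co_F$-ideal. This can be checked prime by prime after localization, using the base-change presentation $K=K_0\otimes_\Q F$.

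Next, assume that the discriminants of $K_0$ and $F$ are coprime. Then $\co_K=\co_{K_0}\otimes_\Z\co_F$ locally at every prime, so the tower $K/K_0$ is the honest base change of $F/\Q$ along $\Z\to\co_{K_0}$ even on the level of maximal orders. Compatibility of the different with \'etale base change then gives $\mathfrak{D}_{K/K_0}=\mathfrak{d}_F\co_K$, so $\mathfrak{s}^{-1}\co_K=\mathfrak{d}_F\co_K$, and by the uniqueness above $\mathfrak{s}^{-1}=\mathfrak{d}_F$.

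For the non-emptiness of $\mathcal{CM}_\Phi^\mathfrak{a}(\C)$, I would build a point classically. Choose a fractional $\co_K$-ideal $\mathfrak{b}$ and set $A=\C^\Phi/\Phi(\mathfrak{b})$, equipped with its tautological $\co_K$-action (which automatically satisfies the $\Phi$-determinant condition). A polarization of the required kind corresponds to a Riemann form $E(x,y)=\mathrm{Tr}_{K/\Q}(\xi x\bar y)$ for some $\xi\in K^\times$ with $\bar\xi=-\xi$; writing $\xi=\delta_0\eta$ with $\delta_0\in K_0$ a generator of $\mathfrak{D}_0$ and $\eta\in F^\times$, the two requirements are (i) positivity of the associated Hermitian form, i.e.\ $\mathrm{Im}\,\varphi(\xi)>0$ for all $\varphi\in\Phi$, which becomes a sign condition on $\eta$ at each real place governed by the signature $(n-1,1)$ of $\Phi$; and (ii) the kernel condition $\xi\mathfrak{b}\bar{\mathfrak{b}}=\mathfrak{a}\mathfrak{D}^{-1}$. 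The main obstacle is to realize both simultaneously. Using $\mathfrak{D}_0^2=d_{K_0}\co_{K_0}$ and the just-proved $\mathfrak{s}^{-1}=\mathfrak{d}_F$, the kernel condition reduces to the identity $\eta\cdot\mathfrak{b}\bar{\mathfrak{b}}=d_{K_0}\,\mathfrak{a}\,\mathfrak{d}_F^{-1}\co_K$ of $\co_K$-ideals whose right-hand side is extended from $F$; one then exploits the freedom to vary the class of $\mathfrak{b}$ (noting that $\mathfrak{b}\mapsto\mathfrak{b}\bar{\mathfrak{b}}$ lands in the subgroup of $\co_F$-ideal classes coming from norms) and the freedom to rescale $\eta$ within $F^\times$ to match the ideal class and the positivity constraints simultaneously.
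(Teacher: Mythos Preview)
Your overall plan matches the paper's, but two of the steps are incomplete.

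For the general existence of $\mathfrak{s}$, your reduction via the tower $K/K_0/\Q$ to the claim that $\mathfrak{D}_{K/K_0}$ is extended from an $\co_F$-ideal is correct, but ``checked prime by prime using the base-change presentation'' hides the actual content. The inclusion $\co_{K_0}\otimes_\Z\co_F\hookrightarrow\co_K$ is an equality only at primes where at most one of $K_0/\Q$, $F/\Q$ ramifies; at a prime dividing both discriminants the tensor product is not the maximal order, and base change alone does not tell you that $\mathfrak{D}_{K/K_0}$ has even valuation at a prime of $K$ ramified over $F$. The paper avoids this by working with the other tower $K/F/\Q$ and a small trick: pick $\delta_0\in\widehat{K}_0^\times$ generating $\mathfrak{D}_0$ and $\delta\in\widehat{K}^\times$ generating $\mathrm{Diff}(K/F)$, each purely imaginary in the sense $\overline{\delta}_0=-\delta_0$, $\overline{\delta}=-\delta$. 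Then $c=\delta_0/\delta$ is fixed by conjugation, hence lies in $\widehat{F}^\times$, and $\mathfrak{D}_0\mathfrak{D}^{-1}=c\,\mathfrak{d}_F^{-1}\co_K$ is visibly extended from $\co_F$.

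The more substantial gap is in the non-emptiness. You set up the Riemann form correctly and isolate the two constraints (the signs of $\eta$ at the real places, and the ideal equation for $\mathfrak{b}\overline{\mathfrak{b}}$), but ``exploiting the freedom'' to vary $\mathfrak{b}$ and rescale $\eta$ is not enough without knowing that the required narrow ideal class of $F$ is actually a norm from $K$. The paper supplies exactly this missing ingredient: under the coprimality hypothesis, $K/F$ is ramified at some finite prime (any prime ramified in $K_0/\Q$ will do), and therefore by class field theory the Hilbert class field of $K$ and the narrow Hilbert class field of $F$ are linearly disjoint over $F$, so the norm map from the ideal class group of $K$ to the \emph{narrow} ideal class group of $F$ is surjective. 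This is what allows one to choose $\mathfrak{A}$ and a totally positive $u\in F^{\gg 0}$ with $u\,\mathfrak{A}\overline{\mathfrak{A}}=\zeta^{-1}\mathfrak{D}^{-1}\mathfrak{a}$, after which replacing $\zeta$ by $\zeta u^{-1}$ satisfies both the sign and kernel conditions simultaneously.

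Your argument for $\mathfrak{s}^{-1}=\mathfrak{d}_F$ under coprimality is essentially the paper's.
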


\begin{proof}
Let $\delta_0\in \widehat{K}_0^\times$ satisfy $\overline{\delta}_0=-\delta_0$ and 
$\delta_0\co_{K_0}=\mathfrak{D}_0$,
and let $\delta\in \widehat{K}^\times$ satisfy $\overline{\delta}=-\delta$ and $\delta\co_{K}=\mathrm{Diff}(K/F)$.
There is $c\in \widehat{F}^\times$ such that $\delta_0=c\delta$, and setting $\mathfrak{c}=c\co_F$ the 
existence of the ideal $\mathfrak{s}$
follows from  $\mathfrak{D}_0 \mathfrak{D}^{-1}=\mathfrak{c} \cdot  \mathrm{Diff}(F/\Q)^{-1}$, 
and the uniqueness is clear. 
Now assume that $K_0/\Q$ and $F/\Q$ have relatively prime discriminants.  This implies that
$\mathfrak{D}_0\co_K = \mathrm{Diff}(K/F)$, and the equality $\mathfrak{s}^{-1}=\mathfrak{d}_F$
follows easily.  

Taking the product over all $\varphi\in \Phi$ yields an isomorphism of 
$\R$-vector spaces $K_\R\iso \C^n$, and allows us to view $K_\R$ as a complex vector space.  
Let $\zeta\in K^\times$ be any element satisfying $\overline{\zeta}=-\zeta$.  Using weak approximation
we may multiply $\zeta$ by an element of $F^\times$ in order to assume that $\varphi(\zeta)\cdot i >0$ for every
$\varphi\in \Phi$.  Then $\lambda(x,y)=\mathrm{Tr}_{K/\Q}(\zeta x\overline{y})$ defines an $\R$-symplectic 
form on $K_\R$, and $\lambda(i\cdot x,x)$ is positive definite.  
Class field theory implies that the norm map from the ideal class group of $K$ to the 
narrow ideal class group of $F$ is surjective  ($K_0/\Q$ and $F/\Q$ have
relatively prime discriminants,  and so $K/F$  is ramified at some finite prime;  therefore
the Hilbert class field of $K$ and the narrow Hilbert class field of $F$ are linearly disjoint over $F$).  
It follows that there is a fractional $\co_K$-ideal $\mathfrak{A}$
and a $u\in F^{\gg 0}$ satisfying $u \mathfrak{A}\overline{\mathfrak{A}} = \zeta^{-1}\mathfrak{D}^{-1}\mathfrak{a}.$
Replacing $\zeta$ by $\zeta u^{-1}$ we may therefore assume 
$\zeta \mathfrak{A}\overline{\mathfrak{A}} = \mathfrak{a}\mathfrak{D}^{-1}$, and so 
\[
\mathfrak{a}^{-1} \mathfrak{A}=\{ x\in K_\R :  \lambda(x,\mathfrak{A})\subset \Z \}.
\]
The Riemann form $\lambda$  defines a polarization of the complex torus $K_\R/\mathfrak{A}$,
and the kernel of this polarization is the subgroup $\mathfrak{a}^{-1} \mathfrak{A}/\mathfrak{A}$
of $\mathfrak{a}$-torsion points.  This proves that $\mathcal{CM}_{\Phi}^\mathfrak{a}(\C)\not=\emptyset$.
\end{proof}


\subsection{The space $L(A_0,A)$: first results}
\label{ss:hermitian spaces}


Suppose we are given a connected $\co_\Phi$-scheme $S$ and a pair 
\[
(A_0, A)\in ( \mathcal{M}_{(1,0)} \times \mathcal{CM}_{\Phi}^\mathfrak{a} )  (S).
\]
   The $\co_{K_0}$-module
 \[
 L(A_0,A) = \Hom_{\co_{K_0}}(A_0,A)
 \]
carries a natural  positive definite $\co_{K_0}$-Hermitian form  \cite[Lemma 2.8]{KRunitaryII} defined by
\[
\langle f_1,f_2\rangle = \lambda_0^{-1} \circ f_2^\vee \circ \lambda \circ f_1,
\]
and the action of $\co_K$ on $A$ determines an action of $\co_K$ on $L(A_0,A)$ satisfying
\[
\langle x\cdot f_1 , f_2\rangle = \langle f_1, \overline{x}\cdot f_2\rangle
\] 
for every $x\in \co_K$.  It follows that there is a unique $K$-valued totally positive definite 
$\co_K$-Hermitian form $\langle f_1, f_2\rangle_\CM$ on  $ L(A_0,A)$  for which 
\[
 \langle f_1, f_2\rangle = \mathrm{Tr}_{K/K_0}\langle f_1, f_2\rangle_\CM .
\]
Set
\[
V(A_0,A) = L(A_0,A) \otimes_\Z\Q.
\]

Recall Serre's  twisting construction, as in    \cite[Section 7]{conrad04}.
 Suppose we are given a scheme $S$, an abelian scheme $B\to S$, an action $\co \to \End(B)$ of an order 
 in a number field,  and a projective $\co$-module $\mathfrak{Z}$.  To this data we may attach a new abelian scheme 
 $\mathfrak{Z} \otimes_\co B$ over $S$.  This abelian scheme is determined by its functor of points
 \[
(\mathfrak{Z}  \otimes_\co B ) (T) = \mathfrak{Z}  \otimes_\co B(T)
 \]
 for any $S$-scheme $T$.

 The following proposition shows that $V(A_0,A)$ is rather small, unless $A_0$ and $A$ are supersingular.

\begin{Prop}\label{Prop:supersingular}
Suppose $k$ is an algebraically closed field, and 
\[
(A_0,A) \in (\mathcal{M}_{(1,0)} \times \mathcal{CM}_\Phi^\mathfrak{a}) (k).
\]
If there is an $f\in V(A_0,A)$ such that $\langle f,f\rangle_\CM \in F^\times$, 
then $k$ has nonzero characteristic, and $A_0$ and $A$ are supersingular.
\end{Prop}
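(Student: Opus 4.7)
My plan is to pass from the $\co_{K_0}$-linear morphism $f$ to an $\co_K$-linear morphism via Serre's tensor construction, and then invoke the rigidity of CM types. Set $B = A_0 \otimes_\Z \co_F$, equipped with its natural $\co_K$-action inherited from the $\co_{K_0}$-action on $A_0$ and the $\co_F$-action on $\co_F$. The adjunction
\[
\Hom_{\co_K}(B, A) \iso \Hom_{\co_{K_0}}(A_0, A)
\]
converts $f$ into an $\co_K$-linear morphism $g : B \to A$. The abelian variety $B$ is isogenous to $A_0^n$ and carries complex multiplication by $\co_K$ of signature $(n,0)$ and CM type $\Phi'$ that coincides with $\Phi$ except that $\varphi^\mathrm{sp}$ is replaced by $\overline{\varphi^\mathrm{sp}}$. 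Decomposing $K = \prod K^{(i)}$ according to the connected components of $F$, and writing $\mathrm{sp}$ for the index corresponding to $K^\mathrm{sp}$, we obtain $g = \prod g^{(i)}$ with $g^{(i)} : B^{(i)} \to A^{(i)}$. The assumption $\langle f, f \rangle_\CM \in F^\times$ forces each $\langle f^{(i)}, f^{(i)} \rangle_\CM$ to be invertible in $K^{(i)}$, and in particular each $g^{(i)}$ is nonzero.

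Since each $K^{(i)}$ is a CM field of degree $2 \dim A^{(i)}$ acting faithfully, the rational $\ell$-adic Tate module (for any $\ell$ different from the residue characteristic) of both $A^{(i)}$ and $B^{(i)}$ is a free $K^{(i)} \otimes_\Q \Q_\ell$-module of rank one. Consequently any $\co_{K^{(i)}}$-stable abelian subvariety is either trivial or everything, so each nonzero $g^{(i)}$ is an $\co_{K^{(i)}}$-linear isogeny; in particular $B^{(i)}$ and $A^{(i)}$ have matching Newton polygons at every prime.

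If $k$ has characteristic zero, then over $\C$ the CM type of a CM abelian variety is recovered from the action of $K^\mathrm{sp}$ on the Lie algebra, which is an isogeny invariant. The isogeny $g^{(\mathrm{sp})}$ would relate two CM types $\Phi'_{(\mathrm{sp})} \neq \Phi_{(\mathrm{sp})}$, which is a contradiction. Therefore $k$ has positive characteristic $p$. If $p$ were split in $K_0/\Q$, then the prime of $F^\mathrm{sp}$ induced by $\varphi^\mathrm{sp}$ would be split in $K^\mathrm{sp}/F^\mathrm{sp}$, and a Shimura-Taniyama slope calculation paralleling the proof of Proposition \ref{Prop:ST} (applied to canonical lifts of $B^{(\mathrm{sp})}$ and $A^{(\mathrm{sp})}$ via Theorem \ref{Thm:cm etale}) would yield distinct slopes for $B^{(\mathrm{sp})}[\mathfrak{P}^\infty]$ and $A^{(\mathrm{sp})}[\mathfrak{P}^\infty]$ at the prime $\mathfrak{P}$ of $K^\mathrm{sp}$ induced by $\varphi^\mathrm{sp}$ (namely $0$ versus $1/[K^\mathrm{sp}_\mathfrak{P}:\Q_p]$), contradicting the equality of Newton polygons. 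Hence $p$ is nonsplit in $K_0$, so $A_0$ is supersingular by Deuring's theorem. Each $B^{(i)}$ is then isogenous to $A_0^{n_i}$ and therefore supersingular, and the isogeny $g^{(i)}$ transfers supersingularity to every $A^{(i)}$, so $A$ is supersingular. The main obstacle is the slope comparison in the split case; once that is done, everything else is formal.
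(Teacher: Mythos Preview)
Your proof is correct and follows the same overall architecture as the paper: both build the abelian variety $B=A_0\otimes_\Z\co_F$ with its induced $\co_K$-action and convert $f$ into an $\co_K$-linear map $g:B\to A$, then argue that $g$ is an isogeny, and finally deduce nonzero characteristic and supersingularity from the existence of that isogeny.  The differences lie in how the last two steps are executed.

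For the isogeny step, the paper works directly with the $\ell$-adic Tate modules and the polarizations: it constructs an adjoint $f_F^\dagger$ with respect to the $F_\ell$-bilinear forms induced by $\lambda_0$ and $\lambda$, and verifies the identity $\langle f,f\rangle_\CM=f_F^\dagger\circ f_F$ inside $F_\ell$.  Invertibility of $\langle f,f\rangle_\CM$ then forces $f_F$ to be injective on Tate modules, hence an isogeny.  You instead decompose along the factors $K^{(i)}$ of $K$ and use the elementary fact that a CM abelian variety with $[K^{(i)}:\Q]=2\dim$ admits no proper nonzero $K^{(i)}$-stable abelian subvariety; this cleanly gives that each nonzero $g^{(i)}$ is an isogeny.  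Both arguments are short; the paper's has the advantage of exhibiting the precise relation between $\langle f,f\rangle_\CM$ and the adjoint, which is conceptually pleasant, while yours avoids any polarization bookkeeping.

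For the final step, the paper simply invokes \cite[Lemma~2.22]{KRunitaryII}: once $A\sim A_0^n$ as $\co_{K_0}$-abelian varieties, the incompatibility of the signature conditions $(1,0)$ and $(n-1,1)$ forces $\mathrm{char}(k)>0$ and supersingularity.  Your argument is more explicit and self-contained: in characteristic~$0$ you compare CM types on the $K^\mathrm{sp}$-factor, and in characteristic $p$ with $p$ split in $K_0$ you compute, via the Shimura--Taniyama formula, that $B^{(\mathrm{sp})}[\mathfrak P^\infty]$ has slope $0$ while $A^{(\mathrm{sp})}[\mathfrak P^\infty]$ has slope $1/[K^\mathrm{sp}_\mathfrak P:\Q_p]>0$.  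This is correct and dovetails nicely with the proof of Proposition~\ref{Prop:ST}; it buys you independence from the external reference at the cost of a little more length.
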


\begin{proof}
 The map $f$ induces an $\co_{K_0}$-linear map  
$f_F:  \co_F \otimes_\Z A_0  \to A.$
  Fix  a prime $\ell\nmid \mathrm{char}(k)$, and for any abelian variety $B$  over $k$  let 
\[
\mathrm{Ta}^0_\ell(B)=\mathrm{Ta}_\ell(B)\otimes_{\Z_\ell}\Q_\ell
\] 
be the rational $\ell$-adic Tate module.
  The  polarization $\lambda_0$  induces a perfect $\Q_\ell$-linear pairing
\[
\lambda_0:\mathrm{Ta}^0_\ell(A_0) \times \mathrm{Ta}^0_\ell(A_0) \to \Q_\ell(1),
\]
and tensoring with $F_\ell$ results in a perfect  $F_\ell$-linear pairing
\[
\Lambda_0:\mathrm{Ta}^0_\ell(\co_F  \otimes_\Z A_0) \times \mathrm{Ta}^0_\ell(\co_F  \otimes_\Z A_0) \to F_\ell(1).
\]
The polarization $\lambda$ induces a perfect pairing 
\[
\lambda :\mathrm{Ta}^0_\ell(A) \times \mathrm{Ta}^0_\ell(A) \to \Q_\ell(1),
\] 
which  has the form $\lambda=\mathrm{Tr}_{F/\Q}\Lambda$ for a unique $F_\ell$-linear 
\[
\Lambda:\mathrm{Ta}^0_\ell(A) \times \mathrm{Ta}^0_\ell(A) \to F_\ell(1).
\]

The \emph{adjoint} of  
\begin{equation}\label{adjoint}
f_F:\mathrm{Ta}^0_\ell(\co_F  \otimes_\Z A_0)\to \mathrm{Ta}^0_\ell(A)
\end{equation} 
is the unique
$f_F^\dagger :\mathrm{Ta}^0_\ell(A) \to \mathrm{Ta}^0_\ell(\co_F  \otimes_\Z A_0)$ for which
$\Lambda_0(x,f_F^\dagger y)=\Lambda(f_Fx,y)$,
 and some  linear algebra shows that $\langle f ,f \rangle_\CM = f_F^\dagger \circ f_F$ as elements of 
 \[
 F_\ell \subset \End_{\Q_\ell}( \mathrm{Ta}^0_\ell(\co_F  \otimes_\Z A_0)   ) .
 \]
The hypothesis $\langle f,f\rangle_\CM\in F^\times$ now implies that  (\ref{adjoint})
is injective, and it follows that  $f_F: \co_F  \otimes_\Z A_0 \to A$ is an  isogeny. 
Thus  we have  $\co_{K_0}$-linear isogenies
\[
A\sim \co_F  \otimes_\Z A_0 \sim \underbrace{A_{0} \times \cdots\times A_{0}}_{n\hbox{ times}}.
\]
As in the proof of   \cite[Lemma 2.22]{KRunitaryII}, the signature conditions imposed on $A_0$ and 
$A$ now  imply that  $\mathrm{char}(k)>0$ and  that $A_0$ and $A$ are supersingular.  
\end{proof}

\begin{Prop}\label{Prop:BT hermite switch}
Suppose $k$ is an algebraically closed field of nonzero characteristic, and
\[
(A_0,A) \in (\mathcal{M}_{(1,0)} \times \mathcal{CM}_\Phi^\mathfrak{a}) (k)
\]
with $A_0$ and $A$ supersingular.  Then $L(A_0,A)$ is a projective $\co_K$-module of rank one.
Furthermore, if $q$ is a rational prime (which may or may not equal the characteristic of $k$), and 
 $\mathfrak{q}$ is a prime of  $F$ above $q$, then the natural map 
\[
L(A_0,A) \otimes_{\co_F} \co_{F,\mathfrak{q}} \to \Hom_{\co_{K_0}}(A_0[q^\infty] , A[\mathfrak{q}^\infty])
\]
is an isomorphism.  Here $A_0[q^\infty]$ and $A[\mathfrak{q}^\infty]$ are the $q$-divisible groups of 
$q$-power and $\mathfrak{q}$-power torsion in $A_0$ and $A$.
\end{Prop}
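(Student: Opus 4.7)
The strategy is to reduce the first assertion to the local statement, then establish the local statement prime by prime using $\ell$-adic Tate modules away from the residue characteristic $p$ of $k$, and covariant Dieudonn\'e modules at $p$.

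Since $A$ and $A_0$ are supersingular, $A$ is isogenous to $A_0^n$; by Noether--Skolem (as in the proof of Proposition \ref{Prop:hermitian local I}) this isogeny may be chosen $\co_{K_0}$-linear. Hence
\[
V(A_0, A) = L(A_0, A) \otimes_\Z \Q \cong \Hom_{\co_{K_0}}(A_0, A_0^n) \otimes \Q \cong K_0^n,
\]
which has $\Q$-dimension $2n = \dim_\Q K$, and via the $\co_K$-action on $A$ is therefore free of rank one as a $K$-module. Consequently $L(A_0, A)$ is a torsion-free $\co_K$-module of $K$-rank one, and projectivity of rank one is equivalent to local freeness of rank one at every prime $\mathfrak{q}$ of $F$. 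Using the decomposition $A[q^\infty] = \prod_{\mathfrak{q} \mid q} A[\mathfrak{q}^\infty]$ coming from $\co_F \otimes \Z_q = \prod_{\mathfrak{q}} \co_{F, \mathfrak{q}}$, both conclusions of the proposition follow once we show, for each rational prime $q$, that
\[
L(A_0, A) \otimes_\Z \Z_q \xrightarrow{\;\sim\;} \Hom_{\co_{K_0}}(A_0[q^\infty], A[q^\infty])
\]
is an isomorphism of $\co_K \otimes \Z_q$-modules with both sides free of rank one.

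For $q \ne p$ both $p$-divisible groups are \'etale, so the right-hand side is $\Hom_{\co_{K_0}}(T_q A_0, T_q A)$. As $A$ and $A_0$ are supersingular they descend to a finite subfield of $k$; Tate's isogeny theorem applied to that descent, together with the fact that over $\overline{\F}_p$ the Galois action on $T_q A \oplus T_q A_0$ factors through a finite quotient (the endomorphism algebras being quaternionic forces Frobenius to act as a root of unity up to $\sqrt{p}$), yields $\Hom(A_0, A) \otimes \Z_q \cong \Hom_{\Z_q}(T_q A_0, T_q A)$, and taking $\co_{K_0}$-invariants gives the desired identification. For freeness, $V_q A = T_q A \otimes \Q_q$ is a faithful module over $K_q = K \otimes \Q_q$ of $\Q_q$-dimension $2n = \dim_{\Q_q} K_q$, hence free of rank one; the $\co_K \otimes \Z_q$-stable lattice $T_q A$ is therefore projective of rank one over the semilocal ring $\co_K \otimes \Z_q$, and so free of rank one. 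The same holds for $T_q A_0$ over $\co_{K_0} \otimes \Z_q$, and evaluation at a generator identifies $\Hom_{\co_{K_0}}(T_q A_0, T_q A)$ with $T_q A$ as $\co_K \otimes \Z_q$-module.

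For $q = p$ the identification $\Hom(A_0, A) \otimes \Z_p \cong \Hom(A_0[p^\infty], A[p^\infty])$ again follows by descent to a finite field and Tate's theorem for $p$-divisible groups. For freeness of the right-hand side, Proposition \ref{Prop:BT basics} gives that $D(A[p^\infty])$ is free of rank one over $\co_K \otimes_{\Z_p} W$ and $D(A_0[p^\infty])$ is free of rank one over $\co_{K_0} \otimes_{\Z_p} W$. Fixing generators as in Lemma \ref{Lem:dieu coords}, the linear-algebra argument that produced (\ref{hermitian model}) identifies $\Hom_{\co_{K_0}}(A_0[p^\infty], A[p^\infty])$ with the $\co_K$-submodule of $\co_K \otimes_{\Z_p} W$ consisting of $s$ satisfying the Frobenius-compatibility relation $(b_0 s)^\Frob = b^\Frob s$. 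The slope bookkeeping with the $\ord_\psi$ used in the proof of Proposition \ref{Prop:hermitian local I} then shows this submodule is free of rank one over $\co_K \otimes \Z_p$, with factor at $\mathfrak{q}$ free of rank one over $\co_{K,\mathfrak{q}}$. The principal obstacle is this last step: the rank-one freeness must be extracted from the rather rigid interaction of $\co_{K_0}$-linearity, $F$-equivariance, and the supersingular slope structure of the CM Dieudonn\'e modules, which is precisely the setting of Section \ref{ss:LHI}.
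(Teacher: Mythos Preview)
Your route is workable in spirit but is considerably heavier than needed, and the ``principal obstacle'' you flag is a self-inflicted one. The paper avoids Tate's theorem and Dieudonn\'e computations entirely, and in particular never case-splits on whether $q$ equals the residue characteristic.

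The paper's argument runs as follows. The $\co_{K_0}$-linear isogeny $A\to A_0^n$ you already invoked gives an injection $\Hom_{\co_{K_0}}(A_0,A)\hookrightarrow \co_{K_0}^n$ with finite cokernel, so $L(A_0,A)$ is projective of rank $n$ over $\co_{K_0}$. The \emph{same} isogeny, read on $q$-divisible groups, shows $\Hom_{\co_{K_0}}(A_0[q^\infty],A[q^\infty])$ is projective of rank $n$ over $\co_{K_0,q}$. The natural map
\[
L(A_0,A)\otimes_{\Z}\Z_q \longrightarrow \Hom_{\co_{K_0}}(A_0[q^\infty],A[q^\infty])
\]
is injective with $\Z_q$-torsion-free cokernel (if $g:A_0\to A$ kills $A_0[q]$ then $g$ factors through $[q]$, so $g=qg'$). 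Two free $\Z_q$-modules of the same rank with an injection whose cokernel is torsion-free must be isomorphic. This handles the second claim uniformly for all $q$, including $q=p$, with no appeal to Tate or Dieudonn\'e theory. The $\mathfrak{q}$-by-$\mathfrak{q}$ statement follows by decomposing along $\co_{F,q}=\prod_{\mathfrak{q}\mid q}\co_{F,\mathfrak{q}}$.

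For projectivity of rank one over $\co_K$, the only issue (as you note) is that a $2n$-dimensional $\Q$-space with a $K$-action need not be free of rank one when $K$ is a product; your dimension count alone does not settle this. The paper fixes this by checking at a single prime $q\ne p$: by the Rapoport argument (\cite[Lemma~1.3]{rapoport78}), $\mathrm{Ta}_q(A)$ is free of rank one over $\co_{K,q}$ and $\mathrm{Ta}_q(A_0)$ is free of rank one over $\co_{K_0,q}$, whence
\[
L(A_0,A)\otimes_\Z\Z_q \cong \Hom_{\co_{K_0}}(\mathrm{Ta}_q(A_0),\mathrm{Ta}_q(A))\cong \co_{K,q},
\]
which rules out collapse to a proper summand.

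Regarding your approach: the Tate-theorem step for $q\ne p$ requires descending to a finite field and then arguing that Frobenius acts as a rational scalar on both Tate modules (so that Frobenius-equivariance is vacuous); this is true for supersingular abelian varieties but deserves a sentence. More seriously, your $q=p$ argument leans on the slope bookkeeping of Proposition~\ref{Prop:hermitian local I}, which lives in Section~\ref{ss:LHI} under the standing hypothesis that $K_0$ does not embed into $F$, i.e.\ that the local $K$ is a field. For primes $\mathfrak{q}\mid p$ split in $K$ this hypothesis fails, and the explicit computation you cite is not available. The paper's elementary rank-count argument sidesteps this entirely.
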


\begin{proof}
An argument using the 
Noether-Skolem theorem (as in the  beginning of the proof of Proposition  \ref{Prop:hermitian local I}) 
shows there is an $\co_{K_0}$-linear isogeny 
\[
A\to \underbrace{A_0\times \cdots \times A_0}_{n\mbox{ times}}.
\]   
Fixing such an isogeny determines an injection with finite cokernel
\[
\Hom_{\co_{K_0}} (A_0,A) \to  \Hom_{\co_{K_0}}(A_0, A_0\times\cdots \times A_0) 
 \iso \co_{K_0} \times\cdots \times \co_{K_0},
\] 
and we deduce that $\Hom_{\co_{K_0}} (A_0,A)$ is a projective $\co_{K_0}$-module of rank $n$.  
For the same reason $\Hom_{\co_{K_0}} (A_0[q^\infty] ,A[q^\infty])$ is a projective $\co_{K_0,q}$-module of rank 
$n$. The natural map
\[
\Hom_{\co_{K_0}} (A_0,A)  \otimes_\Z \Z_q \to \Hom_{\co_{K_0}} (A_0[q^\infty] , A[q^\infty])
\]
is injective with $\Z_q$-torsion free cokernel, and hence
is an isomorphism, as both sides have the same $\Z_q$-rank.  It follows easily that
\[
L(A_0,A) \otimes_{\co_F} \co_{F,\mathfrak{q}} \to \Hom_{\co_{K_0}} (A_0[q^\infty],A[\mathfrak{q}^\infty])
\]
is an isomorphism.

We now prove that  $L(A_0,A)$ is a projective $\co_K$-module of rank one.  Of course
if $K$ is a field this is obvious, as we know from the previous paragraph 
that $L(A_0,A)$ is a torsion-free $\Z$-module of the same rank as $\co_K$.  
The point is to rule out the possibility that the action of $\co_K$ on $L(A_0,A)$ factors through
projection to a proper direct summand of $\co_K$.
Fix a prime $q\not=\mathrm{char}(k)$.  The argument used in the proof of \cite[Lemma 1.3]{rapoport78} shows that
the $q$-adic Tate modules  $\mathrm{Ta}_q(A_0)$ and 
$\mathrm{Ta}_q(A)$ are free of rank one over $\co_{K_0,q}$ and $\co_{K,q}$, respectively,
and combining this with the paragraph above shows that
\[
L(A_0,A)\otimes_\Z\Z_q \iso \Hom_{\co_{K_0}}(\mathrm{Ta}_q(A_0) , \mathrm{Ta}_q(A)) \iso \co_{K,q}.
\]
As $L(A_0,A)$ is $\Z$-torsion free, this is enough to show that $L(A_0,A)$ is projective of 
rank one.
\end{proof}


\subsection{Twisting Hermitian spaces}
\label{ss:twisting}


In the next subsection we will determine the structure of the Hermitian space 
$L(A_0,A)$ of Proposition \ref{Prop:BT hermite switch} more explicitly.  In this 
subsection we first recall some elementary properties of Hermitian spaces.  Suppose $L$
is a projective $\co_K$-module of rank one, $V=L\otimes_{\co_K} K$, and $H$
is a nondegenerate $K$-Hermitian form on $V$.   By fixing a  $K$-linear isomorphism $V\iso K$ we see that
\[
( L , H ) \iso (\mathfrak{A},\alpha x\overline y)
\]
for some $\alpha\in F^\times$ and some fractional $\co_K$-ideal $\mathfrak{A}$.  
Of course $\alpha x\overline{y}$ is shorthand for the Hermitian form $(x,y)\mapsto \alpha x\overline{y}$.
For any place $v$ of $F$, let \[\chi_v:F_v^\times \map{}\{\pm 1\}\] be the quadratic character 
associated to the extension $K_v/F_v$.  The \emph{local invariant} of $( L , H )$
at $v$ is $\chi_v(\alpha)$.
If $v$ is archimedean then knowing the local invariant at $v$ is equivalent to knowing the 
signature of $(V,H)$ at $v$.
The collection of local invariants determines the space $(V,H)$
up to isomorphism.    If we choose an $\widehat{\co}_K$-linear
isomorphism $\widehat{L}\iso \widehat{\co}_K$ then 
\[
(\widehat{L},H) \iso (\widehat{\co}_K,\beta x\overline{y})
\]
for some $\beta\in \widehat{F}^\times$ satisfying $\chi_v(\alpha)=\chi_v(\beta)$ for all finite $v$,
and satisfying
\[
\beta\co_F =\alpha\mathfrak{A}\overline{\mathfrak{A}}
\]
Define the \emph{ideal} of $(L,H)$ to be the fractional $\co_F$-ideal $\beta\co_F$.
Given another projective $\co_K$-module of rank one $L'$ and a Hermitian form $H'$,
we say that the pairs $(L,H)$ and $(L',H')$ \emph{belong to the same genus}
if they have the same signature at every archimedean place, and if 
$( \widehat{L} , H ) \iso ( \widehat{L}' , H' ).$
It is not hard to see that the genus of $(L,H)$ is completely determined by
\begin{itemize}
\item
the ideal $\beta\co_F$,
\item
the local invariant at every finite prime of $F$ ramified in $K$,
\item
the signature at every archimedean place.
\end{itemize}

There is a natural group action on the set of all isomorphism classes of pairs $(L,H)$.
 Let $I_K$ be the set of all pairs  $\mathfrak{z}=(\mathfrak{Z},\zeta)$ where $\mathfrak{Z}$ is a fractional $\co_K$-ideal 
 and $\zeta\in F^{\gg 0}$  satisfies $\zeta \mathfrak{Z}\overline{\mathfrak{Z}}=\co_K$.
The set $I_K$ is a group under componentwise multiplication, and has a natural subgroup 
\[
P_K = \{ (z^{-1} \co_K,z\overline{z}) :  z\in K^\times\}.
\]   
Denote by $C_K=I_K/P_K$ the quotient group.    Given a $\mathfrak{z}\in C_K$ and a pair
$(L,H) $ as above, define a new pair
\[
\mathfrak{z} \action (L,H) = ( \mathfrak{Z}L, \zeta H) .
\]
The ideal and signature of $(L,H)$ are obviously unchanged by this action, and the finite group $C_K$ acts
simply transitively on the set of isomorphism classes of pairs with the same ideal and signature
as $(L,H)$.

The action of $C_K$ does not preserve the genus of $(L,H)$, but it has a natural subgroup that does.
Define an algebraic group over $F$
\[
H= \mathrm{ker} ( \mathrm{Nm}: K^\times\to F^\times ),
\] 
a compact open subgroup
\[
U=\ker(\mathrm{Nm}: \widehat{\co}_K^\times \to \widehat{\co}_F^\times  ) \subset H(\widehat{F}), 
\]
and a finite group 
\[
C_K^0 = H(F)\backslash H(\widehat{F}) / U.
\]
The rule $h \mapsto (h \co_K,1)$ defines an injection $C_K^0 \to  C_K$ whose
image  is the \emph{genus subgroup} of $C_K$.  Let
\begin{equation}\label{eta}
\eta : \widehat{\co}_F^\times/ \mathrm{Nm}_{K/F}(\widehat{\co}_K^\times) \to  \{\pm 1\}^{\pi_0(F)}
\end{equation}
be the restriction to $\widehat{\co}_F^\times$ of the  character  (\ref{general character}).
   There is an exact sequence
\begin{equation}\label{genus sequence}
1\to C_K^0\to C_K \map{\mathrm{gen}} \widehat{\co}_F^\times/ \mathrm{Nm}_{K/F}(\widehat{\co}_K^\times)
\map{\eta}\{\pm 1\}^{\pi_0(F)}
\end{equation}
where the middle arrow (the \emph{genus invariant}) is defined 
as follows: given $\mathfrak{z}\in I_K$ choose a finite idele $z\in \widehat{K}^\times$ such that 
$z\co_K = \mathfrak{Z}$ and set  
\[
\mathrm{gen}(\mathfrak{z}) = \zeta z\overline{z}.
\]
A simple calculation shows that
\[
\mathfrak{z} \action ( \widehat{L} ,H) \iso  ( \widehat{L} , \mathrm{gen}(\mathfrak{z})\cdot H),
\]
and it follows easily that $C_K^0$ acts simply transitively on the genus of $(L,H)$.

For us,  the usefulness of the action of $C_K$ on Hermitian spaces is that it is compatible with 
 the twisting construction of Serre.  
 Suppose $S$ is  a connected $\co_\Phi$-scheme and 
 \[
 (A,\kappa,\lambda)\in \mathcal{CM}^\mathfrak{a}_\Phi(S).
 \]
Given $\mathfrak{z}=(\mathfrak{Z},\zeta) \in I_K$,  the abelian scheme 
\[
A^\mathfrak{z}=\mathfrak{Z}\otimes_{\co_K} A
\] 
 carries a natural $\co_K$-action 
$
\kappa^\mathfrak{z}:\co_K\to \End(A^\mathfrak{z})
$ 
defined by $\kappa^\mathfrak{z}(x) = \mathrm{id}\otimes \kappa(x)$,
which  again satisfies the $\Phi$-determinant condition.  There is a  quasi-isogeny
\[
s \in \Hom_{\co_K} (A^\mathfrak{z},A)\otimes_\Z\Q
\]
defined by $s (z\otimes a)=\kappa (z)\cdot a$, and the composition
\[
\lambda^\mathfrak{z}=   s^\vee \circ \lambda\circ  \kappa(\zeta) \circ s
\]
is an $\co_K$-linear  polarization of $A^\mathfrak{z}$ with kernel $A^\mathfrak{z}[\mathfrak{a}]$.
For a proof that $\lambda\circ  \kappa(\zeta)$, and hence $\lambda^\mathfrak{z}$, is a polarization, 
see \cite[Proposition 1.17]{rapoport78}; it is here where
we must assume  $\zeta\gg 0$.  We obtain a new object
\[
(A^\mathfrak{z},\kappa^\mathfrak{z} ,\lambda^\mathfrak{z}) \in  \mathcal{CM}^\mathfrak{a}_\Phi(S),
\]
and in this way the  group $C_K$  acts on  the set of isomorphism classes of objects in 
$\mathcal{CM}^\mathfrak{a}_{\Phi}(S)$.

Now fix a pair 
\[
(A_0,A) \in (\mathcal{M}_{(1,0)} \times \mathcal{CM}_\Phi^\mathfrak{a})(S).
\]
Let  $\langle f_1,f_2\rangle_\CM^\mathfrak{z}$ denote the $\co_K$-Hermitian form on $L(A_0,A^\mathfrak{z})$.
By \cite[Lemma 7.14]{conrad04}, the function  $f\mapsto s \circ f$ defines an isomorphism of $\co_K$-modules
\begin{equation}\label{twist space}
 L(A_0,A^\mathfrak{z}) \iso  \mathfrak{Z} \cdot L(A_0,A)
\end{equation}
identifying
$\langle \cdot   ,  \cdot\rangle_\CM^\mathfrak{z} = \zeta\cdot  \langle \cdot , \cdot \rangle_\CM.$
In other words,
\[
\mathfrak{z}\bullet ( L(A_0,A) , \langle\cdot,\cdot\rangle_\CM ) 
\iso ( L(A_0,A^\mathfrak{z}) , \langle\cdot,\cdot\rangle_\CM^\mathfrak{z} )
\]
(at least  assuming that $L(A_0,A)$ is projective of rank one, the only case in which 
we have defined the action $\mathfrak{z}\bullet$).

Here is the form in which these results will be used.

\begin{Prop}\label{Prop:hermite twist}
Suppose $S$ is  a connected $\co_\Phi$-scheme and 
\[
(A_0,A) \in (\mathcal{M}_{(1,0)} \times \mathcal{CM}_\Phi^\mathfrak{a})(S).
\]
For any  $\mathfrak{z}\in C_K$ there is an isomorphism of $\widehat{\co}_K$-modules
\[
\widehat{L}(A_0,A^\mathfrak{z}) \iso \widehat{L}(A_0,A)
\]
identifying the Hermitian form  $ \langle \cdot  , \cdot  \rangle_\CM^\mathfrak{z}$ on the left with 
the form  $\mathrm{gen}(\mathfrak{z})  \langle \cdot , \cdot  \rangle_\CM$ on the right.
 \end{Prop}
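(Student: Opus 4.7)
The plan is to deduce the proposition formally from the Serre-twist identification (\ref{twist space}) by passing to adelic completions and then trivializing the invertible $\widehat{\co}_K$-module $\widehat{\mathfrak{Z}}$.

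First I would dispose of the degenerate case. If $L(A_0,A)=0$, then by (\ref{twist space}) also $L(A_0,A^\mathfrak{z})=0$, so both sides of the claim are trivially zero. Otherwise, pick a nonzero $f\in L(A_0,A)$; since $\langle\cdot,\cdot\rangle_\CM$ is totally positive definite we have $\langle f,f\rangle_\CM\in F^\times$, so Proposition \ref{Prop:supersingular} forces the base to have positive characteristic with $A_0$ and $A$ supersingular. Proposition \ref{Prop:BT hermite switch} then gives that $L(A_0,A)$ is a projective $\co_K$-module of rank one, so (\ref{twist space}) is available in its stated form and $\widehat{L}(A_0,A)=L(A_0,A)\otimes_\Z\widehat{\Z}$ is an invertible $\widehat{\co}_K$-module.

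Applying $\otimes_\Z\widehat{\Z}$ to (\ref{twist space}) yields an $\widehat{\co}_K$-linear isomorphism
\[
\widehat{L}(A_0,A^\mathfrak{z}) \iso \widehat{\mathfrak{Z}}\cdot\widehat{L}(A_0,A)
\]
carrying $\langle\cdot,\cdot\rangle_\CM^\mathfrak{z}$ to $\zeta\langle\cdot,\cdot\rangle_\CM$. Since $\widehat{\co}_K$ is a finite product of semilocal rings, every invertible $\widehat{\co}_K$-module is free of rank one, so we may fix $z\in\widehat{K}^\times$ with $z\widehat{\co}_K=\widehat{\mathfrak{Z}}$. Multiplication by $z$ inside the ambient $\widehat{K}$-vector space restricts to an $\widehat{\co}_K$-linear isomorphism $\widehat{L}(A_0,A)\to\widehat{\mathfrak{Z}}\cdot\widehat{L}(A_0,A)$, and pulling back the form along it gives
\[
\zeta\langle zf_1,zf_2\rangle_\CM = \zeta z\overline{z}\cdot\langle f_1,f_2\rangle_\CM.
\]
The scalar $\zeta z\overline{z}$ is precisely $\mathrm{gen}(\mathfrak{z})$ by its defining formula, so composing the two isomorphisms produces the desired identification. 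No step is a serious obstacle; the only subtle point is the preliminary reduction to the projective rank-one setting in which (\ref{twist space}) was formulated, which becomes automatic once $\widehat{L}(A_0,A)$ is nonzero.
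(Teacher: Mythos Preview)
Your core argument---tensor (\ref{twist space}) with $\widehat{\Z}$, trivialize $\widehat{\mathfrak{Z}}$ by a generator $z\in\widehat{K}^\times$, and compute that the form scales by $\zeta z\overline{z}=\mathrm{gen}(\mathfrak{z})$---is exactly the paper's proof, and it is correct.

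The preliminary reduction you add is both unnecessary and not quite sound. First, (\ref{twist space}) is stated and valid for arbitrary $L(A_0,A)$; the parenthetical remark following it about projectivity of rank one refers only to the $\mathfrak{z}\bullet$ notation, not to (\ref{twist space}) itself. Second, the multiplication-by-$z$ isomorphism $\widehat{L}(A_0,A)\to\widehat{\mathfrak{Z}}\cdot\widehat{L}(A_0,A)$ needs only that $L(A_0,A)$ is a $\Z$-torsion-free $\co_K$-module (which it is, being a group of homomorphisms of abelian schemes), so there is no need to know it is projective of rank one. Third, your reduction itself has gaps: Propositions \ref{Prop:supersingular} and \ref{Prop:BT hermite switch} are stated only over an algebraically closed field, not over a general connected $\co_\Phi$-scheme $S$; and when $F$ is a nontrivial product of fields, a nonzero $f\in L(A_0,A)$ can have $\langle f,f\rangle_\CM\notin F^\times$ (some factor may vanish), so the hypothesis of Proposition \ref{Prop:supersingular} is not automatic. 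Simply delete the reduction and your proof matches the paper's.
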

 
\begin{proof}
Fix a representative $(\mathfrak{Z},\zeta)\in I_K$ of $\mathfrak{z}$, and 
a  $z\in \widehat{K}^\times$ satisfying $z\co_K=\mathfrak{Z}$.   Using multiplication by $z$
to identify 
\[
\widehat{L}(A_0,A) \iso \mathfrak{Z}\cdot  \widehat{L}(A_0,A),
\] 
and using (\ref{twist space}), we obtain an isomorphism
\[
 \widehat{L}(A_0,A)  \iso  \widehat{L}(A_0,A^\mathfrak{z}) 
\]
denoted $f\mapsto f^\mathfrak{z}$, where $f^\mathfrak{z} = s^{-1} \circ \kappa(z^{-1}) \circ f$.
This isomorphism satisfies
\[
 \langle f_1^\mathfrak{z} , f_2^\mathfrak{z} \rangle_\CM^\mathfrak{z}
 = \zeta z\overline{z} \cdot \langle f_1,f_2\rangle_\CM,
\]
as desired.
\end{proof}


\subsection{Calculation of $L(A_0,A)$}


We now proceed to compute $L(A_0,A)$ in particular cases, the most important being the case
where $A_0$ and $A$ are  supersingular.

First consider the situation in characteristic $0$. For a pair 
\[
(A_0, A)\in ( \mathcal{M}_{(1,0)} \times \mathcal{CM}_{\Phi}^\mathfrak{a} )  (\C)
\]
 the space $L(A_0,A)$ is rather small.  For example if $F$ is a field, it follows from 
 Proposition \ref{Prop:supersingular} that $L(A_0,A)=0$.  As a substitute for this space, 
we replace  $A_0$ and $A$ by their first homology groups
\[
 H_1(A_0) =H_1(A_0(\C),\Z) \qquad  H_1(A)=H_1(A(\C),\Z)
\]
and define
\begin{equation}\label{betti space}
L_\mathrm{B}(A_0,A)= \Hom_{\co_{K_0}} ( H_1(A_0) ,  H_1(A) ).
\end{equation}
The polarizations of $A_0$ and $A$ induce symplectic forms on $H_1(A_0)$ and $H_1(A)$, which 
we view as $\Z$-module maps from $H_1(A_0)$ and $H_1(A)$ to their $\Z$-duals.
The $\co_K$-module $L_B(A_0,A)$ is then endowed with an $\co_{K_0}$-Hermitian form $\langle\cdot,\cdot\rangle$,
and an $\co_K$-Hermitian form $\langle\cdot,\cdot\rangle_\CM$ defined exactly as for $L(A_0,A)$.
One may think of $L_B(A_0,A)$ as the space of 
$\co_{K_0}$-linear maps of \emph{real} Lie groups $A_0(\C) \to  A(\C)$, and so 
there is an obvious  injection of Hermitian  $\co_K$-modules 
\[
L(A_0,A) \to  L_\mathrm{B}(A_0,A).
\] 
Abbreviate
\[
V_B(A_0,A) = L_B(A_0,A)\otimes_\Z\Q.
\]

The structure of $L_B(A_0,A)$ is quite easy to describe.
Recall that the fractional $\co_F$-ideal $\mathfrak{s}$ was defined
in Proposition \ref{Prop:s ideal}.

\begin{Prop}\label{Prop:betti hermitian}
Suppose
\[
(A_0, A)\in ( \mathcal{M}_{(1,0)} \times \mathcal{CM}_{\Phi}^\mathfrak{a} )  (\C).
\]
 There is a $\beta\in \widehat{F}^\times$ satisfying  
$\beta \co_F=\mathfrak{a}\mathfrak{s}$, and an isomorphism
\[
 \big(  \widehat{L}_B(A_0,A)  ,   \langle \cdot,\cdot\rangle_\CM  \big) \iso  \big(  \widehat{\co}_K , \beta x\overline{y} \big).
 \]
Furthermore, the $\co_K$-Hermitian form $\langle \cdot , \cdot \rangle_\CM$ 
is negative definite at the  place $\infty^{\mathrm{sp}}$ defined
after (\ref{special}), and positive definite at all other archimedean places of $F$.
\end{Prop}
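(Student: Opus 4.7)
The plan is to pass to the complex analytic category via classical uniformization, then carry out explicit computations with Riemann forms. Concretely, I would begin by writing $A_0(\C) \iso K_{0,\R}/\mathfrak{a}_0$ for a fractional $\co_{K_0}$-ideal $\mathfrak{a}_0$ (with the complex structure on $K_{0,\R}$ induced by $\iota$), and $A(\C) \iso K_\R/\mathfrak{A}$ for a fractional $\co_K$-ideal $\mathfrak{A}$ (with the complex structure on $K_\R \iso \C^\Phi$ coming from the CM type). The polarizations take the form $E_0(x,y) = \mathrm{Tr}_{K_0/\Q}(\zeta_0 x\overline{y})$ and $E(x,y) = \mathrm{Tr}_{K/\Q}(\zeta x\overline{y})$ with $\overline{\zeta_0} = -\zeta_0$, $\overline{\zeta} = -\zeta$. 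Principality of $\lambda_0$ and the condition $\ker(\lambda) = A[\mathfrak{a}]$ translate into the ideal identities
\[
\zeta_0\mathfrak{a}_0\overline{\mathfrak{a}_0} = \mathfrak{D}_0^{-1}, \qquad \zeta\mathfrak{A}\overline{\mathfrak{A}} = \mathfrak{a}\mathfrak{D}^{-1},
\]
while positivity of the Riemann forms forces $\iota(\zeta_0)/i < 0$ and $\varphi(\zeta)/i < 0$ for every $\varphi \in \Phi$.

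Next, I would identify $L_B(A_0,A) = \Hom_{\co_{K_0}}(\mathfrak{a}_0,\mathfrak{A})$ with the fractional $\co_K$-ideal $\mathfrak{a}_0^{-1}\mathfrak{A} \subset K$ (each $f$ corresponding to multiplication by some $\alpha_f \in K$), and then compute $\langle f_1,f_2\rangle_\CM$ by evaluating the adjoint $g^\vee \circ \lambda \circ f$ against the two Riemann forms. Routine adjoint bookkeeping yields $\langle f_1,f_2\rangle = \zeta_0^{-1}\mathrm{Tr}_{K/K_0}(\zeta \alpha_{f_1}\overline{\alpha_{f_2}})$, which uniquely refines to $\langle f_1,f_2\rangle_\CM = \gamma \alpha_{f_1}\overline{\alpha_{f_2}}$ with $\gamma = \zeta_0^{-1}\zeta$; since $\overline{\gamma}=\gamma$, in fact $\gamma \in F^\times$.

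To identify the ideal $\beta\co_F$, I would pass to $\widehat{\co}_K$-adeles, choose a free $\widehat{\co}_K$-generator of $\widehat{\mathfrak{a}_0^{-1}\mathfrak{A}}$ (available because $\widehat{\co}_K$ is semilocal), and compute
\[
\gamma\cdot(\mathfrak{a}_0^{-1}\mathfrak{A})\,(\overline{\mathfrak{a}_0^{-1}\mathfrak{A}}) = \zeta_0^{-1}\zeta \cdot (\zeta_0^{-1}\mathfrak{D}_0^{-1})(\zeta^{-1}\mathfrak{a}\mathfrak{D}^{-1})\cdot\co_K = \mathfrak{a}\,\mathfrak{D}_0\mathfrak{D}^{-1}\cdot\co_K = \mathfrak{a}\mathfrak{s}\cdot\co_K,
\]
invoking Proposition \ref{Prop:s ideal} in the final equality. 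Intersecting with $\widehat{F}$ produces the required $\beta\co_F = \mathfrak{a}\mathfrak{s}$.

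The remaining task is the signature calculation, which I anticipate being the most delicate step, purely because several sign conventions must line up correctly. For each archimedean place $v$ of $F$ I would choose an embedding $\varphi_v: K\to\C$ inducing $v$ on $F$ and compute $\varphi_v(\gamma) = \varphi_v(\zeta_0)^{-1}\varphi_v(\zeta)$. When $v \neq \infty^{\mathrm{sp}}$, take $\varphi_v \in \Phi$: then $\varphi_v|_{K_0} = \iota$, so $\varphi_v(\zeta_0) = \iota(\zeta_0)$ and $\varphi_v(\zeta)$ are both negative multiples of $i$, giving $\varphi_v(\gamma) > 0$. When $v = \infty^{\mathrm{sp}}$, the unique $\Phi$-embedding inducing $v$ is $\varphi^{\mathrm{sp}}$, which by definition satisfies $\varphi^{\mathrm{sp}}|_{K_0} = \overline{\iota}$; this conjugation flips the sign of $\varphi^{\mathrm{sp}}(\zeta_0)$ but not of $\varphi^{\mathrm{sp}}(\zeta)$, producing $\varphi^{\mathrm{sp}}(\gamma) < 0$. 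This establishes the signature assertion and completes the proof.
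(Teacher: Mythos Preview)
Your proposal is correct and follows essentially the same route as the paper: uniformize $A_0$ and $A$ by fractional ideals, write the polarizations as trace forms $\mathrm{Tr}(\zeta_0 x\overline{y})$ and $\mathrm{Tr}(\zeta x\overline{y})$, identify $L_B(A_0,A)$ with $\mathfrak{a}_0^{-1}\mathfrak{A}$ carrying the Hermitian form $\zeta_0^{-1}\zeta\, x\overline{y}$, and read off the ideal and signatures from the known constraints on $\zeta_0,\zeta$. One small slip: in your displayed ideal computation the factor $(\zeta_0^{-1}\mathfrak{D}_0^{-1})$ should be its inverse $\zeta_0\mathfrak{D}_0$ (you are substituting for $(\mathfrak{a}_0\overline{\mathfrak{a}_0})^{-1}$, not $\mathfrak{a}_0\overline{\mathfrak{a}_0}$), though your final answer $\mathfrak{a}\mathfrak{D}_0\mathfrak{D}^{-1}\co_K=\mathfrak{a}\mathfrak{s}\co_K$ is correct.
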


\begin{proof}
This follows from the classical theory of CM abelian varieties over $\C$.
For some  fractional $\co_K$-ideal  $\mathfrak{A}$ there is an isomorphism of $\co_K$-modules
$\mathfrak{A}\iso H_1(A)$, and  the polarization $\lambda$ determines a symplectic pairing on  $\mathfrak{A}$
of the form 
\[
\lambda(x,y)=\mathrm{Tr}_{K/\Q}(\zeta x\overline{y}),
\] 
where $\zeta\in K^\times$ satisfies $\overline{\zeta}=-\zeta$ and
$\zeta\mathfrak{A}\overline{\mathfrak{A}}=\mathfrak{aD}^{-1}$. The real vector space $\mathfrak{A}_\R$
is canonically identified with $\Lie(A)$, and hence comes with a complex structure for which 
the quadratic form $\lambda(ix,x)$  is positive definite.  This last condition is equivalent to  $\varphi(\zeta)\cdot i>0$
for every $\varphi\in \Phi$.  

Similarly, for some fractional $\co_{K_0}$-ideal $\mathfrak{A}_0$ there is an isomorphism
of $\co_{K_0}$-modules $\mathfrak{A}_0\iso H_1(A_0)$, and the symplectic form on $\mathfrak{A}_0$ induced by the 
polarization $\lambda_0$ has the form 
\[
\lambda_0(x,y)=\mathrm{Tr}_{K_0/\Q} (\zeta_0 x\overline{y})
\] 
for some $\zeta_0\in K_0^\times$ satisfying $\overline{\zeta}_0=-\zeta_0$, 
$\zeta_0\mathfrak{A}_0\overline{\mathfrak{A}}_0=\mathfrak{D}_0^{-1}$,
and $\iota(\zeta_0)\cdot i>0$.

There are now  isomorphisms of $\co_K$-modules
\[
 \mathfrak{A}_0^{-1}\mathfrak{A} \iso \Hom_{\co_{K_0}}(\mathfrak{A}_0,\mathfrak{A})  \iso L_B(A_0,A), 
\]
and under these identifications the $\co_K$-Hermitian  form  on $L_B(A_0,A)$ 
is identified with the $\co_K$-Hermitian form $\zeta_0^{-1}\zeta x\overline{y}$ on $\mathfrak{A}_0^{-1}\mathfrak{A}$.
If $\varphi\in \Phi$ with $\varphi\not=\varphi^\mathrm{sp}$ then 
$\varphi(\zeta_0^{-1}\zeta) >0$, while $\varphi^\mathrm{sp}(\zeta_0^{-1}\zeta)<0$.
This shows that $\langle f,f\rangle_\CM$ is negative  definite at $\infty^{\mathrm{sp}}$ and 
positive definite at all other archimedean  places of $F$.  The rest follows by fixing an $\widehat{\co}_K$-linear
isomorphism $\mathfrak{A}_0^{-1}\mathfrak{A}\widehat{\co}_K\iso \widehat{\co}_K$.
\end{proof}

\begin{Rem}\label{Rem:L_B}
Of course Proposition \ref{Prop:betti hermitian} does not determine the Hermitian space $L_B(A_0,A)$
up to isomorphism, nor does it even determine the genus of $L_B(A_0,A)$.  In the 
terminology of Section \ref{ss:twisting}, Proposition \ref{Prop:betti hermitian} tells us the ideal of $L_B(A_0,A)$, and
the signature at every archimedean place, and so only determines the $C_K$-orbit of $L_B(A_0,A)$.
Let $\mathcal{L}_B$ denote the set of isomorphism classes of pairs $(L,H)$ where 
\begin{itemize}
\item
$L$ is a projective $\co_K$-module of rank one, 
\item
$H$ is a $K$-valued $\co_K$-Hermitian form on $L$,
\item
$(L,H)$ has ideal $\mathfrak{as}$, in the terminology of Section \ref{ss:twisting}, 
 \item
$(L,H)$  is negative definite at $\infty^\mathrm{sp}$ and positive definite at all other archimedean places of $F$.
 \end{itemize}
 This is a transitive $C_K$-set, and Proposition \ref{Prop:betti hermitian} tells us that every 
 $L_B(A_0,A)$ lies in $\mathcal{L}_B$.  The discussion preceding 
 Proposition \ref{Prop:hermite twist} applies equally well to $L_B(A_0,A)$,  and shows that 
\[
\mathfrak{z}\bullet ( L_B(A_0,A) , \langle\cdot,\cdot\rangle_\CM ) 
\iso ( L_B(A_0,A^\mathfrak{z}) , \langle\cdot,\cdot\rangle_\CM^\mathfrak{z} )
\]
for any $\mathfrak{z}\in C_K$.  Thus as the pair $(A_0,A)$ varies, we obtain every element of 
$\mathcal{L}_B$.  In this sense, Proposition \ref{Prop:betti hermitian} is as sharp as possible.
\end{Rem}

The remainder of this subsection is devoted to the proof of the following theorem, which 
similarly determines the $C_K$-orbit of  the Hermitian space $(L(A_0,A) , \langle \cdot,\cdot\rangle_\CM)$
at a supersingular point.

\begin{Thm}\label{Thm:global hermitian}
Suppose $\mathfrak{p}$ is a prime of $K_\Phi$ for which $\mathfrak{p}_F$ is nonsplit in $K$, and suppose
\[
(A_0,A ) \in  ( \mathcal{M}_{(1,0)} \times \mathcal{CM}_{\Phi}^\mathfrak{a})  (k^\alg_{\Phi,\mathfrak{p}}).
\]
There is an  isomorphism  
\[
\big( \widehat{L}(A_0,A)  , \langle \cdot , \cdot \rangle_\CM  \big)  \iso
\big(  \widehat{\co}_K , \beta x\overline{y} \big)
\]
 for some  $\beta\in \widehat{F}^\times$ satisfying
 \[
 \beta \co_F=\mathfrak{asp}_F^{\epsilon_p}.
 \]
 Here $p$ is the rational prime below $\mathfrak{p}$, and $\epsilon_p$ is defined by (\ref{epsilon}).
Furthermore, if we view $\beta\in F_\A^\times$ with  trivial archimedean components then $\chi_{K/F}(\beta)=1$. 
\end{Thm}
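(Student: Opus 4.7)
The plan is to compute the ideal $\beta\co_F$ locally at each prime $\mathfrak{q}$ of $F$, combining Proposition \ref{Prop:BT hermite switch} to decompose the problem prime by prime, Proposition \ref{Prop:hermitian local I} at $\mathfrak{p}_F$, Proposition \ref{Prop:simple deformation} at the other primes above $p$, and Proposition \ref{Prop:betti hermitian} for comparison via the canonical lift. I begin by noting that $\mathfrak{p}_F$ nonsplit in $K$ forces $p$ nonsplit in $K_0$, since otherwise $K_{0,p}\otimes_{\Q_p}F_{\mathfrak{p}_F}$ splits and carries the splitting up to $K$. Proposition \ref{Prop:ST} therefore applies, making $A_0$, $A$, and each $A[\mathfrak{q}^\infty]$ for $\mathfrak{q}\mid p$ supersingular, and Proposition \ref{Prop:BT hermite switch} guarantees that $L(A_0,A)$ is projective of rank one over $\co_K$ with local components
\[
L(A_0,A)\otimes_{\co_F}\co_{F,\mathfrak{q}} \iso \Hom_{\co_{K_0}}(A_0[\ell^\infty],A[\mathfrak{q}^\infty]),
\]
where $\ell$ is the rational prime below $\mathfrak{q}$. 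Trivializing $\widehat{L}(A_0,A)\iso\widehat{\co}_K$ presents the Hermitian form as $\beta x\overline{y}$, and the task reduces to determining $\beta_\mathfrak{q}\co_{F,\mathfrak{q}}$ at each $\mathfrak{q}$.

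For primes $\mathfrak{q}\neq\mathfrak{p}_F$, I would fix an isomorphism $\C\iso\C_\mathfrak{p}$ of $K_\Phi$-algebras and pass through the canonical lift of Definition \ref{Def:canonical} to produce a complex pair $(A_0^\C,A^\C)$. At $\ell\neq p$ the $\ell$-adic Tate modules are unchanged by specialization and match $H_1(-;\Z_\ell)$ in characteristic zero, so $L(A_0,A)\otimes_{\co_F}\co_{F,\mathfrak{q}}$ is isomorphic to $L_B(A_0^\C,A^\C)\otimes_{\co_F}\co_{F,\mathfrak{q}}$ as Hermitian $\co_{K,\mathfrak{q}}$-modules. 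At a prime $\mathfrak{q}\neq\mathfrak{p}_F$ above $p$, the local CM type $\Phi_\mathfrak{q}$ has signature $(n_\mathfrak{q},0)$ (since $\varphi^\mathrm{sp}$ induces $\mathfrak{p}_F$, not $\mathfrak{q}$), so Proposition \ref{Prop:simple deformation} bijects the Hom modules under specialization through every Artinian quotient of $W_{\Phi,\mathfrak{p}}$; passing to the limit and then to $\C$ gives the analogous identification with $L_B(A_0^\C,A^\C)\otimes_{\co_F}\co_{F,\mathfrak{q}}$. In both cases Proposition \ref{Prop:betti hermitian} yields the local ideal $\mathfrak{a}\mathfrak{s}\co_{F,\mathfrak{q}}$.

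The principal remaining case, $\mathfrak{q}=\mathfrak{p}_F$, is precisely the situation of Section \ref{ss:LHI}: applying Proposition \ref{Prop:hermitian local I} to the triples $(A_0[p^\infty],\kappa_0[p^\infty],\lambda_0[p^\infty])$ and $(A[\mathfrak{p}_F^\infty],\kappa[\mathfrak{p}_F^\infty],\lambda[\mathfrak{p}_F^\infty])$ yields a local ideal of $\mathfrak{a}_{\mathfrak{p}_F}\mathfrak{p}_F^{\epsilon_p}\mathfrak{D}_0\mathfrak{D}^{-1}$, which rewrites as $\mathfrak{a}_{\mathfrak{p}_F}\mathfrak{s}_{\mathfrak{p}_F}\mathfrak{p}_F^{\epsilon_p}$ via Proposition \ref{Prop:s ideal}. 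Assembling the local pieces yields $\beta\co_F=\mathfrak{a}\mathfrak{s}\mathfrak{p}_F^{\epsilon_p}$. For the final assertion $\chi_{K/F}(\beta)=1$, I would fix a $K$-linear trivialization $V(A_0,A)\iso K$ realizing $\langle\cdot,\cdot\rangle_\CM$ globally as $\alpha x\overline{y}$ with $\alpha\in F^\times$: total positive definiteness of $\langle\cdot,\cdot\rangle_\CM$ forces $\chi_v(\alpha)=1$ at each archimedean $v$, and the product formula $\prod_v\chi_v(\alpha)=1$, applied componentwise across $\pi_0(F)$, gives $\prod_{v\,\mathrm{finite}}\chi_v(\alpha)=1$; since $\chi_v(\alpha)=\chi_v(\beta_v)$ at every finite $v$ by construction and $\beta_v=1$ archimedeanly, this is exactly the claim. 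The central difficulty is the deformation-theoretic calculation at $\mathfrak{p}_F$, which has been absorbed into Proposition \ref{Prop:hermitian local I}; the supersingular hypothesis is what allows that proposition to apply, and the remaining steps are comparisons or formal manipulations.
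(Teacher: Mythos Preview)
Your proposal is correct and follows essentially the same route as the paper: decompose prime by prime via Proposition~\ref{Prop:BT hermite switch}, compare with $L_B$ of the canonical lift at all $\mathfrak{q}\neq\mathfrak{p}_F$ (Tate modules for $\ell\neq p$, Proposition~\ref{Prop:simple deformation} for the signature $(n_\mathfrak{q},0)$ primes above $p$), invoke Proposition~\ref{Prop:hermitian local I} at $\mathfrak{p}_F$, and finish with positive definiteness plus the product formula. The only place the paper adds a detail you skipped is at the primes $\mathfrak{q}\mid p$, $\mathfrak{q}\neq\mathfrak{p}_F$: after lifting to $W_{\Phi,\mathfrak{p}}$ one still needs Tate's theorem to identify the Hom module over $W_{\Phi,\mathfrak{p}}$ with that over $\C_\mathfrak{p}$ (the base-change map is injective with torsion-free cokernel, hence an isomorphism by rank comparison), but this is a routine supplement to your outline.
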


\begin{proof}
 The  pair  $(A_0,A)$  is necessarily supersingular: $A$ is supersingular by
Proposition \ref{Prop:ST}, and $A_0$ is supersingular as $p$ is nonsplit in $K_0$.
We will determine the structure of $\big( L(A_0,A), \langle\cdot,\cdot\rangle_\CM  \big)$ by
exploiting the fact that the pair  $(A_0,A)$ has a canonical lift, in the sense of Definition \ref{Def:canonical}.
  This will allow us  to reduce most of the 
calculation of $L(A_0,A)$ to a calculation in characteristic $0$, where 
Proposition \ref{Prop:betti hermitian} applies.

By Remark \ref{Rem:extra canonical} there is a unique lift of $(A_0,A)$ to a pair
\[
(A_0^\prime, A^\prime) \in (  \mathcal{M}_{(1,0)} \times \mathcal{CM}_{\Phi}^\mathfrak{a} ) (\C_\mathfrak{p}).
\]
After fixing an isomorphism of $K_\Phi$-algebras $\C_\mathfrak{p}\iso \C$,   
we may view $(A_0^\prime,A^\prime)$  also as a pair
\begin{equation}\label{lift}
(A^\prime_0,A^\prime)  \in   (  \mathcal{M}_{(1,0)} \times \mathcal{CM}_{\Phi}^\mathfrak{a}  )  (\C).
\end{equation}
The comparison between $L(A_0,A)$ and $L_B(A_0^\prime,A^\prime)$ now proceeds by replacing $A_0$, $A$, 
$A_0^\prime$, and $A^\prime$ by their Barsotti-Tate groups.
Suppose  $\mathfrak{q}\subset\co_F$ is a prime lying above a rational
prime $q$ (which may or may not equal $p$).  The  $\co_{K,\mathfrak{q}}$-module
\[
L_{\mathfrak{q}}(A_0,A)= \Hom_{\co_{K_0}} (A_{0}[q^\infty] ,  A[\mathfrak{q}^\infty] )
\]
comes equipped with a $K_\mathfrak{q}$-valued $\co_{K,\mathfrak{q}}$-Hermitian form $\langle f_1,f_2\rangle_\CM$ 
defined exactly as above.  Similarly, define an
$\co_{K,\mathfrak{q}}$-Hermitian space
\[
L_{\mathfrak{q}}(A^\prime_0,A^\prime)= \Hom_{\co_{K_0}} (A^\prime_{0}[q^\infty] ,  A^\prime[\mathfrak{q}^\infty] ).
\]
 There are  isomorphism of Hermitian $\co_{K,\mathfrak{q}}$-modules
\begin{align}\label{betti local}
L_\mathrm{B}(A^\prime_0,A^\prime) \otimes_{\co_K} \co_{K,\mathfrak{q}}  
&  \iso L_\mathfrak{q}(A^\prime_0,A^\prime) \\
L(A_0,A) \otimes_{\co_K} \co_{K,\mathfrak{q}} 
& \iso L_\mathfrak{q}(A_0,A). \nonumber
\end{align}
The first is obvious, as the $q$-divisible groups of $A_0^\prime$ and $A^\prime$ are constant, and 
isomorphic to $H_1(A_0') \otimes_\Z \Q_q/\Z_q$ and $H_1(A') \otimes_\Z \Q_q/\Z_q$, respectively.
The second isomorphism is part of the statement of Proposition \ref{Prop:BT hermite switch}.
These isomorphisms, together with the following lemma, allow us to convert information about
$L_B(A_0',A')$ to information about $L(A_0,A)$.

\begin{Lem}\label{Lem:Hermitian lift}
Suppose $\mathfrak{q}\subset\co_F$ is a prime  with $\mathfrak{q} \not=\mathfrak{p}_F$.  There is 
an $\co_{K}$-linear isomorphism
\begin{equation}\label{reduction iso}
L_\mathfrak{q}(A^\prime_0,A^\prime) \iso L_\mathfrak{q}(A_0,A)
\end{equation}
respecting the Hermitian forms.
\end{Lem}

\begin{proof}
Let $q$ be the rational prime below $\mathfrak{q}$.
If $q\not=p$ then the $q$-adic Tate modules of $A^\prime$ and $A$ are canonically isomorphic,
and similarly for the $q$-adic Tate modules of $A^\prime_0$ and  $A_0$.  Therefore
\[
\Hom_{\co_{K_0}}(A_0^\prime[q^\infty] , A^\prime[q^\infty] ) \iso \Hom_{\co_{K_0}}(A_0[q^\infty] , A[q^\infty] )
\]
and (\ref{reduction iso}) follows by taking $\mathfrak{q}$-parts.

Now suppose $q=p$, so $\mathfrak{q}$ lies above $p$.  Let $\Phi(\mathfrak{q})$ be the set of all $\varphi\in \Phi$ which,
when viewed as a map $K\to \C_\mathfrak{p}$, induce the prime $\mathfrak{q}$.  The hypothesis that 
$\mathfrak{q} \not=\mathfrak{p}_F$ implies that $\varphi^\mathrm{sp}\not\in \Phi(\mathfrak{q})$, 
and so every $\varphi\in \Phi(\mathfrak{q})$ satisfies $\varphi|_{K_0} = \iota$.  In the terminology of
Section \ref{ss:LHII}, $\Phi(\mathfrak{q})$ is a $p$-adic CM type of $K_\mathfrak{q}$ of signature $(m,0)$, where
$m=[F_\mathfrak{q}:\Q_p]$.  Furthermore, the $p$-divisible group $A[\mathfrak{q}^\infty]$, 
with its action of $\co_{K,\mathfrak{q}}$, satisfies the $\Phi(\mathfrak{q})$-determinant condition of 
Section \ref{ss:canonical lifts}.   By Proposition \ref{Prop:simple deformation} the reduction map
\[
\Hom_{\co_{K_0}} ( A^\can_0 [ p^\infty ], A^\can[\mathfrak{q}^\infty] )
\to  
\Hom_{\co_{K_0}} ( A_0 [p^\infty ], A[\mathfrak{q}^\infty] )
\]
is an isomorphism.  Strictly speaking,  Proposition \ref{Prop:simple deformation}
deals with deformations to Artinian quotients of $W_{\Phi,\mathfrak{p}}$, but
one may pass to the limit  by applying 
 \cite[Theorem 3.4]{conrad04} to   truncated $p$-divisible groups. 

The pair   $(A_0^\prime,A^\prime)$ is the image of $(A_0^\can,A^\can)$ under base change through
$W_{\Phi,\mathfrak{p}} \to \C_\mathfrak{p}$, and base change  defines an injection 
 \[
\Hom_{\co_{K_0}} ( A^\can_0 [ p^\infty ], A^\can[\mathfrak{q}^\infty] )
\to  \Hom_{\co_{K_0}} ( A^\prime_0 [p^\infty ], A^\prime[\mathfrak{q}^\infty] )
 \]
whose image is, by Tate's theorem \cite[p.~181]{tate67},  the submodule of invariants
for the action of   $\Aut(\C_{\mathfrak{p}}/ W_{\Phi,\mathfrak{p}})$.  In particular the 
 cokernel is $\Z_p$-torsion free.  We have now constructed an injection
 \[
 L_\mathfrak{q}(A_0,A) \to  L_\mathfrak{q}(A_0^\prime,A^\prime)
 \]
 with $\Z_p$-torsion free cokernel.  But Propositions \ref{Prop:betti hermitian} and \ref{Prop:BT hermite switch},
 together with the isomorphisms (\ref{betti local}), imply that 
 the domain and codomain are free of rank one over $\co_{K,\mathfrak{q}}$, and so 
 this map is an isomorphism.  It is clear from the construction  that it respects 
 the Hermitian forms.
\end{proof}

It only remains to collect the pieces together. 
Let $\mathfrak{q}$ be a prime of $F$.  If $\mathfrak{q}\not=\mathfrak{p}_F$ then (\ref{betti local}),
 and Lemma \ref{Lem:Hermitian lift} tell us that 
\[
L_B(A'_0 , A' ) \otimes_{\co_F}\co_{F,\mathfrak{q}} \iso L(A_0,A) \otimes_{\co_F}\co_{F,\mathfrak{q}},
\]
and so by Proposition \ref{Prop:betti hermitian} there is an isomorphism
\[
L (A_0 , A ) \otimes_{\co_F}\co_{F,\mathfrak{q}}  \iso \co_{K,\mathfrak{q}}
\]
identifying  $\langle \cdot,\cdot \rangle_\CM$ with $\beta_\mathfrak{q}x\overline{y}$ for some
$\beta_\mathfrak{q}\in F^\times_\mathfrak{q}$ satisfying
$\beta_\mathfrak{q}\co_{F,\mathfrak{q}} = \mathfrak{as}\co_{F,\mathfrak{q}}$.

If $\mathfrak{q}=\mathfrak{p}_F$ then, as in the proof of Lemma \ref{Lem:Hermitian lift},
let $\Phi(\mathfrak{q})$ be the set of all $\varphi\in \Phi$ which, when viewed as a map $K\to \C_\mathfrak{p}$,
induce the prime $\mathfrak{q}$.  The assumption that $\mathfrak{q}=\mathfrak{p}_F$ implies that 
$\varphi^\mathrm{sp}\in \Phi(\mathfrak{q})$, and the $p$-adic CM type $\Phi(\mathfrak{q})$ of $K_\mathfrak{q}$
has signature, in the terminology of  Section \ref{ss:LHI},  $(m-1,1)$ where $m=[F_\mathfrak{q}:\Q_p]$.  
The $p$-divisible group $A[\mathfrak{q}^\infty]$, with its action of $\co_{K,\mathfrak{q}}$, satisfies
the $\Phi(\mathfrak{q})$-determinant condition, and so the results of  Section \ref{ss:LHI} apply.
In particular, Proposition \ref{Prop:hermitian local I} and (\ref{betti local}) give  isomorphisms 
\[
 L(A_0,A) \otimes_{\co_F} \co_{F,\mathfrak{q}}\iso   L_\mathfrak{q}(A_0,A)\iso \co_{K,\mathfrak{q}},
\] 
which identify
$\langle f_1,f_2\rangle_\CM$ with $\beta_\mathfrak{q}x\overline{y}$ for some 
$\beta_\mathfrak{q}\in F^\times_\mathfrak{q}$ satisfying
$\beta_\mathfrak{q}\co_{F,\mathfrak{q}} = \mathfrak{asp}_F^{\epsilon_p}\co_{F,\mathfrak{q}}$.

Setting $\beta=\prod_\mathfrak{q}\beta_\mathfrak{q}$, we have now shown that there is an isomorphism
\[
\widehat{L}(A_0,A) \iso \widehat{\co}_K
\]
identifying $\langle\cdot,\cdot\rangle_\CM$ with $\beta x\overline{y}$.
It only remains to show that  $\chi_{K/F}(\beta)=1$.  We know that $V(A_0,A)$ is 
 a free $K$-module of rank one, equipped with a positive definite Hermitian form.
 It follows that  for some $\beta^*\in F^{\gg 0}$ there is an isomorphism 
$V(A_0,A)\iso K$ identifying $\langle \cdot, \cdot \rangle_\CM$ with $\beta^* x\overline{y}$.  
Certainly $\chi_{K/F}(\beta^*)=1$, and $\beta$ and $\beta^*$ differ everywhere locally by a norm from 
$K_\A^\times$.  Therefore also $\chi_{K/F}(\beta)=1$, completing the proof of Theorem \ref{Thm:global hermitian}.
\end{proof}

The following proposition is not needed in the proofs of our main results, but it is 
illuminating, and follows easily from what has been said.

\begin{Prop}\label{Prop:invariant switch}
Let $(A_0,A)$ be as in Theorem \ref{Thm:global hermitian}, and let $(A_0',A')$ be 
as in (\ref{lift}).
 The $K$-Hermitian spaces $V_B(A_0',A')$ and  $V(A_0,A)$ are isomorphic locally at a 
 place $v$ of $F$ if and only if  $v\not\in \{\infty^{\mathrm{sp}} , \mathfrak{p}_F\}$.
\end{Prop}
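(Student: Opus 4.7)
The plan is to compare local Hermitian invariants of $V_B(A_0',A')$ and $V(A_0,A)$ at every place $v$ of $F$, using what has already been essentially established in the proof of Theorem \ref{Thm:global hermitian} together with the product formula.

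At the archimedean places: Proposition \ref{Prop:betti hermitian} asserts that $V_B(A_0',A')$ is negative definite at $\infty^{\mathrm{sp}}$ and positive definite at every other archimedean place of $F$, whereas the Hermitian form $\langle\cdot,\cdot\rangle_\CM$ on $V(A_0,A)$ is totally positive definite by its construction from the positive definite form (\ref{hermitian def}). Hence the archimedean invariants of the two spaces agree at every real place except $\infty^{\mathrm{sp}}$, where they disagree.

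At finite primes $\mathfrak{q}\ne\mathfrak{p}_F$: I would appeal directly to Lemma \ref{Lem:Hermitian lift}, whose content is precisely an $\co_{K,\mathfrak{q}}$-linear isometry $L_\mathfrak{q}(A_0',A')\iso L_\mathfrak{q}(A_0,A)$. Combining this with the base-change isomorphisms (\ref{betti local}) yields an isometry of Hermitian $F_\mathfrak{q}$-spaces
\[
V_B(A_0',A')\otimes_F F_\mathfrak{q}\iso V(A_0,A)\otimes_F F_\mathfrak{q},
\]
so the local invariants agree at every finite prime outside $\mathfrak{p}_F$.

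Finally, at the prime $\mathfrak{p}_F$ itself: since both $V_B(A_0',A')$ and $V(A_0,A)$ are nondegenerate one-dimensional $K$-Hermitian spaces, each has the form $(K,\alpha x\overline y)$ for some $\alpha\in F^\times$, and the product formula for the quadratic Hecke character $\chi_{K/F}$ of (\ref{general character}) gives $\prod_v \chi_v(\alpha)=1$; equivalently, the product over all places $v$ of $F$ of the local Hermitian invariants of each of our spaces is $+1$. Since the spaces have equal invariants at every place other than $\infty^{\mathrm{sp}}$ and $\mathfrak{p}_F$ by the previous two paragraphs, and disagree at $\infty^{\mathrm{sp}}$, they must also disagree at $\mathfrak{p}_F$. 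There is no essentially new obstacle; the only genuinely geometric input is Lemma \ref{Lem:Hermitian lift}, which has already been proved, and the rest is the product formula plus bookkeeping of signatures.
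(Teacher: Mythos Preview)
Your proof is correct and follows essentially the same route as the paper's: compare archimedean invariants via Proposition~\ref{Prop:betti hermitian} and total positivity of $\langle\cdot,\cdot\rangle_\CM$, use Lemma~\ref{Lem:Hermitian lift} at finite primes $\mathfrak{q}\ne\mathfrak{p}_F$, and conclude at $\mathfrak{p}_F$ by parity. The paper phrases the last step as ``the set of places at which the spaces differ has even cardinality,'' which is exactly your product-formula observation.
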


\begin{proof}
The set of places of $F$ at which the Hermitian spaces in question are not isomorphic is finite of even cardinality.  
As the second is totally positive definite, Proposition \ref{Prop:betti hermitian} implies that 
they are isomorphic at all archimedean places except $\infty^\mathrm{sp}$.  Therefore the 
set of finite places of $F$ at which they are not isomorphic has odd cardinality.  By Lemma \ref{Lem:Hermitian lift}
they are isomorphic at all finite places $\mathfrak{q}\not=\mathfrak{p}_F$, and it follows that $\mathfrak{p}_F$
is the unique finite place at which they are not isomorphic.
\end{proof}

One may interpret Proposition \ref{Prop:invariant switch} as follows.  Recall the collection 
of $\co_K$-Hermitian spaces $\mathcal{L}_B$ of Remark \ref{Rem:L_B}, and 
define a  collection of rank one $K$-Hermitian spaces 
\[
\mathcal{V}_B = \{ (L\otimes_{\co_K}K,H) : (L,H)\in \mathcal{L}_B \}.
\]
This is precisely the collection of Hermitian spaces $V_B(A_0',A')$  that appear as the pair
$(A_0,A)$ varies in Theorem \ref{Thm:global hermitian}.
A rank one Hermitian space is determined by the collection of local invariants at all places of $F$,
and for each space in $\mathcal{V}_B$ one can construct a new Hermitian space
by changing the invariant both at $\infty^\mathrm{sp}$ and  at $\mathfrak{p}_F$.  If we denote by
 $\mathcal{V}_B(\mathfrak{p})$ the set of Hermitian spaces obtained from $\mathcal{V}_B$ in this way,
then as the pair $(A_0,A)$ varies in Theorem \ref{Thm:global hermitian}, the  Hermitian spaces $V(A_0,A)$ vary
over $\mathcal{V}_B(\mathfrak{p})$.


\subsection{The stack $\mathcal{Z}^\mathfrak{a}_{\Phi}(\alpha)$}
\label{ss:zero stack}


If $S$ is an $\co_\Phi$-scheme, then to each $S$-valued point 
\[
(A_0,A) \in  (  \mathcal{M}_{(1,0)} \times \mathcal{CM}_{\Phi}^\mathfrak{a}  ) (S)
\]
we have associated an $\co_K$-module $L(A_0,A)$  equipped with an $\co_K$-Hermitian form
$\langle\cdot,\cdot\rangle_\CM$.

\begin{Def}
For any $\alpha\in F$ let $\mathcal{Z}^\mathfrak{a}_{\Phi}(\alpha)$ be the algebraic 
stack over $\co_\Phi$ classifying triples $(A_0,A,f)$ over $\co_\Phi$-schemes $S$ in which 
\begin{itemize}
\item
$( A_0, A)  \in  (  \mathcal{M}_{(1,0)} \times \mathcal{CM}_{\Phi}^\mathfrak{a})  (S)$,
\item
$f\in L(A_0,A)$ satisfies $\langle f,f\rangle_\CM =\alpha$.
\end{itemize}
If $\alpha=\co_F$ we omit it from the notation.
\end{Def}

The evident forgetful morphism  
\[
\mathcal{Z}^\mathfrak{a}_{\Phi}(\alpha) \to  \mathcal{M}_{(1,0)} \times \mathcal{CM}_{\Phi}^\mathfrak{a}
\] 
 is finite and unramified, by the proof of \cite[Proposition 2.10]{KRunitaryII}.

\begin{Prop}\label{Prop:zero cycle}
Suppose $\alpha\in F^\times$.
\begin{enumerate}
 \item
 The stack  $\mathcal{Z}^\mathfrak{a}_{\Phi}(\alpha)$ 
has dimension zero,  is supported in nonzero characteristic, and every geometric point is  supersingular. 
Furthermore, $\mathcal{Z}_{\Phi}^\mathfrak{a}(\alpha)$ is empty unless $\alpha$ is totally positive.
\item
If  $\mathfrak{p}$ is a prime of $K_\Phi$ for which
$
\mathcal{Z}_{\Phi}^\mathfrak{a}(\alpha)(k_{\Phi,\mathfrak{p}}^\alg)\not=\emptyset,
$
then $\mathfrak{p}_F$ is nonsplit in $K$.
\end{enumerate}
\end{Prop}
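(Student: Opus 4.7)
The plan is to derive every conclusion from two earlier inputs: the totally positive definiteness of $\langle\cdot,\cdot\rangle_\CM$ on $L(A_0,A)$, and Propositions \ref{Prop:supersingular} and \ref{Prop:ST}.

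First I would dispose of the positivity claim. The Hermitian form $\langle\cdot,\cdot\rangle_\CM$ on $L(A_0,A)$ is totally positive definite (this was set up in Section \ref{ss:hermitian spaces}, since its trace down to $\co_{K_0}$ is the positive definite form $\langle\cdot,\cdot\rangle$). So any $f$ with $\langle f,f\rangle_\CM=\alpha\in F^\times$ is nonzero and forces $\alpha\gg 0$. In particular $\mathcal{Z}^{\mathfrak a}_\Phi(\alpha)$ is empty unless $\alpha$ is totally positive.

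Next, I would handle the characteristic and supersingularity claims simultaneously. Given any geometric point $(A_0,A,f)$ of $\mathcal{Z}^{\mathfrak a}_\Phi(\alpha)$ over an algebraically closed field $k$, we have $\langle f,f\rangle_\CM=\alpha\in F^\times$. Proposition \ref{Prop:supersingular} then forces $\mathrm{char}(k)>0$ and both $A_0,A$ supersingular. This immediately gives that $\mathcal{Z}^{\mathfrak a}_\Phi(\alpha)$ is supported in nonzero characteristic and that every geometric point is supersingular.

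For dimension zero: the forgetful morphism
\[
\mathcal{Z}^{\mathfrak a}_\Phi(\alpha)\to \mathcal{M}_{(1,0)}\times\mathcal{CM}^{\mathfrak a}_\Phi
\]
is finite and unramified (by the proof of \cite[Proposition 2.10]{KRunitaryII}, as noted just before the proposition), and the target is \'etale over $\co_\Phi$ by Proposition \ref{Prop:reduction II}, hence of absolute dimension $1$. Being finite over a stack of dimension $1$ and concentrated in nonzero characteristic, $\mathcal{Z}^{\mathfrak a}_\Phi(\alpha)$ has dimension zero.

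Finally, for the nonsplit conclusion, suppose $\mathcal{Z}^{\mathfrak a}_\Phi(\alpha)(k^{\mathrm{alg}}_{\Phi,\mathfrak p})\neq\emptyset$ and let $p$ be the residue characteristic of $\mathfrak p$. Since $A_0$ is a supersingular elliptic curve with CM by $\co_{K_0}$ of signature $(1,0)$, $p$ cannot split in $K_0$ (otherwise $A_0[p^\infty]$ would be ordinary). Now that $p$ is nonsplit in $K_0$, Proposition \ref{Prop:ST} applies to $A$: supersingularity of $A$ is equivalent to $\mathfrak{p}_F$ being nonsplit in $K$. Since $A$ is supersingular, $\mathfrak{p}_F$ is nonsplit in $K$, as required.

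No step is genuinely difficult; the only thing to double-check is that the characteristic of the residue field of $\mathfrak p$ is automatically nonsplit in $K_0$, but this is forced by the presence of a supersingular elliptic curve $A_0$ with $\co_{K_0}$-action.
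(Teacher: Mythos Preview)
Your proof is correct and follows essentially the same approach as the paper: both rely on Proposition~\ref{Prop:supersingular} for the characteristic and supersingularity claims, on the total positive definiteness of $\langle\cdot,\cdot\rangle_\CM$ for the positivity claim, and on Proposition~\ref{Prop:ST} (together with $p$ nonsplit in $K_0$, forced by the supersingularity of $A_0$) for the nonsplit conclusion. The only minor difference is in the dimension-zero argument: the paper argues locally, observing that unramifiedness of the forgetful map gives a surjection on completed strictly Henselian local rings, so that each local ring of $\mathcal{Z}^\mathfrak{a}_\Phi(\alpha)$ is a quotient of $W_{\Phi,\mathfrak{p}}$ and hence Artinian once characteristic-zero points are excluded; your argument is the global repackaging of the same fact via finiteness over a one-dimensional base with empty generic fiber.
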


\begin{proof}
Suppose  $(A_0,A, f)\in \mathcal{Z}^\mathfrak{a}_{\Phi}(\alpha)(k)$ with $\alpha\in F^\times$
and $k$ an algebraically closed field.  As $\langle f,f\rangle_\CM=\alpha$, Proposition \ref{Prop:supersingular}
shows that $k$ has nonzero characteristic, and that $A_0$ and $A$ are supersingular.
The supersingularity of $A_0$ implies that  $p$ is nonsplit in $K_0$,  and  Proposition \ref{Prop:ST} 
then implies $\mathfrak{p}_F$ is nonsplit in $K$.
Next we show that $\mathcal{Z}^\mathfrak{a}_{\Phi}(\alpha)$ has dimension $0$.  
Suppose $\mathfrak{p}$ is a prime of $\co_\Phi$
and $z\in \mathcal{Z}^\mathfrak{a}_{\Phi}(\alpha)(k^\alg_{\Phi,\mathfrak{p}})$ is a geometric point.
The forgetful morphism 
\[
\mathcal{Z}^\mathfrak{a}_{\Phi}(\alpha) \to   \mathcal{M}_{(1,0)} \times \mathcal{CM}_{\Phi}^\mathfrak{a}
\]
is unramified, and so  induces a surjection on completed strictly Henselian local rings.
Proposition \ref{Prop:reduction II} now implies that $\widehat{\co}_{\mathcal{Z}^\mathfrak{a}_{\Phi}(\alpha),z}$
is a quotient of  $W_{\Phi,\mathfrak{p}}$.  As  $\mathcal{Z}^\mathfrak{a}_{\Phi}(\alpha)$ has no geometric points in 
characteristic $0$, this quotient  has dimension $0$.

The only thing left to prove is that $\mathcal{Z}^\mathfrak{a}_{\Phi}(\alpha)=\emptyset$ unless $\alpha\gg 0$.
This is clear from the fact that $\langle\cdot,\cdot\rangle_\CM$ is totally positive definite.
\end{proof}

The following theorem essentially counts the number of geometric points of 
$\mathcal{Z}^\mathfrak{a}_{\Phi}(\alpha)$.

\begin{Thm}\label{Thm:point count}
Suppose $\alpha\in F^{\gg 0}$ and assume $\mathcal{CM}_{\Phi}^\mathfrak{a}(\C)\not=\emptyset$.
If $\mathfrak{p}$ is a prime of $K_\Phi$ for which $\mathfrak{p}_F$ is nonsplit in $K$, then
\[
 \sum_{  (A_0,A,f) \in \mathcal{Z}^\mathfrak{a}_{\Phi}(\alpha)(k_{\Phi,\mathfrak{p}}^\alg) } 
 \frac{1}{\# \Aut(A_0,A,f)}  
 = \frac{ h(K_0)}{w(K_0)}  \cdot \rho\left( \frac{\alpha\co_F}{\mathfrak{asp}_F^{\epsilon_p}}\right)
\]
where $p$ is the rational prime below $\mathfrak{p}$.  Recall that $\mathfrak{s}$ was defined in 
Proposition \ref{Prop:s ideal}, $\epsilon_p$ was defined by (\ref{epsilon}), $\rho$ was defined by
(\ref{rho}), $h(K_0)$ is the class number of $K_0$, and $w(K_0)$ is the number of roots of unity in $K_0$.
\end{Thm}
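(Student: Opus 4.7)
The strategy organizes the weighted count via the forgetful morphism $\mathcal{Z}_\Phi^\mathfrak{a}(\alpha) \to \mathcal{M}_{(1,0)} \times \mathcal{CM}_\Phi^\mathfrak{a}$: the $\mathcal{M}_{(1,0)}$-factor is handled by classical CM theory for elliptic curves, and the remaining count is put in bijection with the set of integral $\co_K$-ideals counted by $\rho$.

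The first step is the automorphism computation. Given a nonzero $f \in L(A_0, A)$, a pair $(u_0, u) \in \Aut(A_0) \times \Aut(A) = \mu(K_0) \times \mu(K)$ is an automorphism of $(A_0, A, f)$ iff $u \cdot f = u_0 \cdot f$; since $L(A_0, A)$ is a torsion-free $\co_K$-module, this forces $u = u_0$, so $\Aut(A_0, A, f)$ is the diagonal copy of $\mu(K_0)$, of order $w(K_0)$. The same analysis shows that for fixed $A_0$ the induced action of $\Aut(A_0)$ on $\iso$-classes of $(A, f)$ is trivial, because any $u_0 \in \mu(K_0) \subset \mu(K) = \Aut(A)$ provides an isomorphism $(A, f) \iso (A, u_0 f)$. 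Combined with the identification $\mathcal{M}_{(1,0)}(k_{\Phi,\mathfrak{p}}^\alg)/\iso \leftrightarrow \mathrm{Pic}(\co_{K_0})$ (which follows from the étaleness of $\mathcal{M}_{(1,0)}$ in Proposition \ref{Prop:reduction II} together with classical CM theory for elliptic curves), this yields
\[
\sum_{(A_0,A,f)/\iso} \frac{1}{\#\Aut(A_0,A,f)} = \frac{h(K_0)}{w(K_0)} \cdot M(\alpha),
\]
where $M(\alpha)$ is the number of $\iso$-classes of pairs $(A, f)$, with $f \in L(A_0, A)$ satisfying $\langle f, f\rangle_\CM = \alpha$, computed for any fixed $A_0$.

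The second step exhibits a bijection between these classes and the integral ideals $\mathfrak{B} \subset \co_K$ satisfying $\mathfrak{B}\bar{\mathfrak{B}} = (\alpha\co_F/\mathfrak{asp}_F^{\epsilon_p})\co_K$. Given $(A, f)$, fix any $K$-linear identification $L(A_0, A) \otimes_\Z \Q \iso K$; then $L(A_0, A)$ becomes a fractional ideal $\mathfrak{A} \subset K$ and $f$ becomes an element $f \in \mathfrak{A}$. I set $\mathfrak{B} := f\mathfrak{A}^{-1}$, which is integral because $f \in \mathfrak{A}$ and well-defined because any two identifications differ by multiplication by some $c \in K^\times$, which rescales both $f$ and $\mathfrak{A}$ by $c$. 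Theorem \ref{Thm:global hermitian} provides $\gamma \in F^{\gg 0}$ with $\langle x, y\rangle_\CM = \gamma x \bar y$ and $\gamma\mathfrak{A}\bar{\mathfrak{A}} = \mathfrak{asp}_F^{\epsilon_p}$; combined with $\gamma f\bar f = \alpha$ this gives the required norm identity. For surjectivity, given a target $\mathfrak{B}$, pick a base point $(A_*, f_*)$ (guaranteed by the nonemptiness hypothesis $\mathcal{CM}_\Phi^\mathfrak{a}(\C) \ne \emptyset$ together with étaleness), compute its associated ideal $\mathfrak{B}_*$ and length $\alpha_* = \langle f_*, f_*\rangle_\CM$, and realize $\mathfrak{B}$ via the twisting action $A_* \mapsto A_*^{(\mathfrak{Z}, \zeta)}$ for a suitable class $(\mathfrak{Z}, \zeta) \in I_K$; here equation (\ref{twist space}) and Proposition \ref{Prop:hermite twist} track the marked vector and the Hermitian form through the twist. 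Injectivity comes from the fact that two pairs producing the same $\mathfrak{B}$ yield isometric $\co_K$-Hermitian lattices with matched marked vectors, which lifts to an isomorphism of the underlying polarized CM abelian varieties by the faithfulness of the Serre tensor construction on isomorphism classes.

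The main obstacle is verifying surjectivity. The naive twist from a single base point $(A_*, f_*)$ only produces $A$'s whose lattice ideal class lies in the image of $C_K \to \mathrm{Pic}(\co_K)$, so one must show that every admissible $\mathfrak{B}$ is in fact realized — equivalently, that the $I_K$-action on the set of pairs is transitive enough to hit every target $\mathfrak{B}$ after one also allows adjustment of the marked vector inside its twisted lattice. The key input is that Theorem \ref{Thm:global hermitian} pins down the local isomorphism type of every $\widehat{L}(A_0, A)$ arising in characteristic $\mathfrak{p}$, so the problem reduces to a transitivity question for the twisting action on $\mathcal{CM}_\Phi^\mathfrak{a}(k^\alg_{\Phi,\mathfrak{p}})/\iso$ compatible with the $C_K$-action on Hermitian lattices described in Proposition \ref{Prop:hermite twist}.
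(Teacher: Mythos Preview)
Your approach is genuinely different from the paper's. The paper fixes a base pair $(A_0,A)$, rewrites the count over the $C_K$-orbit of $A$ as an adelic orbital integral $O_\alpha(A_0,A)$ on the norm-one torus $H$, evaluates that orbital integral place by place (this is the key Lemma~\ref{Lem:orbital}), and then sums over $C_K/C_K^0$ and over the $h(K_0)$ choices of $A_0$. You instead attempt a direct ideal-theoretic bijection $(A,f)\leftrightarrow\mathfrak{B}$. Both routes rest on the same three inputs: Theorem~\ref{Thm:global hermitian} for the ideal of $\widehat{L}(A_0,A)$, the twisting formula (\ref{twist space}), and the simple transitivity of $C_K$ on $\mathcal{CM}_\Phi^\mathfrak{a}(k_{\Phi,\mathfrak{p}}^\alg)$. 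Your route is more elementary and avoids the orbital-integral formalism; the paper's route fits the automorphic framework and makes the local nature of $\rho$ transparent.

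There is, however, a real gap where you yourself flag it. Your injectivity step (``faithfulness of the Serre tensor construction'') and your surjectivity step both require the fact that $C_K$ acts \emph{simply transitively} on isomorphism classes in $\mathcal{CM}_\Phi^\mathfrak{a}(k_{\Phi,\mathfrak{p}}^\alg)$, which you neither prove nor cite. The paper asserts this explicitly in the proof (just before (\ref{near count})), deducing it from the complex uniformization together with Remark~\ref{Rem:extra canonical}. With simple transitivity in hand, injectivity is immediate: if $(A,f)$ and $(A',f')$ give the same $\mathfrak{B}$ then the marked Hermitian lattices are isometric; writing $A'=A^{\mathfrak{z}}$ and using (\ref{twist space}) one sees $\mathfrak{z}\in P_K$, hence $A\cong A'$, and the norm-one unit relating the two marked vectors lies in $\mu(K)=\Aut(A)$ by Kronecker's theorem.

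Your worry about surjectivity is also misdirected. You fear that twisting from a base $A_*$ only reaches ideal classes in the image of $C_K\to\mathrm{Pic}(\co_K)$, but the target $\mathfrak{B}$ already satisfies $\mathfrak{B}\overline{\mathfrak{B}}=\alpha(\mathfrak{a}\mathfrak{s}\mathfrak{p}_F^{\epsilon_p})^{-1}\co_K$, while $\gamma_*\mathfrak{A}_*\overline{\mathfrak{A}}_*=\mathfrak{a}\mathfrak{s}\mathfrak{p}_F^{\epsilon_p}$; multiplying gives $(\mathfrak{B}\mathfrak{A}_*)\overline{(\mathfrak{B}\mathfrak{A}_*)}=(\alpha/\gamma_*)\co_K$ with $\alpha/\gamma_*\in F^{\gg 0}$, so $[\mathfrak{B}\mathfrak{A}_*]^{-1}$ automatically lies in the image of $C_K\to\mathrm{Pic}(\co_K)$. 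Concretely, for any $f\in K^\times$ set $\mathfrak{Z}=f(\mathfrak{B}\mathfrak{A}_*)^{-1}$ and $\zeta=\alpha/(\gamma_* f\overline{f})$; then $(\mathfrak{Z},\zeta)\in I_K$, one has $f\in\mathfrak{Z}\mathfrak{A}_*=L(A_0,A_*^{\mathfrak{z}})$ because $\mathfrak{B}\subset\co_K$, and $(A_*^{\mathfrak{z}},f)$ hits $\mathfrak{B}$ with $\langle f,f\rangle_\CM^{\mathfrak{z}}=\alpha$. So no extra transitivity beyond the well-definedness of the $C_K$-action is needed for surjectivity; the genuine use of simple transitivity is in injectivity.
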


\begin{proof}
As an abelian variety over $\C$ with complex multiplication admits a model over a number field
having everywhere good reduction,  the hypothesis $\mathcal{CM}_{\Phi}^\mathfrak{a}(\C)\not=\emptyset$
implies that  $\mathcal{CM}_{\Phi}^\mathfrak{a}(k_{\Phi,\mathfrak{p}}^\alg)\not=\emptyset$.  
As $\mathcal{M}_{(1,0)}(\C)$ has $h(K_0)$ elements, we similarly have 
$\mathcal{M}_{(1,0)}(k_{\Phi,\mathfrak{p}}^\alg)\not=\emptyset$.  Fix a pair
\[
(A_0,A)\in ( \mathcal{M}_{(1,0)} \times   \mathcal{CM}^\mathfrak{a}_{\Phi}  )  (k_{\Phi,\mathfrak{p}}^\alg).
\]  
Using (\ref{twist space})  we compute
\begin{eqnarray*}\lefteqn{
\sum_{\mathfrak{z}\in C_K^0} 
\# \{ f \in L( A_0, A^\mathfrak{z}) :  \langle f,f\rangle_\CM^\mathfrak{z}=\alpha \}  =
\sum_{\mathfrak{z}\in C_K^0}
 \sum_{\substack{ x \in V( A_0,A) \\  \langle x,x \rangle_\CM=\alpha  }  }
\mathbf{1}_{\mathfrak{Z} L(A_0,A)} (x) } \\
& = &
\sum_{ h \in H(F)\backslash H(\widehat{F})/U}
 \sum_{\substack{ x\in V( A_0,A) \\  \langle x,x\rangle_\CM=\alpha  }  }
\mathbf{1}_{\widehat{L}(A_0,A)} (h^{-1} x) \\
& = & 
\#(H(F)\cap U)
\sum_{ h \in  H(\widehat{F})/U}
 \sum_{\substack{ x\in H(F)\backslash V( A_0,A) \\  \langle x,x\rangle_\CM=\alpha  }  }
\mathbf{1}_{\widehat{L}(A_0,A)} (h^{-1} x) .
\end{eqnarray*}
Here and elsewhere, $\mathbf{1}$ means characteristic function.
If $\mu(K)$ denotes the group of roots of unity in $\co_K$, then
$\Aut(z) \iso \mu(K)$ for any $z\in \mathcal{CM}^\mathfrak{a}_{\Phi}(k^\alg_{\Phi,\mathfrak{p}})$,
and so  \[\Aut(A_0,A^\mathfrak{z}) \iso \mu(K_0)\times \mu(K).\]
Furthermore $\mu(K) \iso H(F)\cap U$, and we have now proved
\[
\sum_{\mathfrak{z}\in C_K^0} 
\sum_{ \substack{   f \in L( A_0, A^\mathfrak{z}) 
\\ \langle f,f\rangle_\CM^\mathfrak{z}=\alpha }  }
\frac{ w(K_0)} {  \#\Aut(A_0,A^\mathfrak{z}) } 
=     \sum_{ h \in  H(\widehat{F})/U}
 \sum_{\substack{ x\in H(F)\backslash V( A_0,A) \\  \langle x,x\rangle_\CM=\alpha  }  }
\mathbf{1}_{\widehat{L}(A_0,A)} (h^{-1} x).
\]
If there are no $x\in V(A_0,A)$ satisfying $\langle x,x\rangle_\CM =\alpha$ then of course 
the right hand side is $0$.  If there are such $x$, then they are permuted simply transitively by $H(F)$,
and so
\begin{equation} \label{pre-orbital}
\sum_{\mathfrak{z}\in C_K^0} 
\sum_{ \substack{   f \in L( A_0, A^\mathfrak{z}) 
\\ \langle f,f\rangle_\CM^\mathfrak{z}=\alpha }  }
\frac{ 1} {  \#\Aut(A_0,A^\mathfrak{z}) } 
=    \frac{1}{w(K_0)} \sum_{ h \in  H(\widehat{F})/U}
\mathbf{1}_{\widehat{L}(A_0,A)} (h^{-1} x)
\end{equation}
where on the right we have fixed one $x\in V(A_0,A)$ satisfying $\langle x,x\rangle_\CM =\alpha$.

We interrupt the proof for a definition.

\begin{Def}
For any $\alpha\in \widehat{F}^\times$, define the \emph{orbital integral}
\[
O_\alpha(A_0,A) = \sum_{h\in H(\widehat{F})/U}  \mathbf{1}_{ \widehat{L}(A_0,A)}  (h^{-1} \cdot x)
\]
where $x\in \widehat{V}(A_0,A)$ satisfies $\langle x,x\rangle_\CM =\alpha$. If such  $x$  exist
then  $H(\widehat{F})$ permutes them  simply transitively, so the orbital integral is independent of the choice.  
If no such  $x$ exists then set $O_\alpha(A_0,A) =0$.   
\end{Def}

Using this new notation, (\ref{pre-orbital}) may
be rewritten as
\[
\sum_{\mathfrak{z}\in C_K^0} 
\sum_{ \substack{   f \in L( A_0, A^\mathfrak{z}) 
\\ \langle f,f\rangle_\CM^\mathfrak{z}=\alpha }  }
\frac{1 } {  \#\Aut(A_0,A^\mathfrak{z}) } =   \frac{1}{w(K_0)}\cdot O_\alpha(A_0,A).
\]
It follows from Proposition \ref{Prop:hermite twist} that
\[
O_\alpha(A_0,A^\mathfrak{z}) =  O_{\mathrm{gen}(\mathfrak{z})^{-1}\alpha}(A_0,A)
\]
for any $\mathfrak{z}\in C_K$, and so summing over $\mathfrak{z}\in C_K/C_K^0$ 
and using the exactness of (\ref{genus sequence}) shows that
\begin{equation}\label{reduction to orbital}
 \sum_{\mathfrak{z}\in C_K} 
\sum_{ \substack{   f \in L( A_0,A^\mathfrak{z}) 
\\ \langle f,f\rangle_\CM^\mathfrak{z}=\alpha }  }
\frac{1 }
{  \#\Aut(A_0,A^\mathfrak{z}) }
= \frac{ 1}{w(K_0)} \sum_{ \xi \in\mathrm{ker}(\eta)} O_{\xi\alpha}(A_0,A)
\end{equation}
where the sum is over $\xi$ in the kernel of (\ref{eta}).

 \emph{Assuming} that $\widehat{V}(A_0, A)$ represents $\alpha$, 
Theorem \ref{Thm:global hermitian} reduces the calculation of  $O_\alpha(A_0,A)$ to a pleasant exercise, as in 
\cite[Section 2.5]{howard-yangA}.  We interrupt  our proof yet again to state the result as a lemma.

\begin{Lem}\label{Lem:orbital}
Let $\beta$ be as in the  statement of  Theorem \ref{Thm:global hermitian}.
For any $\alpha \in \widehat{F}^\times$
\[
O_{\alpha}(A_0,A) = \begin{cases}
 \rho (\alpha\beta^{-1}\co_F )
 &\hbox{if $\widehat{V}(A_0,A)$ represents $\alpha$} \\
 0& \hbox{otherwise}.
\end{cases}
\]
\end{Lem}

\begin{proof}
Assume that $\widehat{V}(A_0,A)$ represents $\alpha$, and fix an $x\in \widehat{K}$ such that 
$\alpha=\beta x\overline{x}$.  The orbital integral factors as  product of local integrals $O_{\alpha,v}(A_0,A)$, 
one for each finite place $v$ of $F$,  defined by
\[
O_{\alpha,v}(A_0,A) = \sum_{h\in H(F_v) / U_v} \mathbf{1}_{\co_{K,v}}(h^{-1} x_v).
\]
If $v$ is nonsplit in $K$ then $H(F_v) / U_v =1$ and 
\[
O_{\alpha,v}(A_0,A) = 
\begin{cases}
1 & \hbox{if }\alpha_v\beta_v^{-1}\in \co_{F,v} \\
0 &\hbox{otherwise}.
\end{cases}
\]
If $v$ is split in $K$ then $K_v\iso F_v\times F_v$.  After fixing a uniformizer $\varpi\in F_v$ we find that
$H(F_v)/U_v$ is the cyclic group generated by $ (\varpi,\varpi^{-1}) \in F_v^\times \times F_v^\times$, and 
\[
O_{\alpha,v}(A_0,A) = 
\begin{cases}
1+\ord_v(\alpha_v\beta_v^{-1})  & \hbox{if }\alpha_v\beta_v^{-1}\in \co_{F,v} \\
0 &\hbox{otherwise}.
\end{cases}
\]
In either case $O_{\alpha,v}(A_0,A)$ is the number of  ideals $\mathfrak{C}_v\subset \co_{K,v}$ satisfying
\[
\beta_v\mathfrak{C}_v\overline{\mathfrak{C}}_v = \alpha\co_{F,v},
\] 
and therefore, recalling  the definition (\ref{rho}) of $\rho(\mathfrak{b})$,  we have  proved
\[
O_{\alpha}(A_0,A) =   \rho (\alpha\beta^{-1}\co_F )
\]
completing the proof of the lemma.
\end{proof}

Now go back to our fixed  $\alpha\in F^{\gg 0}$, and assume that
 $ \rho (\alpha\beta^{-1}\co_F )\not=0$.  This implies that 
 $\alpha\co_F=\beta\mathfrak{C}\overline{\mathfrak{C}}$
for some $\co_K$-ideal $\mathfrak{C}$, and it follows that there is a unique
\[
\xi \in \widehat{\co}_F^\times/\mathrm{Nm}_{K/F}\widehat{\co}_K^\times
\]
such that $\xi \alpha$ is represented by the quadratic form 
$\beta x\overline{x}$ on $\widehat{K}$.  Recalling that
$\chi_{K/F}(\beta)=1$ and that $\alpha \gg 0$, this $\xi$ lies in the kernel of (\ref{eta}).   
In other words there is a unique
$\xi \in \mathrm{ker}(\eta)$ such that $\widehat{V}(A_0,A)$ represents $\xi\alpha$.
Using   $\beta\co_F = \mathfrak{asp}_F^{\epsilon_p}$ we now deduce 
\begin{equation}\label{orbit count}
\sum_{\xi\in \mathrm{ker}(\eta) }
O_{\xi \alpha}(A_0,A) =  \rho\left( \frac{\alpha\co_F}{\mathfrak{asp}_F^{\epsilon_p}}\right).
\end{equation}
If, on the other hand, $ \rho (\alpha\beta^{-1}\co_F )=0$, then $\widehat{V}(A_0,A)$ does not represent
$\xi \alpha$ for any $\xi\in \widehat{\co}_F^\times$, and both sides of (\ref{orbit count}) are zero.
Comparing with (\ref{reduction to orbital}) shows that
\begin{equation}
 \sum_{\mathfrak{z}\in C_K} 
\sum_{ \substack{   f \in L( A_0,A^\mathfrak{z}) 
\\ \langle f,f\rangle_\CM^\mathfrak{z}=\alpha }  }
\frac{1 }
{  \#\Aut(A_0,A^\mathfrak{z}) }
= \frac{ 1}{w(K_0)}\cdot \rho\left( \frac{\alpha\co_F}{\mathfrak{asp}_F^{\epsilon_p}}\right). \label{near count}
\end{equation}

The action of $C_K$ on the set of isomorphism classes of
$\mathcal{CM}_{\Phi}^\mathfrak{a}(k_{\Phi,\mathfrak{p}}^\alg)$ is simply transitive.  
For example, one can first prove this in characteristic $0$ using the complex uniformization of CM abelian
varieties, and then use Remark \ref{Rem:extra canonical}  to deduce the result 
over $k_{\Phi,\mathfrak{p}}^\alg$.  The same argument shows that  there are $h(K_0)$ isomorphism classes of 
objects in $\mathcal{M}_{(1,0)}(k_{\Phi,\mathfrak{p}}^\alg)$.  Therefore  (\ref{near count}) implies
\[
\sum_{   \substack{  A_0  \in   \mathcal{M}_{(1,0)}  (k_{\Phi,\mathfrak{p}}^\alg)  \\ 
A  \in   \mathcal{CM}^\mathfrak{a}_{\Phi}(k_{\Phi,\mathfrak{p}}^\alg) } }
\sum_{ \substack{   f \in L( A_0,A) 
\\ \langle f,f\rangle_\CM=\alpha }  } \frac{1 }{  \#\Aut(A_0,A) } 
=\frac{ h(K_0) }{w(K_0)}\cdot \rho\left( \frac{\alpha\co_F}{\mathfrak{asp}_F^{\epsilon_p}}\right),
\]
and Theorem \ref{Thm:point count} follows.
\end{proof}


\subsection{The  degree of $\mathcal{Z}_\Phi^\mathfrak{a}(\alpha)$}


Throughout this subsection we assume that \emph{the discriminants of $K_0/\Q$ and $F/\Q$ are odd and relatively prime.} 
The primary reason for this assumption  is so that we may apply Theorem 
\ref{Thm:crystal deform}, the secondary reason is so that Proposition \ref{Prop:s ideal} applies.

\begin{Def}\label{Def:arakelov} 
For any $\alpha\in F$ for which $\mathcal{Z}^\mathfrak{a}_{\Phi}(\alpha)$ has dimension $0$, 
define the \emph{Arakelov degree}
\[
\widehat{\deg}\, \mathcal{Z}^\mathfrak{a}_{\Phi}(\alpha) = 
 \sum_{ \mathfrak{p} \subset \co_\Phi}   \frac{  \log(\mathrm{N}(\mathfrak{p}))  }{[K_\Phi:\Q]}
\sum_{z\in \mathcal{Z}_{\Phi}( \alpha ) (k^\alg_{\Phi,\mathfrak{p}})  }
\frac{\mathrm{length}(\co^\mathrm{sh}_{\mathcal{Z}_{\Phi}(\alpha),z})}  {\#\Aut(z)}.
\]
\end{Def}

Our goal is to compute the Arakelov degree of $\mathcal{Z}^\mathfrak{a}_{\Phi}(\alpha)$ for $\alpha\gg 0$.
The degree has been normalized in such a way that it is unchanged if the field $K_\Phi$ is enlarged. 
  By the comments at the beginning 
of Section \ref{S:global moduli}, we may therefore make the minimal choice $K_\Phi=\varphi^\mathrm{sp}(K)$.
This will ease comparison with the notation of Section \ref{ss:LHI}.
As in the introduction, let $K^\mathrm{sp}$ be the factor of $K$ on which  $\varphi^\mathrm{sp}:K\to \C$ 
is nonzero.  Let  $F^\mathrm{sp}$ be the maximal totally real subfield of $K^\mathrm{sp}$.
We henceforth use $\varphi^\mathrm{sp}$ to identify 
\[
K^\mathrm{sp} = K_\Phi.
\]
For any prime $\mathfrak{p}$ of $K_\Phi$, we have $\mathfrak{p}_K=\mathfrak{p}$ under this 
identification, and $\mathfrak{p}_F$ is the prime of $F^\mathrm{sp}$ below $\mathfrak{p}$.  
Let  $e_\mathfrak{p}$    be the  ramification degree  of  $K^\mathrm{sp}_{\mathfrak{p}_K}/F^\mathrm{sp}_{\mathfrak{p}_F}$.

\begin{Thm}\label{Thm:local length}
Fix $\alpha\in F$ with $\varphi^\mathrm{sp}(\alpha)\not=0$.
Let $\mathfrak{p}$ be a prime of $K_\Phi$ such that $\mathfrak{p}_F$ is nonsplit in $K$.
The strictly Henselian local ring of $\mathcal{Z}_{\Phi}^\mathfrak{a}(\alpha)$ at any geometric point 
$z\in \mathcal{Z}_{\Phi}^\mathfrak{a}(\alpha)(k^\alg_{\Phi,\mathfrak{p}})$
is Artinian of length
\[
\mathrm{length} ( \co^\mathrm{sh}_{\mathcal{Z}^\mathfrak{a}_{\Phi}(\alpha),z } ) = 
\frac{ 1}{2} \cdot e_\mathfrak{p}
\cdot  \ord_{\mathfrak{p}_F} (\alpha\mathfrak{p}_F\mathfrak{a}^{-1}\mathfrak{d}_F). 
\]
In particular the length does not depend on $z$.
Note that $\varphi^\mathrm{sp}(\alpha)\not=0$ guarantees that $\alpha$ has nonzero projection
to the factor $F^\mathrm{sp}\subset F$, and so $\ord_{\mathfrak{p}_F}(\alpha)<\infty$. 
Thus the right hand side is finite.
\end{Thm}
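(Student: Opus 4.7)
The plan is to identify $\co^\mathrm{sh}_{\mathcal{Z}^\mathfrak{a}_\Phi(\alpha),z}$ as a quotient of $W_{\Phi,\mathfrak{p}}$ measured by a lifting obstruction for $f$, and then compute this obstruction by reducing to the local deformation theory of Section \ref{ss:LHI}. First, since the forgetful morphism $\mathcal{Z}^\mathfrak{a}_\Phi(\alpha) \to \mathcal{M}_{(1,0)} \times \mathcal{CM}^\mathfrak{a}_\Phi$ is finite and unramified, and since the completed strictly Henselian local ring of the target at the image of $z$ is $W_{\Phi,\mathfrak{p}}$ by Proposition \ref{Prop:reduction II}, the completed local ring at $z$ is a surjective quotient $W_{\Phi,\mathfrak{p}}/\mathfrak{m}^k$ for some $k\geq 1$ (it is Artinian by Proposition \ref{Prop:zero cycle}). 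By Theorem \ref{Thm:cm etale}, the pair $(A_0,A)$ admits a unique canonical lift $(A_0^\can,A^\can)$ to $W_{\Phi,\mathfrak{p}}$ with reductions $(A_0^{(k)},A^{(k)})$ modulo $\mathfrak{m}^k$, and $k$ is characterized as the largest integer for which $f$ extends to an $\co_{K_0}$-linear homomorphism $A_0^{(k)}\to A^{(k)}$.

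Next I will decompose the lifting problem by primes. By the Serre-Tate theorem together with Proposition \ref{Prop:BT hermite switch}, lifting $f$ is equivalent to simultaneously lifting the induced $\co_{K_0}$-linear maps $f[\mathfrak{q}^\infty]\colon A_0[q^\infty]\to A[\mathfrak{q}^\infty]$ for every rational prime $q$ and every prime $\mathfrak{q}$ of $F$ above $q$. For $q\neq p$ both groups are \'etale over $W_{\Phi,\mathfrak{p}}$, so there is no obstruction. For $q=p$ and $\mathfrak{q}\neq \mathfrak{p}_F$, the local $p$-adic CM type $\Phi(\mathfrak{q})$ has signature $(n_\mathfrak{q},0)$ --- every element restricts to $\iota$ on $K_0$, exactly as in the proof of Proposition \ref{Prop:ST} --- so Proposition \ref{Prop:simple deformation} gives unique lifting of $f[\mathfrak{q}^\infty]$ to every infinitesimal neighborhood, and again there is no obstruction. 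The entire obstruction is therefore concentrated in the component $\mathfrak{q}=\mathfrak{p}_F$.

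For that component, the local $p$-adic CM type $\Phi(\mathfrak{p}_F)$ of $K^\mathrm{sp}_{\mathfrak{p}_K}$ has signature $(n_{\mathfrak{p}_F}-1,1)$ with special element $\varphi^\mathrm{sp}|_{K^\mathrm{sp}_{\mathfrak{p}_K}}$, placing us in the setting of Section \ref{ss:LHI}. The coprimality and oddness of $\mathrm{disc}(K_0/\Q)$ and $\mathrm{disc}(F/\Q)$ guarantee that either $K/\Q_p$ is unramified, or $p\neq 2$ with at least one of $K_0/\Q_p$ or $F/\Q_p$ unramified, which is exactly the running hypothesis of Theorem \ref{Thm:crystal deform}. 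The assumption $\varphi^\mathrm{sp}(\alpha)\neq 0$ ensures that the $\mathfrak{p}_F$-component of $\langle f,f\rangle_\CM=\alpha$ is nonzero, so $f[\mathfrak{p}_F^\infty]\neq 0$ and Theorem \ref{Thm:crystal deform} applies, yielding
\[
k = \frac{1}{2}\cdot \ord_{\co_{K,\mathfrak{p}_K}^\mathrm{sp}}\!\left(\alpha\,\mathfrak{p}_F\,\mathfrak{D}_\mathrm{loc}\,\mathfrak{D}_{0,\mathrm{loc}}^{-1}\,\mathfrak{a}^{-1}\right),
\]
where $\mathfrak{D}_\mathrm{loc}$ and $\mathfrak{D}_{0,\mathrm{loc}}$ denote the local differents at $\mathfrak{p}_K$ and $\mathfrak{p}$ respectively.

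The remainder is bookkeeping. The global identity $\mathfrak{D}\mathfrak{D}_0^{-1}=\mathfrak{d}_F\co_K$ from Proposition \ref{Prop:s ideal} (using the coprimality hypothesis) rewrites $\mathfrak{D}_\mathrm{loc}\mathfrak{D}_{0,\mathrm{loc}}^{-1}$ as $\mathfrak{d}_{F,\mathfrak{p}_F}\co_{K,\mathfrak{p}_K}^\mathrm{sp}$, and the ramification identity $\ord_{\co_{K,\mathfrak{p}_K}^\mathrm{sp}}=e_\mathfrak{p}\cdot \ord_{\mathfrak{p}_F}$ then gives
\[
k = \frac{1}{2}\, e_\mathfrak{p}\cdot \ord_{\mathfrak{p}_F}\!\left(\alpha\,\mathfrak{p}_F\,\mathfrak{a}^{-1}\,\mathfrak{d}_F\right),
\]
as claimed. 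The main technical content --- the lifting computation itself --- is Theorem \ref{Thm:crystal deform}, proved via crystalline deformation theory in Section \ref{ss:LHI}; what remains here is the reduction to that theorem through Serre-Tate, the prime-by-prime verification that all other components impose no obstruction, and the translation of the local formula into the global differents.
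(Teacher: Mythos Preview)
Your proof is correct and follows essentially the same route as the paper: reduce via Serre--Tate to deformations of $p$-divisible groups, split $A[p^\infty]=\prod_\mathfrak{q}A[\mathfrak{q}^\infty]$, dispose of the factors with $\mathfrak{q}\neq\mathfrak{p}_F$ using Proposition~\ref{Prop:simple deformation}, and apply Theorem~\ref{Thm:crystal deform} at $\mathfrak{p}_F$. Two small cosmetic points: the case $q\neq p$ is already subsumed in the Serre--Tate reduction and need not be treated separately, and the citation of Proposition~\ref{Prop:BT hermite switch} for the prime-by-prime decomposition is misplaced---that proposition concerns the Hermitian structure of $L(A_0,A)$, whereas the decomposition you need is simply $A[p^\infty]\cong\prod_{\mathfrak{q}\mid p}A[\mathfrak{q}^\infty]$ coming from the $\co_F$-action.
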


\begin{proof}
 Let $p$ be the rational 
prime below $\mathfrak{p}$, and recall that $W_{\Phi,\mathfrak{p}}$ 
is the  completed integer ring of the maximal  unramified extension of  $K_{\Phi,\mathfrak{p}}$.
Let $\mathbf{ART}$ be the category of Artinian local $W_{\Phi,\mathfrak{p}}$-algebras with 
residue field $k^\alg_{\Phi,\mathfrak{p}}$.  If 
\[
(A_0,A,f)\in  \mathcal{Z}_{\Phi}^\mathfrak{a}(\alpha)(k^\alg_{\Phi,\mathfrak{p}})
\]
is the triple corresponding to $z$, then the completed 
strictly Henselian local ring $\widehat{\co}^\mathrm{sh}_{\mathcal{Z}^\mathfrak{a}_{\Phi}(\alpha),z }$
pro-represents the functor of deformations of $(A_0,A, f)$ to  objects of $\mathbf{ART}$. 
By the Serre-Tate theorem this is the same as the corresponding deformation functor of $p$-divisible groups
$(A_0[p^\infty], A[p^\infty] , f[p^\infty])$.   

We argue as in the proofs of Lemma \ref{Lem:Hermitian lift} and Theorem \ref{Thm:global hermitian}.
There is a decomposition  $A[p^\infty]\iso \prod_\mathfrak{q}A[\mathfrak{q}^\infty]$ over the primes 
$\mathfrak{q}\subset \co_F$ above $p$, and similarly for any deformation of $(A,\kappa,\lambda)$.  
Fix an isomorphism of $K_\Phi$-algebras $\C_\mathfrak{p}\iso \C$, and let  $\Phi(\mathfrak{q})$ be the 
set of all $\varphi\in \Phi$ whose restriction to $F\to \C_\mathfrak{p}$ induces the prime $\mathfrak{q}$.
The triple $(A[\mathfrak{q}^\infty],\kappa[\mathfrak{q}^\infty], \lambda[\mathfrak{q}^\infty])$
satisfies the $\Phi(\mathfrak{q})$-determinant condition of Section \ref{ss:canonical lifts}.
Set $m=[F_\mathfrak{q}:\Q_p]$.

If $\mathfrak{q}\not=\mathfrak{p}_F$ then $\varphi^\mathrm{sp} \not\in \Phi(\mathfrak{q})$, and 
$\Phi(\mathfrak{q})$ has signature $(m,0)$ in the sense of Section \ref{ss:LHII}.
Theorem \ref{Thm:BT canonical} implies that 
 $(A_0[p^\infty],A[\mathfrak{q}^\infty])$ lifts uniquely to every object of $\mathbf{ART}$, 
 and Proposition \ref{Prop:simple deformation} implies that the homomorphism
  \[
  f[\mathfrak{q}^\infty]  : A_0[p^\infty] \to A[\mathfrak{q}^\infty]
 \]
  lifts uniquely as well.  It follows that the deformation functors of the triples $(A_0,A,f)$ and  
$(A_0[p^\infty], A[\mathfrak{p}_F^\infty] , f [\mathfrak{p}_F^\infty])$ are canonically isomorphic.
As $\varphi^\mathrm{sp} \in \Phi(\mathfrak{p}_F)$, the $p$-adic CM type $\Phi(\mathfrak{p}_F)$ has 
signature $(m-1,1)$ in the sense of Section \ref{ss:LHI}.
By Theorem \ref{Thm:BT canonical}  the deformation functor of the pair 
 $(A_0[p^\infty], A[\mathfrak{p}_F^\infty])$ is pro-represented by $W_{\Phi,\mathfrak{p}}$,
and Theorem \ref{Thm:crystal deform} implies that the deformation functor of 
$(A_0[p^\infty], A[\mathfrak{p}_F^\infty],f[\mathfrak{p}_F^\infty])$ is pro-represented by $W_{\Phi,\mathfrak{p}}/\mathfrak{m}^k$,
where $\mathfrak{m}$ is the maximal ideal of $W_{\Phi,\mathfrak{p}}$ and 
\[
k= \frac{1}{2} \cdot \ord_{\mathfrak{p}_K} (\alpha\mathfrak{p}_F\mathfrak{DD}_0^{-1}\mathfrak{a}^{-1}) .
\]
Therefore the length of $\widehat{\co}^\mathrm{sh}_{\mathcal{Z}^\mathfrak{a}_{\Phi}(\alpha),z }$ is 
\[
k =  \frac{1}{2} \cdot e_\mathfrak{p}\cdot \ord_{\mathfrak{p}_F} (\alpha\mathfrak{p}_F  \mathfrak{s}^{-1}  \mathfrak{a}^{-1})  .
\]
\end{proof}

 Theorem  \ref{Thm:local length} computes the lengths of the  local rings of $\mathcal{Z}^\mathfrak{a}_{\Phi}(\alpha)$,
while Theorem \ref{Thm:point count} counts the number geometric points.  
The calculation of the Arakelov degree is now an easy corollary of these results.
Theorem \ref{Main Theorem I} of the introduction is the  case $\mathfrak{a}=\co_F$ of the 
following theorem.

\begin{Thm}\label{Thm:zero cycle degree}
If  $\alpha\in F^{\gg 0}$ then $\mathcal{Z}^\mathfrak{a}_{\Phi}(\alpha)$ has dimension $0$, and 
\[
\widehat{\deg}\, \mathcal{Z}^\mathfrak{a}_{\Phi}(\alpha)  =   \frac{h(K_0)}{    w(K_0)}
\sum_{\mathfrak{p}}   \frac{ \log(\mathrm{N}(\mathfrak{p}))} { [K^\mathrm{sp} :\Q ] }
 \cdot \ord_{\mathfrak{p}} (\alpha \mathfrak{p}\mathfrak{d}_F \mathfrak{a}^{-1} )
\cdot   \rho ( \alpha\mathfrak{p}^{-\epsilon_p}\mathfrak{d}_F \mathfrak{a}^{-1}  )  
\]
where the sum is over all  primes $\mathfrak{p}$ of $F^\mathrm{sp}$ nonsplit in $K^\mathrm{sp}$, and $p$
is the prime of $\Q$ below $\mathfrak{p}$.
\end{Thm}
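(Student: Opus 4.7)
The strategy is to combine the point count of Theorem~\ref{Thm:point count} with the local length formula of Theorem~\ref{Thm:local length}, and then rewrite the resulting sum over primes of $K_\Phi$ as a sum over primes of $F^\mathrm{sp}$. The dimension zero assertion is immediate from Proposition~\ref{Prop:zero cycle}, since $\alpha \in F^{\gg 0}$ has nonzero projection to every factor of $F$ and in particular $\varphi^\mathrm{sp}(\alpha) \neq 0$, so Theorem~\ref{Thm:local length} applies at every geometric point.

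The plan is to take $K_\Phi = \varphi^\mathrm{sp}(K) = K^\mathrm{sp}$; the Arakelov degree is independent of this choice, and this identification makes $\mathfrak{p}_K = \mathfrak{p}$ for every $\mathfrak{p}\subset \co_\Phi$, with $\mathfrak{p}_F$ then being the prime of $F^\mathrm{sp}$ below $\mathfrak{p}$. First I would observe that by Proposition~\ref{Prop:zero cycle}, the summand in Definition~\ref{Def:arakelov} is zero unless $\mathfrak{p}_F$ is nonsplit in $K^\mathrm{sp}$, so the outer sum ranges only over those $\mathfrak{p}\subset\co_{K^\mathrm{sp}}$. For every such $\mathfrak{p}$ there is a unique prime of $F^\mathrm{sp}$ below it, so the assignment $\mathfrak{p}\mapsto \mathfrak{p}_F$ is a bijection between nonsplit primes of $K^\mathrm{sp}$ and nonsplit primes of $F^\mathrm{sp}$.

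Next I would substitute the two main ingredients. By Theorem~\ref{Thm:local length}, the length of $\co^\mathrm{sh}_{\mathcal{Z}^\mathfrak{a}_\Phi(\alpha),z}$ at every geometric point $z$ over $\mathfrak{p}$ equals $\tfrac{1}{2}\, e_\mathfrak{p}\, \ord_{\mathfrak{p}_F}(\alpha\mathfrak{p}_F \mathfrak{a}^{-1}\mathfrak{d}_F)$, and in particular is independent of $z$. The weighted point count of Theorem~\ref{Thm:point count} (applicable since $\mathcal{CM}^\mathfrak{a}_\Phi(\C)\neq\emptyset$ by Proposition~\ref{Prop:s ideal} under our discriminant hypotheses) then contributes $\tfrac{h(K_0)}{w(K_0)}\rho(\alpha\co_F/\mathfrak{a}\mathfrak{s}\mathfrak{p}_F^{\epsilon_p})$. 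Using the equality $\mathfrak{s}^{-1} = \mathfrak{d}_F$ from Proposition~\ref{Prop:s ideal}, this $\rho$-factor becomes $\rho(\alpha\mathfrak{p}_F^{-\epsilon_p}\mathfrak{d}_F\mathfrak{a}^{-1})$, matching the desired formula.

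The remaining step is purely a matter of weights: I would verify the identity
\[
\frac{1}{2}\cdot e_\mathfrak{p}\cdot \frac{\log\mathrm{N}(\mathfrak{p})}{[K^\mathrm{sp}:\Q]} = \frac{\log\mathrm{N}(\mathfrak{p}_F)}{[K^\mathrm{sp}:\Q]}
\]
for every prime $\mathfrak{p}$ of $K^\mathrm{sp}$ nonsplit over $\mathfrak{p}_F$. In the inert case $e_\mathfrak{p}=1$ and $\mathrm{N}(\mathfrak{p}) = \mathrm{N}(\mathfrak{p}_F)^2$, and in the ramified case $e_\mathfrak{p}=2$ and $\mathrm{N}(\mathfrak{p}) = \mathrm{N}(\mathfrak{p}_F)$, so the identity holds in both cases. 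Assembling the three factors gives exactly the formula in the statement. There is no real obstacle here; the work has been done in Theorems~\ref{Thm:point count} and~\ref{Thm:local length}, and the final argument is a bookkeeping exercise in translating between primes of $K^\mathrm{sp}$ and primes of $F^\mathrm{sp}$ and absorbing the ramification factor $\tfrac{1}{2} e_\mathfrak{p}$ into the normalization of the logarithmic weight.
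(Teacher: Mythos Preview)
Your proposal is correct and follows essentially the same route as the paper: both invoke Proposition~\ref{Prop:zero cycle} for the dimension claim and for discarding split primes, then combine Theorems~\ref{Thm:point count} and~\ref{Thm:local length} prime by prime, using $\mathfrak{s}^{-1}=\mathfrak{d}_F$ from Proposition~\ref{Prop:s ideal}. Your explicit verification of the weight identity $\tfrac{1}{2}e_\mathfrak{p}\log\mathrm{N}(\mathfrak{p})=\log\mathrm{N}(\mathfrak{p}_F)$ in the inert and ramified cases spells out a step the paper leaves implicit in its final line, but the argument is the same.
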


\begin{proof}
The first claim is Proposition \ref{Prop:zero cycle}.  If $\mathfrak{p}$ is a prime of $K_\Phi$
for which $\mathfrak{p}_F$ is nonsplit in $K$, then combining Theorem \ref{Thm:point count} and 
Theorem \ref{Thm:local length} shows that
\[
\sum_{z\in \mathcal{Z}^\mathfrak{a}_{\Phi}( \alpha ) (k^\alg_{\Phi,\mathfrak{p}})  }
\frac{\mathrm{length}(\co^\mathrm{sh}_{\mathcal{Z}_{\Phi}(\alpha),z})}  {\#\Aut(z)}  
=
 \frac{ e_\mathfrak{p} h(K_0) }{ 2  w(K_0)} 
\cdot  \ord_{\mathfrak{p}_F} (\alpha \mathfrak{p}_F\mathfrak{d}_F \mathfrak{a}^{-1} )
\cdot   \rho ( \alpha\mathfrak{p}_F^{-\epsilon_p}\mathfrak{d}_F \mathfrak{a}^{-1}  ) .
\]
If $\mathfrak{p}$ is a prime of $K_\Phi$ for which $\mathfrak{p}_F$ is split in $K$
then the left hand side is zero by Proposition \ref{Prop:zero cycle}.
Summing over all  primes $\mathfrak{p}$ of $K_\Phi \iso K^\mathrm{sp}$ yields the result.
\end{proof}


\subsection{Arithmetic divisors on $\mathcal{X}_{\Phi}$}
\label{ss:divisors}


In this subsection we fix an $\alpha\in F^\times$, and restrict to the case $\mathfrak{a}=\co_F$.  
Abbreviate
\[
\mathcal{Z}_{\Phi}(\alpha) = \mathcal{Z}^{\co_F}_{\Phi}(\alpha)
\qquad 
\mathcal{CM}_{\Phi} = \mathcal{CM}^{\co_F}_{\Phi},
\]
and define a regular $1$-dimensional  stack 
\[
\mathcal{X}_\Phi=\mathcal{M}_{(1,0)/\co_\Phi} \times_{\co_\Phi} \mathcal{CM}_{\Phi}
\]
over $\co_\Phi$.
We know from Section \ref{ss:zero stack}
that $\mathcal{Z}_{\Phi}(\alpha)$ is $0$-dimensional, and that the natural map
\[
\mathcal{Z}_{\Phi}(\alpha) \to  \mathcal{X}_{\Phi}
\]
is finite and unramified.  This allows us to view $\mathcal{Z}_{\Phi}(\alpha)$ as a divisor 
on $\mathcal{X}_\Phi$, which we denote by   $\mathtt{Z}_{\Phi}(\alpha)$.  To give the precise definition,
it suffices to describe the pullback of $\mathtt{Z}_{\Phi}(\alpha)$ to an atlas
$ \gamma:  X\map{  } \mathcal{X}_\Phi.$ Let  $Z$ be the cartesian product
\[
\xymatrix{
{ Z } \ar[r]^\phi  \ar[d] &  { X} \ar[d]^\gamma  \\
\mathcal{Z}_{\Phi}(\alpha) \ar[r]   &  \mathcal{X}_\Phi,
}
\]
so that $\phi:Z\to X$ is a finite unramified morphism of \emph{schemes}.  
The divisor $\mathtt{Z}_{\Phi}(\alpha)$ on $\mathcal{X}_\Phi$ is defined as the unique divisor whose pullback to $X$ is
\[
\gamma^* \mathtt{Z}_{\Phi}(\alpha)=  \sum_{z\in Z}  [k(z) : k(\phi(z))] \cdot    \length( \co_{Z,z} ) \cdot \phi(z).
\]
  The reader may consult \cite{gillet84,vistoli} for  the general  theory of divisors and cycles on stacks.
Of course $ \mathtt{Z}_{\Phi}(\alpha) =0$ unless $\alpha \gg 0$.

An \emph{arithmetic divisor} on $\mathcal{X}_\Phi$ is a 
pair $(\mathtt{Z},\green)$ where $\mathtt{Z}$ is a Weil divisor on 
$\ \mathcal{X}_{\Phi}$ with rational coefficients,
and $\green$ is a Green function for $\mathtt{Z}$.  As $\mathtt{Z}$ has no points in characteristic $0$, 
this simply means  that $\green$ is \emph{any} function on the finite set of points
\[
 \bigsqcup_{ \substack{ \sigma:K_\Phi \to \C  \\ \sigma|_{K_0} = \iota }} \mathcal{X}_\Phi^\sigma(\C),
\]
where  $\mathcal{X}_\Phi^\sigma$ is the stack over $\C$ obtained from $\mathcal{X}_\Phi$ by base change.  
To each  rational
function $f$ on $\mathcal{X}_\Phi$ there is an associated \emph{principal arithmetic divisor} $(\mathrm{div}(f), -\log |f|^2)$.
The quotient group of arithmetic divisors modulo principal arithmetic divisors is the 
 \emph{codimension $1$ arithmetic Chow group} $\widehat{\mathrm{CH}}^1 ( \mathcal{X}_\Phi )$
 of Gillet-Soul\'e \cite{BKK, gillet09,gillet-soule90,KRY}.

We will construct a Green function $\green_{\Phi}(\alpha,y,\cdot )$ for the divisor $\mathtt{Z}_{\Phi}(\alpha) $,
depending on an auxiliary parameter $y\in F_\R^{\gg 0}$.   For $t\in \R^{>0}$ define
\begin{equation}\label{beta}
\beta_1(t)=\int_1^\infty e^{-t u}u^{-1} \, du.
\end{equation}
First suppose that $\sigma:K_\Phi\to \C$ is the  inclusion, so that
a  point $z\in \mathcal{X}_\Phi^\sigma(\C)$  corresponds to a pair 
\[
(A_0,A) \in   \mathcal{M}_{(1,0) }(\C) \times \mathcal{CM}_{\Phi}(\C).
\]
To each such pair we attach, exactly as in (\ref{betti space}), an $\co_K$-module
\[
L_B(A_0,A) = \Hom_{\co_{K_0}} (H_1(A_0) , H_1(A))
\]
equipped with a Hermitian form $\langle\cdot, \cdot\rangle_\CM$.  By Proposition \ref{Prop:betti hermitian},
$L_B(A_0,A)$ is a projective $\co_K$-module of rank one, is negative definite at the archimedean place 
$\infty^\mathrm{sp}$ of $F$ determined by $\varphi^\mathrm{sp}:K\to \C$, and is positive definite
at the other archimedean places.  We define
\begin{equation}\label{pre green}
\green_{\Phi}(\alpha,y, z ) = 
\sum_{ \substack{f\in L_B(A_0,A) \\ \langle f,f\rangle_\CM =\alpha } } 
\beta_1( 4\pi |y\alpha|_{\infty^\mathrm{sp} }).
\end{equation}

To complete the definition of $\green_{\Phi}(\alpha,y,\cdot )$ we must generalize this construction to 
an arbitrary  $\Q$-algebra map  $\sigma:K_\Phi\to \C$  whose restriction to $K_0$ is $\iota$.
If we extend $\sigma$ in some way to an automorphism of $\C$, we obtain a new CM type $\Phi^\sigma$,
which does not depend on how $\sigma$ was extended. 
It's not hard to see that  $\mathcal{X}_\Phi^\sigma(\C) \iso \mathcal{X}_{\Phi^\sigma}(\C)$, and so  
points $z\in \mathcal{X}_\Phi^\sigma(\C)$ correspond to a pairs
\[
(A_0,A) \in  \mathcal{M}_{(1,0)}(\C) \times \mathcal{CM}_{K,\Phi^\sigma} (\C).
\]
Define 
\begin{equation}\label{pre green II}
\green_{\Phi}(\alpha,y, z ) = 
\sum_{ \substack{f\in L_B(A_0,A) \\ \langle f,f\rangle_\CM =\alpha } } 
\beta_1( 4\pi |y\alpha|_{\infty^{\mathrm{sp},\sigma} })
\end{equation}
as above, where now $\infty^{\mathrm{sp},\sigma}$ is the archimedean place of $F$ induced by 
the special element  $\sigma\circ \varphi^\mathrm{sp} : K\to \C$ of $\Phi^\sigma$.
As $\langle \cdot , \cdot \rangle_\CM$ is negative definite at $\infty^{\mathrm{sp},\sigma}$, and positive
definite at the remaining archimedean places, the function $\green (\alpha,y,\cdot)$ is identically $0$ 
if  $\alpha\gg 0$.

\begin{Def}
For every $\alpha\in F^\times$ and $y\in F_\R^{\gg 0}$, define an arithmetic divisor
\[
\widehat{\mathtt{Z}}_{\Phi}  (\alpha,y) = \big( \mathtt{Z}_{\Phi}(\alpha)    ,   \green_{\Phi}(\alpha,y,\cdot ) \big)
\in 
\widehat{\mathrm{CH}}^1 ( \mathcal{X}_\Phi ).
\]
Note that if $\alpha \gg 0$ then 
\[
\widehat{\mathtt{Z}}_{\Phi}  (\alpha,y) =  \big( \mathtt{Z}_{\Phi}(\alpha)    ,0 \big),
\]
while if $\alpha\not\gg 0$ then 
\[
\widehat{\mathtt{Z}}_{\Phi}  (\alpha,y) =  \big(   0, \green_{\Phi}(\alpha,y,\cdot )  \big).
\]
\end{Def}

If $\alpha\not\gg 0$ then our definition of $\widehat{\mathtt{Z}}_{\Phi}  (\alpha,y)$ is, at the moment,
rather unmotivated, although the use of the function $\beta_1(t)$ in the definition follows
Kudla \cite{kudla97,KRY}.  The particular choice of Green function will be justified in Section \ref{s:eisenstein},
when we show that, for all $\alpha\in F^\times$, the arithmetic divisor
$\widehat{\mathtt{Z}}_{\Phi}  (\alpha,y)$ is closely related to the Fourier coefficient 
of a Hilbert modular Eisenstein series.

There is a canonical linear functional
\[
\widehat{\deg} : \widehat{\mathrm{CH}}^1 ( \mathcal{X}_\Phi ) \to  \R
\]
defined as the composition 
\[
 \widehat{\mathrm{CH}}^1 ( \mathcal{X}_\Phi ) \to   \widehat{\mathrm{CH}}^1(\Spec(\co_\Phi)) \to  \R
\]
where the first arrow is push-forward by the structure map $\mathcal{X}_\Phi\to  \Spec(\co_\Phi)$
and the second is $[K_\Phi:\Q]^{-1}$ times the degree of \cite[Section 3.4.3]{gillet-soule90}.
If $\mathtt{Z}$ is a prime Weil divisor on $\mathcal{X}_\Phi$ then
\[
\widehat{\deg} \, (\mathtt{Z},0) =  \frac{ 1  }{[K_\Phi:\Q]}
 \sum_{ \mathfrak{p} \subset\co_\Phi}     \log(\mathrm{N}(\mathfrak{p}))  
\sum_{z\in \mathtt{Z} (k^\alg_{\Phi,\mathfrak{p}})  }
\frac{1}  {\#\Aut(z)}.
\]
If $\green$ is a Green function on $\mathcal{X}_\Phi$ then
\begin{equation}\label{arch degree def}
\widehat{\deg} \, (0 , \green) =  \frac{  1  }{ [K_\Phi:\Q]}
 \sum_{ \substack{ \sigma:K_\Phi \to \C  \\ \sigma|_{K_0} = \iota }}
\sum_{z\in \mathcal{X}_\Phi^\sigma(\C) }
\frac{\green(z)}  {\#\Aut(z)}.
\end{equation}

\begin{Thm}\label{Thm:degree formulas}
Suppose the discriminants of $K_0$ and $K$ are odd and relative prime.   Fix 
$\alpha\in F^\times$ and $y\in F_\R^{\gg 0}$.

\begin{enumerate}
\item
If $\alpha \gg 0$ then
\[
\widehat{\deg}\, \widehat{\mathtt{Z}}_{\Phi}(\alpha,y)  =   \frac{h(K_0)}{    w(K_0)}
\sum_{\mathfrak{p}}   \frac{ \log(\mathrm{N}(\mathfrak{p}))} { [K^\mathrm{sp} :\Q ] }
 \cdot \ord_{\mathfrak{p}} (\alpha \mathfrak{p}\mathfrak{d}_F)
\cdot   \rho ( \alpha\mathfrak{p}^{-\epsilon_p}\mathfrak{d}_F   )  
\]
where the sum is over all  primes $\mathfrak{p}$ of $F^\mathrm{sp}$ nonsplit in $K^\mathrm{sp}$, and $p$
is the prime of $\Q$ below $\mathfrak{p}$.

\item
Suppose  $\alpha\not\gg 0$.  If $\alpha$ is negative at exactly one archimedean place $v$ of $F$,
and if the corresponding map $F\to \R$ factors through the summand $F^\mathrm{sp}$ of $F$,  then
\[
\widehat{\deg}\, \widehat{\mathtt{Z}}_{\Phi}(\alpha,y)  =   \frac{h(K_0)}{    w(K_0)}
 \frac{1}{[K^\mathrm{sp} : \Q]}    \cdot 
 \beta_1( 4\pi  |y\alpha|_v ) \cdot \rho(\alpha\mathfrak{d}_F).
\]
If no such $v$ exists then the left hand side is $0$.
\end{enumerate}
\end{Thm}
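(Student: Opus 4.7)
Part (1) follows immediately from Theorem \ref{Thm:zero cycle degree} with $\mathfrak{a}=\co_F$. By Proposition \ref{Prop:betti hermitian}, the Hermitian form $\langle\cdot,\cdot\rangle_\CM$ on every $L_B(A_0,A)$ is indefinite (negative at $\infty^{\mathrm{sp},\sigma}$, positive elsewhere), so it cannot represent any totally positive $\alpha$. Hence $\green_\Phi(\alpha,y,\cdot)\equiv 0$ when $\alpha\gg 0$, making $\widehat{\mathtt{Z}}_\Phi(\alpha,y)=(\mathtt{Z}_\Phi(\alpha),0)$, whose arithmetic degree is exactly the Arakelov degree computed in Theorem \ref{Thm:zero cycle degree}.

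For part (2), Proposition \ref{Prop:zero cycle} gives $\mathcal{Z}_\Phi(\alpha)=\emptyset$, so only the archimedean part contributes:
\[
\widehat{\deg}\,\widehat{\mathtt{Z}}_\Phi(\alpha,y) = \frac{1}{[K_\Phi:\Q]}\sum_{\sigma}\sum_{z\in\mathcal{X}_\Phi^\sigma(\C)}\frac{\green_\Phi(\alpha,y,z)}{\#\Aut(z)}.
\]
Choose the minimal reflex field $K_\Phi=K^\mathrm{sp}$. Since $\varphi^\mathrm{sp}$ factors through the summand $K^\mathrm{sp}\subset K$, the $\Q$-embeddings $\sigma:K^\mathrm{sp}\to\C$ restricting to $\iota$ on $K_0$ biject with archimedean places of $F^\mathrm{sp}$ via $\sigma\mapsto\sigma|_{F^\mathrm{sp}}$, and $\infty^{\mathrm{sp},\sigma}$ is the corresponding archimedean place of $F$ pulled back through the projection $F\to F^\mathrm{sp}$. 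By Proposition \ref{Prop:betti hermitian}, the inner sum vanishes unless $\alpha$ is negative exactly at the place $v=\infty^{\mathrm{sp},\sigma}$, which must factor through $F^\mathrm{sp}$. This yields both the ``otherwise $0$'' clause and a unique contributing $\sigma$ in the remaining case.

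For this unique $\sigma$, the inner sum becomes
\[
\beta_1(4\pi|y\alpha|_v)\cdot \sum_{(A_0,A)\in\mathcal{M}_{(1,0)}(\C)\times\mathcal{CM}_{\Phi^\sigma}(\C)}\frac{\#\{f\in L_B(A_0,A):\langle f,f\rangle_\CM=\alpha\}}{\#\Aut(A_0,A)}.
\]
The remaining sum is computed by an exact transcription of the proof of Theorem \ref{Thm:point count}, replacing $L$ by $L_B$ and Theorem \ref{Thm:global hermitian} by Proposition \ref{Prop:betti hermitian}. All the necessary inputs persist over $\C$: the simply transitive $C_K$-action on $\mathcal{CM}_{\Phi^\sigma}(\C)$, the twisting formalism of Section \ref{ss:twisting} together with Proposition \ref{Prop:hermite twist}, the local orbital integral identity of Lemma \ref{Lem:orbital}, and the exact sequence (\ref{genus sequence}). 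The archimedean signs of $\xi\alpha$ and $\alpha$ agree for any finite idele $\xi$, so the signature compatibility needed to reduce to a single representation count still holds. The ideal of $L_B(A_0,A)$ here is $\mathfrak{s}$ (with $\mathfrak{a}=\co_F$), and Proposition \ref{Prop:s ideal} gives $\mathfrak{s}^{-1}=\mathfrak{d}_F$, so the sum evaluates to $\frac{h(K_0)}{w(K_0)}\rho(\alpha\mathfrak{d}_F)$. Dividing by $[K^\mathrm{sp}:\Q]$ yields the stated formula.

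\textbf{Main obstacle.} The argument is essentially mechanical given the results already in hand: part (1) is immediate, and part (2) runs parallel to Theorem \ref{Thm:point count}. The one piece of real work is the signature bookkeeping --- correctly matching each $\sigma$ to the archimedean place $\infty^{\mathrm{sp},\sigma}$ of $F$, verifying that it always factors through $F^\mathrm{sp}$, and pinning down the unique $\sigma$ for which the Hermitian form admits solutions of $\langle f,f\rangle_\CM=\alpha$. Once this is done, the orbital integral calculation is a formal rerun of what has already been established.
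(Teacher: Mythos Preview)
Your proposal is correct and matches the paper's own proof almost exactly: part (1) is reduced to Theorem \ref{Thm:zero cycle degree} with $\mathfrak{a}=\co_F$, and part (2) is handled by rerunning the orbital-integral argument of Theorem \ref{Thm:point count} with $L_B$ in place of $L$ and Proposition \ref{Prop:betti hermitian} in place of Theorem \ref{Thm:global hermitian}, followed by the identification $\mathfrak{s}^{-1}=\mathfrak{d}_F$. The only cosmetic difference is that you specialize to the minimal choice $K_\Phi=K^\mathrm{sp}$ so that each archimedean place of $F^\mathrm{sp}$ corresponds to a unique $\sigma$, whereas the paper keeps $K_\Phi$ general and records the multiplicity $[K_\Phi:\Q]/[K^\mathrm{sp}:\Q]$; since the degree is normalized to be independent of $K_\Phi$, both computations give the same answer.
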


\begin{proof}
If $\alpha\gg 0$ then 
\begin{equation}\label{degree compare}
\widehat{\deg}\, \widehat{\mathtt{Z}}_{\Phi}  (\alpha,y)  = 
\widehat{\deg}\, \mathcal{Z}_{\Phi}(\alpha),
\end{equation}
where the right hand side is the Arakelov degree of Definition \ref{Def:arakelov}.  Hence the 
first claim is just a restatement of Theorem \ref{Thm:zero cycle degree}, in the special case $\mathfrak{a}=\co_F$.

Now suppose $\alpha\not\gg 0$, and fix a $\sigma:K_\Phi\map{}\C$. 
If $\alpha$ is negative at $\infty^{\mathrm{sp},\sigma}$ 
and positive at all other archimedean places of $F$, then repeating the proof of Theorem \ref{Thm:point count}  shows that
\[
\sum_{ \substack{   A_0\in \mathcal{M}_{(1,0)}(\C) \\ A\in \mathcal{CM}_{K,\Phi^\sigma}(\C)   } }
\sum_{ \substack{  f\in L_B(A_0,A) \\ \langle f,f\rangle_\CM =\alpha   }} 
\frac{1 }{\#\Aut(A_0,A) }
 =  \frac{h(K_0)}{w(K_0)}  \cdot  \rho(\alpha\mathfrak{s}^{-1}).
\]
Indeed, the only difference is in the calculation of the orbital integral (Lemma \ref{Lem:orbital}),
where one replaces the $\beta$ of Theorem \ref{Thm:global hermitian} with the $\beta$ of 
Proposition \ref{Prop:betti hermitian}.    The inner sum on the left is empty if $\alpha$ is positive at $\infty^{\mathrm{sp},\sigma}$
or negative at some other archimedean place.

 As $\sigma:K_\Phi\to \C$ varies over all embeddings 
whose restriction to $K_0$ is $\iota$, $\infty^{\mathrm{sp},\sigma}$ varies over all archimedean places of 
$F^\mathrm{sp}$, each counted with multiplicity $[K_\Phi:\Q]/ [K^\mathrm{sp}:\Q]$.
If  $\alpha$ is negative at exactly one archimedean place $v$ of $F$,
and if this place $v$ lies on $F^\mathrm{sp}$, then we compute
\begin{eqnarray*} \lefteqn{
\widehat{\deg}\, \widehat{\mathtt{Z}}_{\Phi}(\alpha,y)  } \\
& = &
\frac{  1  }{ [K_\Phi:\Q]}
 \sum_{ \substack{ \sigma:K_\Phi \to \C  \\ \sigma|_{K_0} = \iota }}
\sum_{z\in \mathcal{X}_\Phi^\sigma(\C) }
\frac{\green_{\Phi}(\alpha,y,z)}  {\#\Aut(z)} \\
& = &
\frac{  1  }{ [K_\Phi:\Q]}
 \sum_{ \substack{ \sigma:K_\Phi \to \C  \\ \sigma|_{K_0} = \iota }}
 \sum_{ \substack{   A_0\in \mathcal{M}_{(1,0)}(\C) \\ A\in \mathcal{CM}_{K,\Phi^\sigma}(\C)   } }
\sum_{ \substack{f\in L_B(A_0,A) \\ \langle f,f\rangle_\CM =\alpha } } 
\frac{ \beta_1( 4\pi |y\alpha|_{\infty^{\mathrm{sp},\sigma} }) } { \#\Aut(A_0,A) } \\
 & = &
 \frac{1}{[K^\mathrm{sp} : \Q]}   \frac{h(K_0)}{w(K_0)}  \cdot 
 \beta_1( 4\pi  |y\alpha|_v ) \cdot \rho(\alpha\mathfrak{s}^{-1}).
\end{eqnarray*}
If no such $v$ exists then the inner sum on the third line is empty.
To complete the proof, recall from Proposition \ref{Prop:s ideal} that, under our
hypotheses on the discriminants of $K_0$ and $K$, $\mathfrak{s}=\mathfrak{d}_F^{-1}$. 
\end{proof}


\subsection{Arithmetic divisors on $\mathcal{M}$}
\label{ss:divisors II}


In this subsection we study arithmetic intersection theory on the $\co_{K_0}$-stack 
\[
\mathcal{M} = \mathcal{M}_{(1,0)} \times_{\co_{K_0}} \mathcal{M}_{(n-1,1)}
\]
of the introduction.   Recall that $\mathcal{M}$ is smooth of relative dimension $n-1$ over 
$\co_{K_0}[\mathrm{disc}(K_0)^{-1}]$.  
If $S$ is a connected $\co_{K_0}$-scheme, then to every point $(A_0,A)\in \mathcal{M}(S)$
we have attached an $\co_{K_0}$-module
\[
L(A_0,A)= \Hom_{\co_{K_0}}(A_0,A),
\]
and an $\co_{K_0}$-Hermitian form $\langle\cdot,\cdot\rangle$ defined by (\ref{hermitian def}).

\begin{Def}
For any nonzero $m\in \Z$ let $\mathcal{Z}(m)$ be algebraic stack over $\co_{K_0}$ whose 
functor of points assigns to any connected $\co_{K_0}$-scheme $S$ the groupoid of 
triples $(A_0,A,f)$, where $(A_0,A)\in \mathcal{M}(S)$,  and $f\in L(A_0,A)$ satisfies
$\langle f,f\rangle =m$. 
\end{Def}

 We call the stacks $\mathcal{Z}(m)$ the  \emph{Kudla-Rapoport divisors}.
By \cite[Proposition 2.10]{KRunitaryII} the natural map $\mathcal{Z}(m)\to \mathcal{M}$ is finite and unramified.
As in Section \ref{ss:divisors},  we abbreviate $\mathcal{X}_\Phi$ for the $1$-dimensional stack 
\[
\mathcal{X}_\Phi  = \mathcal{M}_{(1,0)/\co_{\Phi}} \times_{\co_\Phi} \mathcal{CM}_\Phi.
\]
The map $\mathcal{CM}_\Phi \map{} \mathcal{M}_{(n-1,1)/\co_\Phi}$ defined by restricting 
the action of $\co_K$ to $\co_{K_0}$ induces a map
\[
\mathcal{X}_\Phi  \to \mathcal{M}_{/\co_\Phi}.
\]
A point in the intersection of $\mathcal{X}_\Phi$ and $\mathcal{Z}(m)$, defined over some $\co_\Phi$-scheme $S$, 
is a triple $(A_0,A,f)$ in which $(A_0,A,f)\in \mathcal{Z}(m)$, and $A$ is endowed with complex multiplication by 
$\co_K$.  We know from Section \ref{ss:hermitian spaces} that the induced $\co_K$-action on $L(A_0,A)$
then endows $L(A_0,A)$ with additional structure: a  totally positive definite
$\co_K$-Hermitian form $\langle\cdot,\cdot\rangle_\CM$
whose trace is the original Hermitian form $\langle\cdot ,\cdot \rangle$.  Thus $\langle f,f\rangle_\CM$ must
satisfy
\[
m= \mathrm{Tr}_{F/\Q} \langle f,f\rangle_\CM.
\]
In this way we see that the stack theoretic intersection 
\[
\mathcal{X}_\Phi \cap \mathcal{Z}(m) = \mathcal{X}_\Phi \times_{\mathcal{M}_{/\co_\Phi}} \mathcal{Z}(m)_{/\co_\Phi}
\]
admits a decomposition
\begin{equation}\label{intersection decomp}
\mathcal{X}_\Phi \cap \mathcal{Z}(m) = 
\bigsqcup_{ \substack{  \alpha\in F \\ \mathrm{Tr}_{F/\Q}(\alpha) =m }} \mathcal{Z}_\Phi(\alpha)
\end{equation}
where $\mathcal{Z}_\Phi(\alpha) = \mathcal{Z}_\Phi^{\co_F}(\alpha)$ is the stack of Section \ref{ss:zero stack}.

\begin{Def}\label{Def:finite intersection}
Define the \emph{intersection multiplicity} 
\[
I ( \mathcal{X}_{\Phi} : \mathcal{Z}(m) )
= 
\sum_{ \mathfrak{p}  \subset\co_\Phi} 
\frac{ \log( \mathrm{N}(\mathfrak{p}))  }{[K_\Phi : \Q]}
\sum_{ z \in  (\mathcal{X}_{\Phi} \cap \mathcal{Z}(m))(k^\alg_{\Phi,\mathfrak{p}} ) }
 \frac{ \mathrm{length}  \big( \co^\mathrm{sh}_{\mathcal{X}_{\Phi} \cap \mathcal{Z}(m),z} \big) } { \#\Aut(z) }.
\]
This is finite if $\mathcal{X}_\Phi\cap \mathcal{Z}(m)$ has dimension $0$.
\end{Def}

\begin{Rem}
From the point of view of arithmetic intersection theory,  Definition \ref{Def:finite intersection} is 
a bit  naive.  The more natural definition is the \emph{Serre intersection multiplicity}
of \cite[Chapter I.2]{soule92} or \cite[Chapter V.3]{serre00}, which takes into account higher $\mathrm{Tor}$
terms of the structure sheaves $\co_{\mathcal{X}_\Phi}$ and $\co_{\mathcal{Z}(m)}$.  
We have not done this, as the stack $\mathcal{M}$
in which the intersection is taking place is neither flat nor regular, and so is itself a rather naive place to be 
doing arithmetic intersection theory. See the comments of Section \ref{ss:speculation}.
 For the reader's benefit, we only point out that \cite[p.~111]{serre00}
shows that under modest hypotheses these higher $\mathrm{Tor}$ terms vanish, and Serre's intersection multiplicity
agrees with the naive intersection multiplicity.
\end{Rem}

\begin{Thm}\label{Thm:fundamental decomp I}
Let $m$ be any nonzero integer. If  $F$ is a field  then (\ref{intersection decomp}) has dimension $0$,
and
\[
I(\mathcal{X}_\Phi : \mathcal{Z}(m))    = 
\sum _{ \substack{ \alpha\in F^\times, \alpha \gg 0 \\ \mathrm{Tr}_{F/\Q} (\alpha) = m} } 
\widehat{\deg}\, \widehat{\mathtt{Z}}_{\Phi}(\alpha,y) 
\]
for any $y\in \R^{>0}$.
 \end{Thm}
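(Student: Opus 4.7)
The plan is to deduce the identity directly from the moduli-theoretic decomposition (\ref{intersection decomp}) together with the dimension and emptiness results of Proposition \ref{Prop:zero cycle} and the definition of $\widehat{\mathtt{Z}}_\Phi(\alpha,y)$ for $\alpha \gg 0$.

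First I would verify the dimension statement. Since $F$ is a field and $m\neq 0$, any $\alpha\in F$ with $\mathrm{Tr}_{F/\Q}(\alpha) = m$ is automatically nonzero, hence lies in $F^\times$. By Proposition \ref{Prop:zero cycle}, each summand $\mathcal{Z}_\Phi(\alpha)$ appearing in (\ref{intersection decomp}) is then zero-dimensional, and is empty unless $\alpha$ is totally positive. Thus $\mathcal{X}_\Phi \cap \mathcal{Z}(m)$ is zero-dimensional, and effectively only the totally positive $\alpha$ contribute. The apparent sum over infinitely many $\alpha$ is actually finite: the values of the totally positive-definite Hermitian form $\langle\cdot,\cdot\rangle_\CM$ lie in a discrete $\co_F$-submodule of $F$, while the conditions $\alpha\gg 0$ and $\mathrm{Tr}_{F/\Q}(\alpha)=m$ confine $\alpha$ to a bounded region of $F_\R$, so only finitely many $\alpha$ give a nonempty $\mathcal{Z}_\Phi(\alpha)$.

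Next I would match local rings. The decomposition (\ref{intersection decomp}) is an identity of algebraic stacks, not merely of geometric point sets: each side is the fiber product defining $\mathcal{X}_\Phi\cap\mathcal{Z}(m)$, stratified by the value $\alpha = \langle f,f\rangle_\CM$ of the refined Hermitian form which is determined by the moduli datum. It follows that for any geometric point $z\in (\mathcal{X}_\Phi\cap\mathcal{Z}(m))(k_{\Phi,\mathfrak p}^{\mathrm{alg}})$ lying in the $\mathcal{Z}_\Phi(\alpha)$ stratum,
\[
\co^{\mathrm{sh}}_{\mathcal{X}_\Phi\cap\mathcal{Z}(m),z} \;=\; \co^{\mathrm{sh}}_{\mathcal{Z}_\Phi(\alpha),z},
\qquad \Aut(z \in \mathcal{X}_\Phi\cap\mathcal{Z}(m)) \;=\; \Aut(z \in \mathcal{Z}_\Phi(\alpha)).
\]
Plugging these into Definition \ref{Def:finite intersection} and regrouping by $\alpha$, one obtains
\[
I(\mathcal{X}_\Phi : \mathcal{Z}(m)) \;=\; \sum_{\substack{\alpha\in F^\times,\ \alpha\gg 0 \\ \mathrm{Tr}_{F/\Q}(\alpha)=m}} \widehat{\deg}\,\mathcal{Z}_\Phi(\alpha),
\]
where $\widehat{\deg}\,\mathcal{Z}_\Phi(\alpha)$ is the Arakelov degree of Definition \ref{Def:arakelov}.

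Finally, unwinding definitions gives the claimed Fourier-theoretic reformulation. For $\alpha\gg 0$ the arithmetic divisor class $\widehat{\mathtt{Z}}_\Phi(\alpha,y)$ is by definition $(\mathtt{Z}_\Phi(\alpha),0)$, with trivial archimedean part, so its arithmetic degree $\widehat{\deg}\,\widehat{\mathtt{Z}}_\Phi(\alpha,y)$ is independent of $y\in\R^{>0}$ and coincides with $\widehat{\deg}\,\mathcal{Z}_\Phi(\alpha)$ via (\ref{degree compare}). Substituting this into the displayed formula yields the desired identity. The only non-formal step is the identification of local rings in the second paragraph, but this is essentially tautological once one observes that (\ref{intersection decomp}) is an isomorphism of stacks coming from a disjoint-union decomposition of the moduli functor according to the locally constant invariant $\alpha$.
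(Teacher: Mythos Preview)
Your proposal is correct and follows essentially the same approach as the paper's own proof: both deduce the dimension-zero claim from Proposition \ref{Prop:zero cycle} via the observation that $F$ a field and $m\neq 0$ force $\alpha\in F^\times$, then read off the degree identity from the stack decomposition (\ref{intersection decomp}) and the relation (\ref{degree compare}). You have simply made explicit a few points the paper leaves tacit, namely the finiteness of the sum and the matching of strictly Henselian local rings and automorphism groups under (\ref{intersection decomp}).
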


\begin{proof}
 The assumption that $F$ is a field implies that every $\alpha\in F$ with
$\mathrm{Tr}_{F/\Q}(\alpha)=m$ must satisfy $\alpha\in F^\times$.  
By Proposition \ref{Prop:zero cycle} the right hand side of (\ref{intersection decomp})  has dimension 
zero, and the only nonempty contribution comes from totally positive $\alpha$.
Therefore
 (\ref{intersection decomp}) implies 
 \[
I(\mathcal{X}_\Phi : \mathcal{Z}(m) ) =
\sum _{ \substack{ \alpha\in F^\times \\ \mathrm{Tr}_{F/\Q} (\alpha) = m} } \widehat{\deg}\, \mathcal{Z}_{\Phi}(\alpha)
\]
where $\widehat{\deg}$ is the Arakelov degree of Definition \ref{Def:arakelov},
and the claim  follows from (\ref{degree compare}).
\end{proof}

In order to construct a Green function for the divisors $\mathcal{Z}(m)$, we first 
describe the complex uniformizations of the algebraic stacks $\mathcal{M}$ and $\mathcal{Z}(m)$, following  \cite{KRunitaryII}.
Recall that $K_0$ comes with a fixed embedding $\iota:K_0\to \C$.   Let  $\delta\in K_0$ 
be the unique  square root of $\mathrm{disc}(K_0)$ for which $\delta=i\cdot |\delta|$.
 If $W$ is any $K_0$-vector space then $W_\R=W\otimes_{\Q} \R$ is a $\C$-vector space.

 \begin{Def}
A \emph{principal Hermitian lattice} of signature $(r,s)$ is a projective 
$\co_{K_0}$-module $\mathfrak{A}$ of rank $r+s$
together with a Hermitian form $H$ of signature $(r,s)$  under which $\mathfrak{A}$ is self-dual.  
\end{Def}

Let $\mathfrak{A}$ and $\mathfrak{A}_0$ be a principal Hermitian lattices of signature $(n-1,1)$ and $(1,0)$, respectively,
with Hermitian forms $H$ and $H_0$.    Define a $\Q$-symplectic form $\lambda$ on $\mathfrak{A}_\Q$ by
\begin{equation}\label{unitary-symplectic}
\delta\cdot \lambda(v,w) = H(v,w) - H(w,v),
\end{equation}
and a $\Q$-symplectic form $\lambda_0$ on $\mathfrak{A}_{0\Q}$ by the same formula, with $H$ replaced by $H_0$.
 The $\co_{K_0}$-module 
\[
L_B(\mathfrak{A}_0,\mathfrak{A})=\Hom_{\co_{K_0}}( \mathfrak{A}_0,\mathfrak{A})
\]
carries a natural Hermitian form $\langle f_1, f_2\rangle= f_2^*\circ f_1$, where for any 
$f\in L_B(\mathfrak{A}_0,\mathfrak{A})$ 
we define $f^* :\mathfrak{A} \to  \mathfrak{A}_0$  by the relation $H(fv,w)  =  H_0(v,f^*w).$
The Hermitian forms $H_0$, $H$, and $\langle \cdot, \cdot \rangle$ are related by
\[
H(f_1v_1, f_2v_2 ) = H_0(v_1,v_2) \cdot \langle f_1,f_2\rangle.
\]

Abbreviate 
\[
V=L_B(\mathfrak{A}_0,\mathfrak{A})\otimes_\Z\Q,
\] 
and  let $\mathcal{D}$ be the set of  negative $\C$-lines in the Hermitian space $V_\R$.
Given a nonzero  isotropic vector  $e\in V_\R$, there is an isotropic 
$e' \in V_\R$ such that $\langle e,e'\rangle=\delta$.  The restriction of 
$\langle\cdot,\cdot\rangle$ to the  orthogonal complement of the $\C$-span of $\{ e,e'\}$ is  positive definite, and so we may 
extend $\{e,e'\}$ to a $\C$-basis  $e,e_1\ldots, e_{n-2}, e'\in V_\R$  in such a way that the Hermitian form 
 is  given by
\[
\langle x,y \rangle = {}^t x \cdot \left(\begin{smallmatrix} 
& &  \delta \\
& A & \\
-\delta 
\end{smallmatrix}\right)  \cdot  \overline{y}
\]
for a diagonal matrix $A\in M_{n-2}(\R)$ with positive diagonal entries.  There is a bijection
\begin{equation}\label{cusp coordinates}
\mathcal{D} \iso \{  (w,u)\in \C\times \C^{n-2} : \mathrm{Tr}(\delta w)+  {}^t u A \overline{u} < 0    \}
\end{equation}
defined by associating $(w,u)$ to the negative $\C$-line spanned by 
\[
\left[ \begin{matrix} w \\ u \\ 1 \end{matrix} \right] \in \C^n \iso V_\R.
\]
We say that the basis $e,e_1,\ldots, e_{n-2}, e'$ and the coordinates $(w,u)$ are
\emph{adapted} to the isotropic vector  $e$, which should be thought of as the limit  as $w\to i\cdot \infty$.  
The coordinates $(w,u)$ make $\mathcal{D}$ into a complex manifold.
If $n=1$ then $V_\R$ has signature $(0,1)$, and so has no nonzero isotropic vector.  In this degenerate case
$\mathcal{D}$ consists of a single point.

Any choice of nonzero vector in $\mathfrak{A}_{0\Q}$ determines an isomorphism (evaluation at the chosen vector)
of $K$-vector spaces $V\to \mathfrak{A}_\Q$,  which identifies $H$ with a 
positive rational multiple of $\langle\cdot,\cdot\rangle$, and  identifies $\mathcal{D}$
with the space of negative lines in $\mathfrak{A}_\R$.  This identification does not depend on the choice of vector
used in its definition.  Any $\mathtt{h}\in \mathcal{D}$, viewed as a negative line in the complex vector space
$\mathfrak{A}_\R$,  determines an endomorphism $J_\mathtt{h}$ of  $\mathfrak{A}_\R$ by 
\[
J_\mathtt{h} v = \begin{cases}
-iv & \mbox{if }v\in \mathtt{h} \\
iv& \mbox{if }v\in \mathtt{h}^\perp,
\end{cases}
\]
where $\mathtt{h}^\perp$ is the orthogonal complement of $\mathtt{h}$ with respect to $H$.
Of course 
\[
J_\mathtt{h} \circ J_\mathtt{h}=-1,
\] 
and it is easy to see that the quadratic form $\lambda(J_\mathtt{h}v,v)$ 
on $\mathfrak{A}_\R$  is  positive definite.   A little linear algebra shows that every $\R$-linear endomorphism
of $\mathfrak{A}_\R$ satisfying these two properties is of the form $J_\mathtt{h}$ for a unique $\mathtt{h}\in\mathcal{D}$.

We now describe the complex uniformization of $\mathcal{M}(\C)$, following \cite{KRunitaryII}.
The complex elliptic curve $A_0(\C)=\mathfrak{A}_{0\R}/\mathfrak{A}_0$, 
with its principal polarization determined by $\lambda_0$, and its natural $\co_{K_0}$-action,
determines a point of $\mathcal{M}_{(1,0)}(\C)$.  To each $\mathtt{h}\in  \mathcal{D}$ 
there is an associated  $(A_\mathtt{h},\kappa_\mathtt{h},\lambda_\mathtt{h})\in \mathcal{M}_{(n-1,1)}(\C)$
in which  
\begin{itemize}
\item
$A_\mathtt{h}(\C) = \mathfrak{A}_\R/\mathfrak{A}$ with the 
complex structure determined by $J_\mathtt{h}$, 
\item
$\kappa_\mathtt{h}:\co_{K_0}\to \End(A_\mathtt{h})$  is  induced by 
the $\co_{K_0}$-module structure on $\mathfrak{A}$,  
\item
$\lambda_\mathtt{h}:A_\mathtt{h} \to A_\mathtt{h}^\vee$ is the polarization
induced by the symplectic form $\lambda$.
\end{itemize}
The rule $\mathtt{h}\mapsto (A_0,A_\mathtt{h})$ defines a morphism of complex orbifolds 
\[
\mathcal{D}\to  \mathcal{M}(\C)
\] 
Let $\Gamma_\mathfrak{A}$ be the automorphism group of $(\mathfrak{A},H)$, and 
let $\Gamma_{\mathfrak{A}_0}$ be the automorphism group of $(\mathfrak{A}_0,H_0)$ 
(so that  $\Gamma_{\mathfrak{A}_0}$ is just the 
group of roots of unity in $K_0$).  The group $\Gamma=\Gamma_{\mathfrak{A}_0}\times\Gamma_\mathfrak{A}$
acts on $L_B(\mathfrak{A}_0,\mathfrak{A})$ through automorphisms preserving the Hermitian form $\langle\cdot,\cdot\rangle$,
and so acts on the space $\mathcal{D}$.  The pair $(A_0,A_\mathtt{h})$ depends only on the $\Gamma$-orbit of $\mathtt{h}$,
and we obtain a morphism of complex orbifolds 
\[
[\Gamma\backslash \mathcal{D}] \to \mathcal{M}(\C)
\] 
identifying $[\Gamma\backslash \mathcal{D}]$ with a connected component of 
$\mathcal{M}(\C)$.  The other connected components 
are obtained by repeating this construction for each of the finitely many 
isomorphism classes of pairs $(\mathfrak{A}_0,\mathfrak{A})$.

Given a nonisotropic  $f\in L_B(\mathfrak{A}_0,\mathfrak{A})$, define
\[
\mathcal{D}(f) = \{ \mathtt{h} \in \mathcal{D} : f\perp \mathtt{h} \}.
\]
Following \cite{kudla97} or \cite{bruinier99} there is a standard way to construct a smooth  function on 
$\mathcal{D} \smallsetminus \mathcal{D}(f)$  with a logarithmic  singularity along  $\mathcal{D}(f)$.   
Let $f_\mathtt{h}$ be the orthogonal projection of $f$ to  $\mathtt{h}$, and set 
\[
R (f,\mathtt{h})  = - \langle f_\mathtt{h}, f_\mathtt{h} \rangle,
\]
 a nonnegative real analytic function on $\mathcal{D}$ whose zero set is $\mathcal{D}(f)$.  If we write 
\[
f = ae + b_1e_1+\cdots + b_{n-2} e_{n-2} + ce'
\]
in terms of a basis adapted to an isotropic $e\in \mathfrak{A}_\R$, then this function is given by the 
explicit formula
\begin{equation}\label{majorant}
R (f,\mathtt{h})  =
\frac{ |\delta (\overline{c} w - \overline{a})  + {}^t\overline{b}  A u|^2  }
{ |\delta (w-  \overline{w}) +{}^t uA\overline{u} |}
\end{equation}
in the coordinates (\ref{cusp coordinates}), where ${}^t b = [b_1\cdots b_{n-2}]$.  
This calculation shows that $\mathcal{D}(f)$ is a complex analytic divisor on $\mathcal{D}$, defined by the 
equation 
\[
\delta (\overline{c} w - \overline{a})  + {}^t\overline{b}  A u=0.
\]
If $\langle f,f\rangle<0$ then
$\mathcal{D}(f)=\emptyset$, and $R(f,\mathtt{h})$ is a positive function on $\mathcal{D}$.
In the degenerate case $n=1$ the set $\mathcal{D}(f)$ is empty, and 
 $R(f,\mathtt{h})=-\langle f,f\rangle$.  For any $\mathtt{h}\in\mathcal{D}$, each  $f\in L_B(\mathfrak{A}_0,\mathfrak{A})$ 
induces a homomorphism of \emph{real} Lie groups 
\[
f:A_0(\C)\to A_\mathtt{h}(\C),
\]
and linear algebra shows that this map  is complex analytic if and only if $\mathtt{h}\in \mathcal{D}(f)$.  
In this way we obtain a morphism of orbifolds
\[
 \Big[\Gamma\backslash
 \bigsqcup_{ \substack{  f\in L_B(\mathfrak{A}_0,\mathfrak{A}) \\ \langle f,f\rangle =m   }} 
 \mathcal{D}(f)  \Big] \to  \mathcal{Z}(m)(\C)
\]
defined by sending $\mathtt{h}\in\mathcal{D}(f)$ to the triple $(A_0,A_\mathtt{h}, f)$.  The image is 
an open and closed suborbifold of $\mathcal{Z}(m)(\C)$, and taking the disjoint union over all isomorphism
classes of pairs $(\mathfrak{A}_0,\mathfrak{A})$ gives a complex uniformization of $\mathcal{Z}(m)(\C)$.

The function $\beta_1(x)$ of (\ref{beta}) has a logarithmic singularity at $x=0$, in the sense that
$\beta_1(x) + \log (x)$  can be extended smoothly to  $\R$.  Furthermore, $\beta_1(x)$ decays 
exponentially as $x\to\infty$. Given  a  positive parameter $y\in \R$, define a smooth function 
\[
\green(f, y, \mathtt{h}) = \beta_1\big( 4\pi y R(f,\mathtt{h}) \big)
\]
on $\mathcal{D} \smallsetminus \mathcal{D}(f)$. If  $g(\mathtt{h}) =0$ is any equation for the divisor $\mathcal{D}(f)$ on some open
subset $U$ of  $\mathcal{D}$, then (\ref{majorant}) shows that
$ \green(f, y, \mathtt{h} )  +\log |g(\mathtt{h})|^2$ extends smoothly to all of $U$. For nonzero $m\in \Z$  the sum
\[
\green(m,y,\mathtt{h}) 
=  \sum_{ \substack{  f\in L_B(\mathfrak{A}_0 , \mathfrak{A}) \\ \langle f,f\rangle =m   }}  \green(f,y,\mathtt{h})
\]
 defines a Green function, in the sense of \cite{gillet-soule90, soule92}, for the orbifold divisor 
\[
 \Big[\Gamma\backslash
 \bigsqcup_{ \substack{  f\in L_B(\mathfrak{A}_0,\mathfrak{A}) \\ \langle f,f\rangle =m   }} 
 \mathcal{D}(f)  \Big] \to   [ \Gamma \backslash \mathcal{D}  ].
 \]
  Using the complex uniformizations  of 
$\mathcal{Z}(m)(\C)$ and $\mathcal{M}(\C)$ described above, 
the function $\green(m,y,\cdot)$, constructed 
now for every isomorphism class of pairs $(\mathfrak{A}_0,\mathfrak{A})$, 
defines a Green function for  the divisor  $\mathcal{Z}(m)$ on $\mathcal{M}$.  
If $m<0$ then $\green(m,y,\cdot )$ is a smooth function on $\mathcal{M}(\C)$.

Using the forgetful map $\mathcal{X}_\Phi \to  \mathcal{M}_{/\co_\Phi}$, it makes sense to evaluate 
$\green(m,y,\cdot)$ on the finite set of points of the complex fiber of $\mathcal{X}_\Phi$.  More
precisely, we define:
\[
\green(m,y,  \mathcal{X}_\Phi) = \frac{1}{[K_\Phi:\Q]} 
\sum_{ \substack{\sigma : K_\Phi\to \C \\ \sigma|_{K_0} = \iota } }
\sum_{z\in \mathcal{X}_\Phi^\sigma(\C) } \frac{ \green(m,y, z) }{\#\Aut(z)} .
\]
Here $\mathcal{X}_\Phi^\sigma$ is the $\C$-scheme obtained from $\mathcal{X}_\Phi$ by base change
through $\sigma$.  There is a slight abuse of notation on the right hand side, as we are confusing 
$z\in \mathcal{X}_\Phi^\sigma(\C)$ with its image in 
$(\mathcal{M}_{/\co_\Phi})^\sigma(\C)  \iso \mathcal{M} ( \C )$.
The right hand side is only defined if the images of $\mathcal{X}_\Phi^\sigma$ and $\mathcal{Z}(m)$
have no common points in the complex fiber of $\mathcal{M}$.
This is equivalent to (\ref{intersection decomp}) being $0$-dimensional.

\begin{Thm}\label{Thm:fundamental decomp II}
Let $m$ be any nonzero integer. If  $F$ is a field  then (\ref{intersection decomp}) has dimension $0$,
and
\[
 \green(m,y,  \mathcal{X}_\Phi)  = 
\sum _{ \substack{ \alpha\in F^\times, \alpha \not \gg 0 \\ \mathrm{Tr}_{F/\Q} (\alpha) = m} } 
\widehat{\deg}\, \widehat{\mathtt{Z}}_{\Phi}(\alpha,y) 
\]
for any $y\in \R^{>0}$.
\end{Thm}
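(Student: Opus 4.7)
Since $F$ is a field, every $\alpha \in F$ with $\mathrm{Tr}_{F/\Q}(\alpha) = m \neq 0$ automatically lies in $F^\times$, and so Proposition \ref{Prop:zero cycle} implies that each summand $\mathcal{Z}_\Phi(\alpha)$ in the decomposition (\ref{intersection decomp}) is zero-dimensional and supported in nonzero characteristic.  In particular $\mathcal{X}_\Phi(\C)$ is disjoint from $\mathcal{Z}(m)(\C)$ in $\mathcal{M}(\C)$, so $\green(m,y,\cdot)$ restricts to a smooth function along the complex fiber of $\mathcal{X}_\Phi$ and the evaluation $\green(m,y,\mathcal{X}_\Phi)$ is well-defined.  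The plan is to prove the pointwise identity
\[
\green(m,y,z) = \sum_{\substack{\alpha \in F^\times, \alpha \not\gg 0 \\ \mathrm{Tr}_{F/\Q}(\alpha) = m}} \green_\Phi(\alpha,y,z)
\]
for every $\sigma:K_\Phi\to\C$ extending $\iota$ and every $z = (A_0,A) \in \mathcal{X}_\Phi^\sigma(\C)$; summing this identity over $z$ and $\sigma$ with the weights appearing in (\ref{arch degree def}) then gives the theorem.

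The main task is to reinterpret $\green(m,y,z)$ from the complex-analytic side in terms of the archimedean data of the CM structure on $A$.  The $\co_K$-action makes $V = V_B(A_0,A)$ a free $K$-module of rank one, so there is a decomposition
\[
V_\R = \bigoplus_v V_v
\]
indexed by the archimedean places of $F$, where each $V_v$ is a one-dimensional $K_v\iso\C$-subspace, and this decomposition is orthogonal both for the $K$-Hermitian form $\langle\cdot,\cdot\rangle_\CM$ and for its trace $\langle\cdot,\cdot\rangle = \mathrm{Tr}_{K/K_0}\langle\cdot,\cdot\rangle_\CM$.  By Proposition \ref{Prop:betti hermitian}, $\langle\cdot,\cdot\rangle_\CM$ is negative definite on $V_{v^*}$ and positive definite on the remaining summands, where $v^* = \infty^{\mathrm{sp},\sigma}$; the same then holds for $\langle\cdot,\cdot\rangle$, so $V_{v^*}$ is the unique negative complex line in $(V_\R,\langle\cdot,\cdot\rangle)$.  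Unwinding the complex uniformization recalled in Section \ref{ss:divisors II} shows that the point $z$ maps to $\mathtt{h} = V_{v^*} \in \mathcal{D}$, and the orthogonal projection of any $f \in V$ to $\mathtt{h}$ is simply its $v^*$-component $f_{v^*}$.  A direct computation then gives
\[
R(f,\mathtt{h}) = -\langle f_{v^*}, f_{v^*}\rangle = |\alpha|_{v^*},
\]
where $\alpha = \langle f,f\rangle_\CM$.

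With this identification in hand, grouping the sum defining $\green(m,y,z)$ by the value of $\alpha = \langle f,f\rangle_\CM$ yields
\[
\green(m,y,z) = \sum_{\substack{f\in L_B(A_0,A) \\ \langle f,f\rangle = m}} \beta_1\bigl(4\pi y R(f,\mathtt{h})\bigr)
= \sum_{\substack{\alpha\in F^\times \\ \mathrm{Tr}_{F/\Q}(\alpha) = m}} \sum_{\substack{f\in L_B(A_0,A) \\ \langle f,f\rangle_\CM = \alpha}} \beta_1\bigl(4\pi|y\alpha|_{v^*}\bigr),
\]
and the inner double sum is precisely $\green_\Phi(\alpha,y,z)$ by the definition (\ref{pre green II}).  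When $\alpha\gg 0$ the inner sum is empty, since $\langle\cdot,\cdot\rangle_\CM$ cannot represent a totally positive element at the place $v^*$ where it is negative definite, so the sum collapses to one over $\alpha\not\gg 0$, yielding the desired pointwise identity.  The main technical work lies in the identification $\mathtt{h} = V_{v^*}$ together with the resulting formula $R(f,\mathtt{h}) = |\alpha|_{v^*}$; both amount to a careful bookkeeping of how $\langle\cdot,\cdot\rangle$ and $\langle\cdot,\cdot\rangle_\CM$ decompose at the archimedean places under the scalar restriction from $K$ to $K_0$.
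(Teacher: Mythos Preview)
Your proposal is correct and follows essentially the same approach as the paper: identify the negative line $\mathtt{h}$ corresponding to $z$ as the component of $V_\R$ at the special archimedean place $\infty^{\mathrm{sp},\sigma}$, compute $R(f,\mathtt{h})=|\langle f,f\rangle_\CM|_{\infty^{\mathrm{sp},\sigma}}$, regroup the sum defining $\green(m,y,z)$ by $\alpha=\langle f,f\rangle_\CM$, and discard the totally positive $\alpha$. The paper phrases the identification of $\mathtt{h}$ via the idempotent $\epsilon^{\mathrm{sp}}\in F_\R$ acting on $\mathfrak{A}_\R$, but this is the same computation you perform in $V_\R$.
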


\begin{proof}
We already saw in  Theorem \ref{Thm:fundamental decomp I}  that (\ref{intersection decomp}) has dimension $0$.
Suppose we have a point $z \in \mathcal{X}_{\Phi}(\C)$ representing a pair $(A_0,A)$.  Set
$\mathfrak{A}_0 = H_1(A_0(\C),\Z)$ and $\mathfrak{A} = H_1(A(\C) , \Z)$, 
viewed as principal Hermitian lattices using the Hermitian 
forms $H_0$ and $H$ determined, using (\ref{unitary-symplectic}), by the polarizations $\lambda_0$ and $\lambda$.
The canonical isomorphism $\mathfrak{A}_\R \iso \Lie(A)$
determines a complex structure on $\mathfrak{A}_\R$, and under this complex structure multiplication by $i$ 
has the form  $J_\mathtt{h}$ for a unique $\mathtt{h}\in\mathcal{D}$.
If  $\epsilon^\mathrm{sp}\in F_\R$ is the idempotent corresponding to the 
place $\infty^\mathrm{sp}$ determined by the restriction of $\varphi^\mathrm{sp}:K\to \C$ to $F$,
 then the negative  line $\mathtt{h}$ is none other than $\mathtt{h}= \epsilon^\mathrm{sp} \cdot \mathfrak{A}_\R.$

It follows easily that for any $f\in L_B( A_0, A)$ we have
\[
R(f,\mathtt{h} ) = -\langle f_\mathtt{h}, f_\mathtt{h}\rangle 
=  -\langle \epsilon^\mathrm{sp} f ,  \epsilon^\mathrm{sp}f   \rangle 
=  | \langle f,f\rangle_\CM |_{\infty^\mathrm{sp}}.
\]
Therefore $\green ( f,y, \mathtt{h} ) 
= \beta_1\big(   4\pi y |   \langle f,f\rangle_\CM  |_{\infty^\mathrm{sp}}\big)$
and 
\begin{eqnarray*} 
\sum_{z\in \mathcal{X}_{\Phi}(\C)} \frac{  \green(m,y, z)}{\#\Aut(z)}   
& = &
\sum_{(A_0,A) \in   \mathcal{X}_{\Phi}(\C)} 
\sum_{ \substack{  f\in L_B(A_0,A) \\ \langle f,f\rangle =m   }}  
\frac{\beta_1\big(   4\pi y |   \langle f,f\rangle_\CM  |_{\infty^\mathrm{sp}}\big) }{\#\Aut(A_0,A) }  \\
& = &
\sum_{ \substack{ \alpha\in F \\ \mathrm{Tr}_{F/\Q}(\alpha) =m  }  }
\sum_{(A_0,A) \in   \mathcal{X}_\Phi(\C)} 
\sum_{ \substack{  f\in L_B(A_0,A) \\ \langle f,f\rangle_\CM =\alpha   }}  
\frac{  \beta_1\big( 4\pi y |\alpha|_{\infty^\mathrm{sp}} \big)}{\#\Aut(A_0,A) } \\
& = &
\sum_{ \substack{ \alpha\in F \\ \mathrm{Tr}_{F/\Q}(\alpha) =m  }  }
\sum_{ z \in   \mathcal{X}_\Phi(\C)}  \frac{  \green_\Phi(\alpha,y,z )}{\#\Aut(z) }
\end{eqnarray*}
where the final equality is by the definition (\ref{pre green}) of $ \green_\Phi(\alpha,y,z )$
at a point $z\in \mathcal{X}_\Phi(\C)$.  As $F$ is a field and $m\not=0$, we may restrict to $\alpha\in F^\times$
in the final sum.  As the Hermitian form $\langle\cdot,\cdot\rangle_\CM$ is negative definite at $\infty^\mathrm{sp}$
by Proposition \ref{Prop:betti hermitian}, we may further restrict to $\alpha\not\gg 0$.

Now suppose $z\in  \mathcal{X}_\Phi^\sigma(\C)$.  As in the discussion preceding (\ref{pre green II}),
we may identify $\mathcal{X}_\Phi^\sigma(\C)  \iso \mathcal{X}_{\Phi^\sigma}(\C)$.  
Repeating the argument above with $\Phi$ replaced by $\Phi^\sigma$ and 
$\infty^\mathrm{sp}$ replaced by $\infty^{\mathrm{sp},\sigma}$,
and using (\ref{pre green II}) instead of (\ref{pre green}), shows that
\[
 \green(m,y,  \mathcal{X}_\Phi)  = \frac{1}{[K_\Phi:\Q]}
 \sum_{ \substack{ \alpha\in F^\times, \alpha\not\gg 0 \\ \mathrm{Tr}_{F/\Q}(\alpha) =m  }  }
 \sum_{ \substack{\sigma : K_\Phi\to \C \\ \sigma|_{K_0} = \iota } }
 \sum_{ z \in   \mathcal{X}_\Phi^\sigma(\C)}  \frac{  \green_\Phi(\alpha,y,z )}{\#\Aut(z) }.
\]
Comparing with (\ref{arch degree def}) completes the proof.
\end{proof}


\section{Eisenstein series}
\label{s:eisenstein}


Keep $K_0$, $F$, $K$,  $\Phi$, and $K_\Phi$ as in Section \ref{S:global moduli}.
In this section we construct a Hilbert modular Eisenstein series $\mathcal{E}_\Phi(\tau,s)$ 
on $\mathcal{H}_F$. This Eisenstein series is \emph{incoherent} in the sense of  
Kudla \cite{kudla97}, and so vanishes at $s=0$.
We use formulas of Yang \cite{yang05} to compute the Fourier coefficients of the derivative at $s=0$, 
and show that  these coefficients agree with the arithmetic degrees appearing in 
Theorem \ref{Thm:degree formulas}.


\subsection{A Hilbert modular Eisenstein series}


In this subsection we attach to every $\mathbf{c}\in F_\A^\times$ a Hilbert modular Eisenstein series
$\mathcal{E}(\tau, s;\mathbf{c},\psi_F)$ of the type considered in \cite{yang05}. 

First we quickly recall some of the local theory of \cite{kudla97,kudla-rallis,kudla99b,yang05}.
Fix a place $v$ of $F$ and a $\mathbf{c} \in F_v^\times$, and let $\chi_v$ be the character of $F_v^\times$ associated
to the quadratic extension $K_v/F_v$.    Let $\psi$ be an additive character $F_v\to \C^\times$.
Associated to the $F_v$-quadratic space $(K_v,\mathbf{c} x\overline{x})$ and the character 
$\psi$ is a Weil representation
$\omega_{\mathbf{c},\psi}$ of $\mathrm{SL}_2(F_v)$ on the space of Schwartz functions 
$\mathfrak{S}(K_v)$ on $K_v$; see \cite[Chapter II.4]{kudla96}.
For $s\in \C$ let $I(\chi_v,s)$ be the space of the induced representation of the character $\chi_v(x)\cdot |x|_v^s$.
There is an $\mathrm{SL}_2(F_v)$-intertwining operator 
\[
\lambda_{\mathbf{c},\psi}: \mathfrak{S}(K_v) \to  I(\chi_v,0)
\]
defined by 
\[
\lambda_{\mathbf{c},\psi}(\varphi) (g) =( \omega_{\mathbf{c},\psi}(g)\varphi)(0).
\] 
For any $\varphi\in \mathfrak{S}(K_v)$ there is an associated section $\Phi(g,s)\in I(\chi_v,s)$ characterized by the 
properties
\begin{itemize}
\item $\Phi( \cdot ,0) = \lambda_{\mathbf{c},\psi}(\varphi)$,
\item $\Phi(g,s)$ is \emph{standard} in the sense that $\Phi(k,s)$ is independent of $s$ for all $k$ in the 
usual maximal compact subgroup of $\mathrm{SL}_2(F_v)$.  
\end{itemize}
It will always be clear from context whether $\Phi$ refers to a section of $I(\chi_v,s)$, or to the fixed CM type of $K$.

If  $v$ is a finite place of $F$,   let $\mathbf{1}_{\co_{K,v}}\in \mathfrak{S}(K_v)$ be the 
characteristic function of $\co_{K,v}$, and let 
\[
\Phi_{\mathbf{c},\psi}(g,s)\in I(\chi_v,s)
\] 
be the standard section 
satisfying $\Phi_{\mathbf{c},\psi}(\cdot ,0)= \lambda_{\mathbf{c},\psi}(\mathbf{1}_{\co_K,v}).$
If $v$ is archimedean,   let 
$
\varphi(x)=\mathrm{exp}(-2\pi |\mathbf{c}x\overline{x}|_v)
$
be the Gaussian, and let $\Phi_{\mathbf{c},\psi}$ be the corresponding standard
section satisfying $\Phi_{\mathbf{c},\psi} (\cdot,0)= \lambda_{\mathbf{c},\psi}(\varphi)$.  If 
\[
\mathrm{sign}(\mathbf{c})=\mathbf{c}/|\mathbf{c}|_v
\] 
denotes the sign of $\mathbf{c}$,  then
$\Phi_{\mathbf{c},\psi}$ is the  \emph{normalized standard section of weight 
$\mathrm{sign}(\mathbf{c})$}, characterized by the property 
\[
\Phi_{ \mathbf{c},\psi }\left( \left(\begin{matrix} \cos\theta& \sin \theta \\ -\sin\theta & \cos\theta  \end{matrix}\right),s \right) 
= e^{  \mathrm{sign}(\mathbf{c})\cdot i \theta} 
\]
for every $\theta\in \R$; see for example \cite[(4.29)]{kudla-rallis}.

For each $\alpha\in F_v^\times$ and  $\Phi\in I(\chi_v,s)$  define the local Whittaker function
\[
W_\alpha(g,s; \Phi,\psi) = 
\int_{F_v} \Phi\left( \left(\begin{smallmatrix}  0 & -1 \\ 1 & 0 \end{smallmatrix}\right)
 \left(\begin{smallmatrix} 1 & x\\ 0& 1\end{smallmatrix}\right) g, s \right) \psi_v (-\alpha x)\, dx,
\]
where $g\in \mathrm{SL}_2(F_v)$, and the Haar measure on $F_v$ is self-dual with respect to $\psi$.  
When $\Phi=\Phi_{\mathbf{c},\psi}$ as above we abbreviate
\begin{equation}\label{local whitt}
W_\alpha(g,s; \mathbf{c},\psi) = W_\alpha(g,s;\Phi_{\mathbf{c},\psi}, \psi).
\end{equation}
If we fix a $\delta \in F_v^\times$ and set $(\delta \psi)(x) =\psi(\delta x)$ then 
$\Phi_{\mathbf{c}, \delta \psi } =  \Phi_{\delta \mathbf{c},\psi}$
and
\begin{equation}\label{whitt shift}
W_{\alpha}( g,s; \mathbf{c} , \delta \psi ) 
= |\delta|_v^{1/2} \cdot W_{\delta \alpha}( g,s; \delta \mathbf{c} ,\psi)
\end{equation}
for all $\alpha\in F_v$.  Indeed, the first equality is clear from explicit formulas for the Weil representation, as in 
\cite[Chapter II.4]{kudla96}, and the second is clear from the first.

Now we switch to the global setting.  Let $\psi_\Q: \Q\backslash \Q_\A \to \C^\times$ be the 
usual additive character, whose  archimedean component  satisfies $\psi_\Q(x)=e^{2\pi i x}$ for all  $x\in \R$,
and whose nonarchimedean components are unramified.  Set  
\[
\psi_F(x) = \psi_\Q(\mathrm{Tr}_{F/\Q}(x)).
\]
Let 
\[
\chi:F_\A^\times\to \{\pm 1\}
\] 
be the composition of (\ref{general character}) with the product 
map $\{\pm 1\}^{\pi_0(F)} \to \{\pm 1\},$ so that $\chi=\prod_v\chi_v$, and let
$I(\chi,s) = \otimes_v I(\chi_v,s)$
be the representation of $\mathrm{SL}_2(F_\A)$ induced by the character $\chi$.
Given any $\mathbf{c}\in F_\A^\times$  we define a section of $I(\chi,s)$ by 
$
\Phi_{\mathbf{c},\psi_F} = \otimes_{v} \Phi_{\mathbf{c}_v,\psi_{F,v}}
$
and an Eisenstein series
\[
E(g,s;\mathbf{c},\psi_F) = \sum_{\gamma\in B(F)\backslash \mathrm{SL}_2(F)} \Phi_{\mathbf{c},\psi_F}(\gamma g,s)
\]
on $\mathrm{SL}_2(F_\A)$, where $B\subset\mathrm{SL}_2$ is the subgroup of upper triangular matrices.

Let
\[
\mathcal{H}_F = \{ x+iy\in F_\C : x,y\in F_\R, y\gg 0\}
\]
be the $F$-upper half plane.   A choice of isomorphism $F_\R\iso \R^n$, which we do not make, 
identifies $\mathcal{H}_F$ with a product of $n$ complex upper half planes.   
For $\tau=x+iy\in \mathcal{H}_F$ set
\[
g_\tau =\left(\begin{matrix} 1& x \\ & 1 \end{matrix}\right)
\left(\begin{matrix} y^{1/2} & \\ & y^{-1/2} \end{matrix}\right) \in \mathrm{SL}_2(F_\R),
\]
viewed as an element of $\mathrm{SL}_2(F_\A)$ with trivial nonarchimedean components, and set
(recall that $\mathfrak{d}_F$ is the different of $F/\Q$)
\[
\mathcal{E}(\tau,s;\mathbf{c},\psi_F) =\mathrm{N}(\mathfrak{d}_F)^{\frac{s+1}{2}} \cdot 
  \frac{ L(s+1,\chi)}{  \mathrm{Norm}_{F/\Q}(y)^{1/2} }
 \cdot E(g_\tau,s;\mathbf{c},\psi_F),
\]
where $L(s,\chi)=\prod_v L(s,\chi_v)$ is the Dirichlet $L$-function of $\chi$, including the $\Gamma$-factor 
\[
L(s,\chi_v)=\pi^{-(s+1)/2}\cdot  \Gamma\left(\frac{s+1}{2}\right)
\]
for archimedean $v$. Thus $\mathcal{E}(\tau,s;\mathbf{c},\psi_F)$ is a Hilbert modular form,  and admits a Fourier expansion
\[
\mathcal{E}(\tau,s;\mathbf{c},\psi_F) = \sum_{\alpha\in F} \mathcal{E}_\alpha(\tau,s;\mathbf{c},\psi_F)
\]
in which 
\[
\mathcal{E}_\alpha(\tau,s;\mathbf{c},\psi_F) =\mathrm{Norm}_{F/\Q}(y)^{-1/2}   \int_{F\backslash F_\A} 
E\left( \left(\begin{matrix} 1 & b \\ & 1\end{matrix}\right) g_\tau,s ; \mathbf{c}, \psi_F(-b\alpha)\right)\, db.
\]

Assume that $\mathbf{c}\in F_\A^\times$ satisfies $\mathbf{c}\co_F=\mathfrak{d}_F^{-1}$, 
$\mathbf{c}_v=1$ for every archimedean $v$, and $\chi(\mathbf{c})=-1$.
The second condition implies that $\mathcal{E}(\tau,s ;\mathbf{c} , \psi_F)$ has parallel weight one.  The
third condition implies that the $F_\A$-quadratic space $(K_\A,\mathbf{c} x\overline{x})$ is not the adelization
of any $F$-quadratic space, and so the Eisenstein series $\mathcal{E}(\tau,s;\mathbf{c},\psi_F)$ is \emph{incoherent}
in the sense of \cite{kudla97}.  In particular 
\[
\mathcal{E}(\tau,0;\mathbf{c},\psi_F)  =0
\]
 by \cite[Theorem 2.2]{kudla97}.  Strictly speaking, the notion of an incoherent Eisenstein 
 series  only makes  sense if $F$ is a field.  In general  we write 
 \[
 F= \prod_j F_j
 \]
 as a product  of totally real fields.  There are corresponding factorizations $K=\prod_j K_j$
 where each $K_j$ is a quadratic totally imaginary extension of $F_j$, and 
 \[
 \mathcal{H}_F = \prod_j \mathcal{H}_{F_j}
 \]
 where $\mathcal{H}_{F_j}$ is the $F_j$-upper half plane.  The element  $\mathbf{c}$ factors as
 $\mathbf{c}=\prod_j\mathbf{c}_j$, where each $\mathbf{c}_j\in F_{j\A}^\times \subset F_\A^\times$ 
 has trivial components away from the factor $F_{j\A}^\times$.  Similarly $\psi_F=\prod_j \psi_{F_j}$,
 and  there is a factorization of Eisenstein series
 \begin{equation}\label{eisenstein components}
 \mathcal{E}(\tau,s;\mathbf{c},\psi_F) = \prod_j \mathcal{E}_j(\tau_j, s; \mathbf{c}_j,\psi_{F_j}).
 \end{equation}
Each Eisenstein series in the factorization is then either  \emph{coherent} or \emph{incoherent}, 
depending on  whether $\chi(\mathbf{c}_j)=1$ or $-1$.  All incoherent factors vanish at $s=0$,
and as $\prod_j \chi(\mathbf{c}_j) = \chi(\mathbf{c})=-1$ there is at least one incoherent factor.

Recall that the fixed  CM type $\Phi$ has a distinguished element $\varphi^\mathrm{sp}:K\to \C$, which
determines a direct factor  $K^\mathrm{sp}$ of $K$ with maximal totally real subfield  $F^\mathrm{sp}$.  
Recall also that the restriction of $\varphi^\mathrm{sp}$ to $F$ determines an archimedean place denoted
$\infty^\mathrm{sp}$.
We will be restricting our attention to Eisenstein series 
 $\mathcal{E}(\tau,s;\mathbf{c},\psi_F)$ with $\mathbf{c}$ chosen so that all factors
 on the right hand side of (\ref{eisenstein components}) are coherent, except for the incoherent factor
 corresponding to  $F^\mathrm{sp}$.

\begin{Def}\label{Def:eisenstein}
Define  a Hilbert modular Eisenstein series of weight one
\[
\mathcal{E}_{\Phi}(\tau, s) = \sum_{\mathbf{c}\in \Xi} \mathcal{E}(\tau,s;\mathbf{c},\psi_F)
\]
where the sum is over the finite set $\Xi$ of $\mathrm{Nm}_{K/F}(\widehat{\co}_K^\times)$-orbits of 
$\mathbf{c}\in \A_F^\times$ satisfying 
\begin{itemize}
\item
$\mathbf{c}\co_F=\mathfrak{d}_F^{-1},$
\item
$\mathbf{c}_v=1$ for every archimedean $v$,
\item
for every factor $F_j$ of $F$
\[
\chi(\mathbf{c}_j) = \begin{cases}
1 & \mbox{if }F_j\not=F^\mathrm{sp} \\
-1 &\mbox{if }F_j=F^\mathrm{sp}.
\end{cases}
\]
\end{itemize}
\end{Def}

To understand the motivation behind the particular set $\Xi$, reconsider the collection
of Hermitian spaces $\mathcal{L}_B$ of Remark \ref{Rem:L_B}.  Thus $\mathcal{L}_B$
consists of all isomorphism classes of  Hermitian spaces $( L_B(A_0,A), \langle\cdot,\cdot\rangle_\CM )$
as   
\[
(A_0,A) \in (\mathcal{M}_{(1,0)} \times \mathcal{CM}_\Phi^\mathfrak{a})(\C)
\]
 varies.   Take $\mathfrak{a}=\co_F$,
and assume that $\mathfrak{s}=\mathfrak{d}_F^{-1}$ (which is the case if $K_0$ and $K$
have relatively prime discriminants, by Proposition \ref{Prop:s ideal}).  The elements of
$\mathcal{L}_B$ are alternately characterized as the  isomorphism classes of pairs $(L,H)$ in which 
$L$ is a projective $\co_K$-module of rank $1$,
$H$ is a $K$-valued $\co_K$-Hermitian form on $L$,  the ideal of $(L,H)$ is $\mathfrak{d}_F^{-1}$,
and $(L,H)$ is negative definite at $\infty^\mathrm{sp}$ and positive definite at all other archimedean places.
For any such $(L,H)$ there is a  $\beta\in F_\A^\times$  and  an isomorphism of $K_\A$-Hermitian spaces
\[
( L\otimes_{\co_K} K_\A ,H) \iso ( K_\A, \beta x\overline{y})
\] 
identifying $L\otimes_{\co_K} \widehat{\co}_K \iso \widehat{\co}_K$.   This $\beta$ must 
satisfy $\beta\co_F=\mathfrak{d}_F^{-1}$, be negative at $\infty^\mathrm{sp}$ and  positive at
all other archimedean places, and satisfy $\chi(\beta_j)=1$ for each factor $F_j$ of $F$.   The finite part of $\beta$
is well-defined up to multiplication by a norm from $\widehat{\co}_K^\times$, and each archimedean
component is well-defined  up to sign.   This makes clear the connection between 
$\Xi$ and $\mathcal{L}_B$: the elements of $\Xi$ arise by taking the $\beta$'s corresponding to 
elements of $\mathcal{L}_B$, and replacing the negative component at $\infty^\mathrm{sp}$ by
a positive component.  The corresponding $K_\A$-Hermitian spaces $(K_\A,\mathbf{c} x\overline{y})$
from which the Eisenstein series $\mathcal{E}(\tau, s;\mathbf{c},\psi_F)$ are constructed
are therefore incoherent at the factor $F^\mathrm{sp}$, and coherent at all other factors.
That is to say, the $K_{j\A}$-Hermitian space $(K_{j\A}, \mathbf{c}_j x\overline{y})$ arises
as the adelization of a $K_j$-Hermitian space if any only if $F_j\not= F^\mathrm{sp}$.


\subsection{Fourier coefficients}


Of course $\mathcal{E}_{\Phi}(\tau,0)=0$, and so we study the derivative at $s=0$, which 
 has a Fourier expansion
\[
\frac{d}{ds} \mathcal{E}_{\Phi}(\tau,s) \big|_{s=0}  =\sum_{\alpha\in F} b_{\Phi}(\alpha,y)\cdot  q^\alpha
\]
in which 
\[
q^\alpha = \mathrm{exp}(2\pi i \mathrm{Tr}_{F/\Q}(\alpha\tau) ).
\]
We will give an explicit formula for the coefficients, at least when $\alpha\in F^\times$, and compare them with the formulas
of Theorem \ref{Thm:degree formulas}.

For $\alpha\in F^\times$ and $\mathbf{c}\in \Xi$, define a finite set of places of $F$
\[
\mathrm{Diff} (\alpha,\mathbf{c}) = \{ v: \chi_v(\alpha\mathbf{c}) =-1\}.
\]
Note that every $v\in \mathrm{Diff} (\alpha,\mathbf{c})$ is nonsplit in $K$, and that there
 is a  disjoint union 
\[
\mathrm{Diff} (\alpha,\mathbf{c})  = \bigsqcup_j \, \{ \mbox{places }v \mbox{ of }F_j : \chi_v(\alpha\mathbf{c}) =-1\}.
\]
Our hypotheses on $\mathbf{c}$ imply that every set in the disjoint union has even cardinality, except for 
\[
\mathrm{Diff}^\mathrm{sp}(\alpha,\mathbf{c}) =  \{ \mbox{places }v \mbox{ of }F^\mathrm{sp} : \chi_v(\alpha\mathbf{c}) =-1\},
\]
which has odd cardinality. In particular   $\mathrm{Diff}(\alpha,\mathbf{c})$ has odd cardinality, and if it 
contains a unique place of $F$,  that place must lie on the factor $F^\mathrm{sp}$.

If $v$ is a finite place of $F$ and $\mathfrak{b}$ is a fractional $\co_{F,v}$-ideal, let 
\[
\rho_v(\mathfrak{b})
= \# \{ \mathfrak{B} \subset  \co_{K,v} : \mathfrak{B}\overline{\mathfrak{B}} = \mathfrak{b}\co_{K,v}\} .
\]
If $\mathfrak{b}$ is a fractional $\co_F$-ideal, set $\rho(\mathfrak{b})=\prod_v\rho_v(\mathfrak{b}_v)$, as in the 
introduction.  The following proposition follows from calculations of Yang \cite{yang05}.

\begin{Prop}\label{Prop:whitt calculation}
Suppose $\alpha\in F^\times$, let $d_{K/F}$ be the relative discriminant of $K/F$, 
and let $r$ denote the number of places  of $F$ ramified in $K$ (including the archimedean places).  
Suppose $\mathbf{c}\in\Xi$.
\begin{enumerate}
\item
If $\#\mathrm{Diff}(\alpha,\mathbf{c}) >1$ then $\ord_{s=0}\, \mathcal{E}_{\alpha}(\tau,s; \mathbf{c}, \psi_F) > 1$.
\item
If $\mathrm{Diff}(\alpha,\mathbf{c}) =\{\mathfrak{p}\}$ with $\mathfrak{p}$ finite prime of $F$ then 
\[
\frac{d}{ds} \mathcal{E}_{\alpha}(\tau,s;  \mathbf{c}, \psi_F) \big|_{s=0}   
 =  \frac{ - 2^{r-1 } }    {  \mathrm{N}(d_{K/F})^{1/2}} 
 \cdot  \rho(\alpha\mathfrak{d}_F\mathfrak{p}^{-\epsilon_\mathfrak{p}})  \cdot 
  \ord_{\mathfrak{p}}(\alpha\mathfrak{d}_F\mathfrak{p}) \cdot    \log(\mathrm{N}(\mathfrak{p})) \cdot q^\alpha 
\]
where $\epsilon_\mathfrak{p}=0$ if $\mathfrak{p}$  ramifies in $K$, and $\epsilon_\mathfrak{p}=1$ if 
$\mathfrak{p}$ is unramified in $K$.
\item
If $\mathrm{Diff}(\alpha,\mathbf{c}) =\{v\}$ with $v$ an archimedean place of $F$ then 
\[
\frac{d}{ds} \mathcal{E}_{\alpha}(\tau,s;  \mathbf{c}, \psi_F ) \big|_{s=0}  
 =   \frac{ - 2^{r-1}   }{\mathrm{N}(d_{K/F})^{1/2}} \cdot   \rho(\alpha\mathfrak{d}_F)  \cdot \beta_1(4\pi |y\alpha|_v) 
 \cdot q^\alpha.
\]
Recall that  $\beta_1(t)$ was defined by (\ref{beta}).
\end{enumerate}
\end{Prop}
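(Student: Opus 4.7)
The plan is to compute the $\alpha$-th Fourier coefficient of $\mathcal{E}(\tau,s;\mathbf{c},\psi_F)$ as a product of local Whittaker functions, using the standard fact that for $\alpha \in F^\times$
\[
\mathcal{E}_\alpha(\tau,s;\mathbf{c},\psi_F) = \mathrm{N}(\mathfrak{d}_F)^{(s+1)/2} \cdot \frac{L(s+1,\chi)}{\mathrm{Norm}_{F/\Q}(y)^{1/2}} \cdot \prod_v W_\alpha(g_{\tau,v},s;\mathbf{c}_v,\psi_{F,v}),
\]
obtained from the Bruhat decomposition of the non-degenerate term.  First I would apply the shift formula (\ref{whitt shift}) to reduce the computation of each local factor $W_\alpha(g_{\tau,v}, s; \mathbf{c}_v, \psi_{F,v})$ to a Whittaker integral against the standard additive character $\psi_{\Q_p}$ (respectively, the standard archimedean character), with a compensating power of $|\mathfrak{d}_{F,v}|_v$ that ultimately accounts for the factor $\mathrm{N}(\mathfrak{d}_F)^{-1/2}$ appearing in $\rho(\alpha\mathfrak{d}_F\cdot)$.

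Next I would invoke Yang's local formulas from \cite{yang05}.  At every finite place $v \notin \mathrm{Diff}(\alpha,\mathbf{c})$, the normalized local Whittaker function $L(s+1,\chi_v)^{-1} \cdot W_\alpha(1,s;\mathbf{c}_v,\psi_{F,v})$ is holomorphic at $s=0$ and its value there is $\rho_v(\alpha\mathfrak{d}_F)$ (if $v$ is split or inert) or $\rho_v(\alpha\mathfrak{d}_F) \cdot (\mbox{factor of }2)$ at ramified $v$; collecting the powers of $2$ at the ramified finite places gives $2^{r-\#\{v \mid \infty\}}$.  At an archimedean place $v \notin \mathrm{Diff}$ the normalized Whittaker function of the Gaussian yields $q^\alpha_v$ (multiplied by an appropriate power of $2$), contributing $q^\alpha$ overall together with the remaining powers of $2$ to build up $2^{r-1}$.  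At a place $v \in \mathrm{Diff}(\alpha,\mathbf{c})$, Yang's formulas show the normalized local Whittaker function has a simple zero at $s=0$ with prescribed derivative; the finite case produces $\ord_{\mathfrak{p}}(\alpha\mathfrak{d}_F\mathfrak{p}) \log\mathrm{N}(\mathfrak{p})$ times $\rho_v(\alpha\mathfrak{d}_F\mathfrak{p}^{-\epsilon_{\mathfrak{p}}})$ rather than $\rho_v(\alpha\mathfrak{d}_F)$ (this is the local twist recorded in Yang's tables), while the archimedean case produces $\beta_1(4\pi|y\alpha|_v)$ via direct evaluation of the derivative of the Gaussian Whittaker integral.

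With these local calculations in hand, part (1) is immediate: each $v \in \mathrm{Diff}(\alpha,\mathbf{c})$ contributes a simple zero to the normalized Whittaker product, so $\#\mathrm{Diff}(\alpha,\mathbf{c}) > 1$ forces $\ord_{s=0} \mathcal{E}_\alpha \geq \#\mathrm{Diff} > 1$.  For (2) and (3), when $\mathrm{Diff}(\alpha,\mathbf{c}) = \{v_0\}$, the derivative at $s=0$ of the product is the product over $v \neq v_0$ of the values (evaluated using Yang at finite $v$ and using the Gaussian computation at archimedean $v$) times the derivative at the single Diff place $v_0$.  Multiplying these and matching the shape of the answer is essentially bookkeeping, although one should keep careful track of signs and of how the archimedean $L$-factors $\Gamma((s+1)/2)\pi^{-(s+1)/2}$ interact with the weight-one standard sections.

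The main obstacle will be the bookkeeping of constants: one must verify that the powers of $2$, the factor $\mathrm{N}(d_{K/F})^{-1/2}$, the sign, and the precise index shift $\mathfrak{p} \mapsto \mathfrak{p}^{-\epsilon_{\mathfrak{p}}}$ all emerge correctly from the local Whittaker calculations, and that the split/inert/ramified contributions combine to form $\rho(\alpha\mathfrak{d}_F\cdot)$ as claimed.  Once the local formulas are taken verbatim from \cite{yang05}, this reduces to careful aggregation; the delicate step is tracking how the local constants at the ramified finite places conspire with those at the archimedean places to produce exactly the uniform constant $-2^{r-1}/\mathrm{N}(d_{K/F})^{1/2}$ in both (2) and (3).
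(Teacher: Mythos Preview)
Your approach is essentially the same as the paper's: factor $\mathcal{E}_\alpha$ into local Whittaker functions, apply the shift formula (\ref{whitt shift}) to pass to an unramified additive character, then quote Yang's local formulas and aggregate. The only point you leave implicit that the paper makes explicit is that the sign $-1$ comes from the product of local epsilon factors: each local value carries a factor $\chi_v(-1)\epsilon(1/2,\chi_v,\psi_v^{\mathrm{unr}})$, and the global product is $\chi(\mathbf{c})\prod_v\epsilon(1/2,\chi_v,\psi_{F,v})=-1$ since $\chi(\mathbf{c})=-1$ and $\epsilon(1/2,\chi)=1$ by the functional equation of $L(s,\chi)$.
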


\begin{proof}
Returning briefly to the local setting of (\ref{local whitt}), define the normalized local Whittaker function 
\[
W_{\alpha_v}^*(g_v ,s; \mathbf{c}_v,\psi_v) =  L(s+1,\chi_v) \cdot W_{\alpha_v} ( g_v ,s; \mathbf{c}_v ,\psi_v).
\]
Here $\psi_v$ is any local additive character.   The  Fourier coefficient  factors as a product 
\[
\mathcal{E}_\alpha(\tau ,s;\mathbf{c},\psi_F) 
= \mathrm{N}(\mathfrak{d}_F)^{(s+1)/2} \mathrm{Norm}_{F/\Q}(y)^{-1/2} \prod_v W^*_{ \alpha_v}(g_{\tau,v},s; \mathbf{c}_v,\psi_{F,v}).
\]
The character $\psi^\mathrm{unr}(x) = \psi_F(\mathbf{c} x)$
is  an unramified character of $F_\A^\times$, and  (\ref{whitt shift}) shows that
\begin{equation}\label{Fourier factor}
\mathcal{E}_\alpha(\tau,s;\mathbf{c},\psi_F) = \mathrm{N}(\mathfrak{d}_F)^{s/2}  \mathrm{Norm}_{F/\Q}(y)^{-1/2}
 \prod_v  W^*_{\delta_v\alpha_v}(g_{\tau,v},s; 1,  \psi_v^\mathrm{unr}).
\end{equation}

Let $v$ be a nonarchimedean place of $F$, fix a uniformizing parameter $\varpi\in F_v$,   let $f_v=\ord_v(d_{K/F})$,
and let $q_v=\#\co_{F,v}/(\varpi)$.    We now invoke \cite[Proposition 2.1]{yang05} and \cite[Proposition 2.3]{yang05}.
If  $\chi_v(\alpha \mathbf{c})=1$  then
 \begin{eqnarray*}\lefteqn{
 W^*_{\delta_v\alpha_v}(g_{\tau,v}, 0; 1,\psi_v^\mathrm{unr})  } \\
& = &    \chi_v(-1) \epsilon(1/2,\chi_v,\psi_v^\mathrm{unr}) \rho_v(\alpha\mathfrak{d}_F)
\cdot \begin{cases}
2 q_v^{-f_v/2} & \mbox{ if $v$ is ramified in $K$} \\
1&\mbox{ if $v$ is unramified in $K$.}
\end{cases}
 \end{eqnarray*}
 If  instead  $\chi_v(\alpha \mathbf{c})=-1$ then 
 $ W^*_{\delta_v\alpha_v}(g_{\tau,v}, s; 1,\psi_v^\mathrm{unr})$ vanishes at $s=0$, and 
\begin{align*}
\frac{d}{ds}W^*_{\delta_v\alpha_v} (g_{\tau,v} ,s; 1 ,\psi_v^\mathrm{unr}) \big|_{s=0}   
&=
   \chi_v(-1) \epsilon(1/2,\chi_v,\psi_v^\mathrm{unr}) \log (q_v )  \cdot 
 \frac{\ord_v(\alpha\mathfrak{d}_F)+1 }{2}  \\
&   \quad  \times \begin{cases}
 2q_v^{-f_v/2}  \cdot \rho_v(\alpha\mathfrak{d}_F)   & \mbox{ if $v$ is ramified in $K$} \\
\rho_v(\alpha\mathfrak{d}_F\mathfrak{p}_v^{-1}) &\mbox{ if $v$ is unramified in $K$}
\end{cases}
\end{align*}
where $\mathfrak{p}_v$ is the prime ideal associated to $v$.

Now suppose  $v$ is an archimedean place of $F$. In this case we cite \cite[Proposition 2.4]{yang05}. 
 If $\chi_v(\alpha\mathbf{c})=1$ then
\[
W^*_{\delta_v\alpha_v}(g_{\tau,v},0;  1 ,\psi_v^\mathrm{unr}) = 
2\chi_v(-1) \epsilon(1/2,\chi_v,\psi_v^\mathrm{unr}) \cdot y_v^{1/2} e^{2\pi i \alpha_v \tau_v}  .
\]
If $\chi_v(\alpha\mathbf{c})=-1$ then $W^*_{\delta_v\alpha_v}(g_{\tau,v},0; 1 ,\psi_v^\mathrm{unr})=0$ and 
\[
\frac{d}{ds}W^*_{\delta_v\alpha_v} (g_{\tau,v} ,s; 1 ,\psi_v^\mathrm{unr}) \big|_{s=0} 
=   \chi_v(-1) \epsilon(1/2,\chi_v,\psi_v^\mathrm{unr}) \cdot  y_v^{1/2}  e^{2\pi i \alpha_v\tau_v} \beta_1(4\pi |y\alpha|_v ).
\]

Everything now follows easily.  The above formulas show that when $v\in \mathrm{Diff}(\alpha,\mathbf{c})$
the $v$ factor on the right hand side of (\ref{Fourier factor}) vanishes at $s=0$, and so the order of vanishing of 
$\mathcal{E}_\alpha(\tau,s;\mathbf{c},\psi_F)$ is at least  $\#\mathrm{Diff}(\alpha,\mathbf{c})$.
If $\mathrm{Diff}(\alpha,\mathbf{c})=\{w\}$ then  differentiating (\ref{Fourier factor}) at $s=0$ shows that
\begin{eqnarray*}
\frac{d}{ds} \mathcal{E}_{\alpha}(\tau,s;  \mathbf{c}, \psi_F) \big|_{s=0}   
&=&   \mathrm{Norm}_{F/\Q}(y)^{-1/2} \cdot 
\frac{d}{ds}  W^*_{\delta_w\alpha_w}(g_{\tau,w},s; 1,  \psi_w^\mathrm{unr}) \big|_{s=0} \\
& &   \times 
 \prod_{v\not=w}  W^*_{\delta_v\alpha_v}(g_{\tau,v},0; 1,  \psi_v^\mathrm{unr}),
\end{eqnarray*}
and the claim follows from the formulas above and the root number calculation 
\[
\prod_v\epsilon(1/2,\chi_v,\psi_v^\mathrm{unr}) = \chi(\mathbf{c}) \cdot \prod_v\epsilon(1/2,\chi_v,\psi_{F,v}) = -1
\]
(the first equality follows from \cite[(3.29)]{kudla-tate},  the second follows from the functional equation of $L(s,\chi)$,
which shows that $\epsilon(1/2,\chi)=1$).
\end{proof}

By the first claim of the proposition, for any $\alpha\in F^\times$ we have
\begin{align*}
b_\Phi (\alpha,y) \cdot q^\alpha   &   =  
 \sum_{\mathbf{c} \in \Xi }
\mathcal{E}'_{\alpha}(\tau,0;  \mathbf{c}, \psi_F)  \\
& = \sum_{v}  \sum_{ \substack{  \mathbf{c} \in \Xi   \\   \mathrm{Diff}(\alpha,\mathbf{c}) =\{ v \}    } }
\mathcal{E}'_{\alpha}(\tau,0;  \mathbf{c}, \psi_F) 
\end{align*}
where the outer sum is over all places $v$ of $F$.  This sum is unchanged if we restrict further to 
places $v$ of $F^\mathrm{sp}$ which are nonsplit in $K^\mathrm{sp}$, as these are the only places 
for which the relation $\mathrm{Diff}(\alpha,\mathbf{c}) =\{ v \} $ can ever hold.

\begin{Cor}\label{Cor:final fourier}
Suppose $\alpha\in F^\times$ and $y\in F_\R^{\gg 0}$.
\begin{enumerate}
\item
If $\alpha$ is totally positive then 
\begin{eqnarray*}
b_{\Phi}(\alpha,y)  
 =     \frac{-2^{r -1 } }{  \mathrm{N}( d_{K/F})^{1/2}} \cdot 
\sum_\mathfrak{p}  
  \ord_{\mathfrak{p}}(\alpha\mathfrak{d}_F\mathfrak{p})  \cdot
    \rho(\alpha\mathfrak{d}_F\mathfrak{p}^{-\epsilon_\mathfrak{p}}) \cdot  \log(\mathrm{N}(\mathfrak{p}))  
\end{eqnarray*}
where the sum is over all primes $\mathfrak{p}$ of $F^\mathrm{sp}$ nonsplit in $K^\mathrm{sp}$.
In particular $b_{\Phi}(\alpha,y)$ is independent of $y$.
\item
If $\alpha$ is negative at exactly one archimedean place, $v$, of $F$, and if this $v$ lies on the  factor $F^\mathrm{sp}$, 
then 
\[
b_{\Phi}(\alpha,y)  =   \frac{-2^{r - 1 } }{  \mathrm{N}( d_{K/F})^{1/2}} \cdot 
  \rho(\alpha\mathfrak{d}_F)  \cdot  \beta_1(4\pi |y\alpha|_v).
\]
\item
In all other cases $b_{\Phi}(\alpha,y)=0$.
\end{enumerate}
\end{Cor}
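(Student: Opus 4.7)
The plan is to start from the identity displayed immediately before the corollary,
\[
b_\Phi(\alpha,y)\cdot q^\alpha = \sum_{\mathbf{c}\in \Xi}\mathcal{E}'_\alpha(\tau,0;\mathbf{c},\psi_F),
\]
and to use part (1) of Proposition \ref{Prop:whitt calculation} to discard all terms with $\#\mathrm{Diff}(\alpha,\mathbf{c})>1$, so that only the $\mathbf{c}$ with $\mathrm{Diff}(\alpha,\mathbf{c})=\{v\}$ for a single place $v$ of $F$ contribute. The bulk of the proof then consists of an existence-and-uniqueness count: for each place $v$, I will determine precisely which pairs $(\alpha,\mathbf{c})$ satisfy $\mathrm{Diff}(\alpha,\mathbf{c})=\{v\}$, and for each such $v$ show that at most one $\mathbf{c}\in \Xi$ works.

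For the counting I will exploit the global product formula for the quadratic character $\chi_j$ of $F_j^\times$: since $\alpha_j\in F_j^\times$, we have $\prod_w\chi_w(\alpha)=1$ in every factor. Fix $\mathbf{c}\in \Xi$ and note that at archimedean $w$, $\mathbf{c}_w=1$ forces $\chi_w(\alpha\mathbf{c})=\mathrm{sign}(\alpha_w)$, while at finite nonsplit $w$ the local class of $\mathbf{c}_w$ modulo $\mathrm{Nm}_{K/F}(\co_{K,w}^\times)$ is determined by $\chi_w(\mathbf{c}_w)$. If $\mathrm{Diff}(\alpha,\mathbf{c})=\{v\}$, then $\chi_w(\mathbf{c}_w)=\chi_w(\alpha)$ at every finite nonsplit $w\neq v$, while $\chi_v(\mathbf{c}_v)=-\chi_v(\alpha)$. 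Combining with $\prod_w\chi_w(\mathbf{c})=\chi(\mathbf{c}_j)$ and the product formula for $\chi_j(\alpha_j)$, one computes that the factor $F_j$ containing $v$ must satisfy $\chi(\mathbf{c}_j)=-1$, forcing $F_j=F^\mathrm{sp}$; the remaining factors automatically yield $\chi(\mathbf{c}_{j'})=+1$. In the totally positive case this gives exactly one admissible $\mathbf{c}$ for each finite prime $v$ of $F^\mathrm{sp}$ nonsplit in $K^\mathrm{sp}$; in the mixed-sign case it gives one admissible $\mathbf{c}$ precisely when $\alpha$ has a single negative archimedean place and that place lies on $F^\mathrm{sp}$; otherwise no admissible $\mathbf{c}$ exists and $b_\Phi(\alpha,y)=0$.

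Once this bookkeeping is in place, the formula is immediate: for a fixed $v$ with $\mathrm{Diff}(\alpha,\mathbf{c})=\{v\}$, I substitute the corresponding evaluation from Proposition \ref{Prop:whitt calculation}, part (2) when $v$ is finite and part (3) when $v$ is archimedean. Summing over $v$ and dividing out $q^\alpha$ then produces the three cases of the corollary. The main obstacle is the global $\chi_j$-parity argument identifying which factor $v$ must lie on and ruling out the other configurations; all local calculations have already been absorbed into Proposition \ref{Prop:whitt calculation}, so once the existence-and-uniqueness count is pinned down, the proof collapses to a substitution.
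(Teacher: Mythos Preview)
Your proposal is correct and follows essentially the same route as the paper's proof: both start from the displayed identity, use part (1) of Proposition \ref{Prop:whitt calculation} to reduce to singletons $\mathrm{Diff}(\alpha,\mathbf{c})=\{v\}$, observe (via the parity argument already recorded just before the corollary) that such a $v$ must lie on $F^\mathrm{sp}$, and then substitute parts (2) and (3). The only cosmetic difference is that you spell out the product-formula parity count explicitly, whereas the paper simply asserts the existence and uniqueness of $\mathbf{c}$; note that the paper first restricts to primes with $\rho(\alpha\mathfrak{d}_F\mathfrak{p}^{-\epsilon_\mathfrak{p}})\neq 0$ before asserting uniqueness, which is the clean way to handle the edge cases where no admissible $\mathbf{c}$ exists (your ``exactly one'' claim is literally false there, but harmlessly so since those terms vanish).
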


\begin{proof}
Suppose first that $\alpha$ is totally positive, so that $\mathrm{Diff}(\alpha,\mathbf{c})$ contains only finite 
places of $F$.  Proposition \ref{Prop:whitt calculation} implies
\[
b_{\Phi}(\alpha,y)  = \frac{ - 2^{r-1 } }    {  \mathrm{N}(d_{K/F})^{1/2}}  
\sum_\mathfrak{p} \sum_{ \substack {\mathbf{c} \in \Xi \\  \mathrm{Diff}(\alpha,\mathbf{c}) = \{\mathfrak{p}\} }  }
 \rho(\alpha\mathfrak{d}_F\mathfrak{p}^{-\epsilon_\mathfrak{p}})  \cdot 
  \ord_{\mathfrak{p}}(\alpha\mathfrak{d}_F\mathfrak{p}) \cdot    \log(\mathrm{N}(\mathfrak{p}))
\]
where the first sum is over  all primes  of $F^\mathrm{sp}$ that are nonsplit in $K^\mathrm{sp}$.  Obviously,
we may further restrict to those $\mathfrak{p}$  for which 
 $\rho(\alpha\mathfrak{d}_F\mathfrak{p}^{-\epsilon_\mathfrak{p}})\not=0$, and for each $\mathfrak{p}$ 
there is a unique choice of $\mathbf{c}\in \Xi$ for which $\mathrm{Diff}(\alpha,\mathbf{c})=\{\mathfrak{p}\}$. 
This proves the first claim, and the proofs of the remaining claims are similar.
\end{proof}

Comparing Theorem \ref{Thm:degree formulas} and Corollary \ref{Cor:final fourier} proves the following result.

\begin{Thm}\label{Thm:degree-fourier}
Assume the discriminants of $K_0/\Q$ and $F/\Q$ are odd and relatively prime. 
If $\alpha\in F^\times$ and $y\in F_\R^{\gg 0}$ then
\[
\widehat{\mathrm{deg}}\, \widehat{\mathtt{Z}}_{\Phi}(\alpha,y)
 =       -   \frac{ h(K_0) } {w(K_0) } \cdot   \frac{  \sqrt{\mathrm{N}( d_{K/F}) }  } { 2^{r -1 }    [K^\mathrm{sp}:\Q]}
 \cdot  b_{\Phi}(\alpha,y) .
\]
\end{Thm}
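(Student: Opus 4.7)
The plan is to prove this identity by a direct case comparison between Theorem \ref{Thm:degree formulas} and Corollary \ref{Cor:final fourier}, since both results partition the set of pairs $(\alpha,y)$ with $\alpha\in F^\times$ and $y\in F_\R^{\gg 0}$ into exactly the same three sign-patterns for $\alpha$: totally positive, negative at a unique archimedean place lying on the summand $F^\mathrm{sp}$, and everything else.

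First I would treat the case $\alpha\gg 0$. Here Theorem \ref{Thm:degree formulas}(1) expresses $\widehat{\mathrm{deg}}\,\widehat{\mathtt{Z}}_{\Phi}(\alpha,y)$ as a sum over primes $\mathfrak{p}$ of $F^\mathrm{sp}$ nonsplit in $K^\mathrm{sp}$ of the quantity $\log(\mathrm{N}(\mathfrak{p}))\cdot \ord_{\mathfrak{p}}(\alpha\mathfrak{p}\mathfrak{d}_F)\cdot \rho(\alpha\mathfrak{p}^{-\epsilon_p}\mathfrak{d}_F)$, while Corollary \ref{Cor:final fourier}(1) expresses $b_{\Phi}(\alpha,y)$ as the same sum against the same primes. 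The summands are identical, and the overall multiplicative constants on the two sides are related precisely by $-h(K_0)/w(K_0)\cdot \sqrt{\mathrm{N}(d_{K/F})}/(2^{r-1}[K^\mathrm{sp}:\Q])$, which is the asserted factor.

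Next I would treat the archimedean-boundary case: $\alpha$ negative at exactly one archimedean place $v$ of $F$, with $v$ factoring through $F^\mathrm{sp}$. Theorem \ref{Thm:degree formulas}(2) and Corollary \ref{Cor:final fourier}(2) each contribute a single term of the form $\rho(\alpha\mathfrak{d}_F)\cdot \beta_1(4\pi|y\alpha|_v)$, and again matching the prefactors gives the identity. In all remaining sign patterns both $\widehat{\mathrm{deg}}\,\widehat{\mathtt{Z}}_{\Phi}(\alpha,y)$ and $b_{\Phi}(\alpha,y)$ vanish, so the identity is trivial; on the arithmetic side this is the content of Theorem \ref{Thm:degree formulas}(2), and on the analytic side it follows from the vanishing of $\mathcal{E}'_\alpha(\tau,0;\mathbf{c},\psi_F)$ whenever $\#\mathrm{Diff}(\alpha,\mathbf{c})>1$ (Proposition \ref{Prop:whitt calculation}(1)).

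There is no genuine obstacle at this stage of the argument; the proof is a bookkeeping exercise. The real work has already been done: the arithmetic side requires the deformation-theoretic length calculation of Theorem \ref{Thm:local length}, together with the orbital integral count of Theorem \ref{Thm:point count}, assembled in Theorem \ref{Thm:degree formulas}; the analytic side requires Yang's explicit local Whittaker function formulas, recorded in Proposition \ref{Prop:whitt calculation} and summed in Corollary \ref{Cor:final fourier}. The only subtlety worth checking is that the set $\mathrm{Diff}(\alpha,\mathbf{c})$ singles out exactly the primes of $F^\mathrm{sp}$ nonsplit in $K^\mathrm{sp}$ (respectively the archimedean place on $F^\mathrm{sp}$) that appear in the geometric formulas, which is built into the definition of the set $\Xi$ indexing the incoherent Eisenstein series $\mathcal{E}_\Phi(\tau,s)$.
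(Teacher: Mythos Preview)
Your proposal is correct and follows exactly the paper's approach: the paper's entire proof is the single sentence ``Comparing Theorem \ref{Thm:degree formulas} and Corollary \ref{Cor:final fourier} proves the following result,'' and you have simply spelled out that comparison case by case. The only detail worth noting explicitly is that, under the hypothesis on discriminants, a prime $\mathfrak{p}$ of $F^\mathrm{sp}$ ramifies in $K^\mathrm{sp}$ if and only if the rational prime $p$ below it ramifies in $K_0$, so the $\epsilon_p$ of Theorem \ref{Thm:degree formulas} agrees with the $\epsilon_\mathfrak{p}$ of Corollary \ref{Cor:final fourier}.
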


Let  $i_F:\mathcal{H}\to \mathcal{H}_F$ be the diagonal embedding of the usual complex upper half plane into 
$\mathcal{H}_F$.  The restriction $\mathcal{E}_{\Phi}(i_F(\tau),s)$ 
of $\mathcal{E}_{\Phi} (\tau,s)$
to $\mathcal{H}$ vanishes at $s=0$, and the derivative has a Fourier expansion
\[
\frac{d}{ds} \mathcal{E}_{\Phi}(i_F(\tau),s) \big|_{s=0} = \sum_{m\in \Z} c_{\Phi}(m,y)  \cdot  q^m
\]
in which
\[
c_{\Phi}(m,y) = \sum_{ \substack{ \alpha\in F \\ \mathrm{Tr}_{F/\Q}(\alpha) =m  }  } b_{\Phi}(\alpha,y).
\]
Here $\tau=x+iy\in\mathcal{H}$ and $q=\exp(2\pi i \tau)$, as usual.

\begin{Cor}
Assume the discriminants of $K_0/\Q$ and $F/\Q$ are odd and relatively prime. 
If $F$ is a field and $m$ is  nonzero  then
\[
I(\mathcal{X}_\Phi : \mathcal{Z}(m) )  +  \green(m,y, \mathcal{X}_\Phi ) =
 -   \frac{ h(K_0) }{w(K_0) } \cdot 
 \frac{  \sqrt{\mathrm{N}( d_{K/F}) }  } { 2^{r -1 } [K:\Q]} \cdot  c_{\Phi}(m,y) 
\]
for all $y\in \R^{>0}$.
\end{Cor}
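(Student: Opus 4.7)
The plan is to combine Theorems \ref{Thm:fundamental decomp I} and \ref{Thm:fundamental decomp II} to rewrite the left hand side as a single sum of arithmetic degrees of the divisors $\widehat{\mathtt{Z}}_\Phi(\alpha,y)$, then apply Theorem \ref{Thm:degree-fourier} term by term, and finally recognize the resulting sum of Fourier coefficients $b_\Phi(\alpha,y)$ as $c_\Phi(m,y)$. Once one observes that the hypothesis ``$F$ is a field'' makes two separate simplifications available, this is essentially formal.

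First I would note that since $F$ is a field, the only factor of $K$ is $K$ itself, so $K^{\mathrm{sp}}=K$ and $[K^{\mathrm{sp}}:\Q]=[K:\Q]$. In addition, for $m\neq 0$, every $\alpha\in F$ with $\mathrm{Tr}_{F/\Q}(\alpha)=m$ automatically lies in $F^\times$, and the set of such $\alpha$ is partitioned into those which are totally positive and those which are not. Theorem \ref{Thm:fundamental decomp I} identifies the contribution from the totally positive $\alpha$ with $I(\mathcal{X}_\Phi:\mathcal{Z}(m))$, and Theorem \ref{Thm:fundamental decomp II} identifies the contribution from the non-totally-positive $\alpha$ with $\green(m,y,\mathcal{X}_\Phi)$. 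Adding them gives
\[
I(\mathcal{X}_\Phi : \mathcal{Z}(m))+\green(m,y,\mathcal{X}_\Phi)
=\sum_{\substack{\alpha\in F^\times \\ \mathrm{Tr}_{F/\Q}(\alpha)=m}}
\widehat{\deg}\,\widehat{\mathtt{Z}}_\Phi(\alpha,y).
\]

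Next I would apply Theorem \ref{Thm:degree-fourier}, which (crucially) gives the same constant of proportionality for every $\alpha\in F^\times$. Pulling this constant outside the sum, and using $[K^{\mathrm{sp}}:\Q]=[K:\Q]$, transforms the right hand side into
\[
-\,\frac{h(K_0)}{w(K_0)}\cdot\frac{\sqrt{\mathrm{N}(d_{K/F})}}{2^{r-1}[K:\Q]}
\sum_{\substack{\alpha\in F^\times \\ \mathrm{Tr}_{F/\Q}(\alpha)=m}} b_\Phi(\alpha,y).
\]
Finally, since $m\neq 0$ forces $\alpha=0$ to be excluded from the index set of $c_\Phi(m,y)$, the remaining sum equals $c_\Phi(m,y)$ by definition, yielding the claimed identity.

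There is no substantial obstacle here; the work is all front-loaded into the three theorems being invoked. Theorem \ref{Thm:fundamental decomp I} is where the local length computation of Theorem \ref{Thm:local length} (via the deformation theory of Section \ref{s:deformations}) and the point count of Theorem \ref{Thm:point count} do their job on the finite side; Theorem \ref{Thm:fundamental decomp II} packages the complex-uniformization calculation of $\green(m,y,\cdot)$ in terms of the Hermitian spaces $L_B(A_0,A)$; and Theorem \ref{Thm:degree-fourier} is the arithmetic-to-analytic comparison powered by Yang's explicit local Whittaker formulas. The only thing one needs to check is that these three inputs fit together without mismatch, which they do precisely because $F$ being a field collapses the product decomposition of $K$ and eliminates the improper-intersection phenomenon flagged in the introduction.
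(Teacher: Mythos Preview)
Your proposal is correct and follows exactly the same approach as the paper's proof: combine Theorems \ref{Thm:fundamental decomp I} and \ref{Thm:fundamental decomp II} to express the left hand side as a single sum of $\widehat{\deg}\,\widehat{\mathtt{Z}}_\Phi(\alpha,y)$ over $\alpha$ with $\mathrm{Tr}_{F/\Q}(\alpha)=m$, then apply Theorem \ref{Thm:degree-fourier} and the definition of $c_\Phi(m,y)$. Your version simply spells out explicitly the observations (e.g.\ $K^{\mathrm{sp}}=K$, the partition into $\alpha\gg 0$ and $\alpha\not\gg 0$) that the paper leaves implicit.
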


\begin{proof}
Theorems \ref{Thm:fundamental decomp I} and \ref{Thm:fundamental decomp II}  imply
\[
I(\mathcal{X}_\Phi : \mathcal{Z}(m) )  +  \green(m,y, \mathcal{X}_\Phi ) 
= \sum_{ \substack{ \alpha\in F \\ \mathrm{Tr}_{F/\Q}(\alpha) =m  }  } \widehat{\deg}\, \widehat{\mathtt{Z}}_\Phi(\alpha,y),
\]
and so the claim is clear from Theorem \ref{Thm:degree-fourier}.
\end{proof}

\bibliographystyle{plain}

\end{document}